\documentclass[11pt, reqno]{amsart}
\usepackage{amssymb}
\usepackage{times}
\usepackage{amsmath,amsthm}
\usepackage{amsfonts}
\usepackage{leftidx}
\usepackage{mathrsfs}
\usepackage{enumerate}
\usepackage{abstract}
\usepackage{stmaryrd}
\usepackage{ulem}

\usepackage{float}
\usepackage{hyperref}
\synctex=-1

\def\R{\mathbb{R}}

\def\d{|\nabla|}
\def\n{\nabla}

\def\p{\partial}

\def\vo{\vspace{1\baselineskip}}

\def\h{\frac{1}{2}}
\def\ta{\thickapprox}
\def\be{\begin{equation}}
\def\ee{\end{equation}}

\newtheorem{theorem}{Theorem}[subsection]
\newtheorem{thm}{Theorem}[section]
\newtheorem{lemma}[theorem]{Lemma}
\newtheorem{proposition}[theorem]{Proposition}
\theoremstyle{definition}
\newtheorem{definition}[theorem]{Definition}

\theoremstyle{remark}
\newtheorem{remark}{Remark}[section]
\setlength{\textwidth}{16cm} \setlength{\oddsidemargin}{0cm}
\setlength{\evensidemargin}{0cm}
\numberwithin{equation}{section}
\begin{document}
 \title[3D Finite Depth Capillary waves ]{Global regularity for the 3D finite depth capillary water waves  }
\author{Xuecheng Wang}
\address{Mathematics Department, Princeton University, Princeton, New Jersey,08544, USA}
\email{xuecheng@math.princeton.edu}
\thanks{}
\maketitle
\begin{abstract}

In this paper, we prove global regularity, scattering, and the non-existence of small traveling waves for the  $3D$ finite depth capillary waves system for small initial data. The non-existence of small traveling waves  shows a fundamental difference between  the capillary waves ($\sigma=1, g=0$) and  the gravity-capillary   waves ($\sigma=1,$ $ 0< g< 3$) in the finite depth setting. As, for the later case,    there exists arbitrary small $L^2$ traveling waves.

 Different from the water waves system in the infinite depth setting, the quadratic terms of the same system in the finite depth setting are worse due to the absence of null structure  inside the Dirichlet-Neumann operator.  In the finite depth setting,  the  capillary waves system  has the worst quadratic terms among  the water waves systems with  all possible values of gravity effect constant and surface tension coefficient. It loses favorable cancellations not only in the High $\times$ Low type interaction but also in the High $\times $ High type interaction.

In the worst scenario, the best decay rate  of the nonlinear solution  that one could expect is   $ (1+t) ^ {-1/2} $, because the $3D$ finite depth capillary waves system lacks null structures and there exists   $Q(u, \bar{u})$ type quadratic term, which causes a very large time resonance set and the definite growth of the associated profile.  As a result, the problematic terms  are not only  the quadratic terms  but also the cubic terms.

 Our results and proofs have the following  features and innovations.

\begin{enumerate}
\item[$\bullet$] We are able to control the nonlinear effect of the $Q(u, \bar{u})$ type quadratic term, which  is very delicate. Even for the $2D$ quadratic Schr\"odinger equation, which is a simplified model of the    $3D$ finite depth capillary waves system,  there is no small data global regularity result  when there is no null structure inside the $Q(u, \bar{u})$ type quadratic term.  It is not completely solved even   for the $3D$ quadratic Schr\"odinger equation. On the contrary, when the nonlinearity of quadratic Schr\"odinger equation is $u\bar{u}$, there  exist  finite time blow up solutions for suitably small $L^2$ initial data, see \cite{ikeda}.

 \item[$\bullet$] We introduce a novel method to control the weighted norms, which help us get around a delicate difficulty around the large time resonance set and  further enable us to prove the dispersion estimate for the nonlinear solution. We control two weighted norms at the same time: the lower order weighted norm and the high order weighted norm.  We allow the high order weighted norm to grow appropriately, which helps us to prove that the low order weighted norm doesn't grow over time. Although this idea is not new in the nonlinear wave equations, which have many vector fields commute with the associated linear operator, it usually doesn't work    well for  general dispersive equations, especially when there is no scaling  vector field  available. More weights mean more burdens, which make it more difficult to close the argument.

\item[$\bullet$]   To prove    global regularity for the capillary waves system, we  identify a good variable to prove the dispersion  estimate  and fully exploit  the hidden structures inside the capillary waves  system, which include   the hidden symmetries in both quadratic terms and cubic terms, the conservation law of momentum,   and  the hidden structures of the symbol of  the quadratic terms.

\end{enumerate}

\end{abstract}
\setcounter{tocdepth}{1}
\tableofcontents

\section{Introduction}\label{introduction}

\subsection{The set-up of problem and previous results}
We study the 
 evolution of a constant density inviscid fluid, e.g., water, inside a time dependent domain $\Omega(t)\subset \mathbb{R}^3$, which has a fixed flat bottom $\Sigma$ and a moving interface $\Gamma(t)$. Above the water region $\Omega(t)$ is vacuum. In other words, we only consider the one-fluid problem. We assume that the fluid is irrotational and incompressible. We only consider the effect of surface tension. The effect of gravity is neglected. The problem under consideration is also known as the capillary water waves system.

After normalizing the depth of $\Omega(t)$ to be ``$1$'', we can represents $\Omega(t), \Sigma,$ and $\Gamma(t)$ in the Eulerian coordinates  as follows, 
 \[
\Omega(t):=\{(x,y): x \in \mathbb{R}^2, -1\leq y \leq h(t, x)\}, \]
\[\Gamma(t):= \{(x, h(t, x): x\in \mathbb{R}^2\}, \quad \Sigma:= \{(x,-1): x\in \mathbb{R}^2\},
 \]
where $h(t,x)$ represents the height of interface, which will be a small perturbation of zero.
 
Let $u(t)$ and $p$ denotes the velocity and the pressure of the fluid respectively. Then the evolution of fluid can be described by the free boundary Euler equation  as follows,
\be
\p_t u + u\cdot \nabla u =-\nabla p, \quad \nabla \cdot u =0, \quad \nabla \times u =0, \quad \textup{in}\,\,\, \Omega(t).
\ee
The free surface $\Gamma(t)$ moves with the normal component of the velocity according to the kinematic boundary condition as follows, 
\[
\p_t + u\cdot\nabla \, \textup{ is tangent  to $\cup_{t} \Gamma(t)$}. 
\]
The pressure $p$ satisfies the  Young-Laplace equation as follows,
\[
p= \sigma H(h),\quad \textup{on}\,\, \Gamma(t).
\]
where $\sigma$ denotes the surface tension coefficient, which will be normalized to be ``$1$'', and $H (h) $ represents the mean curvature of the interface, which is given as follows,
\[
H(h)= \nabla\cdot \Big(\frac{\nabla h}{\sqrt{1+|\nabla h|^2}}\Big).
\]
At last, we have the following Neumann type boundary condition on the bottom $\Sigma$, 
\[
u\cdot \vec{\mathbf{n}}=0, \quad  \textup{on}\,\, \Sigma,
\]
which means that the fluid cannot go through the bottom as it is fixed.

Since the velocity field is irrotational, we can represent it in terms of velocity potential $\phi$. Let $\psi$ be the restriction of velocity potential on the boundary $\Gamma(t)$, more precisely, $\psi(t,x):=\phi(t,x,\,h(t,x))$. From divergence free condition and boundary conditions, we can derive the following Laplace equation with two boundary conditions: Neumann type on the bottom and Dirichlet type on the interface,
\begin{equation}\label{harmoniceqn}
(\Delta_x + \p_y^2)\phi=0, \quad \frac{\p \phi}{\p \vec{\mathbf{n}}}\big|_{\Sigma}=0, \quad \phi\big|_{\Gamma(t)} = \psi.
\end{equation}
Hence, we can reduce (for example, see \cite{zakharov}) the motion of fluid to the  evolution of the height ``$h(t, x)$'' and the restricted velocity potential  `` $\psi(t, x)$ '' as follows,
\begin{equation}\label{waterwaves}
\left\{\begin{array}{l}
\p_t h= G(h)\psi,\\
\p_t \psi =    H(h) - \frac{1}{2} |\nabla \psi|^2 + \displaystyle{\frac{(G( h)\psi + \n \,h\cdot\n \psi)^2}{2(1+ |\n\,h|^2)}},
\end{array}\right.
\end{equation}
where $G( h)\psi= \sqrt{1+|\n\,h|^2}\mathcal{N}( h)\psi$  and $\mathcal{N}( h)\psi$ is the Dirichlet-Neumann operator.
 The capillary waves system (\ref{waterwaves}) has the following conserved energy and momentum, see e.g., \cite{benolver},
\be\label{conservedenergy}
\mathcal{H}(  h(t),\psi(t)) := \Big[ \int_{\R^2} \frac{1}{2}\psi(t) G( h(t))\psi(t)   + \frac{  |\nabla h(t)|^2}{1+\sqrt{1+|\nabla h(t)|^2}} d x\Big]= \mathcal{H}( h(0), \psi(0)),
\ee
\be\label{conservedmom}
\int_{\R^2} h(t, x) d x =\int_{\R^2} h(0, x) d x  .
\ee
From  \cite{wang2}[Lemma 3.4], we know that
\be\label{e10}
(\textup{Flat bottom setting}):\quad  \Lambda_{\leq 2}[G(h)\psi]=\d\tanh\d \psi -\nabla\cdot( h \nabla \psi) - \d \tanh\d ( h \d \tanh\d \psi),
\ee
\be\label{e11}
(\textup{Flat bottom setting}):\quad\Lambda_{\leq 2}[\p_t \psi]=\Delta h -\h |\nabla \psi|^2 + \h (\d\tanh \d \psi)^{2},
\ee
where $\Lambda_{\leq 2}[\mathcal{N}]$ denotes  the linear terms and quadratic terms of the nonlinearity $\mathcal{N}$.

Therefore, in the small   solution regime, the following approximation holds,
\[
2\mathcal{H}(\,h(t),\psi(t))\approx   \| \nabla h  \|_{L^2}^2 +  \| \d^{1/2} \sqrt{\tanh(\d)} \psi\|_{L^2}^2
\]
\be\label{approximationconservation}
\approx \| \nabla h  \|_{L^2}^2 +\| \d P_{\leq 1}[\psi]\|_{L^2}^2 +\| \d^{1/2} P_{\geq 1}[\psi]\|_{L^2}^2 .
\ee
 
There is an extensive literature on the study of the   water waves system. Without being exhaustive, we  only discuss some of the history and previous works.

\noindent$\bullet$ \quad Previous results on the  local existence of the water waves system. \quad 

Early works of  Nalimov \cite{nalimov} and Yosihara \cite{yosihara} considered the local well-posedness of the  small perturbation of flat interface such that the Rayleigh-Taylor sign condition holds. It was first discovered by   Wu \cite{wu1, wu2} that  the Rayleigh-Taylor   sign condition holds without the smallness assumptions in the infinite depth setting. She showed the local existence  for arbitrary size of initial data in Sobolev spaces. After the breakthrough of Wu's work, there are many important works on improving the understanding of local well-posedness of the full water waves system and the free boundary Euler equations. Christodoulou-Lindblad \cite{christodoulou} and Lindblad \cite{lindblad} considered the gravity waves with vorticity. Beyer-Gunter \cite{beyer} considered the effect of surface tension. Lannes\cite{lannes} considered the finite depth setting. See also Shatah-Zeng\cite{shatah1}, and Coutand-Shkoller \cite{coutand1}. 
 It turns out that local well-posedness also holds even if the interface  have a unbounded curvature and the bottom is very rough even without regularity assumption (a finite separation condition is required), see the works of Alazard-Burq-Zuily \cite{alazard1,alazard2} for more detailed description.

\noindent$\bullet$ \quad Previous results on the   long time behavior of the water waves system. \quad 

The long time behavior of the water waves system is more difficult and challenging.  Even for  a small perturbation of static solution and flat interface, we only have few results so far. Note that, it is possible to develop a so-called ``splash-singularity'' for a large perturbation, see \cite{fefferman}. 

To obtain  the local existence, it is very important to avoid losing derivatives due to the quasilinear nature of the water waves systems,  which corresponds to the high frequency part of the solution . However, to study the long time behavior and the dispersion of solution over time, the  low  frequency part plays an essential role. It is very interesting to see that the water waves systems in different settings  have  very different behavior at the low frequency part. The methods developed in one setting may be not applicable to another setting.

We first discuss previous results in  the infinite depth setting.  The first long-time result for the water waves system is due to the work of Wu \cite{wu3}, where she proved the almost global existence for the gravity waves in $2D$.   Subsequently, Germain-Masmoudi-Shatah \cite{germain2} and Wu \cite{wu4} proved the global existence for the gravity waves system in $3D$, which is the first global regularity result for the water waves system. Global existence of the capillary waves in $3D$ was also obtained by Germain-Masmoudi-Shatah \cite{germain3}. For the $2D$ gravity waves  system, it is highly nontrivial to bypass the almost global existence.  As first pointed out by  Ionescu-Pusateri \cite{IP1} and independently by Alazard-Delort \cite{alazard}, one has to modify the profile appropriately first to prove global regularity. The solution possesses the modified scattering property instead of the usual scattering. 
 Later, a different interesting proof of the almost global existence was obtained in the holomorphic coordinates by Hunter-Ifrim-Tataru \cite{hunter}, then Ifrim-Tataru \cite{Ifrim1} improved this result and gave another interesting   proof of the global existence. The author \cite{wang1} considered the infinite energy solution of the gravity waves in $2D$, which removed the momentum assumption in  previous small data results. 
 Global existence of capillary waves in $2D$ was also obtained. See  Ionescu-Pusateri   \cite{IP3, IP4} and  Ifrim-Tataru  \cite{Ifrim1}. Very recently, Deng-Ionescu-Pausader-Pusateri \cite{deng1} proved small data  global regularity for the $3D$ gravity-capillary waves ($\sigma, g > 0$), which completes the picture of small data global existence for the $3D$ water waves system in  the infinite depth setting.   The long time behavior of the $2D$ gravity-capillary waves in the infinite depth setting remains open.

Now, we restrict ourself to the finite depth case.The behavior of water waves in the finite depth setting  is more delicate due to the presence of traveling waves, the more complicated structure at the low frequency part, and less favorable quadratic terms.

 For the $3D$ infinite depth water waves system,   there is no small (in $L^2$ sense) traveling waves in all settings from all previous global existence results, see \cite{wu4,germain2,deng1}. However, we do have small traveling waves for the $3D$ finite depth gravity-capillary waves  as long as $\sigma/g > 1/3$.   From the recent work of the author \cite{wang4}, we know that there is no small traveling wave  for the 3D gravity waves system,  i.e., $\sigma/g=0$. So far, it is still not clear whether there exist  small traveling waves  for the $3D$ finite depth gravity-capillary waves when $0< \sigma/g\leq 1/3$.  However, if one can combine the ideas used in \cite{deng1} and \cite{wang4} successfully, then it is reasonable to expect that there is no small traveling waves when $0< \sigma/g < 1/3$.

On the long time behavior side. Only results on the gravity waves have been obtained.
The large time existence was   obtained by  Alvarez-Samaniego-Lannes \cite{lannes} for the $3D$ gravity waves. Recently, the author \cite{wang2,wang4} showed  that global regularity  holds for  the $3D$ gravity waves system for suitably small initial data. For the $2D$ case,   Harrop-Griffiths-Ifrim-Tataru \cite{Ifrim2}  showed the cubic life span ($1/\epsilon^2$) of gravity waves for   small initial data of size $\epsilon$.

 When the surface tension is effective, to the best knowledge of the author,  there is no long time existence result,  which exceeds  the scale of $1/\epsilon$ life span, in either $2D$ or $3D$. 

\subsection{Main difficulties for the capillary waves system in the flat bottom settings}
To help readers understand the main issues and the main difficulties of the capillary waves in the finite depth setting, we compare the capillary waves system in the infinite depth setting and the flat bottom setting.

The question of global regularity for the capillary waves sysetm in the infinite depth setting is already highly nontrivial. Thanks to recent  works of Germain-Masmoudi-Shatah \cite{germain2}, Ionescu-Pusateri\cite{IP3, IP4}, and Ifrim-Tataru   \cite{Ifrim1}, now we understand the  long time behavior of small data solution of the infinite depth capillary waves in both $2D$ and $3D$  very well. 

Unfortunately, as we will explain shortly, we can't   use those understandings of the long time behavior as those understandings in the infinite depth setting are not related at all to the finite detph setting. The effect of the finite depth bottom shows up in the long run, which totally changes the long time behavior.

  One of the main reasons that it is even possible to prove global regularity for the $2D$ capillary waves in the infinite depth setting is that there exist  favorable cancellations in the infinite depth setting, which act like null structures. Note that
\be\label{e1}
(\textup{Infinite depth setting}):\quad \Lambda_{\leq 2}[\p_t h ]= \Lambda_{\leq 2}[G(h)\psi]=\d \psi -\nabla\cdot( h \nabla\psi) - \d(h \d \psi),
\ee
\be\label{e2}
(\textup{Infinite depth setting}):\quad\Lambda_{\leq 2}[\p_t \psi]=\Delta h -\h |\nabla \psi|^2 + \h (\d\psi)^{2}.
\ee

Intuitively speaking, for the infinite depth setting, we are dealing with the following type of quasilinear   dispersive equation,
\[
\textup{Infinite depth}:\quad (\p_t + i |\nabla|^{3/2}) u = \d^{1/2}\Lambda_{2}[G(h)\psi] + i \Lambda_2[\p_t \psi] + \mathcal{R},\quad u = \d^{1/2} h + i \psi.
\]

From (\ref{e1}) and (\ref{e2}), it is easy to check that the size  of symbol of quadratic terms is   ``$0$'' in both   $1\times 1 $ (sizes of input frequencies)  $\rightarrow 0$ (size  of output frequency) type interaction and $1\times 0 \rightarrow 1$ type interaction.  More precisely, we can gain \emph{at least the smallness of  $|\xi|\min\{|\eta|, |\xi-\eta|\}^{1/2}$} from the symbol of quadratic terms, where $\xi-\eta$ and $\eta$ are frequencies of  two  inputs inside the quadratic terms.

 Intuitively speaking, the smallness of  symbol stabilizes the nonlinear effect. As we will see in later discussion, the  smallness of output frequency stabilizes the growth of the Fourier transform of the profile around a small neighborhood of zero frequency, which again leads to the expectation that the growth of Fourier transform of the profile in the medium frequency part is also stabilized in the $1\times 0\rightarrow 1$ type interaction.

However, we lose all those favorable cancellations  for the capillary waves system (\ref{waterwaves}) in the finite depth setting.  From  (\ref{e10})  and (\ref{e11}), it is easy to check that the size  of symbol of quadratic terms is  ``$1$'' in both  $1\times 1 \rightarrow 0$ type interaction and  $1\times 0 \rightarrow 1$ type  interaction. More precisely,  in the $1\times 0 \rightarrow 1$ type interaction, the size of symbol of the quadratic terms of  ``$\p_t h $'' is $1$, meanwhile, in the $1\times 1\rightarrow 0$ type interaction,    the size of symbol of the quadratic terms of   ``$\p_t\psi$'' is $1$.

Because of lacking null structures, we expect much stronger nonlinear effect for the finite depth capillary waves, which makes the question of global regularity in the finite depth setting much harder than the infinite depth setting.

Recall (\ref{waterwaves}), (\ref{e10}), and (\ref{e11}). To understand the long time behavior of solution of (\ref{waterwaves}), we have to understand the long time behavior of the following toy model, which only keeps the quadratic terms of the system (\ref{waterwaves}),
\be\label{e18}
(\textup{Toy Model}):  (\p_t + i \d^{\frac{3}{2}}\sqrt{\tanh(\d)}) u =  \sqrt{\frac{\d}{\tanh \d}} \Lambda_{2}[G(h)\psi] + i \Lambda_2[\p_t \psi],\,\, u:= \sqrt{\frac{\d}{\tanh \d}} h + i \psi.	
\ee

We can even simplify the toy model  further by only considering the low frequency part of (\ref{e18}) and replacing $   \d^{3/2}\sqrt{\tanh(\d)} $ and $\sqrt{\d/\tanh(\d)}$by $\d^2 $ and ``$1$'' respectively. More precisely, we consider the long time behavior of the  toy model of (\ref{e18}) as follows, 
\be\label{e70}
(\textup{Toy model of (\ref{e18}) }):\quad  (\p_t - i \Delta)v= Q_1(v,\bar{v}) + Q_2(v,  {v})+ Q_3(\bar{v}, \bar{v}),\quad  v:\R_t \times \R^2_x \longrightarrow \mathbb{C}, 
\ee
where the symbols $q_i(\xi-\eta, \eta)$ of the quadratic terms $Q_i(\cdot, \cdot)$, $i\in\{1,2,3\}$, satisfy the following estimate, 
\be\label{e20}
\| \mathcal{F}^{-1}[ q_i(\xi-\eta,\eta)\psi_k(\xi)\psi_{k_1}(\xi-\eta)\psi_{k_2}(\eta)]\|_{L^1} \lesssim \min\{2^{2\max\{k_1,k_2\}},1\}, \quad i \in \{1,2,3\},
\ee
which captures the facts that there are at least two derivatives inside (\ref{e18}) and the size of symbol is ``$1$'' in both $1\times 1\rightarrow 0$ type interaction and the $1\times 0\rightarrow 1$ type interaction. 

It turns out that the toy model (\ref{e70}) ( $2D$ quadratic Schr\"odinger equation) is already a  very delicate problem due to the presence of $  v  \bar{v} $ type nonlinearity. \emph{Even  the quadratic Schr\"odinger equation in $3D$  is not completely solved}.

 If without the $ v  \bar{v} $ type quadratic term, then the $1\times 1 \rightarrow 0$ type interaction is actually not very bad.  Note that the phases are all of size $1$  in the $1\times 1 \rightarrow 0$ type interaction if there is no $ v  \bar{v} $ type quadratic term.     The high oscillation of phase in time will also stabilize the growth of the  profile  in a neighborhood of zero frequency even without the smallness comes from the symbol. We refer readers to  the works of Germain-Masmoudi-Shatah \cite{germain4, germain5} for more detailed discussion.

Unfortunately,   we do have  $v \bar{v}$ type quadratic term in     the capillary waves system (\ref{waterwaves}). To see the nonlinear effect of $v \bar{v}$ type quadratic term  in the long run, we consider the toy model (\ref{e70}) for simplicity. 
We define the profile of solution $v(t)$ as $g(t):= e^{-it \Delta} v(t)$ and study the growth of profile over time, which gives us a sense of what the dispersion of solution will be. 

 From the Duhamel's formula, we have
\[
\widehat{g}(t, \xi) = \widehat{g}(0, \xi) + \int_0^t \int_{\R^2} e^{ i2s \xi \cdot \eta }  q_1(\xi-\eta, \eta) \widehat{g}(s, \xi-\eta)  \widehat{\bar{g}}(s, \eta) 
 + e^{ i2s \eta \cdot (\xi-\eta)} q_2(\xi-\eta, \eta) \widehat{ {g}}(s, \xi-\eta) \]
 \be\label{e30}
 \times \widehat{g}(s, \eta)  + e^{i s(|\xi|^2 +|\xi-\eta|^2 +|\eta|^2)}q_3(\xi-\eta, \eta) \widehat{\bar{g}}(s, \xi-\eta)\widehat{\bar{g}}(s, \eta)
 d \eta d s.
 \ee

We start from the first iteration by replacing ``$g(s)$'' on the right side of (\ref{e30})
with the initial data $g(0)$, whose frequency is localized around ``$1$''. As a result, intuitively speaking, in the worst case, we have
\be\label{e35}
   \left\{ \begin{array}{ll}
  |\widehat{g}(t, \xi)| \sim t, & \textup{when}\,\, |\xi| \lesssim  {1}/{t}\\
   1/|\xi| \lesssim |\widehat{g}(t, \xi)|  \lesssim (1+t)^\delta/|\xi|  & \textup{when}\,\, |\xi|\ll 1.
   \end{array}\right.
\ee

Because of the growth of the profile around the zero frequency, generally speaking, the $L^\infty_x$ decay rate of solution is not sharp and is only $(1+t)^{-1/2}$ when $|\xi|\approx (1+t)^{-1/2}$.  Of course, it is just a intuition of what the worst scenario can happen based on the picard iteration.  

Due to the growth of profile at the low frequency part, it will cause again the growth of profile at the  medium frequency part (i.e., frequency of size around ``$1$'') because of the $1\times 0\rightarrow 1$ type interaction. Therefore, generally speaking, one might expect certain instability happens. Recently, Ikeda and Inui \cite{ikeda} showed that there exists a class of small $L^2$ initial data such that the solution of the quadratic Schr\"odinger equation of $|u|^2$ type blows up in a polynomial time in \emph{both $2D$ and $3D$}. 

From (\ref{e35}), it is also reasonable to expect that the solution should   behave better if symbol of quadratic terms contributes certain power of the smallness ``$\xi$''. For the $3D$ quadratic Schr\"odinger equation, recently, the author \cite{wang3} showed that there exists small data global solution as long as the symbol of quadratic terms contribute the smallness of $|\xi|^{\epsilon}$ for any small ``$\epsilon$'', $0< \epsilon\ll 1$. An analogue of this result  should also hold  for the $2D$ quadratic Schr\"odinger equation.  Again, we don't have the  luxury of smallness inside the capillary waves system (\ref{waterwaves}), some other ideas are needed,

\subsection{Main result}

 Despite the nonlinear effect comes  from the quadratic term is very strong, after fully exploiting the structures inside the capillary water waves (\ref{waterwaves}) and using a novel weighted norms method for a good substitution variable, in this paper, we  show that the solution of capillary waves system (\ref{waterwaves}) globally exists and scatters to a linear solution for   small initial data.  

More precisely,  our main theorem is stated as follows, 
 \begin{thm}
Let $N_0=2000, \delta\in (0, 10^{-9}]$, $\tilde{\delta}:=400\delta$, and $\alpha =1/10 $. Assume that the initial data $(h_0, \psi_0)$ satisfies the following smallness condition,
\[
\| (h_0, \psi_0)\|_{H^{N_0 +1/2}} + \sum_{\Gamma \in \{L, \Omega\}}\| (\Gamma  h_0, \Gamma  \psi_0)\|_{H^{10+1/2}} + \sum_{\Gamma^1, \Gamma^2 \in \{L, \Omega\}}\| ( \Gamma^1 \Gamma^2 h_0, \Gamma^1 \Gamma^2 \psi_0)\|_{H^{1/2}} \lesssim \epsilon_0,
\]
where $\epsilon_0$ is a sufficiently small constant, $\Omega:= x^{\perp}\cdot \nabla_x$ and $L:=x \cdot \nabla_x +2$. Then there exists a unique global solution for the capillary water waves system \textup{(\ref{waterwaves})} with initial data $(h_0, \psi_0)$, and   the solution scatters to a linear solution associated with \textup{(\ref{waterwaves})}. Moreover, the following estimate holds,
\be\label{desiredfinalestimate}
  \sup_{t\in [0,T]} (1+t)^{-\delta}\| (\tilde{\Lambda}h, \psi)(t)\|_{H^{N_0}}   + (1+t) \big[ \sum_{k\in \mathbb{Z}}  2^{(1+\alpha)k + 6 k_{+} }\| P_{k}[(h, \psi)(t)]\|_{L^\infty}\big] \lesssim \epsilon_0.
\ee 
where $ \tilde{\Lambda}:=  \d^{1/2} (\tanh \d)^{-1/2}$ 
\end{thm}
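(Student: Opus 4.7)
The plan is to close a bootstrap continuity argument on three families of norms: (i) a top-order Sobolev energy $\|(\tilde{\Lambda}h,\psi)(t)\|_{H^{N_0}}$ allowed to grow like $(1+t)^\delta$; (ii) a dispersive $Z$-type norm on the profile together with a one-vector-field weighted norm, both required to stay bounded by $C\epsilon_0$; and (iii) a two-vector-field weighted norm allowed to grow like $(1+t)^{\tilde{\delta}}$. The first preparatory step is to pass to the complex-valued good unknown $u := \tilde{\Lambda}h + i\psi$, which diagonalizes the linearization to $(\partial_t + i\Lambda)u = \mathcal{N}(u)$ with dispersion relation $\Lambda = |\nabla|^{3/2}\sqrt{\tanh|\nabla|}$, using the paralinearization of $G(h)\psi$ in the flat-bottom setting and an Alinhac-style good-unknown symmetrization to move all quasilinear derivatives into the principal symbol. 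The remaining semilinear nonlinearity has explicit quadratic, cubic, and quartic symbols whose sizes are analogous to those in (\ref{e20}), but without any null-structure smallness.

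For the top-order energy estimate, after the symmetrization only semilinear remainders need to be bounded; these are controlled by a high-norm/low-norm split using the pointwise decay in (\ref{desiredfinalestimate}), and the $(1+t)^\delta$ growth is absorbed by the resulting integrable time factor. For the vector-field weighted energies, the rotation $\Omega$ commutes exactly with $\Lambda$, while the scaling-type $L = x\cdot\nabla + 2$ generates a tame commutator with $\Lambda$, so the commuted equations preserve the same semilinear structure and can be closed by the same scheme. Along the way, the hidden symmetries of the quadratic and cubic symbols are exploited, together with the momentum conservation (\ref{conservedmom}) — which pins $\widehat{h}(t,0)$ to a constant and so removes the worst zero-output-frequency contribution — to keep the weighted norms under control.

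The hardest part, and the entire point of the new ideas advertised in the introduction, is the propagation of the $Z$-norm on the profile $f(t) := e^{it\Lambda}u(t)$, from which stationary phase delivers the pointwise decay in (\ref{desiredfinalestimate}). A normal form transformation eliminates the $uu$ and $\bar u \bar u$ branches of the quadratic nonlinearity, where the phase is bounded away from zero; the resulting cubic correctors are harmless. The $u\bar u$ branch, however, carries an essentially two-dimensional time-resonant set on which no normal form is available: this is precisely the $Q(u,\bar u)$ obstruction highlighted in the introduction. The strategy for this piece is a simultaneous dyadic decomposition in frequency and in distance to the resonant set, combined with integration by parts in $\eta$ in the oscillatory regions; on the near-resonant pieces the weighted norms have to be spent. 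The main obstacle is that the naive pointwise bound on $\widehat{f}(\xi)$ suggested by the heuristic (\ref{e35}) fails in the critical zone $|\xi|\sim (1+t)^{-1/2}$, and closing the $Z$-norm there forces the two-weight device announced in the introduction: the polynomially growing high-order weighted norm, interpolated against the bounded low-order weighted norm, produces just enough additional smallness in $\xi$ to absorb the near-resonant $u\bar u$ contribution and allow the low-order weighted norm and the $Z$-norm to close without growth.
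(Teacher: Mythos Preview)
Your proposal misidentifies which quadratic branches can be removed by normal form. The phase $\Phi^{+,+}(\xi,\eta)=\Lambda(\xi)-\Lambda(\xi-\eta)-\Lambda(\eta)$ is \emph{not} bounded away from zero: in the High$\times$Low regime $|\eta|\ll|\xi|$ one has $\Phi^{+,+}(\xi,\eta)\approx 2\lambda'(|\xi|^2)\,\xi\cdot\eta$ (see (\ref{degeneratephase})), which vanishes on the codimension-one set $\xi\cdot\eta=0$ exactly as $\Phi^{+,-}$ does. You therefore cannot eliminate the $uu$ branch globally; the paper's normal form (\ref{e87900}) is surgical, removing only the space-resonance neighborhood $|\eta-\xi/2|\ll|\xi|$ and those High$\times$Low and Low$\times$High pieces where the phase \emph{is} large (namely $\mu=-$ with $|\eta|\ll|\xi|$, and $\mu\nu=+$ with $|\xi|\ll|\eta|$). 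After this surgery both $\tilde Q_{+,+}$ and $\tilde Q_{+,-}$ survive in the High$\times$Low regime with the same degenerate phase $\approx\xi\cdot\eta$, and that is where the real work lies.

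More seriously, your proposal is missing the mechanism that actually closes the high-order weighted norm $Z_2$. When $L$ (i.e.\ $\hat L_\xi=-\xi\cdot\nabla_\xi$) hits $e^{it\Phi}$ it produces a factor $t\,\hat L_\xi\Phi$; two applications give $t^2$. Integration by parts in $\eta$ alone cannot absorb these powers of $t$ near the time-resonant set, and ``interpolating'' the two weighted norms is not the device the paper uses. The key structural observation is the decomposition (\ref{eqn1301}): in High$\times$Low one has $(\hat L_\xi+\hat L_\eta)\Phi^{+,\nu}=\tilde c(\xi-\eta)\,\Phi^{+,\nu}+\mathcal{O}(|\eta|^2)$. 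The first piece is removed by integration by parts in \emph{time} (it is small precisely near time resonance), while the second carries an extra $|\eta|^2$ that makes the $L^\infty$ decay of the low-frequency input sharp. The same trick is reused after exploiting the $\xi\leftrightarrow\xi-\eta$ symmetry on the term where both vector fields land on the high-frequency input (see (\ref{e100})--(\ref{eqn470})): the symmetrized symbol is again $\mathcal{O}(\xi\cdot\eta)=\mathcal{O}(\Phi)+\mathcal{O}(|\eta|^2)$. Without this phase/time-IBP decomposition the $Z_2$ estimate does not close. Finally, your use of momentum conservation is inverted: the paper uses (\ref{conservedmom}) to bound $\widehat h(t,\eta)$ at small \emph{input} frequency (Lemma~\ref{Linftyxi}), which handles the threshold case $|\eta|\sim t^{-1}$ in the High$\times$Low $Z_1$ estimate, not a zero-output-frequency issue.
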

\begin{remark}
From (\ref{desiredfinalestimate}), we know that the solution decays over time, which implies that there is no small traveling waves associated with the capillary water waves system (\ref{waterwaves}), i.e., $\sigma/g = \infty$. This result sharply contrasts to the existence of small traveling waves for the  gravity-capillary waves system with $\sigma/g > 1/3.$ 
\end{remark}

\subsection{Main   ideas of proof}
  The idea of proving global regularity for a dispersive equation is classical, which is   iterating  the local existence result by controlling both the energy and the dispersion of solution over time.

The whole argument depends on the expectation of the decay rate of solution, which is very delicate. 
 From (\ref{e35}), it is reasonable to expect that the solution does not decay sharply. Also it is reasonable to expect that the decay rate of solution may be different  if  the  solution is localized around different size of frequencies.

  Generally speaking, one can put as many derivatives as he/she wants  in front of the solution, which certainly helpful in the $1\times 1 \rightarrow 0$ type interaction. However, the more derivatives one puts the less information it reveals about the solution itself. The gain from the weigh of regularity in the $1\times 1\rightarrow 0$ type interaction becomes the corresponding  loss in the $1\times 0 \rightarrow 1$ type interaction. 

 Motivated from (\ref{e35}), we expect that $1+\alpha$ ($0< \alpha < 1$) derivatives of solution decay sharply. The reason why we put ``$1+$'' derivatives instead of $1$ derivative in front of solution is to avoid a summability issue which causes a logarithmic loss in time, which prevents the decay rate to be sharp. In other words, within the exponential time interval of existence, the worst decay rate is $(1+t)^{-1/2}$. However, after the exponential time interval of existence, the worst decay rate is roughly $(1+t)^{-1/2+\delta}$, as we expect a ``$\log(t)$'' growth of the weighted norm of the profile.

 Based on the expectation of the decay rate  of the nonlinear  solution, we expect that the energy of solution only grows appropriately. We remark that the expectation of the sharp decay rate of the derivatives of solution is also very essential to the energy estimate.  If the decay rate of up to two derivatives of solution were not sharp, then   it is not so clear how one can control the energy near the time resonance set, which is very large and complicated.   Especially given the existence of  blow up solution for small initial data result for the quadratic Schr\"odinger equation, which has the same essential structure of very large time resonance set.
  
  More precisely, the time resonance sets of the quadratic terms are defined as follows,
 \[
\mathcal{T}_{\mu, \nu}:=\{(\xi, \eta):\quad \Lambda(|\xi|) - \mu \Lambda(|\xi-\eta|)-\nu \Lambda(| \eta|)=0 \}, \quad \mu, \nu\in\{+,-\}.
 \]
 Note that, the following approximation roughly holds,
 \be\label{timeresonance}
 \mathcal{T}_{+, -} \cap \{(\xi,\eta):  |\xi|, |\eta|\ll 1\} \approx  \{(\xi,\eta):   |\xi|, |\eta|\ll 1,  \Lambda(|\xi|) -   \Lambda(|\xi-\eta|) +  \Lambda(| \eta|)\approx 2 \xi \cdot \eta \approx 0\}.
 \ee
Note that it is possible that $|\xi\cdot \eta|\ll 1/t$ no matter what the sizes of $|\xi|$ and $|\eta|$ are. From (\ref{timeresonance}), we know that the time resonance set is almost everywhere and very complicated.

\subsubsection{Energy estimate: the control of high frequency part of solution} We first point out that the difference of the high frequency part between the infinite depth setting and the flat bottom setting is very little.  Thanks to the works of Alazard-M\'etivier \cite{alazardguy} and Alazard-Burq-Zuilly\cite{alazard1, alazard2},  by using the method of paralinearization and symmetrization, we can find a pair of good unknown variables, such that the equation satisfied by the good unknown variables have symmetries inside, which help us avoiding losing derivatives in the energy estimate. 

However, since the decay rate of solution in the worst scenario is only $(1+t)^{-1/2+\delta}$, a rough $L^2-L^\infty$ type energy estimate is not sufficient to prove that the energy only grows appropriately.  Recall that we expect that the decay rate of $1+\alpha$ derivatives of solution is sharp.  Therefore, we need to pay special attention to the low frequency part of the input that is putted in $L^\infty$ type space.  To this end,  an important step is to understand the structure of low frequency part of the Dirichlet-Neumann operator, which has been studied in details in \cite{wang2}.

We first state our desired energy estimate and then explain the main intuitions behind.  We expect the following new type of energy estimate holds, 
\be\label{e40}
\big|\frac{d}{d t}E(t) \big|\lesssim  E(t)\big(\| (h(t), \psi(t)) \|_{W^{6,1+\alpha}} + \|(h(t),\psi(t))\|_{W^{6,1}}\|(h(t),\psi(t))\|_{W^{6,0}}\big),
\ee
where the function space of type $W^{\gamma,b}$ is defined as follows, 
\be\label{Linftyspace}
\| f\|_{W^{\gamma, b}}:=\sum_{k\in\mathbb{Z}} (2^{\gamma k } + 2^{b k}) \| P_{k } f\|_{L^\infty}, \quad    b< \gamma.
\ee

Since we expect that the decay rate of the $1+\alpha$ derivatives of solution is sharp, the desired new type of energy estimate (\ref{e40}) is sufficient to prove that the energy only grows appropriately.

To derive the new type energy estimate  (\ref{e40}), besides the quadratic terms, we also have to pay special attentions to the low frequency part of the cubic terms. 

Recall (\ref{e10}), (\ref{e11}), and  (\ref{e18}). We know that there are at least two derivatives in total inside the quadratic terms. Note the facts that we don't lose derivatives   after utilizing symmetries during the energy estimates and  the total number of derivatives doesn't decrease in this process. As a result, intuitively speaking, there are only two possible scenarios as follows, (i) including   the  High $\times$ High type interaction,   there are at least two derivatives associated with the input with relatively smaller frequency; (ii) the regularity of the quadratic terms can be lower to  $L^2$. Therefore, we can put the input with larger frequency in $L^\infty$ and put the other input in $L^2$. In whichever scenario, the input which is putted in $L^\infty$  type space  always has two derivatives in front, which explains the first estimate in (\ref{e40}).  A very similar intuition also holds  for cubic and higher order terms, which leads to the second part of (\ref{e40}).

 \subsubsection{The dispersion estimate: sharp decay rate of the $1+\alpha$ derivatives of solution}
 To carry out the analysis of decay estimate, we first identify a good substitution variable, which has the same decay rate as  the original solution. Instead of proving the dispersion estimate for the original variable, our goal is reduced to prove the dispersion estimate for the good substitution variable. To this end, we introduce a novel method of estimating the weighted norms, which helps us to get around the difficulties around the large time resonance set.

The main reason that we study the good substitution variable instead of the original variable is  that the equation satisfied by the good substitution variable has favorable structures, which make it possible to prove that the high order weighted norm only grows appropriately. 

We divide the rest of this subsection into three parts. (i) In the first part, we explain how to find the good substitution variable and why we do so. (ii) In the second part, we explain some main ideas in the estimate of the lower order weighted norm. The goal is to prove that it   doesn't grow over time, which further implies that the decay rate of $1+\alpha$ derivatives of solution is sharp. A crucial ingredient is that the high order weighted norm only grows appropriately. (iii) In the third part,  we explain main ideas behind the estimate of high order weighted norm, which is the most important part of the whole argument.

\noindent $\bullet$\quad \textit{A good substitution variable.\quad} To illustrate  main ideas and for simplicity, we consider the toy model (\ref{e18}).  Define the profile of $u(t)$ as $f(t):=e^{it \Lambda} u(t)$, we have
\[
\widehat{f}(t, \xi) = \widehat{f}(0, \xi)  + \sum_{\mu, \nu\in\{+,-\}} \int_0^t \int_{\R^2} e^{is\Phi^{\mu,\nu}(\xi, \eta)}q_{\mu, \nu}(\xi-\eta, \eta) \widehat{f^{\mu}}(s,\xi-\eta ) \widehat{f^{\nu}}(s, \eta) d \eta d s,  
\]
where $f^{+}:=f=:P_{+}[f]$, $f^{-}:=\bar{f}=:P_{-}[f]$, and  $q_{\mu, \nu}(\xi-\eta, \eta)$ is the symbol of  $u^{\mu} u^{\nu}$ type quadratic term. The   phase $\Phi^{\mu, \nu}(\xi-\eta, \eta)$ is defined as follows,
\[
\Phi^{\mu,\nu}(\xi, \eta)= \Lambda(|\xi|) -\mu\Lambda(|\xi-\eta|) -\nu\Lambda(|\eta|), \quad \Lambda(|\xi|):=|\xi|^{3/2} \sqrt{\tanh(|\xi|)}.
\]
Note that 
\[
\nabla_\eta \Phi^{+, +}(\xi,\eta) = -\Lambda'(|\eta-\xi|)\frac{\eta-\xi}{|\eta-\xi|} - \Lambda'(|\eta| )\frac{\eta }{|\eta |} \Longrightarrow \nabla_\eta \Phi^{+, +}(\xi,\xi/2)=0.
\]
Therefore, we can't do integration by parts in $\eta$ around a small neighborhood of $(\xi, \xi/2)$ (space resonance set) .  Fortunately,  $(\xi, \xi/2)$ doesn't belong to the time resonance set. From the explicit formula, it is easy to check the validity of the following estimate,
\[
\Phi^{+, +}(\xi,\xi/2)= \Lambda(|\xi|) -2\Lambda(|\xi|/2) \sim \Lambda(|\xi|).
\]

Very similarly, it is easy to check that the following estimate holds when $|\eta|\ll |\xi|$ and $\mu=-$ or $|\xi|\ll |\eta|$, $\mu\nu=+$,
\[
|\Phi^{\mu, \nu}(\xi, \eta)| \sim \max\{\Lambda(|\xi|), \Lambda(|\eta|)\}.
\]
We call  those cases as the high oscillation in time cases, in which the associated phase is relatively big.

Therefore , we can use a normal form transformation to remove the high oscillation in time cases as follows,
\be\label{toynormalform}
v:= u + \sum_{\mu, \nu\in \{+,-\}} A_{\mu, \nu}(u^{\mu}, u^{\nu}),  \quad a_{\mu, \nu}(\xi-\eta, \eta) = \sum_{k\in \mathbb{Z}} \frac{ i q_{\mu, \nu}(\xi-\eta, \eta)}{\Phi^{\mu, \nu}(\xi, \eta)} 
\ee
\[
 \times \big( \psi_{\leq k-10}(\eta-\xi/2) \psi_{k}(\xi)+ \frac{1-\mu}{2} \psi_k(\xi)\psi_{\leq k-10}(\eta) + \frac{1+\mu\nu}{2}\psi_k(\xi) \psi_{\geq k+10}(\eta) \big),
\]
where  $a_{\mu, \nu}(\xi-\eta, \eta)$ is the symbol of quadratic terms $A_{\mu, \nu}(\cdot, \cdot)$.   Note that there are at least two derivatives inside the symbol, which covers the loss of dividing the phase. As a result, the normal form transformation is not singular. 

There are two reasons  that we   remove the high oscillation cases. The first reason  is that we want ``$\nabla_\xi \Phi^{\mu, \nu}(\xi,\eta)$'' to be small when $|\eta|\ll |\xi|$, therefore the error term  when ``$\nabla_\xi $'' hits the phase is not so bad. The second reason is that we want ``$\nabla_\eta \Phi^{\mu, \nu}(\xi,\eta)$'' to has a lower bound such that we can always do integration by parts in $\eta$. Those two properties  are very essential in later weighted norm estimate.

It is easy to check that the structure of quadratic terms inside the equation satisfied by $v$ is much better.  Now, the strategy is to prove the decay rate of  $1+\alpha$ derivatives of `` $v$'' is sharp.

So far, the discussion is restricted to the toy model (\ref{e18}). For the capillary waves system (\ref{waterwaves}), we use similar ideas not only for   quadratic terms, but also for   cubic terms and quartic terms, see (\ref{normalformatransfor}). Please refer to  subsection \ref{goodvariable} for more details.

\vo

\noindent $\bullet$\quad \textit{The low  order weighed norm.\quad}  We first define the low order weighted norm $Z_1$-norm and the high order weighted norm $Z_2$-norm as follows, 
\be\label{loworderweightnrom}
 \| g\|_{Z_1}:= \sum_{k\in \mathbb{Z}} \sum_{j\geq -k_{-}} \| g\|_{B_{k,j}}, \quad  \| g\|_{B_{k,j}}:=   (2^{(1+\alpha)k  } +  2^{ 10 k_{+}} )2^j \| \varphi^k_j(x) P_k g 	(x)\|_{L^2}, 
\ee
\be\label{highorderweightnorm}
 \| g\|_{Z_2}:=\sum_{\Gamma^1, \Gamma^2\in \{L, \Omega\}} \| \Gamma^1\Gamma^2 g \|_{L^2} + \| \Gamma^1 g \|_{L^2} ,
\ee
where $\varphi_j^k(x)$ is defined in (\ref{spatiallocalization}), which is first introduced in the work of Ionescu-Pausader \cite{ benoit}. An advantage of using this type space is that it not only localizes the frequency but also   localizes the spatial concentration.  The atomic space of this type has been successfully used in many dispersive PDEs, see \cite{deng1,deng2,guo2, benoit,benoit3,wang4}.

Define the profile of the good substitution variable $v(t)$ as $g(t):= e^{it \Lambda}v(t)$.  From the linear dispersion estimates (\ref{highdecay}) and (\ref{lowdecay}) in Lemma \ref{lineardecay}, to prove the sharp decay rate, it would be sufficient to prove that the $Z_1$-norm of the profile $g(t)$ doesn't grow in time.

 Now, we  explain the main ideas of how to prove $Z_1$ norm doesn't grow under the assumption that the $Z_2$ norm only grows appropriately.

   To get around the difficulties in the  High $\times$ High type interaction, we put the very high weighted in the definition of $Z_1$ norm. As a result, the High $\times$ High type interaction is not a   issue.  So, the real issue is the High $\times$ Low type interaction, e.g., $|\eta|\ll |\xi|$.  As a typical example of the threshold case in the   High $\times$ Low type interaction, we consider the case when $|\eta|\sim 1/t, |\xi|\sim 1 $. For this case,  the high order weighted norm is actually not helpful as we can't do integration by parts in $\eta$. Also,  the volume of support of $\eta$ in this scenario is not sufficient because the size of the Fourier transform of the profile  is ``$ t$'', which is very big, and the size of symbol is $1$, which is not helpful. 

 To get around this difficulty, we use the hidden structure inside the capillary waves system (\ref{waterwaves}). Recall (\ref{conservedmom}). We know that $\widehat{h}(t, 0)$ is conserved over time. A simple Fourier analysis shows that $\widehat{\psi}(t,0) \sim t$. Those two facts 
 motivate  us to expect that the source of trouble is $\widehat{\psi}(t, \eta)$ but not  $\widehat{h}(t, \eta)$.  We expect that $\widehat{h}(t, \eta)$   behaves better than $\widehat{g}(t, \eta)$ when $|\eta|\lesssim 1/t$. As a matter of fact, we do have a better estimate for $\widehat{h}(t, \eta)$, see (\ref{eqn400}) in Lemma \ref{Linftyxi}, which says that $\widehat{h}(t, \eta)$ is bounded above by $t^{2\delta}$ when $\eta\sim 1/t.$ Recall again (\ref{e10}) and (\ref{e11}), we know that there is at least one derivative in front of the velocity potential ``$\psi(t)$''. Therefore, it contributes the smallness of $\eta\sim 1/t$, when `` $\psi(t)$'' has the frequency $\eta$. To sum up, either the symbol contributes the smallness of $\eta$  or the input of smaller frequency is the height ``$h(t)$''. In whichever case, the threshold case when $|\eta|\sim 1/t, |\xi|\sim 1 $ is not a issue.

For the non-threshold case, we do integration by parts in $\eta$ once to take the advantage of the gap between the threshold case and the non-threshold case. For the High $\times$ High type interaction, the gap is created by the extra $2^{\alpha k}$ we  put in the definition of $Z_1$ norm. For the High $\times$ Low type and Low $\times$ High type interactions, the gap is created by the observation we made in above discussion.  The gain from the gap is more than the loss from the growth of the high order weighted norm.  As a result, the $Z_1$ norm of the profile doesn't grow in time.
 \vo

\noindent $\bullet$\quad \textit{The high order weighted norm.\quad}   The estimate of the high order weighted norm is very essential and  will consume most of this paper. Without being too technical, we explain some essential ideas that make it possible to control the growth of the high order weighted norm. 	

 Note that we used the vector field  ``$L:=x\cdot \nabla +2$'' (equivalently, `` $-\xi \cdot \nabla_\xi$'' on the Fourier side) twice in the definition of $Z_2$ norm (\ref{highorderweightnorm}). The    vector field ``$L   $'' does very inconvenient to use. As every time it hits the   phase, it creates a burden of extra ``$t$''.  Use this vector field twice is a nightmare.

 On the one hand,   there is no better option because there is no scaling vector field associated with the capillary waves system (\ref{waterwaves}); on the other hand,   we observe that  the equation satisfied by  ``$ L \big(e^{it\Lambda} u(t)\big)$'',  still has a favorable structure inside. We remark that, without this observation, it is not so clear how one can go beyond the almost global life span of the solution even though it is already a very difficult problem to show the almost global existence.

When  the vector field  $L$ indeed hits the phase, we observe  a very useful hidden structure. As  as an example,  we consider the High $\times$ Low type interaction.  Note that (see (\ref{eqn1301}) and (\ref{eqn951}) for more details),
\be\label{e80}
\xi \cdot \nabla_\xi \Phi^{+, \nu}(\xi, \eta) = \mathcal{O}(1)\Phi^{+, \nu}(\xi, \eta) + \mathcal{O}(|\eta|^2), \quad \textup{when\,\,} |\eta|\ll |\xi|.
\ee

For the first term on the right hand side of (\ref{e80}), we can do integration by parts in time once. The main intuition behind is that the first term  is very small when the frequency is very close to the time resonance set. For the second term of (\ref{e80}), we gain extra smallness of $|\eta|$, which is very helpful in all cases. As a result, the case when the vector field ``$L$'' hits the phase is actually controllable. 

We remark that if one do integration by parts in $\eta$ directly instead of using decomposition (\ref{e80}), then there is a problem when we are forced to put the input with smaller frequency (i.e., $\eta$) in $L^\infty_x$ space. The symbol only contributes the smallness of $|\eta|$. And after doing integration by parts in `` $\eta$ '', there is no symmetric structure available to make it possible to gain more. As a result,  the insufficient smallness of symbol is not sufficient to guarantee $1/t$ decay rate. However, this difficulty is no longer a issue for the second part of the decomposition (\ref{e80}), which is of size $|\eta|^2.$ As the decay rate of $1+\alpha$ derivatives of solution is sharp, two derivatives are very sufficient.

Another difficulty  is when the vector fields hit  the input with relatively bigger frequency. A rough $L^2-L^\infty$ type estimate is not sufficient to  close the argument since we are forced to put the input with smaller frequency in $L^\infty$, which only decays at $(1+t)^{-1/2+\delta}$ in the worst scenario as there is no good derivative in front.

 To get around this difficulty, we use the hidden symmetry between two inputs. As a result, we can gain one degree of smallness of the smaller frequency, which is still not sufficient. Fortunately, this one degree of smallness is of ``$\xi\cdot\eta$'' type, which assembles the structure of the phase itself. A similar decomposition as in  (\ref{e80}) also holds for the resulted symbol after utilizing the hidden symmetries. Hence, we can do integration by parts in time for one part and have a better symbol   for the other part.   See (\ref{e100}), (\ref{eqn611}), and (\ref{eqn470}) for more details.

We remark that we   don't always gain $1/t$ decay rate  from  doing integration by parts in time  since the new introduced input  may only have $(1+t)^{-1/2+\delta}$ decay rate in the worst scenario. For example, after doing integration by parts in time, we have the following term after $\p_t$ hits the input $\widehat{\Omega^2 g}(t,\xi-\eta)$,
\[
\int_{\R^2}\int_{\R^2} e^{i t\Phi^{+,+}(\xi, \eta)} \overline{\widehat{\Omega^2 g}(t, \xi)} e^{i t\Phi^{+,+}(\xi-\eta, \sigma)} \widehat{  \Omega^2 g}(t, \xi-\eta-\sigma) \widehat{g}(t,\eta) \tilde{q}_{+,+}(\xi-\eta-\sigma,\sigma)
\]
 \be\label{e120}
\times  \widehat{g}(t,\sigma)  \psi_k(\xi)\psi_{k_2}(\eta) \psi_{k_2'}(\sigma)  \psi_{k_1}(\xi-\eta) d\eta d \xi, \quad k_2',k_2\leq k_1-10.
 \ee
  For this case, the total decay rate from the  $L^2-L^2-L^\infty-L^\infty$ type estimate  is only $(1+t)^{-1+2\delta}$ in the worst scenario, which is still not sufficient to close the argument.

 To get around this issue, we first identify the worst scenario and then use the hidden symmetry   to see a good cancellation inside the symbol  for the  worst scenario. The resulted symbol  contributes a smallness of  $2^{\max\{k_2,k_2'\}}$. Hence the decay rate of   $ e^{-it\Lambda} P_{k_2} g$ or $ e^{-it\Lambda} P_{k_2'} g$ is improved from $(1+t)^{-1/2+\delta}$ to $(1+t)^{-1+\delta}$.  As a result, now $L^2-L^2-L^\infty-L^\infty$ type estimate is sufficient. If without this symmetry, i.e., without the smallness of  $2^{\max\{k_2,k_2'\}}$, it is not so clear how to close the argument for the case when $|\eta|, |\sigma|\approx (1+t)^{-1/2}$, $|\xi|\sim 1$ and $\xi \cdot \eta, \xi \cdot \sigma \approx 1/t$.

\subsection{ The outline of this paper}\label{notation} 
In section \ref{prelim}, we will introduce notations and some basic lemmas that will be used constantly. In section \ref{energyestimatesection}, we   prove a new type of energy estimate by using  the method of  paralinearization and symmetrization and paying special attentions to the  low frequency part. In section \ref{setupweightednorms}, we identify a good substitution variable to carry out the estimate of weighted norms. In section \ref{loworderweight},  we prove that the low order weighted norm doesn't grow over time under    the assumptions that the high order weight norm only grows appropriately and  a good control of the remainder term is available. In section \ref{highorderweighted}, we prove that the high order weighted norm only grows appropriately under the assumption that we have a good control on the remainder term. In section \ref{reminderestimatefixed}, we first prove some weighted norm estimates for a fixed time, which were took for granted in section \ref{loworderweight} and \ref{highorderweighted} , and then estimate the reminder terms by using a fixed point type argument. 

\vo
\noindent \textbf{Acknowledgement\quad } Part of this work was done when I visited Tsinghua University during the Summer 2016.    I would like to  thank  Prof. Pin Yu for the invitation and the warm hospitality during the stay.

\section{Preliminary}\label{prelim}
 For any two numbers $A$ and $B$, we use  $A\lesssim B$ and $A\ll B$ to denote  $A\leq C B$ and $A\leq c B$ respectively, where $C$ is an absolute constant and $c$ is a sufficiently small absolute constant. We use $A\sim B$ to denote the case when $A\lesssim B$ and $B\lesssim A$. 
 
Throughout this paper, we   abuse  the  notation of $``\Lambda"$.  When  there is no lower script in $\Lambda$, then  $\Lambda:=|\nabla|^{3/2}\sqrt{\tanh(|\nabla|)}$, which is the linear operator associated for the system  (\ref{waterwaves}). When there is a lower script $p$ in $\Lambda$ where $p\in \mathbb{N}_{+}$, then  we use $\Lambda_{p}(\mathcal{N})$ to denote the $p$-th order terms of the nonlinearity $\mathcal{N}$  if a Taylor expansion of $\mathcal{N}$ is available. Also, we use notation $\Lambda_{\geq p}[\mathcal{N}]$ to denote the $p$-th  and higher orders terms. More precisely, $\Lambda_{\geq p}[\mathcal{N}]:=\sum_{q\geq p}\Lambda_{q}[\mathcal{N}]$. For example, $\Lambda_{2}[\mathcal{N}]$ denotes the quadratic term of $\mathcal{N}$ and $\Lambda_{\geq 2}[\mathcal{N}]$  denotes the quadratic and higher order terms of $\mathcal{N}$. If there is no special annotation, then Taylor expansions  are in terms of $h$ and $\psi$.

We  fix an even smooth function $\tilde{\psi}:\R \rightarrow [0,1]$ supported in $[-3/2,3/2]$ and equals to $1$ in $[-5/4, 5/4]$. For any $k\in \mathbb{Z}$, we define
\[
\psi_{k}(x) := \tilde{\psi}(x/2^k) -\tilde{\psi}(x/2^{k-1}), \quad \psi_{\leq k}(x):= \tilde{\psi}(x/2^k)=\sum_{l\leq k}\psi_{l}(x), \quad \psi_{\geq k}(x):= 1-\psi_{\leq k-1}(x),
\]
and use  $P_{k}$, $P_{\leq k}$ and $P_{\geq k}$ to denote the projection operators  by the Fourier multipliers $\psi_{k},$ $\psi_{\leq k}$ and $\psi_{\geq k }$ respectively. We   use  $f_{k}(x)$ to abbreviate $P_{k} f(x)$ very often.  We use both $\widehat{f}(\xi)$ and $\mathcal{F}(f)(\xi)$ to denote the Fourier transform of $f$, which is defined as follows, 
\[
\mathcal{F}(f)(\xi)= \int e^{-ix \cdot \xi} f(x) d x.
\]
We use $\mathcal{F}^{-1}(g)$ to denote the inverse Fourier transform of $g(\xi)$. For an integer $k\in\mathbb{Z}$, we use $k_{+}$ to denote $\max\{k,0\}$ and  use $k_{-}$ to denote $\min\{k,0\}$. For two well defined functions $f(x)$ and $g(x)$ and  a bilinear form  $Q(f,g)$, we  use the convention that the symbol $q(\cdot, \cdot)$ of $Q(\cdot, \cdot)$  is defined in the following sense throughout this paper,
\begin{equation}
\mathcal{F}[Q(f,g)](\xi)=\frac{1}{4\pi^2} \int_{\R^2} \widehat{f}(\xi-\eta)\widehat{g}(\eta)q(\xi-\eta, \eta) d \eta. 
\end{equation}
Very similarly, for a trilinear form $C(f, g, h)$, its symbol $c(\cdot, \cdot, \cdot)$ is defined in the following sense, 
\[
\mathcal{F}[C(f,g,h)](\xi) = \frac{1}{16\pi^4} \int_{\R^2}\int_{\R^2} \widehat{f}(\xi-\eta)\widehat{g}(\eta-\sigma) \widehat{h}(\sigma) c(\xi-\eta, \eta-\sigma, \sigma) d \eta d \sigma.
\]
Define a class of symbol and its associated norms as follows,
\[
\mathcal{S}^\infty:=\{ m: \mathbb{R}^4\,\textup{or}\, \mathbb{R}^6 \rightarrow \mathbb{C}, m\,\textup{is continuous and }  \quad \| \mathcal{F}^{-1}(m)\|_{L^1} < \infty\},
\]
\[
\| m\|_{\mathcal{S}^\infty}:=\|\mathcal{F}^{-1}(m)\|_{L^1}, \quad \|m(\xi,\eta)\|_{\mathcal{S}^\infty_{k,k_1,k_2}}:=\|m(\xi, \eta)\psi_k(\xi)\psi_{k_1}(\xi-\eta)\psi_{k_2}(\eta)\|_{\mathcal{S}^\infty},
\]
\[
 \|m(\xi,\eta,\sigma)\|_{\mathcal{S}^\infty_{k,k_1,k_2,k_3}}:=\|m(\xi, \eta,\sigma)\psi_k(\xi)\psi_{k_1}(\xi-\eta)\psi_{k_2}(\eta-\sigma)\psi_{k_3}(\sigma)\|_{\mathcal{S}^\infty}.
\]
\begin{lemma}\label{Snorm}
For $i\in\{1,2,3\}, $ if $f:\mathbb{R}^{2i}\rightarrow \mathbb{C}$ is a smooth function and $k_1,\cdots, k_i\in\mathbb{Z}$, then the following estimate holds,
\begin{equation}\label{eqn61001}
\| \int_{\mathbb{R}^{2i}} f(\xi_1,\cdots, \xi_i) \prod_{j=1}^{i} e^{i x_j\cdot \xi_j} \psi_{k_j}(\xi_j) d \xi_1\cdots  d\xi_i \|_{L^1_{x_1, \cdots, x_i}} \lesssim \sum_{m=0}^{i+1}\sum_{j=1}^i 2^{m k_j}\|\p_{\xi_j}^m f\|_{L^\infty} .
 \end{equation}
\end{lemma}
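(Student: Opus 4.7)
The estimate is a Bernstein-type $L^1$ bound on the inverse Fourier transform of a symbol that is frequency-localized on dyadic annuli in each variable $\xi_j$.

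I would first reduce by rescaling, setting $\xi_j = 2^{k_j}\tilde\xi_j$ and $x_j = 2^{-k_j}\tilde x_j$. Each cutoff $\psi_{k_j}(\xi_j)$ becomes $\psi_0(\tilde\xi_j)$, supported in the fixed annulus $|\tilde\xi_j|\sim 1$; the Jacobians $\prod_j 2^{\pm 2k_j}$ coming from $d\xi$ and $dx$ cancel exactly in the $L^1$ norm; and the chain rule $\partial_{\tilde\xi_j}^m[f(2^{k_\cdot}\tilde\xi)] = 2^{mk_j}(\partial_{\xi_j}^m f)(2^{k_\cdot}\tilde\xi)$ produces precisely the factor $2^{mk_j}$ appearing on the right-hand side. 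It therefore suffices to prove the unit-scale case $k_1 = \cdots = k_i = 0$.

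Next, I would apply a weighted Cauchy--Schwarz on $\R^{2i}$. Let $K(x)$ denote the expression inside the $L^1$ norm. Choose the polynomial weight $w(x) = 1 + \sum_{j=1}^i |x_j|^{N}$ with even integer $N$ large enough that $w^{-1} \in L^2(\R^{2i})$; since $w^{-1}\lesssim (1+|x|^{N})^{-1}$, any $N > i$ suffices, so $N = 2(i+1)$ is a convenient choice. Then $\|K\|_{L^1}\le \|w^{-1}\|_{L^2}\,\|wK\|_{L^2}$. The crucial point is that $w$ is a \emph{sum} (not a product) of single-variable powers, so under Plancherel the operator $w$ on the $x$-side becomes $1 + \sum_j(-\Delta_{\xi_j})^{N/2}$ on the $\xi$-side --- a sum of pure one-variable differential operators, with no mixed cross-derivatives across different $\xi_j$. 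Applying $(-\Delta_{\xi_j})^{N/2}$ to $f(\xi)\prod_l\psi_0(\xi_l)$ and expanding by Leibniz produces a finite sum of terms of the form $\partial_{\xi_j}^m f\cdot \partial_{\xi_j}^{N-m}\psi_0(\xi_j)\cdot \prod_{l\neq j}\psi_0(\xi_l)$, with bounded $\psi_0$-derivatives. Since the total support has finite Lebesgue measure, each $L^2$-norm is controlled by the corresponding $L^\infty$-norm $\|\partial_{\xi_j}^m f\|_\infty$.

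The main technical point is to match the sharp summation range $m\le i+1$ appearing in the statement: a naive choice of $N = 2(i+1)$ yields a sum $\sum_{m\le N} = \sum_{m\le 2(i+1)}$, and the sharper bound $m\le i+1$ is recovered either by taking fractional Bessel-potential weights (so $N$ can be chosen just above $i$ rather than $2(i+1)$) or equivalently by a Landau--Kolmogorov interpolation on the compact annulus $|\xi_j|\sim 1$, which absorbs higher-order pure derivatives into combinations of lower-order ones. This interpolation step is the only subtle point; besides it, all the arguments above are routine Fourier manipulations and do not present a fundamental obstacle.
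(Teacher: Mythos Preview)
The paper states this lemma without proof, treating it as a standard technical fact. Your approach --- rescale to unit frequency, then apply weighted Cauchy--Schwarz with the \emph{additive} weight $w(x)=1+\sum_j|x_j|^N$ --- is correct, and the key observation that a sum (rather than a product) of single-block weights produces only pure block-wise derivatives $\partial_{\xi_j}^m$ on the Fourier side, matching the structure of the right-hand side, is precisely what makes the argument go through.

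One correction on the endgame. The integrability condition $w^{-1}\in L^2(\R^{2i})$ forces $N>i$; the smallest \emph{even} such $N$ is $2,4,4$ for $i=1,2,3$, so only $i=2$ overshoots the target $i+1=3$. Your first fix --- take $N=2+\epsilon$ fractional, so that $\||x_j|^{2+\epsilon}K\|_{L^2}=\|\,|D_{\xi_j}|^{2+\epsilon}g\,\|_{L^2}$, and then interpolate $\dot H^{2+\epsilon}_{\xi_j}$ between $\dot H^{2}_{\xi_j}$ and $\dot H^{3}_{\xi_j}$ --- is correct and closes the argument with $m\le 3$. However, your second phrasing is backwards: Landau--Kolmogorov controls \emph{intermediate} derivatives by the endpoints, not higher ones by lower ones, so it cannot ``absorb'' a genuine fourth derivative of $f$ into third-and-lower ones. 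What you are really using (and what your fractional-weight suggestion amounts to) is the ordinary Sobolev interpolation
\[
\|g\|_{\dot H^{2+\epsilon}_{\xi_j}}\le \|g\|_{\dot H^{2}_{\xi_j}}^{1-\epsilon}\,\|g\|_{\dot H^{3}_{\xi_j}}^{\epsilon},
\]
applied in the $\xi_j$-variable with the remaining block variables frozen; Leibniz on the integer-order norms then gives the claimed bound. With that small adjustment the proof is complete.
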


\begin{lemma}\label{multilinearestimate}
Assume that $m$, $m'\in S^\infty$, $p, q, r, s \in[1, \infty]$ , then the following estimates hold for  well defined functions $f(x), g(x)$, and $h(x)$, 
\begin{equation}\label{productofsymbol}
\| m\cdot m'\|_{S^\infty} \lesssim \| m \|_{S^\infty}\| m'\|_{S^\infty},
\end{equation}
\begin{equation}\label{bilinearesetimate}
\Big\| \mathcal{F}^{-1}\big[\int_{\R^2} m(\xi, \eta) \widehat{f}(\xi-\eta) \widehat{g}(\eta) d \eta\big]\Big\|_{L^p} \lesssim \| m\|_{\mathcal{S}^\infty}\| f \|_{L^q}\| g \|_{L^r}  \quad \textup{if}\,\,\, \frac{1}{p} = \frac{1}{q} + \frac{1}{r},
\end{equation}
\begin{equation}\label{trilinearesetimate}
\Big\| \mathcal{F}^{-1}\big[\int_{\R^2}\int_{\R^2} m'(\xi, \eta,\sigma) \widehat{f}(\xi-\eta) \widehat{h}(\sigma) \widehat{g}(\eta-\sigma)  d \eta d\sigma\big] \Big\|_{L^{p}} \lesssim \|m'\|_{\mathcal{S}^\infty} \| f \|_{L^q}\| g \|_{L^r} \| h\|_{L^s},\,\, \end{equation}
where $ \displaystyle{\frac{1}{p} = \frac{1}{q} + \frac{1}{r} + \frac{1}{s}}.$
\end{lemma}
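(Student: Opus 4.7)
The three inequalities are standard facts about the Wiener-type algebra $\mathcal{S}^\infty$, so the plan is to reduce each to a translation/convolution representation on the physical side and then invoke Young's and H\"older's inequalities.

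For \eqref{productofsymbol}, the key observation is that the inverse Fourier transform converts pointwise multiplication of symbols into convolution of their kernels: $\mathcal{F}^{-1}(m \cdot m') = \mathcal{F}^{-1}(m) * \mathcal{F}^{-1}(m')$, up to a fixed constant depending on the Fourier normalization. Young's inequality with exponents $(1,1,1)$ then yields $\|\mathcal{F}^{-1}(m)\ast\mathcal{F}^{-1}(m')\|_{L^1}\le \|\mathcal{F}^{-1}(m)\|_{L^1}\|\mathcal{F}^{-1}(m')\|_{L^1}$, which by the definition of the $\mathcal{S}^\infty$ norm is exactly the claim.

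For \eqref{bilinearesetimate}, I write $K:=\mathcal{F}^{-1}(m)\in L^1(\R^2\times\R^2)$, so $m(\xi,\eta) = \int\int K(y_1,y_2)\, e^{iy_1\cdot\xi+iy_2\cdot\eta}\,dy_1\,dy_2$ (again modulo a harmless constant). Substituting this representation into the integral on the left of \eqref{bilinearesetimate}, changing variable $\xi\mapsto \xi-\eta$ in the $\xi$-integral, and recognizing the two resulting $\xi$- and $\eta$-integrals as inverse Fourier transforms of $\widehat{f}$ and $\widehat{g}$ evaluated at shifted points, one arrives at the representation
\begin{equation*}
\mathcal{F}^{-1}\!\left[\int_{\R^2} m(\xi,\eta)\widehat{f}(\xi-\eta)\widehat{g}(\eta)\,d\eta\right](x)
= \int\!\!\int K(y_1,y_2)\,f(x+y_1)\,g(x+y_1+y_2)\,dy_1\,dy_2.
\end{equation*}
Now Minkowski's inequality pulls the $L^p_x$ norm inside the $dy_1dy_2$ integral, and inside, H\"older's inequality with $1/p=1/q+1/r$ (and translation invariance of $L^q, L^r$) bounds the product by $\|f\|_{L^q}\|g\|_{L^r}$. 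The remaining integral $\int\int|K(y_1,y_2)|\,dy_1\,dy_2=\|m\|_{\mathcal{S}^\infty}$ gives the claim.

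For \eqref{trilinearesetimate}, the argument is essentially identical after writing $K':=\mathcal{F}^{-1}(m')\in L^1(\R^2\times\R^2\times\R^2)$ and performing analogous changes of variables (replacing $\eta\mapsto\eta-\sigma$ and $\xi\mapsto\xi-\eta$) to obtain a representation of the trilinear form as an integral of $K'(y_1,y_2,y_3)\,f(x+y_1)\,g(x+y_1+y_2)\,h(x+y_1+y_2+y_3)$. Minkowski followed by the three-factor H\"older inequality under $1/p=1/q+1/r+1/s$ closes the bound by $\|K'\|_{L^1}\|f\|_{L^q}\|g\|_{L^r}\|h\|_{L^s}$. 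There is no real obstacle here; the only care needed is bookkeeping of Fourier normalization constants and verifying that each change of variable stays within $L^1$ so that Fubini is legitimate, both of which are automatic once $K, K'\in L^1$.
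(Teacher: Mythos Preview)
Your argument is correct and is exactly the standard proof of these Wiener-algebra multilinear bounds; the paper itself states the lemma without proof, treating it as well known. The kernel representation followed by Minkowski and H\"older (with Young for \eqref{productofsymbol}) is precisely the intended route, so there is nothing to add.
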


\begin{definition}
Given $\rho\in \mathbb{N}_{+}, \rho \geq 0$ and $m\in \R$, we use $\Gamma^{m}_{\rho}(\R^2)$  to denote the space of locally bounded functions $a(x,\xi)$ on $\R^2\times (\R^2/\{0\})$, which are $C^{\infty}$ with respect to $\xi$ for $\xi\neq 0 $. Moreover, they satisfy the following estimate, 
\[
\forall |\xi|\geq 1/2, \| \p_{\xi}^{\alpha} a(\cdot, \xi)\|_{W^{\rho, \infty}}\lesssim_{\alpha} (1+|\xi|)^{m-|\alpha|}, \quad \alpha\in \mathbb{N}^2,
\]
where $W^{\rho, \infty}$ is the usual Sobolev space.  For symbol $a\in \Gamma^{m}_{\rho}$, we can define its norm as follows,
\[
M^{m}_{\rho}(a):= \sup_{|\alpha|\leq 2+\rho} \sup_{|\xi|\geq 1/2} \| (1+|\xi|)^{|\alpha|-m}\p_{\xi}^{\alpha} a (\cdot, \xi)\|_{W^{\rho, \infty}}.
\]

\end{definition}

\begin{definition}
\begin{enumerate}
\item[(i)] We use   $\dot{\Gamma}^{m}_{\rho}(\R^2)$ to denote the subspace of $\Gamma^{m}_{\rho}(\R^2)$, which consists of symbols that are homogeneous of degree $m$ in $\xi.$ \\
\item[(ii)] If $a= \displaystyle{\sum_{0\leq j < \rho} a^{(m-j)}}$, where $a^{(m-j)}\in \dot{\Gamma}^{m-j}_{\rho-j}(\R^2)$, then we say $a^{(m)}$ is the principal symbol of $a$.\\
\item[(iii)] An operator $T$ is said to be of order $m$, $m\in \R$, if for all $\mu\in\R$, it's bounded from $H^{\mu}(\R^2)$ to $H^{\mu-m}(\R^2)$. We use $S^{m}$ to denote the set of all operators of order m.
\end{enumerate}
\end{definition}

For  
$a, f\in L^2$ and pseudo differential operator $\tilde{a}(x,\xi)$, we define the operator $T_{a} f$ and $T_{\tilde{a}} f$ as follows,
 \begin{equation}\label{eqn1001}
T_a f = \mathcal{F}^{-1}[\int_{\R} \widehat{a}(\xi-\eta) \theta(\xi-\eta, \eta)\widehat{f}(\eta) d \eta]
,\,\, T_{\tilde{a}} f = \mathcal{F}^{-1} [ \int_{\R} \mathcal{F}_x(\tilde{a})(\xi-\eta,\eta) \theta(\xi-\eta,\eta)\widehat{f}(\eta) d \eta  ],
\end{equation}
where the cut-off function is defined as follows,
\be\label{lowhighcutoff}
\theta(\xi-\eta, \eta) = \left\{\begin{array}{ll}
1 & \textup{when}\,\,|\xi-\eta|\leq 2^{-10} |\eta| \\
0 & \textup{when}\,\, |\xi-\eta| \geq 2^{10} |\eta| .\\
\end{array}\right.
\ee

\begin{lemma}\label{adjoint}
Let $m\in \R$ and $\rho >0$ and let $a\in \Gamma^{m}_{\rho}(\R^d)$, if we denote $(T_{a})^{\ast}$ as the adjoint operator of $T_a$ and denote $\bar{
a}$ as the complex conjugate of $a$, then we know that, $(T_{a})^{\ast}- T_{a^{\ast}}$ is of order $m-\rho$, where
\[
a^{\ast} = \sum_{|\alpha|< \rho} \frac{1}{i^{|\alpha|} \alpha !} \p_{\xi}^{\alpha} \p_{x}^{\alpha} \bar{a}.
\]
Moreover, the operator norm of $(T_{a})^{\ast} - T_{a^{\ast}}$ is bounded by $M^{m}_{\rho}(a).$
\end{lemma}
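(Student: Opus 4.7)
The plan is to compute $(T_a)^{\ast}$ explicitly on the Fourier side and then match it against $T_{a^{\ast}}$ via a Taylor expansion of the symbol in the frequency variable. First, by Plancherel applied to the duality relation $\langle T_a f, g\rangle = \langle f, (T_a)^{\ast} g\rangle$ together with the definition \textup{(\ref{eqn1001})}, and using $\overline{\hat a(-\tau,\eta)} = \widehat{\bar a}(\tau,\eta)$ together with the symmetry $\theta(-\tau,\eta)=\theta(\tau,\eta)$, I would derive, after the change of variables $\tau=\xi-\eta$, the identity
\[
\widehat{(T_a)^{\ast} g}(\eta) = \int_{\R^2} \widehat{\bar a}(\tau,\eta)\,\theta(\tau,\eta)\,\widehat{g}(\eta-\tau)\,d\tau.
\]
Writing the analogous identity for $T_{a^{\ast}} g$ and making the same change of variables, the difference $\widehat{(T_a)^{\ast}g}-\widehat{T_{a^{\ast}}g}$ splits naturally into two pieces: a \emph{symbol piece}, where $\widehat{\bar a}(\tau,\eta)$ is compared with $\widehat{a^{\ast}}(\tau,\eta-\tau)$, and a \emph{cutoff piece}, where $\theta(\tau,\eta)$ is compared with $\theta(\tau,\eta-\tau)$.

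For the symbol piece, I would Taylor expand $\widehat{\bar a}(\tau,\eta)$ in its second variable around $\eta-\tau$ up to order $\rho$, and then use the elementary identity $\tau^\alpha\, \widehat{\bar a}(\tau,\cdot) = (-i)^{|\alpha|}\,\widehat{\p_x^\alpha \bar a}(\tau,\cdot)$ to rewrite each summand as $\frac{1}{i^{|\alpha|}\alpha!}\widehat{\p_\xi^\alpha \p_x^\alpha \bar a}(\tau,\eta-\tau)$, where I use that $(-i)^{|\alpha|} = 1/i^{|\alpha|}$. Summing over $|\alpha|<\rho$ then exactly reconstructs the Fourier symbol of $T_{a^{\ast}}g$, so the main terms cancel and what remains is the integral Taylor remainder $R_\rho$, which carries an explicit factor $\tau^\alpha$ with $|\alpha|=\rho$ against a bounded integral of $\p_\xi^\alpha \widehat{\bar a}$.

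Finally, I would estimate the two error contributions in the operator norm from $H^\mu$ to $H^{\mu-(m-\rho)}$. For the Taylor remainder, the extra factor $\tau^\alpha$ translates to $\rho$ extra $x$-derivatives landing on $\bar a$, which are controlled precisely by the $W^{\rho,\infty}$-regularity built into the norm $M^m_\rho(a)$; combined with the paradifferential cutoff $\theta$ and the symbol bounds provided by Lemma \ref{Snorm} and the bilinear estimate \textup{(\ref{bilinearesetimate})} of Lemma \ref{multilinearestimate}, this yields an operator of order $m-\rho$ whose norm is dominated by $M^m_\rho(a)$. The cutoff piece is supported on the region where $|\tau|\sim|\eta|$, i.e.\ in a High$\times$High frequency zone, where a standard Littlewood--Paley decomposition combined with the $\Gamma^m_\rho$ regularity of $a$ produces an operator that is smoothing of arbitrary order and hence, in particular, belongs to $S^{m-\rho}$. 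The main obstacle I anticipate is the bookkeeping of the Taylor remainder in mixed spatial and frequency variables: one must verify that the $W^{\rho,\infty}$-norm in $x$, which is all that is available in the definition of $M^m_\rho(a)$, is exactly what is needed to bound the kernel of $R_\rho$ in the $\mathcal{S}^\infty_{k,k_1,k_2}$-sense required to apply \textup{(\ref{bilinearesetimate})} uniformly in the relevant frequency scales.
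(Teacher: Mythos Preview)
The paper does not actually prove this lemma: its entire proof is the one-line citation ``See \cite{alazard1}[Theorem 3.10].'' So there is nothing to compare against in terms of argument structure.

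Your proposal, by contrast, sketches the standard classical proof of this paradifferential adjoint formula, and the outline is correct: the Fourier-side computation of $(T_a)^\ast$, the Taylor expansion of $\mathcal{F}_x(\bar a)(\tau,\eta)$ in the second variable around $\eta-\tau$, the identification of the sub-$\rho$ terms with $\mathcal{F}_x(a^\ast)(\tau,\eta-\tau)$ via $\tau^\alpha \mathcal{F}_x(\bar a)(\tau,\cdot) = i^{-|\alpha|}\mathcal{F}_x(\p_x^\alpha \bar a)(\tau,\cdot)$, and the resulting split into a Taylor-remainder piece and a cutoff-mismatch piece --- all of this is exactly the right mechanism. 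One small caution: invoking Lemma~\ref{Snorm} and estimate \eqref{bilinearesetimate} is a bit indirect here, since those are tailored to symbols given by explicit formulas with dyadic localization, whereas the classical route (as in \cite{alazard1}) estimates the remainder kernel directly in $H^\mu\to H^{\mu-m+\rho}$ via Schur-type bounds or Coifman--Meyer theory, using only the $W^{\rho,\infty}_x$ regularity and the $\xi$-symbol decay encoded in $M^m_\rho(a)$. Your instinct that the bookkeeping of the remainder is the delicate point is correct, but the obstacle is more notational than conceptual; the $W^{\rho,\infty}$ control is precisely calibrated to absorb the $\rho$ extra $x$-derivatives coming from $\tau^\alpha$ with $|\alpha|=\rho$.
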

\begin{proof}
See \cite{alazard1}[Theorem 3.10].
\end{proof}
 \begin{lemma}\label{composi}
Let $m\in \R$ and $\rho >0,$ if given symbols $a\in \Gamma_{\rho}^{m}(\R^d)$ and $b\in\Gamma_{\rho}^{m'}(\R^d)$,  we can define
\[
a\sharp b = \sum_{|\alpha|< \rho} \frac{1}{i^{|\alpha|} \alpha!} \p_{\xi}^{\alpha} a \p_{x}^{\alpha}b,
\]
then for all $\mu\in\R$, there exists a constant $K$ such that 
\begin{equation}\label{eqn700000}
\| T_a T_b - T_{a\sharp b}\|_{H^{\mu}\rightarrow H^{\mu-m-m'+\rho}} \leq K M^{m}_{\rho}(a) M^{m'}_{\rho}(b).
\end{equation}
\end{lemma}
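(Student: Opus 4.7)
The plan is to unfold the composition $T_a T_b f$ on the Fourier side, write $a(x,\zeta)$ as a Taylor polynomial in $\zeta$ around the ``output'' frequency $\eta$ up to order $\rho$, and identify the explicit polynomial part with $T_{a\sharp b}f$ while controlling the Taylor remainder as a smoothing operator of order $\rho$. Concretely, from the definition \eqref{eqn1001}, I would write
\[
\mathcal{F}(T_a T_b f)(\xi) = \int\!\!\int \mathcal{F}_x(a)(\xi-\zeta,\zeta)\,\theta(\xi-\zeta,\zeta)\,\mathcal{F}_x(b)(\zeta-\eta,\eta)\,\theta(\zeta-\eta,\eta)\,\widehat{f}(\eta)\,d\zeta\, d\eta,
\]
and compare it with
\[
\mathcal{F}(T_{a\sharp b}f)(\xi)=\int \mathcal{F}_x(a\sharp b)(\xi-\eta,\eta)\,\theta(\xi-\eta,\eta)\,\widehat{f}(\eta)\,d\eta.
\]

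The key expansion is the Taylor formula
\[
\mathcal{F}_x(a)(\xi-\zeta,\zeta) \;=\; \sum_{|\alpha|<\rho}\frac{(\zeta-\eta)^{\alpha}}{\alpha!}\,\partial_\xi^{\alpha}\mathcal{F}_x(a)(\xi-\zeta,\eta) \;+\;R_\rho(\xi-\zeta,\zeta,\eta),
\]
with integral remainder $R_\rho=\int_0^1\frac{(1-t)^{\rho-1}}{(\rho-1)!}\sum_{|\alpha|=\rho}(\zeta-\eta)^\alpha\partial_\xi^\alpha\mathcal{F}_x(a)(\xi-\zeta,\eta+t(\zeta-\eta))\,dt$. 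Substituting the polynomial part and observing that each factor $(\zeta-\eta)^\alpha$ against $\mathcal{F}_x(b)(\zeta-\eta,\eta)$ corresponds, by Fourier inversion in the first slot, to $\partial_x^\alpha b$, the double integral collapses to the single-integral formula for $T_{a\sharp b}f$, up to the difference between the nested cutoffs $\theta(\xi-\zeta,\zeta)\theta(\zeta-\eta,\eta)$ and the single cutoff $\theta(\xi-\eta,\eta)$. That cutoff mismatch should be absorbed into a smoothing remainder by noting that when $|\zeta-\eta|\lesssim 2^{-10}|\eta|$ the two cutoffs agree, and the off-diagonal contribution is controlled by the rapid decay of $\mathcal{F}_x(b)(\zeta-\eta,\eta)$ combined with the symbol bounds $M_\rho^{m'}(b)$.

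For the Taylor remainder, I would use the standard paradifferential $L^2\!\to\! L^2$ boundedness (e.g.\ Coifman--Meyer / the remainder version of Lemma \ref{adjoint}) applied to the symbol $\widetilde a_\alpha(x,\xi):=\partial_\xi^\alpha a(x,\xi)$ of order $m-\rho$, paired with the operator $\partial_x^\alpha T_b$ of order $m'+\rho$. The gain of $\rho$ frequency factors from $\partial_\xi^\alpha a$ precisely compensates the $\rho$ factors of $(\zeta-\eta)$ (which become $\partial_x^\alpha$ on $b$), so that the composite remainder maps $H^\mu$ to $H^{\mu-m-m'+\rho}$ with norm bounded by $K\,M_\rho^m(a)\,M_\rho^{m'}(b)$, as claimed.

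The main technical obstacle will not be the algebraic Taylor manipulation, which is routine, but handling the cutoff functions $\theta$: they do not satisfy $\theta(\xi-\zeta,\zeta)\theta(\zeta-\eta,\eta)=\theta(\xi-\eta,\eta)$ exactly, so one must show that the discrepancy defines an operator of arbitrary smoothing order. I would do this by Littlewood--Paley decomposing all three frequencies $\xi-\zeta$, $\zeta-\eta$, $\eta$ and checking that whenever the nested and single cutoffs disagree, two of the three dyadic pieces have comparable size, allowing any number of integrations by parts in $\zeta$ (using the spectral separation guaranteed by $\theta$) to produce the required gain. Once this is settled, the bound \eqref{eqn700000} follows by summing the dyadic pieces. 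A clean reference for the entire argument is Theorem 3.10 in \cite{alazard1}, whose proof can be quoted verbatim after the above reductions.
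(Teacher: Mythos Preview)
Your sketch is correct and is the standard Taylor-expansion proof of the paradifferential composition formula. The paper, however, does not prove this lemma at all: like Lemma~\ref{adjoint} immediately before it (which is disposed of with ``See \cite{alazard1}[Theorem 3.10]''), Lemma~\ref{composi} is simply quoted as a known result from the paradifferential calculus literature, with no proof block given. So there is nothing in the paper to compare against beyond that implicit citation; your outline---expand $a(x,\zeta)$ in the second variable about $\eta$, match the polynomial part to $T_{a\sharp b}$, control the order-$\rho$ Taylor remainder as an operator of order $m+m'-\rho$, and absorb the cutoff discrepancy $\theta(\xi-\zeta,\zeta)\theta(\zeta-\eta,\eta)$ versus $\theta(\xi-\eta,\eta)$ by dyadic analysis and spectral separation---is exactly the classical argument one finds in, e.g., M\'etivier's monograph on paradifferential calculus or the symbolic-calculus section of \cite{alazard1}.
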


The following lemma on the  $L^\infty$  estimate of the linear solution holds, 
\begin{lemma}\label{lineardecay}
For $f\in L^1(\R^2)$, we have the following $L^\infty$ type estimates:
\begin{equation}\label{highdecay}
\| e^{it \Lambda} P_{k} f\|_{L^\infty} \lesssim (1+|t|)^{-1} 2^{k/2} \| f\|_{L^1},  \quad \textup{if $k\geq 0$}.
\end{equation}
\begin{equation}\label{lowdecay}
\| e^{it \Lambda} P_{k} f\|_{L^\infty} \lesssim (1+|t|)^{-\frac{1+\theta}{2}}2^{\frac{(1-\theta)k}{2} } \| f\|_{L^1},\quad 0\leq \theta \leq 1, \quad \textup{if $k\leq 0$}.
\end{equation}

\end{lemma}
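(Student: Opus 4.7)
The plan is to reduce both inequalities to pointwise $L^\infty$ estimates on the Fourier kernel
\[
K_k(t,x) := \int_{\R^2} e^{i(x\cdot\xi + t\Lambda(|\xi|))}\psi_k(|\xi|)\, d\xi,
\]
via Young's inequality $\|e^{it\Lambda}P_k f\|_{L^\infty} \leq \|K_k(t,\cdot)\|_{L^\infty}\|f\|_{L^1}$, and then run a standard stationary phase analysis on $K_k$. First, I would record the sizes of $\Lambda(r) = r^{3/2}\sqrt{\tanh r}$ and its derivatives on the dyadic shell $r \sim 2^k$, obtained by Taylor expanding $\sqrt{\tanh r}$ at $0$ and $\infty$: for $k \geq 0$ one has $|\Lambda'(r)| \sim 2^{k/2}$ and $|\Lambda''(r)| \sim 2^{-k/2}$, while for $k \leq 0$ one has $|\Lambda'(r)| \sim 2^k$ and $|\Lambda''(r)| \sim 1$. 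In both regimes the tangential curvature $|\Lambda'(r)/r|$ is comparable to $|\Lambda''(r)|$, and $\Lambda''$ is non-degenerate on the support of $\psi_k$.

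For the high-frequency estimate \eqref{highdecay}, I would apply the 2D stationary phase theorem to $K_k$ with phase $\Phi(\xi) = x\cdot\xi + t\Lambda(|\xi|)$. The critical-point equation $\nabla_\xi \Phi = 0$ forces $|x| = |t|\Lambda'(|\xi|) \sim |t|2^{k/2}$, and at such points
\[
\det D^2 \Phi(\xi) = t^2\, \Lambda''(|\xi|)\cdot \frac{\Lambda'(|\xi|)}{|\xi|}\sim t^2 \cdot 2^{-k}.
\]
Non-degenerate stationary phase then gives $|K_k(t,x)|\lesssim (|t|^2 2^{-k})^{-1/2}=|t|^{-1}2^{k/2}$ inside the resonant strip $|x|\sim |t|2^{k/2}$, while outside it the lower bound $|\nabla_\xi\Phi|\gtrsim \max(|x|,|t|2^{k/2})$ permits integration by parts in $\xi$ to gain arbitrary decay. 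Combining with the trivial bound $\|K_k\|_{L^\infty} \leq \|\psi_k\|_{L^1} \sim 2^{2k}$, valid for $|t|\lesssim 1$, yields \eqref{highdecay}.

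For the low-frequency estimate \eqref{lowdecay}, I would first establish two endpoint bounds. The same 2D stationary phase argument, now with $|\det D^2\Phi|\sim t^2$ (since $\Lambda''$ and $\Lambda'/r$ are both $\sim 1$ for $r\lesssim 1$), gives $\|K_k(t,\cdot)\|_{L^\infty}\lesssim (1+|t|)^{-1}$; the trivial bound $\|K_k\|_{L^\infty}\leq 2^{2k}$ is the second endpoint. One then verifies by an elementary computation that
\[
\min\big((1+|t|)^{-1},\,2^{2k}\big)\lesssim (1+|t|)^{-(1+\theta)/2}\, 2^{(1-\theta)k/2},\qquad \theta\in[0,1],\ k\leq 0,
\]
splitting into the two regimes $|t|2^{2k}\geq 1$ (where $\min = (1+|t|)^{-1}$) and $|t|2^{2k}\leq 1$ (where $\min = 2^{2k}$) and checking each case directly; in particular, in the second regime the constraint $(3+\theta)/(1+\theta)\geq 2$ for all $\theta\in[0,1]$ guarantees the inequality for $k\leq 0$. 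This yields \eqref{lowdecay}.

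The main technical obstacle is the transition region in Step 2, where the stationary point of $\Phi$ approaches the boundary of the support of $\psi_k$ and neither a clean stationary-phase nor a clean integration-by-parts estimate applies directly; I would handle it with a smooth partition of unity adapted to the scale $|\nabla_\xi\Phi|\sim |t|2^{k/2}$, verifying that each commutator with the cut-off is controlled by the same Hessian-based factor and no logarithmic losses arise when differentiating the Bessel-type angular factor in polar coordinates.
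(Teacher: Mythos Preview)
Your proposal is correct and substantially more detailed than the paper's own proof, which consists of a single sentence invoking a general dispersive-estimate theorem from Guo--Peng--Wang \cite{Guoz} after recording the phase expansions $\Lambda(|\xi|)\approx |\xi|^2-\tfrac16|\xi|^4$ for $|\xi|\ll 1$ and $\Lambda(|\xi|)\approx|\xi|^{3/2}$ for $|\xi|\gg 1$. That reference establishes kernel bounds of exactly the type you derive by hand, for general radial dispersion relations whose radial and tangential curvatures $\Lambda''(r)$ and $\Lambda'(r)/r$ are comparable and nondegenerate on dyadic shells; so the two routes are mathematically the same underlying stationary-phase computation, but yours is self-contained while the paper's is a black-box citation. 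Your explicit interpolation for the low-frequency family \eqref{lowdecay}, checking $\min\big((1+|t|)^{-1},2^{2k}\big)\lesssim (1+|t|)^{-(1+\theta)/2}2^{(1-\theta)k/2}$ via the constraint $(3+\theta)/(1+\theta)\geq 2$ and $k\leq 0$, is a clean way to recover the full $\theta$-range from the two endpoints.

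One minor wrinkle: your last step in the high-frequency case, combining the stationary-phase bound $|t|^{-1}2^{k/2}$ with the trivial bound $2^{2k}$ for $|t|\lesssim 1$, does not literally yield \eqref{highdecay} as written, since $2^{2k}\not\lesssim 2^{k/2}$ for large $k$ at $t=0$. This is a defect of the $(1+|t|)^{-1}$ prefactor in the \emph{statement} rather than in your argument---the estimate should read $|t|^{-1}$ for $|t|\gtrsim 1$, which is the only regime the paper ever uses.
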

\begin{proof}
After  checking the expansion of the phase, see (\ref{expansion1}), we can apply the main result in \cite{Guoz}[Theorem 1:(a)\&(b)] directly to derive above results.
\end{proof}
\begin{lemma}
The quadratic terms of $G(h)\psi$ is given as follows, 
\be\label{quadraticterm}
\Lambda_2[G(h)\psi]= -\nabla \cdot ( h \nabla \psi) - \d \tanh(\d)( h \d \tanh(\d) \psi).
\ee
\end{lemma}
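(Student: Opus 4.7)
The plan is to extract the quadratic part of $G(h)\psi$ by perturbatively expanding the velocity potential in powers of $h$ and then reading off terms of homogeneity two. Write $\phi = \phi_0 + \phi_1 + O(h^2)$, where $\phi_0$ is the harmonic extension on the flat reference strip $\{-1 \leq y \leq 0\}$ satisfying $\phi_0|_{y=0} = \psi$ and $\partial_y\phi_0|_{y=-1} = 0$, and $\phi_1$ is the linear-in-$h$ correction which is harmonic on the same reference strip with homogeneous Neumann condition at $y=-1$ and Dirichlet data at $y=0$ still to be determined. The boundary values of $\phi$ are prescribed on the true interface $y=h(x)$, so matching $\phi(x,h(x))=\psi(x)$ up to quadratic order will fix $\phi_1|_{y=0}$.

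At zeroth order, taking the Fourier transform in $x$ gives the explicit solution
\[
\widehat{\phi_0}(\xi,y) = \widehat\psi(\xi)\,\frac{\cosh(|\xi|(y+1))}{\cosh|\xi|},
\]
whence $\partial_y\phi_0(x,0) = |\nabla|\tanh|\nabla|\,\psi$ and, using $\partial_y^2\phi_0 = -\Delta_x\phi_0$, also $\partial_y^2\phi_0(x,0) = -\Delta\psi$. Taylor expanding $\phi(x,h(x))=\psi(x)$ around $y=0$ yields $\phi_1(x,0) = -h(x)\,\partial_y\phi_0(x,0) = -h\cdot|\nabla|\tanh|\nabla|\,\psi$ modulo cubic terms. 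Solving the Laplace problem for $\phi_1$ with this Dirichlet data and vanishing Neumann data at $y=-1$ then gives
\[
\partial_y\phi_1(x,0) = -|\nabla|\tanh|\nabla|\bigl(h\cdot|\nabla|\tanh|\nabla|\,\psi\bigr).
\]

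Now plug into the identity $G(h)\psi = -\nabla h\cdot\nabla\phi|_{y=h} + \partial_y\phi|_{y=h}$ and Taylor expand each factor around $y=0$. The first term contributes $-\nabla h\cdot\nabla\psi$ at quadratic order. The second term contributes $\partial_y\phi_0(x,0)$ (linear, giving the $|\nabla|\tanh|\nabla|\psi$ of (\ref{e10})), plus the quadratic pieces $h\,\partial_y^2\phi_0(x,0) + \partial_y\phi_1(x,0) = -h\,\Delta\psi - |\nabla|\tanh|\nabla|(h\cdot|\nabla|\tanh|\nabla|\psi)$. Combining $-\nabla h\cdot\nabla\psi - h\,\Delta\psi = -\nabla\cdot(h\nabla\psi)$ produces exactly the asserted formula
\[
\Lambda_2[G(h)\psi] = -\nabla\cdot(h\nabla\psi) - |\nabla|\tanh|\nabla|\bigl(h\cdot|\nabla|\tanh|\nabla|\,\psi\bigr).
\]

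There is no genuine obstacle here: the computation is a formal Taylor expansion, and the only care needed is bookkeeping at second order (in particular, properly accounting for the movement of the interface via the shift $y = h(x)$, which produces the $h\,\partial_y^2\phi_0$ term that, together with $-\nabla h\cdot\nabla\psi$, assembles into the divergence form $-\nabla\cdot(h\nabla\psi)$). Rigorous justification at the level of Sobolev regularity is standard from the shape-derivative theory of the Dirichlet--Neumann operator (cf.\ \cite{lannes, alazardguy}); for the purpose of identifying the quadratic symbol the above formal computation is enough.
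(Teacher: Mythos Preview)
Your computation is correct. The paper itself does not prove this lemma at all: it simply cites \cite{wang2}[Lemma 3.4]. The framework implicit in that reference is the one set up in the present paper around \eqref{harmonic2}--\eqref{fixedpoint}: flatten the domain via $z=(y-h)/(1+h)$, write $\partial_z\varphi$ as the fixed point of an integral equation involving the nonlinear coefficients $\tilde a,\tilde b,\tilde c$, and read off the quadratic terms by iterating once. Your route is different and more direct: you stay in the physical variables $(x,y)$, expand $\phi=\phi_0+\phi_1+\cdots$ on the reference strip, and Taylor expand the trace at $y=h(x)$. Both approaches are standard; yours is shorter and self-contained for the purpose of identifying the symbol, while the flattened-coordinate formulation is what the paper actually needs later for the full nonlinear estimates (Lemma \ref{Sobolevestimate}, Lemma \ref{symbolcubicandquartic}, etc.). The only minor point worth making explicit is that your $\phi_0$ must be analytically continued past $y=0$ to make sense of $\phi_0(x,h(x))$ when $h>0$; this is harmless at the formal level since $\widehat{\phi_0}(\xi,y)$ is entire in $y$.
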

\begin{proof}
See   \cite{wang2}[Lemma 3.4].
\end{proof}

\section{The energy estimate}\label{energyestimatesection}

Our bootstrap assumption is stated as follows, 
\be\label{bootstrapassumption}
\sup_{t \in [0, T]} (1+t)^{-\delta} \| ( \tilde{\Lambda} h(t), \psi(t)\|_{H^{N_0}}+  (1+t) \| (  h(t),  \psi)(t)\|_{W^{6,1+\alpha}}  \lesssim \epsilon_1:=\epsilon_0^{5/6},
\ee
where $ \tilde{\Lambda}:=  \d^{1/2} (\tanh \d)^{-1/2}$ and the function space  $W^{6,1+\alpha}$ was defined in (\ref{Linftyspace}).

The goal of this section is to prove that the energy of solution only grow appropriately. More precisely, we will use the paralinearization and symmetrization method to prove the following proposition,  
\begin{proposition}
Under the bootstrap assumption \textup{(\ref{bootstrapassumption})}, the following energy estimate holds for any $t\in[0,T]$,
\be\label{energyestimate}
\| (\tilde{\Lambda}  h(t),\psi(t))\|_{H^{N_0}}^2  \lesssim \epsilon_0^2 +   \int_0^t  \| ( \tilde{\Lambda} h(s),\psi(s))\|_{H^{N_0}}^2 \big( \| (h, \psi)\|_{W^{6,1+\alpha}}+\| (h, \psi)\|_{W^{6,0}}\| (h, \psi)\|_{W^{6,1}}   \big) d s .
\ee
\end{proposition}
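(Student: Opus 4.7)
The plan is to use the paralinearization/symmetrization framework of Alazard-M\'etivier and Alazard-Burq-Zuily, adapted to the finite-depth setting with particular attention to the low-frequency structure. First I would introduce the Alinhac good unknown $\omega := \psi - T_B h$, where $B := (G(h)\psi + \nabla h \cdot \nabla \psi)/(1+|\nabla h|^2)$, so that the system rewrites, modulo smoothing remainders, in the quasilinear form
$$\partial_t h + T_V \cdot \nabla h = T_\lambda \omega + R_1, \qquad \partial_t \omega + T_V \cdot \nabla \omega + T_\ell h = R_2,$$
where $V := \nabla \psi - B \nabla h$, $\lambda$ is the principal symbol of the Dirichlet-Neumann operator on the curved interface (satisfying $\lambda \sim |\xi|\tanh|\xi|$ at top order), and $\ell$ is the principal symbol of the linearized mean-curvature operator (so $\ell \sim |\xi|^2$ at top order). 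A symmetrization step then produces operators $T_p, T_q$ of suitable orders so that the paired unknown $(\tilde h, \tilde \omega) := (T_p h, T_q \omega)$ satisfies
$$\partial_t \tilde h + T_V \cdot \nabla \tilde h + T_\gamma \tilde \omega = \tilde R_1, \qquad \partial_t \tilde \omega + T_V \cdot \nabla \tilde \omega - T_\gamma \tilde h = \tilde R_2,$$
with $\gamma$ real-valued and positive; the weights in $p, q$ are chosen so that the associated energy $E(t) := \|\tilde h\|_{H^{N_0}}^2 + \|\tilde \omega\|_{H^{N_0}}^2$ is equivalent to $\|(\tilde\Lambda h, \psi)\|_{H^{N_0}}^2$ up to $\epsilon_1$-errors absorbable in the bootstrap (\ref{bootstrapassumption}).

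Differentiating $E(t)$ produces four families of terms. The transport contributions from $T_V \cdot \nabla$ are skew-symmetric at principal order by Lemma \ref{adjoint}, and the lower-order remainders are controlled by $\|\nabla V\|_{L^\infty} E(t) \lesssim \|(h,\psi)\|_{W^{6,1}} E(t)$. The principal symmetric coupling $\pm T_\gamma$ cancels between the two equations. The commutator errors from symmetrization and from the paralinearization itself are controlled by the symbolic calculus of Lemma \ref{composi}: each commutator is bounded by products of $M^{m}_\rho$-seminorms of $\lambda$, $\ell$, $p$, $q$, whose coefficients are polynomials in $(h, \nabla h, \nabla \psi, G(h)\psi)$ whose $W^{\rho,\infty}$ seminorms are dominated by $\|(h,\psi)\|_{W^{6,1+\alpha}}$. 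The genuinely new analytic content sits in the remainders $R_i, \tilde R_i$, which split into a quadratic part and a cubic-and-higher part. For the quadratic part one exploits the fact, visible in (\ref{e10})-(\ref{e11}) and (\ref{quadraticterm}), that the quadratic nonlinearity always carries at least two total derivatives distributed among its two inputs. In a High $\times$ Low output-localized interaction the derivatives may be placed on the $L^\infty$ factor, giving a $W^{6,1+\alpha}$-bound; in a High $\times$ High interaction Bernstein reduces the output to $L^2$, the smaller-frequency factor is placed in $L^\infty$ and again absorbs two derivatives. The bilinear estimate (\ref{bilinearesetimate}) combined with the $\mathcal{S}^\infty$-bounds from Lemma \ref{Snorm} then yields a quadratic contribution of size $\|(h,\psi)\|_{W^{6,1+\alpha}} E(t)$. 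For the cubic-and-higher remainders, Taylor-expanding $G(h)\psi$ and the mean-curvature $H(h)$ in $h$ supplies at least two factors in $(h, \nabla h, \nabla\psi)$ available to be placed in $L^\infty$; pairing one in $W^{6,0}$ with another in $W^{6,1}$ via (\ref{trilinearesetimate}) yields $\|(h,\psi)\|_{W^{6,0}}\|(h,\psi)\|_{W^{6,1}} E(t)$.

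The main obstacle I expect is the delicate low-frequency accounting that distinguishes finite depth from infinite depth. In the infinite-depth case the null structure (\ref{e1})-(\ref{e2}) automatically supplies a factor $|\xi|\min(|\eta|,|\xi-\eta|)^{1/2}$ in the symbol, which renders the High $\times$ Low and $1\times 1 \to 0$ cases harmless. In the finite-depth case this gain is absent, so in a High $\times$ Low interaction whose $L^\infty$ input is the small-frequency piece there is a genuine danger that the $L^\infty$ slot is occupied by $\psi$ with no derivatives in front, whose $L^\infty$ norm can grow linearly in $t$ in view of (\ref{e35}). Avoiding this requires the refined low-frequency expansion of the Dirichlet-Neumann operator developed in \cite{wang2}, which lets one rewrite the offending quadratic pieces so that either the symbol absorbs the low frequency, or the $L^\infty$ factor is $h$ (whose Fourier mean is conserved in view of (\ref{conservedmom})) rather than $\psi$, or the $L^\infty$ factor carries at least $1+\alpha$ derivatives and is therefore controlled by the $W^{6,1+\alpha}$ norm. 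Once this rerouting is implemented throughout the quadratic remainder, integrating the resulting differential inequality for $E(t)$ from $0$ to $t$ yields (\ref{energyestimate}).
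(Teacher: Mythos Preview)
Your approach follows the same paralinearization/symmetrization route as the paper, and the overall architecture is correct. Two points deserve flagging.

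First, a concrete gap: differentiating $E(t) = \|\tilde h\|_{H^{N_0}}^2 + \|\tilde\omega\|_{H^{N_0}}^2$ directly forces you to confront the commutator $[\,|\nabla|^{N_0}, T_\gamma\,]$. Since $\gamma$ is of order $3/2$, this commutator is of order $N_0 + 1/2$ and you lose half a derivative; the cancellation you invoke (``the principal symmetric coupling $\pm T_\gamma$ cancels'') only works after the commutator has been absorbed. The paper avoids this by building the top-order energy from $U_i^{N_0} := T_\beta U_i$ with $\beta := (\gamma^{(3/2)} + |\xi|^{3/2})^{2N_0/3}$; the algebraic identity $\partial_\xi\beta\,\partial_x(\gamma^{(3/2)}+|\xi|^{3/2}) = \partial_\xi(\gamma^{(3/2)}+|\xi|^{3/2})\,\partial_x\beta$ then makes $T_\beta T_{\gamma+|\xi|^{3/2}} - T_{\gamma+|\xi|^{3/2}} T_\beta$ genuinely of order $0$. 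This is the Alazard--Burq--Zuily trick and it is not optional here (the paper remarks on this explicitly).

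Second, your ``main obstacle'' paragraph is misplaced. The momentum conservation (\ref{conservedmom}) and the $h$-versus-$\psi$ rerouting you describe are the machinery the paper deploys for the \emph{dispersive} $Z_1$-norm estimate in Section~\ref{loworderweight}, not for the energy estimate. For the energy estimate the low-frequency accounting is simpler and purely structural: after symmetrization, the only genuinely quadratic contribution to $\frac{d}{dt}E_{N_0}$ (beyond the good remainders $\mathfrak{R}_i$, which already satisfy (\ref{gooderror}) by construction) is $\int U_1^{N_0}(T_\gamma - (T_\gamma)^*)U_2^{N_0}$. The paper then checks directly that $\Lambda_1[\gamma]$ depends only on \emph{second} derivatives of $h$ (see (\ref{estimateofgamma1})), which is exactly the statement that the $L^\infty$ coefficient carries $\geq 1+\alpha$ derivatives and hence is bounded by $\|(h,\psi)\|_{W^{6,1+\alpha}}$; and that the quadratic-and-higher part of $\gamma - \gamma^*$ satisfies the seminorm bound (\ref{estimateofgamma2}), which produces the product term $\|(h,\psi)\|_{W^{6,0}}\|(h,\psi)\|_{W^{6,1}}$ in (\ref{energyestimate}). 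No appeal to the conserved zero mode of $h$ is needed at this stage.
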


We separate this section into three parts:  (i) Firstly, we introduce   main results and briefly explain   main ideas of the paralinearization process for the capillary waves system (\ref{waterwaves}). (ii) Secondly, with the highlighted structures of losing derivative inside the system (\ref{waterwaves}), we symmetrize the system (\ref{waterwaves}) such that it doesn't lose derivatives during energy estimate. (iii) Lastly, we use the  symmetrized system to prove the new   energy estimate (\ref{energyestimate}).

%cN2UK7Km5j%

\subsection{Paralinearization of the full system}
Most of this section  has been studied in details    in \cite{wang2}. Here we only briefly introduce the related main results and the main ideas behind those results. Please refer to \cite{wang2} for more detailed discussions.

To perform the paralinearization process,
 we need some basic estimates of the Dirichlet-Neumann operator, which are obtained from analyzing the velocity potential inside the water region ``$\Omega(t)$".

  Recall that the velocity potential satisfies the following Laplace equation with two boundary conditions as follows, 
 \be\label{harmonice}
 \Delta \phi = 0, \quad \phi\big|_{\Gamma(t)}
=\psi(t), \quad \p_{\vec{n}} \phi\big|_{\Sigma} =0. 
\ee
To simplify analysis, we map the water region ``$\Omega(t)$" into the strap $\mathcal{S}:= \R\times [-1,0]$ by doing change of coordinates as follows, 
\[
(x, y)\longrightarrow (x, z ), \quad z:=\frac{y-h(t, x)}{1+h(t, x)}.
\]
We define $\varphi(t,z): =\phi(t, z+h(t,x))$. From  (\ref{harmonice} ), we have
\be\label{harmonic2}
 P\varphi:=[ \Delta_{x}+ \tilde{a}\p_z^2 +  \tilde{b}\cdot \nabla \p_z + \tilde{c}\p_z ] \varphi=0  , \quad \varphi\big|_{z=0}=\psi , \quad \p_z \varphi\big|_{z=-1}=0,
\ee
where
\begin{equation}\label{coeff}
\tilde{a}= \frac{(y+1)^2|\nabla h|^2}{(1+h)^4}  + \frac{1}{(1+h)^2}=\frac{1+(z+1)^2|\nabla h|^2}{(1+h)^2},
\end{equation}
\begin{equation}\label{coeff1}
\tilde{b}=- 2 \frac{(y+1)\nabla h}{(1+h)^2} = \frac{-2(z+1)\nabla h}{1+h},\quad \tilde{c}= \frac{-(z+1)\Delta_{x} h}{(1+h)} + 2\frac{(z+1)|\nabla h|^2}{(1+h)^2}. 
\end{equation}
\begin{equation}\label{DN1}
G(\,h)\psi = [-\nabla\,h\cdot\nabla \phi + \p_y\phi]\big|_{y=\,h} = \frac{1+|\nabla \,h|^2}{1+\,h} \p_z \varphi \big|_{z=0} -\nabla\psi \cdot \nabla \,h. 
\end{equation}
Hence, to study the Dirichlet-Neumann operator, it is sufficient to study the only nontrivial part of $G(h)\psi$, which is $\p_z \varphi\big|_{z=0}.$

From (\ref{harmonic2}), we can derive the following fixed point type formulation for $\nabla_{x,z}\varphi$, which provides a good way to analyze and estimate the Dirichlet-Neumann operator in the small data regime. More precisely, we have
\[\nabla_{x,z}\varphi = \Bigg[ \Big[ \frac{e^{-(z+1)\d}+ e^{(z+1)\d}}{e^{-\d} + e^{\d}}\Big]\nabla\psi ,  \frac{e^{(z+1)\d}- e^{-(z+1)\d}}{e^{-\d}+e^{\d}} \d \psi\Bigg] + \]
\[
+ [\mathbf{0}, g_1(z)]+\int_{-1}^{0} [K_1(z,s)-K_2(z,s)-K_3(z,s)](g_2(s)+\nabla \cdot g_3(s))  ds \]
\begin{equation}\label{fixedpoint}
+\int_{-1}^{0} K_3(z,s)\d\textup{sign($z-s$)}g_1(s)  -\d [K_1(z,s) +K_2(z,s)]g_1(s)\, d  s,
\end{equation}
where
\begin{equation}\label{equation300}
K_1(z,s):=\Big[ \frac{\nabla}{2\d}\frac{e^{-z\d}- e^{z\d} }{e^{-\d} + e^{\d}}e^{(s-1)\d} +\frac{\nabla}{2\d}e^{(z+s)\d} ,  -\h \frac{e^{z\d}+e^{-z\d}}{e^{-\d} +e^{\d}}e^{(s-1)\d} + \frac{1}{2}e^{(z+s)\d}  \Big],
\end{equation}
\begin{equation}\label{equation301}
K_2(z,s):= \Big[ \frac{\nabla}{2\d}\frac{e^{-z\d}- e^{z\d} }{e^{-\d} + e^{\d}}e^{-(s+1)\d}\,\, , \,\,
  -\h \frac{e^{z\d}+e^{-z\d}}{e^{-\d} +e^{\d}} e^{-(s+1)\d} \Big],
\end{equation}
\begin{equation}\label{equation302}
K_3(z,s)= \Big[  \frac{\nabla}{2\d}e^{-|z-s|\d} \,\, , \,\, \frac{1}{2}e^{-|z-s|\d}\textup{sign($s-z$)}\Big].
\end{equation}

\begin{equation}\label{eqn12}
g_1(z) =  \frac{2\,h+\,h^2 - (z+1)^2 |\nabla\,h|^2}{(1+\,h)^2} \p_z \varphi +\frac{(z+1)\nabla\,h\cdot \nabla\varphi}{1+\,h},\quad g_1(-1)=0,
\end{equation}
\begin{equation}\label{eqn14}
g_2(z) =\frac{(z+1)|\nabla \,h|^2 \p_z\varphi}{(1+\,h)^2}  - \frac{\nabla \,h \cdot \nabla\varphi}{1+\,h} ,\quad g_3(z)= \frac{(z+1)\nabla \,h \p_z\varphi}{1+\,h}.
\end{equation}
From (\ref{fixedpoint}), it is sufficient to derive the following $L^2$ type and $L^\infty$ type estimates for $\nabla_{x,z}\varphi$, which are the very first step and also  very essential. 
\begin{lemma}\label{Sobolevestimate}
Under the smallness condition \textup{(\ref{bootstrapassumption})},  the following  estimates hold for $\nabla_{x,z}\varphi$,
\begin{equation}\label{eqn2200}
 \|\nabla_{x,z}\varphi\|_{L^\infty_z H^k}  \lesssim \| \nabla\psi\|_{H^k} + \| \,h\|_{H^{k+1}} \| \nabla\psi\|_{\widetilde{W^0}},\quad 
\end{equation}
\begin{equation}\label{eqn2201}
\| \nabla_{x}\varphi\|_{L^\infty_z \widetilde{W^\gamma}} \lesssim \| \nabla \psi \|_{\widetilde{W^{\gamma}}} , \quad \| \p_z \varphi\|_{L^\infty_z \widetilde{W^{\gamma}}}\lesssim\|   \psi\|_{\widehat{W}^{\gamma,1+\alpha}}+  \| \,h\|_{\widetilde{W^{\gamma+1}}} \| \nabla \psi \|_{\widetilde{W^\gamma}},
\end{equation}
\begin{equation}\label{eqn2203}
\|\Lambda_{\geq 2}[\nabla_{x,z}\varphi]\|_{L^\infty_z\widetilde{W^{\gamma}}} \lesssim \| \nabla\psi\|_{\widetilde{W^\gamma}}\| \,h\|_{\widetilde{W^{\gamma+1}}},
\end{equation}
\begin{equation}\label{eqn2202}
\| \Lambda_{\geq 2}[\nabla_{x,z}\varphi]\|_{L^\infty_z H^k} \lesssim  \| \,h\|_{\widetilde{W^1}}\| \d\psi\|_{H^k} + \| \nabla \psi\|_{\widetilde{W^0}}\| \,h\|_{H^{k+1}},
\end{equation}
\[
 \| \Lambda_{\geq 2}[\nabla_{x,z}\varphi]\|_{L^\infty_z L^2} \lesssim \big( \| (h, \psi)\|_{W^{6,1+\alpha}} +\|(h, \psi)\|_{W^{6,1}} \| (h, \psi)\|_{W^{6,0}} \big)\|(h, \psi)\|_{H^2},
\]
where $k\leq k'-1$ and $1\leq \gamma \leq \gamma'-1$. In above estimates, the range of $z$ for the $L^\infty_z$ norm is restricted in $[-1,0].$
\end{lemma}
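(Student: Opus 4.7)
The plan is to use the fixed point formulation (\ref{fixedpoint}) as the starting point, which expresses $\nabla_{x,z}\varphi$ as the sum of a principal linear piece in $\psi$ plus integral remainder terms against the kernels $K_1, K_2, K_3$ acting on $g_1, g_2, g_3$; since $g_1, g_2, g_3$ themselves are at least quadratic in $(h, \nabla_{x,z}\varphi)$ by (\ref{eqn12}) and (\ref{eqn14}), this formulation is amenable to a bootstrap/contraction argument in the small data regime guaranteed by (\ref{bootstrapassumption}).

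First, I would control the principal linear piece. The Fourier multipliers $(e^{-(z+1)|\xi|}+e^{(z+1)|\xi|})/(e^{-|\xi|}+e^{|\xi|})$ and $(e^{(z+1)|\xi|}-e^{-(z+1)|\xi|})/(e^{-|\xi|}+e^{|\xi|})$ have uniformly bounded symbols for $z\in[-1,0]$, so the $\nabla_x\varphi$ part of the linear piece is controlled directly by $\|\nabla\psi\|_{H^k}$ (respectively $\|\nabla\psi\|_{\widetilde{W^\gamma}}$). For the $\partial_z\varphi$ component, the symbol at $z=0$ reduces to $|\xi|\tanh|\xi|$, which behaves like $|\xi|^2$ at low frequencies; this is exactly why the stronger $\widehat{W}^{\gamma,1+\alpha}$ norm (accommodating one extra derivative at low frequency) appears in (\ref{eqn2201}), while the unweighted $\widetilde{W^\gamma}$ norm suffices for $\nabla_x\varphi$.

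Next, I would establish kernel estimates for $K_1, K_2, K_3$ via Lemma \ref{Snorm}: after Littlewood--Paley localization $\psi_k(\xi)$, each $K_i(z,s)$ has an $\mathcal{S}^\infty$ norm bounded uniformly, with an exponential gain $e^{-c|z-s|2^k}$ at high frequency from the factors $e^{-|z-s||\xi|}$, and with appropriate boundedness at low frequency. Using the bilinear and trilinear estimates of Lemma \ref{multilinearestimate}, the integrals $\int_{-1}^0 K_i(z,s)(g_2(s)+\nabla\cdot g_3(s))\,ds$ are thus controlled by $L^\infty_z$ norms of $g_2, g_3$ in $L^2$ or $L^\infty$. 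Plugging in the definitions of $g_1, g_2, g_3$, which always pair a factor involving $h$ or $\nabla h$ (small by (\ref{bootstrapassumption})) with a factor of $\nabla_{x,z}\varphi$, and combining with the corresponding estimate for the Taylor expansion denominators $1/(1+h)^j$, one obtains a recursion that closes with a small constant, yielding (\ref{eqn2200})--(\ref{eqn2202}) by iteration.

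The main obstacle is the final $L^\infty_z L^2$ estimate for $\Lambda_{\geq 2}[\nabla_{x,z}\varphi]$ with the specific product structure $\bigl(\|(h,\psi)\|_{W^{6,1+\alpha}}+\|(h,\psi)\|_{W^{6,1}}\|(h,\psi)\|_{W^{6,0}}\bigr)\|(h,\psi)\|_{H^2}$. To achieve this, rather than using the crude bound $\|h\|_{H^{k+1}}\|\nabla\psi\|_{\widetilde{W^0}}$, one must apply a Littlewood--Paley decomposition to all three input factors in each quadratic and cubic piece of $g_1, g_2, g_3$ (taking into account the Taylor expansion of the denominators), identify the input of the largest frequency (to be placed in $L^2$), and exploit the exponential kernel decay $e^{-c|z-s|2^k}$ to sum over frequency scales. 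The weight $2^{(1+\alpha)k}$ in the definition of $W^{6,1+\alpha}$ at low frequency is precisely what is needed to avoid a logarithmic divergence in this summation, reflecting the same threshold phenomenon that motivates the norms introduced in (\ref{Linftyspace}). Once this sharp estimate is secured for the quadratic terms, the cubic and higher contributions are absorbed by extracting an additional small factor $\|(h,\psi)\|_{W^{6,0}}$ and iterating the fixed point formulation one more time.
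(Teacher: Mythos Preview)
Your proposal is correct and follows essentially the same approach as the paper: the paper's own proof simply states that the estimates follow from the fixed point type formulation (\ref{fixedpoint}) in the small data regime and refers to Lemma~3.3 in \cite{wang2} for the details, which is precisely the iteration/contraction scheme you have outlined. In fact you have supplied considerably more detail than the paper itself, and your identification of the role of the $W^{6,1+\alpha}$ weight in summing the low-frequency contributions for the final $L^\infty_z L^2$ estimate is on point.
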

\begin{proof}
Thanks to the small date regime, above estimates can be obtained from the fixed point type formulation in (\ref{fixedpoint}).   With  minor modifications, it is easy to see  that  the proof of  above estimates is almost same as in the proof of Lemma 3.3 in \cite{wang2}.   
\end{proof}

During the paralinearization process,    we usually omit good error terms, which do not lose derivative and have good structures inside. To this end, we define the equivalence relation ``$\ta$'' as follows for    $k \geq 0$,
\[
A\thickapprox B, \quad \textup{if and only if $A-B$ is a good error term in the sense of (\ref{gooderror})},
\]
\begin{equation}\label{gooderror}
\|\textup{good error term}\|_{H^{k}}\lesssim_{k} \big[\|( h,  \psi)\|_{ {W}^{6,1+\alpha}} + \| ( h,  \psi)\|_{ {W}^{6,0}} \| (\,h,  \psi)\|_{ {W}^{6,1}}\big]
\big(\|   h \|_{H^k}  +\|  \psi \|_{H^{(k-1)_{+}}}   \big).
\end{equation}

As a result of  paralinearization, we have a good decomposition  of the Dirichlet-Neumann operator as follows, 
\begin{lemma}\label{paraDN}	
Under the smallness condition, we have
\be\label{paralinearization1}
G(h)\psi \approx   T_{\lambda} \omega - T_{ {V}}\cdot \nabla \,h ,\quad \omega:= \psi-T_{B} h, 
 \ee
 \[
 B \overset{\text{abbr}}{=} B(\,h)\psi =\frac{G(\,h)\psi + \nabla \,h \cdot \nabla \psi}{1+|\nabla \,h|^2},\quad V\overset{\text{abbr}}{=} V(\,h)\psi= \nabla \psi - B \nabla \,h,
 \]
\[ \lambda =\lambda^{(1)} + \lambda^{(0)}, \quad \lambda^{(1)} := \sqrt{(1+|\nabla  h|^2) |\xi|^2 -(\nabla h \cdot \xi)^2}, \]
\[ \lambda^{(0)}= \frac{1+|\nabla h|^2}{2\lambda^{(1)}} \big ( \nabla \cdot \big( \frac{\lambda^{(1)} + i \nabla h \cdot\xi }{ 1+ |\nabla h |^2} \nabla h\big)  + i \nabla_\xi \lambda^{(1)} \cdot \nabla\big( \frac{\lambda^{(1)} + i \nabla h \cdot\xi }{ 1+ |\nabla h |^2}\big)\big),
\]
where ``$\omega$'' is the so-called good unknown variable and $\lambda^{(1)}$ and $\lambda^{(0)}$ are the principle symbol and sub-principle  of the Dirichlet-Neumann operator respectively.
\end{lemma}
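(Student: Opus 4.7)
The approach is the paralinearization/factorization scheme of Alazard--M\'etivier and Alazard--Burq--Zuily, adapted to the flat-bottom setting following \cite{wang2}. I would work on the straightened problem (\ref{harmonic2}) in the strip $\mathcal{S}=\mathbb{R}^2\times[-1,0]$. First, I apply Bony's paralinearization to each of the nonlinear coefficients $\tilde{a},\tilde{b},\tilde{c}$ defined by (\ref{coeff})--(\ref{coeff1}) and to the three products $\tilde{a}\p_z^2\varphi$, $\tilde{b}\cdot\nabla\p_z\varphi$, $\tilde{c}\p_z\varphi$, using the estimates of Lemma \ref{Sobolevestimate} on $\nabla_{x,z}\varphi$ to bound every Bony remainder by the good-error norm in (\ref{gooderror}). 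This reduces $P\varphi=0$ to a paradifferential elliptic equation of the schematic form $T_{\tilde a}\p_z^2\varphi+T_{\tilde b}\cdot\nabla\p_z\varphi+T_{\tilde c}\p_z\varphi+\Delta_x\varphi\approx 0$.

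Next I factor the paradifferential operator as $(\p_z-T_A)(\p_z-T_a)\varphi\approx 0$, where $A$ and $a$ are symbols of order $1$ and $0$ whose principal and subprincipal parts are determined, via Lemma \ref{composi}, by matching the principal and subprincipal terms of the composition against those of the reduced operator. Solving the resulting quadratic for the outgoing root at $z=0$ and selecting the positive one gives the principal symbol $\lambda^{(1)}=\sqrt{(1+|\nabla h|^2)|\xi|^2-(\nabla h\cdot\xi)^2}$; computing the next term in the symbolic expansion, with the usual Poisson-bracket correction from Lemma \ref{composi}, yields the formula for $\lambda^{(0)}$. The ellipticity of $(\p_z-T_a)$ away from $z=-1$ (where the Neumann condition holds) lets me discard the incoming factor modulo good errors, so that $(\p_z-T_\lambda)\varphi|_{z=0}\approx 0$ and hence $\p_z\varphi|_{z=0}\approx T_\lambda\psi$.

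Combining this with (\ref{DN1}) and paralinearizing the factor $(1+|\nabla h|^2)/(1+h)$ in front of $\p_z\varphi|_{z=0}$ gives a preliminary identity of the form $G(h)\psi\approx T_\lambda\psi-T_V\cdot\nabla h+R$, in which the term $R$ contains contributions that formally lose one derivative in $h$. The last step is to introduce the good unknown $\omega=\psi-T_B h$: using $B\approx \p_z\varphi|_{z=0}$ to leading order and the identity $V=\nabla\psi-B\nabla h$, one checks that the substitution $\psi=\omega+T_B h$ produces a term $-T_\lambda T_B h$ whose subprincipal part exactly cancels $R$, leaving the claimed $G(h)\psi\approx T_\lambda\omega-T_V\cdot\nabla h$.

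The symbolic computation itself is classical; the real obstacle is the bookkeeping of the many paraproduct and composition remainders. Each must be shown to belong to the good-error class (\ref{gooderror}), and this requires using \emph{both} the $L^2$-type bounds (\ref{eqn2200}), (\ref{eqn2202}) and the $L^\infty$-type bounds (\ref{eqn2201}), (\ref{eqn2203}) of Lemma \ref{Sobolevestimate}, dispatching each remainder according to whether its high- or low-frequency input is critical. Distinguishing the $W^{6,1+\alpha}$ contribution from the quadratic $W^{6,0}\cdot W^{6,1}$ contribution in (\ref{gooderror}) is the key new feature forced by the finite-depth structure, and is precisely where the low-frequency bounds of Lemma \ref{Sobolevestimate} on $\nabla_{x,z}\varphi$ are indispensable.
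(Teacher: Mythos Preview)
Your outline correctly identifies the Alazard--Burq--Zuily scheme that the paper invokes (its own proof is simply a reference to \cite{alazard1,wang2}), and you rightly flag the finite-depth novelty: every remainder must land in the class (\ref{gooderror}), which is where the low-frequency bounds of Lemma~\ref{Sobolevestimate} enter.

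There is, however, an ordering error. In your step~1 you assert that paralinearization reduces $P\varphi=0$ to $T_{\tilde a}\p_z^2\varphi+T_{\tilde b}\cdot\nabla\p_z\varphi+T_{\tilde c}\p_z\varphi+\Delta_x\varphi\approx 0$, and in step~2 you conclude $\p_z\varphi|_{z=0}\approx T_\lambda\psi$. Neither $\approx$ holds in the sense of (\ref{gooderror}): paralinearizing $\tilde a\,\p_z^2\varphi$ also produces the reverse paraproduct $T_{\p_z^2\varphi}\tilde a$ (and likewise for $\tilde b,\tilde c$), which carries $h$ at high frequency and loses a derivative --- this is precisely the term the good unknown exists to absorb, not a ``Bony remainder'' that Lemma~\ref{Sobolevestimate} disposes of. In the ABZ argument the good unknown is introduced \emph{inside the strip, before} the factorization; it is this modified unknown, not $\varphi$, that satisfies the clean paradifferential equation, and factorization then yields the boundary relation in terms of $\omega$ directly. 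Your step~3 does attempt an equivalent cancellation at the boundary, so all the ingredients are present, but as sequenced the argument does not go through: you cannot reach the conclusion of step~2 while the $T_{(\cdot)}h$ terms are still unabsorbed.
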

\begin{proof}
The detailed proof of above Lemma can be found in \cite{alazard1,wang2}. Only minor modifications are required.

 \end{proof}

As a result of paralinearization, the following good decomposition for the mean curvature $H(h)$ holds, 
\begin{lemma}\label{paraMC}
Under the smallness condition, we have
\[
H(h) \approx - T_{l} h ,\quad  l= l^{(2)}+ l^{(1)},\]
\[
l^{(2)} = (1+|\nabla h|^2)^{-1/2}\Big(|\xi|^2 - \frac{\big(\nabla h \cdot \xi \big)^2}{1+|\nabla h|^2}   \Big),\quad 
l^{(1)} = \frac{-i}{2} ( \nabla_x \cdot \nabla_\xi) h^{(2)}.
\]
\end{lemma}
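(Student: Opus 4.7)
The plan is to recognize $H(h)=\nabla\cdot F(\nabla h)$ with the nonlinear map $F(v):=v/\sqrt{1+|v|^2}$, and to paralinearize the composition $F(\nabla h)$ by Bony's theorem; then to push the outer $\nabla\cdot$ through using the symbolic calculus of Lemma \ref{composi}. The differential of $F$ at $v=\nabla h$ is the matrix
\[
F'(\nabla h)=\frac{1}{\sqrt{1+|\nabla h|^2}}\Big(I-\frac{\nabla h\otimes\nabla h}{1+|\nabla h|^2}\Big),
\]
so Bony's paralinearization gives $F(\nabla h)=T_{F'(\nabla h)}\nabla h+R_{1}$, where $R_{1}$ is a bilinear remainder which is twice more regular than $\nabla h$. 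Taking divergence yields
\[
H(h)=\sum_{j,k}\partial_{j}\,T_{a_{jk}}\,\partial_{k}h+\nabla\cdot R_{1},\qquad a_{jk}(x):=[F'(\nabla h(x))]_{jk}.
\]
A direct estimate for $R_{1}$ using the bootstrap assumption (\ref{bootstrapassumption}) and the tame product/composition estimates for paraproducts shows that $\nabla\cdot R_{1}$ is admissible under the error relation (\ref{gooderror}); this is the step where I expect the most bookkeeping but no real obstacle, since $\nabla h$ enjoys the $L^\infty$ decay built into (\ref{bootstrapassumption}).

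Next, I would apply Lemma \ref{composi} twice to rewrite the main term as a single paradifferential operator. Since $i\xi_{j}$ and $i\xi_{k}$ are independent of $x$, the Moyal product reduces to
\[
(i\xi_{j})\sharp a_{jk}(x,\xi)\sharp(i\xi_{k})=-a_{jk}(x)\xi_{j}\xi_{k}+i\xi_{k}\,\partial_{x_{j}}a_{jk}(x),
\]
with all strictly higher-order correction terms of order $\leq 0$ and hence controllable as good errors under (\ref{gooderror}). Summing in $j,k$ and identifying the quadratic form $a_{jk}\xi_{j}\xi_{k}=(1+|\nabla h|^2)^{-1/2}\bigl(|\xi|^2-(\nabla h\cdot\xi)^2/(1+|\nabla h|^2)\bigr)=l^{(2)}$, the operator symbol becomes $-l^{(2)}+i\xi_{k}\partial_{x_{j}}a_{jk}$. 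Moving the sign to the $l$-side gives $H(h)\thickapprox-T_{l^{(2)}}h+T_{i\xi_{k}\partial_{x_{j}}a_{jk}}h$.

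Finally, I would identify the order $1$ correction with $l^{(1)}$. Computing directly,
\[
(\nabla_{x}\cdot\nabla_{\xi})l^{(2)}=\sum_{j}\partial_{x_{j}}\partial_{\xi_{j}}(a_{kl}\xi_{k}\xi_{l})=2\sum_{j,l}\partial_{x_{j}}a_{jl}\,\xi_{l},
\]
so $\tfrac{-i}{2}(\nabla_{x}\cdot\nabla_{\xi})l^{(2)}=-i\xi_{l}\partial_{x_{j}}a_{jl}$, which is exactly the negative of the order $1$ piece extracted above. Thus $H(h)\thickapprox -T_{l^{(2)}+l^{(1)}}h=-T_{l}h$, as claimed. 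The main obstacle will be making rigorous the ``$\thickapprox$'' at each step: one must verify that the Bony remainder $\nabla\cdot R_{1}$, the residual in the symbolic composition of Lemma \ref{composi}, and the contribution of low frequencies of the paraproduct (which are traditionally omitted from $T_{\cdot}$) all satisfy (\ref{gooderror}). This is achieved by splitting each quadratic error into a High$\times$Low piece (bounded by $\|(h,\psi)\|_{W^{6,1+\alpha}}\|h\|_{H^{k}}$ via Lemma \ref{multilinearestimate}) and a High$\times$High piece (bounded using the extra regularity gain and the trivial $\|\cdot\|_{W^{6,0}}\|\cdot\|_{W^{6,1}}$ factor), exactly matching the two pieces on the right side of (\ref{gooderror}).
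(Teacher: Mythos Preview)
Your proposal is correct and follows exactly the standard route: Bony paralinearization of the nonlinear map $F(\nabla h)$, followed by symbolic composition with the outer divergence via Lemma~\ref{composi}, and identification of the subprincipal symbol through $(\nabla_x\cdot\nabla_\xi)l^{(2)}$. This is precisely the argument the paper is invoking when it writes ``See \cite{alazard1}'': the authors do not give a self-contained proof here but defer to Alazard--Burq--Zuily, whose computation is the one you have reproduced.
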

\begin{proof}
See \cite{alazard1}.
\end{proof}
For other terms inside the nonlinearity of  $\p_t \psi$ in (\ref{waterwaves}),   the following Lemma holds.
\begin{lemma}\label{parave}
Under the smallness condition, we have
\be
\h |\nabla \psi|^2 - \h \frac{(\nabla\,h\cdot \nabla \psi + G(\,h)\psi)^2}{1+|\nabla \,h|^2}\ta T_{V}\cdot \nabla \omega - T_{B}G(\,h)\psi.  
\ee
\end{lemma}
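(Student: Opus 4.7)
The argument proceeds by an algebraic reduction followed by a Bony paralinearization and a matching computation; only bookkeeping distinguishes it from the analogous identity in the infinite-depth setting treated in \cite{alazard1, wang2}. The first step is to eliminate the apparent denominator $1+|\nabla h|^2$. Using $\nabla \psi = V + B\,\nabla h$ from the definition of $V$ and $(1+|\nabla h|^2) B = G(h)\psi + \nabla h\cdot \nabla \psi$ from the definition of $B$, direct expansion gives
\[
\h|\nabla \psi|^2 = \h|V|^2 + B\,V\cdot \nabla h + \h B^2 |\nabla h|^2,
\]
\[
\h\, \frac{(\nabla h \cdot \nabla \psi + G(h)\psi)^2}{1+|\nabla h|^2} = \h(1+|\nabla h|^2) B^2 = \h B^2 + \h B^2 |\nabla h|^2,
\]
so that the quartic pieces $\h B^2 |\nabla h|^2$ cancel and the claim reduces to
\[
\h|V|^2 - \h B^2 + B\,V\cdot \nabla h \;\ta\; T_V \cdot \nabla \omega - T_B G(h)\psi.
\]

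The second step paralinearizes each quadratic term via the Bony decomposition $fg = T_f g + T_g f + R(f,g)$, applied componentwise. This yields $\h|V|^2 = T_V\cdot V + \h \sum_i R(V_i, V_i)$ and $\h B^2 = T_B B + \h R(B, B)$; the Bony remainders gain regularity and, together with the cubic residual $B\,V\cdot \nabla h$, are to be absorbed into the good-error class (\ref{gooderror}) once the $L^\infty$ control of $V$, $B$ and $\nabla h$ provided by the bootstrap (\ref{bootstrapassumption}) and Lemma \ref{Sobolevestimate} is inserted. After this reduction the remaining task is to show that $T_V\cdot V - T_B B$ agrees with $T_V\cdot \nabla \omega - T_B G(h)\psi$ modulo good errors.

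The third step is a short matching computation based on Bony again together with the paradifferential commutator $\nabla(T_B h) = T_B \nabla h + T_{\nabla B} h$. Writing $\omega = \psi - T_B h$ and decomposing $B\,\nabla h$ by Bony yields
\[
T_V \cdot \nabla \omega = T_V \cdot V + T_V \cdot T_{\nabla h} B - T_V \cdot T_{\nabla B} h + T_V \cdot R(B,\nabla h),
\]
while the identity $G(h)\psi = B + B|\nabla h|^2 - \nabla h\cdot \nabla \psi$ gives
\[
T_B G(h)\psi = T_B B + T_B(B|\nabla h|^2) - T_B(\nabla h \cdot \nabla \psi).
\]
Every trailing term on both right-hand sides is a cubic paradifferential expression, which after subtraction produces precisely the residual that must be shown to lie in the good-error class (\ref{gooderror}).

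The main technical obstacle is verifying that each such cubic residual satisfies (\ref{gooderror}). The subtle point is that the bound permits only $\|\psi\|_{H^{(k-1)_+}}$, whereas $B$ essentially costs one derivative of $\psi$, so a factor involving $B$ naively seems to demand $\|\psi\|_{H^{k+1}}$. Resolving this requires using the symbolic calculus of Lemma \ref{composi} to redistribute derivatives onto the $L^\infty$ factors, together with the refined bounds on $\nabla_{x,z}\varphi$ in Lemma \ref{Sobolevestimate}; this is exactly the bookkeeping carried out in \cite{alazard1, wang2}, and only minor modifications are needed here due to the flat-bottom formula (\ref{DN1}).
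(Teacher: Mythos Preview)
Your proposal is correct and follows essentially the same route as the paper, which simply defers to \cite{wang2} for this lemma; your sketch is precisely the standard Alazard--Burq--Zuily computation carried out there. The algebraic reduction, Bony paralinearization, and matching are all accurate, and you have correctly identified the only delicate point (the derivative count on $B$ in the good-error verification), which is resolved exactly as you indicate via the symbolic calculus and Lemma~\ref{Sobolevestimate}.
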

\begin{proof}
See \cite{wang2}. 
\end{proof}

\subsection{Symmetrization of the full system}
In this subsection, we use the results we obtained in the paralinearization  process to find out the good substitution variables such that the system of equations satisfied by the good substitution variables has requisite symmetries inside.

Recall (\ref{waterwaves}) and results in Lemmas  \ref{paraDN}, \ref{paraMC}, and \ref{parave}, we have
\be\label{goodstructure}
\left\{ \begin{array}{l}
\p_t h \ta T_{\lambda} \omega - T_{V}\cdot \nabla h \\
\\
\p_t \psi \ta -T_l h +T_{B}G(h)\psi - T_{V}\cdot \nabla \omega.\\
\end{array}\right.
\ee
The symmetrization process, which is only relevant at the high frequency part, is same as what Alazard-Burq-Zully did in \cite{alazard1}.   We first state the main results and then briefly explain main ideas behind. 

The good substitution variables are given as follows,
\be\label{goodunknown}
U_1= \tilde{\Lambda}( h + T_{p|\xi|^{-1/2}-1 } h)  , \quad U_2 = \omega+ T_{q -1}\omega, \quad \omega = \psi -T_{B} h,
\ee
where
\be\label{solution1}
p= p^{(1/2)} + p^{(-1/2)},\quad q=  ( 1+|\nabla h |^2)^{-1/2} ,
\ee
\be\label{solution2}
p^{(1/2)}= (1+|\nabla h|^2)^{-5/4} \sqrt{\lambda^{(1)}}  , \quad p^{(-1/2)} = \frac{1}{\gamma^{(3/2)}} \big[ q  l^{(1)} - \gamma^{(1/2)} p^{(1/2)} + i \nabla_\xi \gamma^{(3/2)} \cdot \nabla_x p^{(1/2)} \big],
\ee
\be\label{gamma}
\gamma = \sqrt{l^{(2)} \lambda^{(1)}} + \sqrt{ \frac{l^{(2)}}{\lambda^{(1)}}   }\frac{\textup{Re} \lambda^{(0)}}{2} - \frac{i}{2}\big( \nabla_\xi\cdot \nabla_x\big) \sqrt{l^{(2)} \lambda^{(1)}} - |\xi|^{3/2}.
\ee
Note that, in the sense of losing derivatives, $U_1$ and $U_2$ are equivalent to $T_p h $ and $T_q \omega$. Here, we pulled out and emphasized the leading linear terms.

From the bootstrap assumption (\ref{bootstrapassumption}) and estimates in Lemma \ref{Sobolevestimate}, the following estimate holds, 
\be\label{comparableenergy}
\|  U_1 - \tilde{\Lambda} h\|_{H^{N_0}} + \| U_2 - \psi\|_{H^{N_0}} \lesssim (\| h \|_{W^{6,1}} + \|\psi\|_{W^{6,1}}) \| (\tilde{\Lambda} h,\psi)\|_{H^{N_0 }} \lesssim \epsilon_0.
\ee
Therefore, we know that the  energy of $(U_1, U_2)$  is comparable with the energy of $(\tilde{\Lambda} h , \psi)$. It would be sufficient to control the energy of $(U_1, U_2)$.

From (\ref{goodstructure}) and (\ref{goodunknown}), we can derive the following the system of equations satisfied by $U_1$ and $U_2$ as follows, 
\be\label{symmetriedvariable}
\left\{ \begin{array}{l}
\p_t U_1= \Lambda U_2 + T_{\gamma} U_2 - T_{V}\cdot \nabla U_1 + \mathfrak{R}_1\\
\\
\p_t U_2 = -\Lambda U_1 -T_{\gamma} U_1 - T_{V}\cdot \nabla U_2 + \mathfrak{R}_2,\\
\end{array}\right.
\ee
where $\mathfrak{R}_1$ and $\mathfrak{R}_2$ are good error terms in the sense of (\ref{gooderror}).  More precisely, we have
\be\label{remainderestimate2}
\| \mathfrak{R}_1\|_{H^{N_0}} + \| \mathfrak{R}_2 \|_{H^{N_0}} \lesssim_{N_0}   \big(\| (h, \psi) \|_{W^{6,1+\alpha}} + \|(h,\psi)\|_{W^{6,1}}\|(h,\psi)\|_{W^{6,0}}\big) \| (\tilde{\Lambda} h,\psi)\|_{H^{N_0 }}   .
\ee

The idea of symmetrization process  is straightforward. We are trying to find out symbols $p(x,\xi)$ and $q(x, \xi)$ such that the system of equations satisfied by $T_p h $ and $T_q \omega$ is symmetric as in (\ref{symmetriedvariable}). As $\lambda \in \Gamma^1_5$ and $l \in \Gamma^2_5$, naturally, we are looking for   $p\in \Gamma^{1/2}_5$,  $q\in \Gamma^{0}_5$ and  $\lambda\in \Gamma^{3/2}_5$ . 

In order to symmetrize the system of equations satisfied by $(U_1, U_2)$,  the following conditions for $p(x,\xi)$,  $q(x, \xi)$, and  $\lambda(x, \xi)$ have to be satisfied, 
\be\label{condition1}
T_{p} T_{\lambda} \sim T_{\gamma + |\xi|^{3/2}} T_{q}, 
\ee
\be\label{condition2}
   T_{q } T_{l} \sim T_{\gamma+ |\xi|^{3/2}} T_{p},
\ee
\be\label{condition3}
T_{\gamma} \sim (T_{\gamma})^{\ast}, 
\ee
where the equivalence relation ``$\sim$'' is defined in the following sense for any $k \geq 0$,
\[
T_{a_1}\sim T_{a_2}, \quad \textup{iff}\, \| T_{a_1} f -T_{a_2} f\|_{H^k} \lesssim_{k} \big( \| (h, \psi)\|_{W^{6,1+\alpha}}  + \| (h, \psi)\|_{W^{6,1}} \| (h, \psi)\|_{W^{6,0}} \big) \| f\|_{H^k}.
\]
Conditions (\ref{condition1}) and (\ref{condition2}) follow directly from the definitions of $U_1$ and $U_2$ and the highlighted principle symbols in    
(\ref{goodstructure}).  Condition (\ref{condition3}) follows from  avoiding losing derivatives during energy estimate.  From Lemma \ref{adjoint}, we have	
\be\label{adjointgamma}
\big(T_{\gamma} \big)^{\ast} \sim T_{\lambda^{\ast}}, \quad \lambda^{\ast}=  \gamma^{(3/2)} + \overline{\gamma^{(1/2)}} + \frac{1}{i}\nabla_\xi \cdot \nabla_x \gamma^{(3/2)}. 
\ee 
 Hence, (\ref{condition3}) can be reformulated as follows, 
 \be\label{condition3reformulate}
T_{\gamma} \sim T_{\gamma^{(3/2)} + \overline{\gamma^{(1/2)}} + \frac{1}{i}\nabla_\xi \cdot \nabla_x \gamma^{(3/2)}}.
 \ee
By using Lemma \ref{composi}, we can derive several equations about the principle symbols and sub-principle symbols of $p(x,\xi)$, $q(x,\xi)$, and $\gamma(x, \xi)$  from (\ref{condition1}), (\ref{condition2}), and (\ref{condition3reformulate}). By solving those equations, one can see that  the principle symbols and sub-principle symbols of $p(x,\xi)$, $q(x,\xi)$, and $\gamma(x, \xi)$    are given as in (\ref{solution1}), (\ref{solution2}), and (\ref{gamma}).  For more detailed computations, please refer to \cite{alazard1}[subsection 4.2].
\subsection{Energy estimate}

We define the energy as follows, 
\[
E_{N_0}(t):=  \|U_1\|_{L^2}^2+\|U_2\|_{L^2}^2 +  \| U_1^{N_0}\|_{L^2}^2 +   \| U_2^{N_0}\|_{L^2}^2 ,
\]
where
\[
U_1^{N_0} = T_{\beta} U_1, \quad U_2^{N_0} = T_{\beta} U_2,\quad \beta:=\big(\gamma^{(3/2)} +|\xi|^{3/2}\big)^{2N_0/3}.
\]
Note that   
\[
\p_\xi \beta \p_x \big( \gamma^{(3/2)} +|\xi|^{3/2} \big)= \p_\xi\big( \gamma^{(3/2)} +|\xi|^{3/2} \big)  \p_x \beta. 
\]
Hence, very importantly, the operator as follows is an operator of order zero, 
\[
T_{\beta}  T_{\gamma + |\xi|^{3/2}} - T_{\gamma + |\xi|^{3/2}} T_{\beta} .
\]
\begin{remark}
We choose to use the variable $T_\beta U_i$ instead of using $\d^{N_0} U_i$ to   estimate the high order Sobolev norm because the commutator $[T_{|\xi|^{N_0}}, T_{\gamma}]$ is of order $1/2$, which will cause the loss of derivatives. The idea of using the good variables $T_{\beta} U_1$ and  $T_{\beta} U_2$ comes from the work of Alazard-Burq-Zuily \cite{alazard1}.
\end{remark}
From (\ref{gamma}) and the smallness assumption (\ref{bootstrapassumption}), it's easy to see that
\[
E_{N_0}\sim  \| U_1\|_{H^{N_0}}^2+  \| U_1\|_{H^{N_0}}^2\sim \| h \|_{H^{N_0+1/2}}^2 + \| \psi\|_{H^{N_0}}^2. 
\]

Therefore, we can derive the system of equations satisfied by $U_1^{N_0}$ and $U_2^{N_0}$ as follows, 
\be\label{symmetriedvariablehigh}
\left\{ \begin{array}{l}
\p_t U_1^{N_0}= \Lambda  U_1^{N_0} + T_{\gamma}  U_2^{N_0} - T_{V}\cdot \nabla  U_1^{N_0} + \mathfrak{R}_1^{N_0}\\
\\
\p_t U_2^{N_0} = -\Lambda U_1^{N_0} -T_{\gamma}  U_2^{N_0} - T_{V}\cdot \nabla  U_2^{N_0} + \mathfrak{R}_2^{N_0},\\
\end{array}\right.
\ee
where the good remainder  terms $\mathfrak{R}_1^{N_0} $  and  $\mathfrak{R}_1^{N_0} $ satisfy the following estimate, 
\be\label{eqnnn121}
\|\mathfrak{R}_1^{N_0} \|_{L^2}  + \|\mathfrak{R}_2^{N_0} \|_{L^2} \lesssim_{N_0}   \big(\| (h, \psi) \|_{W^{6,1+\alpha}} + \|(h,\psi)\|_{W^{6,1}}\|(h,\psi)\|_{W^{6,0}}\big) \| (\tilde{\Lambda} h,\psi)\|_{H^{N_0 }}.
\ee
From 
(\ref{symmetriedvariable}), (\ref{remainderestimate2}),  (\ref{symmetriedvariablehigh}), and (\ref{eqnnn121}), we have
\[
\Big|\frac{d}{d t} E_{N_0}(t) \Big|\lesssim \| (U_1, U_2)\|_{H^{N_0}}  \| (\mathfrak{R}_1, \mathfrak{R}_2, \mathfrak{R}_1^{N_0}, \mathfrak{R}_2^{N_0})\|_{L^2}   \]
\[ +    \Big|\int_{\R^2}  U_1 \big( -T_{V}\cdot \nabla U_1\big)  +   U_2   \big(-T_{V}\cdot \nabla U_2\big)+ U_1^{N_0} \big( -T_{V}\cdot \nabla U_1^{N_0}\big)  +  U_2^{N_0}   \big(-T_{V}\cdot \nabla U_2^{N_0}\big)  d x \Big|	\]
\[
+ \Big| \int_{\R^2}    U_1   (  T_{\lambda} U_2 ) -   U_2  ( T_{\lambda} U_1 ) +   U_1^{N_0}   (  T_{\lambda} U_2^{N_0} ) -   U_2^{N_0}  ( T_{\lambda} U_1^{N_0} ) \Big|
\]
\[
\lesssim_{N_0}  \big(\| (h, \psi) \|_{W^{6,1+\alpha}} + \|(h,\psi)\|_{W^{6,1}}\|(h,\psi)\|_{W^{6,0}}\big)\|(U_1, U_2)\|_{H^{N_0}}^2  
\]
\[
+ \Big| \int_{\R^2}    U_1   (  T_{\lambda} -(T_{\lambda} )^{\ast}) U_2 )  +   U_1^{N_0}    (  T_{\lambda} -(T_{\lambda} )^{\ast})  U_2^{N_0}    d x    \Big|
\]
\begin{equation}\label{eqn8110000}
\lesssim \big(\| (h, \psi) \|_{W^{6,1+\alpha}} + \|(h,\psi)\|_{W^{6,1}}\|(h,\psi)\|_{W^{6,0}}\big)\|(U_1, U_2)\|_{H^{N_0}}^2 .
\end{equation}
Hence finishing with the proof. In above estimate, we used the following facts,  
\be\label{estimateofgamma1}
\Lambda_{1}[\gamma] = |\xi|^{1/2}\big(\frac{1}{2}\Delta h- \frac{\xi}{|\xi|}\cdot \nabla_x(\nabla h \cdot \frac{\xi}{|\xi|}) ), 
\ee
\be\label{estimateofgamma2}
M^{0}_{5}(\Lambda_{\geq 2}[\gamma] - \Lambda_{\geq 2}[\gamma^{\ast}]) \lesssim \| h\|_{W^{6,1}}^2.
\ee

The   equality  in  (\ref{estimateofgamma1})  can be derived from the explicit formula of $\gamma$ in (\ref{gamma}). Note that $\Lambda_{1}[\gamma]$ only depends on the second derivative of $h$, which explains why we can gain $(1+\alpha)$ derivatives at the low frequency part  for the input putted in $L^\infty$. The   estimate in  (\ref{estimateofgamma2}) is a direct consequence of (\ref{adjointgamma}).

\section{The set-up of the  weighted norm estimates}\label{setupweightednorms}

 Recall the capillary waves system (\ref{waterwaves}) and the quadratic terms of nonlinearities in (\ref{e10}) and (\ref{e11}).  To avoid losing derivatives for the quadratic terms, we define  
\be\label{modified}
\tilde{\psi}:=\psi-T_{\d \tanh\d\psi} h, 	
\ee
  which is the linear part and quadratic part of our good unknown variable ``$\omega$'' in (\ref{paralinearization1}).

From (\ref{e10}) and (\ref{e11}),   direct computations give us  the following equalities, 
\[
\Lambda_{\leq 2}[\p_t h ] = \d\tanh\d \tilde{\psi}+  \d \tanh \d \big(T_{\d \tanh \d \tilde{\psi}} h  \big)
\]
\be\label{eqn483}
- \nabla\cdot(h \nabla \tilde{\psi}) -  \d \tanh \d( h  \d \tanh \d\tilde{\psi}),
\ee
\be\label{eqn484}
\Lambda_{\leq 2}[\p_t \tilde{\psi}] = \Delta h - \h |\nabla \tilde{\psi}|^2 + \h |\d\tanh \d \tilde{\psi} |^2 - T_{\d\tanh \d \tilde{\psi}}  \d\tanh \d \tilde{\psi} -T_{\d\tanh \d \Delta h} h.
\ee
 We remark that the Taylor expansions in (\ref{eqn483}) and (\ref{eqn484})   are in terms of $h$ and $\tilde{\psi}$. In later contexts, the Taylor expansions    are all in terms of $h$ and $\tilde{\psi}$. 

Define $u= \tilde{\Lambda} h + i \tilde{\psi}   $. Recall that   $\tilde{\Lambda}=\d^{ 1/2} \big(\tanh\d\big)^{-1/2}$. Very naturally, we have
\begin{equation}\label{eqn1200}
h = \tilde{\Lambda}^{-1}\big( \frac{u+\bar{u}}{2}\big), \quad \tilde{\psi} =   c_{+} u + c_{-} \bar{u},
\end{equation}
where  $c_{+}:= -i/2$ and $c_{-}:=i/2$. Hence, from (\ref{waterwaves}), (\ref{eqn483}), and (\ref{eqn484}),  we can derive the equation satisfied by $u$  as follows, 
\[
(\p_t + i\Lambda)u = \sum_{\mu, \nu\in \{+,-\}} Q_{\mu, \nu}(u^{\mu}, u^{\nu}) + \sum_{\tau, \kappa,\iota\in \{+,-\}} C_{\tau, \kappa,\iota}(u^{\tau}, u^{\kappa}, u^{\iota}  ) 
\]
\begin{equation}\label{complexversion}
+ \sum_{\mu_1, \mu_2,\nu_1, \nu_2\in \{+,-\}} D_{\mu_1, \mu_2,\nu_1, \nu_2}(u^{\mu_1}, u^{\mu_2}, u^{\nu_1} , u^{\nu_2} ) +  \mathcal{R}, 
\end{equation}
where  $\mathcal{R}$ denotes the quintic and higher order terms.

 We  give the    detailed formulation of quadratic terms  $Q_{\mu, \nu}(\cdot, \cdot)$  here, as the structures inside the quadratic terms are very essential in the whole argument. 

 From (\ref{eqn483}), (\ref{eqn484}), and (\ref{eqn1200}), we have,
\[
Q_{\mu, \nu}(u^{\mu}, u^{\nu})= -\frac{c_{\nu}}{2}\tilde{\Lambda}  \p_x(\tilde{\Lambda}^{-1} u^{\mu} \p_x  u^{\nu}) -\frac{c_{\nu}}{2} \tilde{\Lambda}  \d\tanh \d( \tilde{\Lambda}^{-1}u^{\mu} \d\tanh\d  u^{\nu} - T_{\d\tanh\d  u^{\nu}} \tilde{\Lambda}^{-1}u^{\mu}  )
\]
\[
+ \frac{i c_{\mu}c_{\nu}}{2} \big[-\nabla   u^{\mu}\cdot\nabla  u^{\nu} + \d\tanh\d   u^{\mu} \d\tanh \d    u^{\nu} - T_{ \d\tanh\d   u^{\mu}}\d\tanh \d    u^{\nu}
\]
\begin{equation}\label{quadraticterms}
- T_{\d\tanh \d    u^{\nu}}\d\tanh\d   u^{\mu}\big] - \frac{i  }{4}  T_{\d\tanh \d \Delta u^{\nu}} u^{\mu}.
\end{equation}

Define the profile of the solution $u(t) $ as $f(t):= e^{i t\Lambda} u(t)$.  From (\ref{complexversion}), we have
\[
\p_t \widehat{f}(t, \xi) = \sum_{ (\mu, \nu)\in \{+, -\}} \int_{\R^2 } e^{i t\Phi^{\mu, \nu}(\xi, \eta)}  {q}_{\mu, \nu}(\xi-\eta, \eta) \widehat{f^{\mu}}(t, \xi-\eta) \widehat{f^{\nu}}(\eta) d \eta
\]
\[
+  \sum_{\tau, \kappa,\iota\in \{+,-\}} \int_{\R^2} \int_{\R^2} e^{i t\Phi^{\tau,\kappa, \iota}(\xi, \eta, \sigma)}   {c}_{\tau,\kappa, \iota}(\xi-\eta, \eta-\sigma, \sigma) \widehat{f^{\tau}}(t, \xi-\eta) \widehat{f^{\kappa}}(t,\eta-\sigma) \widehat{f^{\iota}}(t,\sigma) d \eta d \sigma 
\]
\[ 
 +   \sum_{\mu_1, \mu_2,\nu_1, \nu_2\in \{+,-\}}  \int_{\R^2} \int_{\R^2} e^{i t\Phi^{\mu_1, \mu_2,\nu_1, \nu_2}(\xi, \eta, \sigma,\kappa)}   {d}_{\mu_1, \mu_2,\nu_1, \nu_2}(\xi-\eta, \eta-\sigma, \sigma-\kappa, \kappa) \widehat{f^{\mu_1}}(t, \xi-\eta)\]
\be\label{duhamel}
\times \widehat{f^{\mu_2}}(t,\eta-\sigma) \widehat{f^{\nu_1}}(t,\sigma-\kappa) \widehat{f^{\nu_2}}(t, \kappa)d \eta d \sigma d \kappa + e^{it\Lambda(\xi)} \widehat{\mathcal{R}}(t, \xi) ,
\ee
where  the phases $\Phi^{\mu, \nu}(\xi, \eta)$,  $\Phi^{\tau,\kappa, \iota}(\xi, \eta, \sigma)$, and $\Phi^{\mu_1, \mu_2,\nu_1, \nu_2}(\xi, \eta, \sigma,\kappa)$ are defined as follows,
\begin{equation}\label{phases}
\Phi^{\mu,\nu}(\xi, \eta)= \Lambda(|\xi|)- \mu \Lambda(|\xi-\eta|) -\nu \Lambda(|\eta|), \quad   \Lambda(|\xi|):=|\xi|^{3/2}\sqrt{\tanh |\xi|},
\end{equation}
\be\label{phaseofcubic}
\Phi^{\tau,\kappa, \iota}(\xi, \eta,\sigma)= \Lambda(|\xi|)- \tau \Lambda(|\xi-\eta|) -\kappa \Lambda(|\eta-\sigma|) - \iota \Lambda(|\sigma|), 
\ee
\be\label{phaseofquartic}
\Phi^{\mu_1, \mu_2,\nu_1, \nu_2}(\xi, \eta, \sigma,\kappa)=\Lambda(|\xi|)- \mu_1 \Lambda(|\xi-\eta|) -\mu_2 \Lambda(|\eta-\sigma|) - \nu_1 \Lambda(|\sigma-\kappa|)- \nu_2 \Lambda(| \kappa|).  
\ee
From (\ref{quadraticterms}),  we write explicitly the  symbol ${q}_{\mu, \nu}(\xi-\eta, \eta)$ of $Q_{\mu, \nu}(u^{\mu}, u^{\nu})$  as follows, 
\[
 {q}_{\mu, \nu}(\xi-\eta, \eta)= \Big(    \frac{c_{\nu} \tilde{\lambda} (  | \xi|^2)}{ 2\tilde{\lambda} ( |\xi- \eta|^2)} \big(\xi\cdot \eta - |\xi||\eta|\tanh(|\xi|)\tanh(|\eta|)\big)  +  \frac{i c_{\mu}c_{\nu}}{2}\big((\xi-\eta) \cdot \eta+ |\xi-\eta||\eta| \]
 \[ \times \tanh(|\xi-\eta|) \tanh(|\eta|)\big)  \Big)    \tilde{\theta
}(\eta, \xi-\eta ) + \Big(  \frac{c_{\mu} \tilde{\lambda} (  | \xi|^2)}{ 2\tilde{\lambda}(|\eta|^2)}  \big( (\xi-\eta)\cdot \xi- |\xi-\eta||\xi| \tanh(|\xi|)\tanh(|\xi-\eta|)\big)
\]
\be\label{symmetricsymbol}
  + \frac{c_{\nu} \tilde{\lambda} (  | \xi|^2)}{ 2\tilde{\lambda}(|\xi-\eta|^2)}\xi \cdot \eta   + i c_{\mu}c_{\nu} (\xi-\eta)\cdot \eta + \frac{i}{4} |\eta|^2(\tanh |\eta| )^2 \Big) \theta(\eta, \xi-\eta),
\ee
where
 \[  \tilde{\lambda}(  x):= |\xi|^{ 1/4}( \tanh(\sqrt{|\xi|}))^{-1/2}, \quad \tilde{\lambda}(\xi)\approx 1 + \frac{x}{6}, \quad |\xi|\ll 1,
 \]
 \be
\tilde{\theta}(\eta, \xi-\eta):= 1-\theta(\eta, \xi-\eta)-\theta(\xi-\eta, \eta).
 \ee

Note that, we switched the roles of $\xi-\eta$ and $\eta$  in (\ref{symmetricsymbol}) when $|\xi-\eta|\ll |\eta|$. As a result, \emph{ $|\eta|$ is always relatively smaller than $|\xi-\eta|$ inside the symbol  $ {q}_{\mu, \nu}(\xi-\eta, \eta)$, $(\mu, \nu) \in \{+,-\}.$} 

From Lemma \ref{Snorm} and the detailed formula in (\ref{symmetricsymbol}), the following  rough estimate of the size of symbol $ {q}_{\mu, \nu}(\xi-\eta, \eta)$ holds, 
\be\label{symbolquadraticrough}
\|  {q}_{\mu, \nu}(\xi-\eta, \eta)\psi_k(\xi)\psi_{k_1}(\xi-\eta)\psi_{k_2}(\eta)\|_{\mathcal{S}^\infty} \lesssim 2^{2k_{1}}.
\ee

 It turn out  that it is very essential to identify the hidden symmetry inside the cubic terms, which will be very help in later high order weighted norm estimate of the cubic terms. To this end, we prove the following lemma, which  shows the leading part of the symbols of cubic terms, which has the requisite symmetric structure.
\begin{lemma}\label{symbolcubicandquartic}
After writing the cubic term $\Lambda_{3}[B(h)\psi]$ in terms of $u$ and $\bar{u}$ via \textup{(\ref{eqn1200})}, we do dyadic decompositions for all inputs and rearrange inputs such that the following unique decomposition holds
\[
\Lambda_{3}[B(h)\psi] =\sum_{\mu, \nu, \tau\in\{+,-\}}  {C'}_{\mu, \nu, \tau}(u^{\mu}, u^{\nu}, u^{\tau}),
\]
and moreover the first input $u^{\mu}$ of cubic term $  {C'}_{\mu, \nu, \tau}(u^{\mu}, u^{\nu}, u^{\tau})$ has the largest frequency among three inputs. 
The following estimates hold for the  symbol $ {c'}_{\mu, \nu,\tau}(\xi, \eta, \sigma)$ of the cubic term $C'_{\mu, \nu, \tau}(u^{\mu}, u^{\nu}, u^{\tau})$, 
\be\label{symbolcubic}
\|  {c'}_{\mu, \nu, \tau}(\xi, \eta, \sigma)\psi_{k_1}(\xi-\eta)\psi_{k_2}(\eta-\sigma)\psi_{k_3}(\sigma) \|_{\mathcal{S}^\infty}\lesssim 2^{ 2k_1 + 2k_{1,+}}.
\ee
\be\label{removebulk}
\| \big({c'}_{\mu, \nu, \tau}(\xi, \eta, \sigma)-\frac{c_{\mu}}{4} d(\xi)  \big)\psi_{k_1}(\xi-\eta)\psi_{k_2}(\eta-\sigma)\psi_{k_3}(\sigma) \|_{\mathcal{S}^\infty} \lesssim 2^{\max\{ k_2,k_3\}  + 3k_{1,+}  }, \textup{if}\,\, k_2,k_3\leq k_1-10.
\ee
where  the detailed formula of $d(\xi)$ is given in \textup{(\ref{eqn630})}. Moreover, the following  rough estimate holds for the  symbol of quartic terms  $\Lambda_{4}[B(h)\psi]$,  
\be\label{symbolquartic}
\| d_{\mu_1,\nu_1,\mu_2,\nu_2}(\xi, \eta, \sigma,\kappa)\psi_{k_1}(\xi-\eta)\psi_{k_2}(\eta-\sigma)\psi_{k_3}(\sigma-\kappa)\psi_{k_4}(\kappa) \|_{\mathcal{S}^\infty}\lesssim 2^{ 2\max\{k_1,\cdots,k_4\}+ 3\max\{k_1, \cdots,k_4\}_{+}}.
\ee

\end{lemma}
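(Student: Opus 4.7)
The plan is to derive explicit polynomial expressions for $\Lambda_{3}[B(h)\psi]$ and $\Lambda_{4}[B(h)\psi]$ by iterating the fixed-point formula \eqref{fixedpoint} for $\nabla_{x,z}\varphi$, using \eqref{DN1} for $G(h)\psi$ and expanding $(1+|\nabla h|^2)^{-1}$ as a Neumann series. Each resulting monomial is a trilinear or quadrilinear Fourier multiplier built from $|\nabla|$, $\tanh|\nabla|$, and $\nabla$, acting on products of $h$ and $\tilde\psi$. Substituting $h = \tilde\Lambda^{-1}((u+\bar u)/2)$ and $\tilde\psi = c_{+} u + c_{-}\bar u$ via \eqref{eqn1200} turns each monomial into a sum over sign combinations $\mu,\nu,\tau \in \{+,-\}$. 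After dyadically decomposing each input by $P_{k_i}$ and relabeling so the first input always carries the largest frequency, one obtains a unique decomposition $\Lambda_{3}[B(h)\psi] = \sum_{\mu,\nu,\tau} C'_{\mu,\nu,\tau}(u^{\mu}, u^{\nu}, u^{\tau})$.

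For the rough estimate \eqref{symbolcubic}, I would apply Lemma \ref{Snorm} term by term to the explicit trilinear symbol. In each contribution, at most two derivatives (and $\tanh$-factors of size $\mathcal{O}(1)$) are distributed among the three inputs, while the outer $B$-operator is of order zero on the linear level. The conversion between $h$ and $u$ at the highest-frequency slot produces at most a $\tilde\Lambda$-factor, bounded by $2^{k_{1,+}/2}$, so the worst-case bound for every contribution is $2^{2k_{1}+2k_{1,+}}$: the $2^{2k_1}$ absorbs the two derivatives falling on the highest-frequency input, and the $2^{2k_{1,+}}$ accounts for the aggregate $\tilde\Lambda$-conversion when the output is also high frequency.

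For the improved estimate \eqref{removebulk} in the high--low--low regime $k_2, k_3 \leq k_1 - 10$, I would Taylor-expand the symbol in the two small frequencies $\eta-\sigma$ and $\sigma$ around zero. The zeroth-order term (obtained by setting $\eta - \sigma = \sigma = 0$) depends only on the output frequency $\xi$ and, through the conversion in \eqref{eqn1200}, on the sign $\mu$ of the high-frequency input via a factor $c_{\mu}$. Collecting all cubic contributions at this evaluation point and matching them with \eqref{eqn630} identifies the leading bulk as $\tfrac{c_{\mu}}{4}d(\xi)$. The remainder is then first order in $\eta - \sigma$ or in $\sigma$, which gives the gain $2^{\max\{k_2,k_3\}}$ in the $\mathcal{S}^\infty$ norm via Lemma \ref{Snorm}, while the residual $\tilde\Lambda$-conversions at the high-frequency slot contribute at most $2^{3k_{1,+}}$.

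The main obstacle is the precise identification of $d(\xi)$ in \eqref{eqn630}: the cubic contributions to $B(h)\psi$ arise from three distinct sources -- the second iteration of \eqref{fixedpoint}, the numerator $\nabla h\cdot\nabla\psi$, and the Neumann expansion of $(1+|\nabla h|^2)^{-1}$ -- and one must verify that the total zeroth-order contribution, after the change of variables \eqref{eqn1200}, assembles exactly into $\tfrac{c_{\mu}}{4}d(\xi)$ with no residual $\nu,\tau$-dependence. All other terms must carry at least one derivative in the low-frequency variables so that the Taylor remainder indeed produces the gain $2^{\max\{k_2,k_3\}}$. Finally, the quartic estimate \eqref{symbolquartic} follows from a direct application of Lemma \ref{Snorm} to every quartic monomial produced by the iteration; no cancellation is needed, and the bound $2^{2\max + 3\max_+}$ simply records two derivatives and up to three $\tilde\Lambda$-factors in the worst case.
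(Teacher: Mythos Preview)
Your overall strategy---iterate the fixed-point formula \eqref{fixedpoint}, read off the multilinear symbols, and apply Lemma~\ref{Snorm}---is exactly what the paper does. Your Taylor-expansion identification of the bulk for \eqref{removebulk} is equivalent to the paper's: the paper observes that the only terms whose symbol does \emph{not} vanish when the low-frequency variables are set to zero are those in which every derivative falls on the highest-frequency input, and traces these to the single contribution $T_{(2h+h^{2})/(1+h)^{2}}\,\partial_{z}\varphi$ inside $g_{1}(z)$; the terms you list coming from $\nabla h\cdot\nabla\psi$ and from the Neumann expansion of $(1+|\nabla h|^{2})^{-1}$ automatically vanish at zeroth order, since each carries a factor $\nabla h$ at low frequency. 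So the bulk reduces to a single explicit source, which is then iterated twice through $\Lambda_{2}[\partial_{z}\varphi]$ to produce $d(\xi)$ and the prefactor $c_{\mu}/4$ (the $1/4$ being the two factors of $1/2$ from $h=\tilde\Lambda^{-1}(u+\bar u)/2$ at low frequency).

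One point needs correction: the source of the high-frequency losses $2^{2k_{1,+}}$ in \eqref{symbolcubic} and $2^{3\max_{+}}$ in \eqref{symbolquartic} is \emph{not} the $\tilde\Lambda$-conversion (in fact $\tilde\Lambda^{-1}\sim|\xi|^{-1/2}$ at high frequency, which only helps). The loss comes from the total derivative count in the iterated nonlinearity: computing $\Lambda_{3}[g_{i}(z)]$ explicitly one finds at least two and at most \emph{four} derivatives distributed among the three inputs (for instance $(z+1)|\nabla h|^{2}\Lambda_{1}[\partial_{z}\varphi]$), and similarly $\Lambda_{4}$ carries up to five derivatives. The interpolation between ``two derivatives at low frequency'' and ``four (resp.\ five) at high frequency'' is precisely $2^{2k_{1}+2k_{1,+}}$ (resp.\ $2^{2\max+3\max_{+}}$). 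Your claim ``at most two derivatives are distributed among the three inputs'' is therefore incorrect and would not yield the stated bound; fixing the count repairs the argument.
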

\begin{proof}
The detailed formulas of symbols of cubic terms and quartic terms can be derived from iterating the fixed point type formulation of $\nabla_{x,z}\varphi$ in  (\ref{fixedpoint}). To prove (\ref{symbolcubic}) and (\ref{symbolquartic}), it is sufficient to prove the corresponding estimates  for $ \Lambda_{3}[ g_i(z)] $ and $ \Lambda_{4}[ g_i(z)] $ for 	$i \in \{1,2,3\}$.   From (\ref{eqn12}) and (\ref{eqn14}),  we have
\[
\Lambda_{2}[g_1(z)] = 2 h \Lambda_1[\p_z \varphi] + (z+1) \nabla \,h \cdot \Lambda_{1}[ \nabla\varphi], \]
\[ \Lambda_{2}[g_2(z)] =-\nabla \,h \cdot \Lambda_{1}[ \nabla\varphi], \quad  \Lambda_{2}[g_3(z)] =(z+1)\nabla h \Lambda_{1}[ \p_z \varphi].
\]
Recall that
\[
\Lambda_1[\nabla_{x,z} \varphi] = \Bigg[ \Big[ \frac{e^{-(z+1)\d}+ e^{(z+1)\d}}{e^{-\d} + e^{\d}}\Big]\nabla\psi ,  \frac{e^{(z+1)\d}- e^{-(z+1)\d}}{e^{-\d}+e^{\d}} \d \psi\Bigg].
\]
Hence, we know that there are  two derivatives inside $\Lambda_{2}[\nabla_{x,z}\varphi(z)]$ at the low frequency part. We can keep doing this process to check the minimal and maximal numbers of derivatives inside $\Lambda_{3}[\nabla_{x,z}\varphi]$. Again, from (\ref{eqn12}) and (\ref{eqn14}), we have
\[
\Lambda_{3}[g_1(z)]= 2 h \Lambda_{2}[\p_z \varphi] + \big(-3h^2-(z+1)^2 |\nabla h|^2 \big) \Lambda_{1}[\p_z \varphi] -(z+1) h \nabla h \cdot \nabla \varphi,
\]
\[
\Lambda_{3}[g_2(z)]=(z+1)|\nabla h|^2 \Lambda_{1}[\p_z \varphi] + h \nabla h \cdot \Lambda_{1}[\nabla \varphi]- \nabla h \cdot\Lambda_{2}[\nabla \varphi],
\]
\[
\Lambda_{3}[g_3(z)]=(z+1)\nabla h \Lambda_{2}[\p_z \varphi] - (z+1) h \nabla h \Lambda_{1}[\p_z \varphi].
\]
Recall that there are at least two derivatives inside $\Lambda_{2}[\nabla_{x,z}\varphi]$, hence we know that there are at least two derivatives and at most four derivatives in total inside $\Lambda_{3}[\nabla_{x,z}\varphi]$.

Very similarly, we can see that there are at least two derivatives and at most five derivatives inside $\Lambda_4[\nabla_{x,z}\varphi]$. To sum up,  our desired estimates (\ref{symbolcubic}) and  (\ref{symbolquartic}) hold. 

To prove (\ref{removebulk}), we only need to identify the bulk terms that with all derivatives hit on the input that has the largest frequency. Recall   (\ref{eqn12})  and (\ref{eqn14}). The bulk term  only appears in $g_1(z)$, which is $
\displaystyle{T_{   \frac{2h + h^2}{(1+h)^2} }\p_z \varphi}. 
$
 Recall (\ref{fixedpoint}). It is easy to identify  the problematic part of $\Lambda_{2}[\p_z\varphi(z)]$  as follows, 
\[
 \int_{-1}^0 \big( - e^{-|z-s|\d}- e^{(z+s)\d} +   \frac{e^{z\d}+e^{-z\d}}{e^{-\d} +e^{\d}} (e^{(s-1)\d}+ e^{-(s+1)\d}) \big)\]
 \[\times  \frac{e^{s+1} \d - e^{-(s+1)\d}}{ e^{\d}+e^{-\d}} \d^2 \big( T_h  \psi\big)  ds 	+ 2   \frac{e^{(z+1)\d}- e^{-(z+1)\d}}{e^{-\d}+e^{\d}} \d (T_h \psi).
\]
Very similarly, we can identify the symbol of the problematic part $C(h,h,\psi)$ of $\Lambda_{3}[\p_z \varphi ]\big|_{z=0}$ as follows, 
\[
C(h,h,\psi) = \mathcal{F}^{-1}\big[\int_{\R^2} \int_{\R^2} \widehat{\psi}(\xi-\eta)\widehat{h}(\eta-\sigma) \widehat{h}(\sigma) d(\xi)\theta(\sigma,\xi) \theta(\eta-\sigma, \xi) d \eta d \sigma \big],
\]
where
 \[
d(\xi):= 2  \int_{-1}^0 \int_{-1}^0  \big(    \frac{( e^{-(z+1)|\xi|}-e^{(z+1)|\xi|} )}{e^{-|\xi|} +e^{|\xi|}}  \big) \frac{e^{(s+1)|\xi|} - e^{-(s+1) |\xi|}}{ e^{|\xi|}+e^{-|\xi|}} 
 \big(   \frac{(e^{z|\xi|}+e^{-z|\xi|})(e^{(s-1)|\xi|}+ e^{-(s+1)|\xi|})}{e^{-|\xi|} +e^{|\xi|}}  \]
 \be\label{eqn630} 
- e^{-|z-s||\xi|}- e^{(z+s)|\xi|}   \big)  |\xi|^3 ds 	 d z 
-\int_{-1}^0 \big(  \frac{( e^{-(s+1)|\xi|}-e^{(s+1)|\xi|} ) }{e^{-|\xi|} +e^{|\xi|}} \big)^2   |\xi|^2  ds + \tanh(|\xi|)|\xi| .
\ee
 \end{proof}

\subsection{The  good substitution variable}\label{goodvariable}
In this subsection, we will find a good substituted variable ``$v(t)$''  to carry out the  analysis of the dispersion estimate. There are two criteria to be met for  ``$v(t)$''. Firstly, the $L^\infty$-norm of $v(t)$ has to be comparable with the $L^\infty$-norm of $u(t)$, otherwise, it is not helpful  to the $L^\infty$ estimate of $u(t)$. Lastly, the equation satisfied by ``$v(t)$'' has good structures inside. By that, we mean the following two properties hold:  (i)  `` $\nabla_\xi\, \textup{phase}$''  is small when the frequency  second largest frequency of inputs is much smaller than the largest frequency of inputs, (ii) symbol vanishes when the frequency is near  the space resonance set but far away from the time resonance set.

Our good substituted variable is defined as follows, 
\[
v(t)= u(t) + \sum_{\mu, \nu\in\{+,-\}} A_{\mu, \nu}( u^{\mu}(t), u^{\nu}(t)) + \sum_{  \tau, \kappa, \iota  \in\{+,-\}}  B_{ \tau, \kappa, \iota}(  u^\tau(t), u^\kappa(t),  {u}^\iota(t)) 
\]
\be\label{normalformatransfor}
+  \sum_{ \mu_1, \mu_2,\nu_1, \nu_2 \in \{+,-\} } E_{\mu_1, \mu_2,\nu_1, \nu_2}(u^{\mu_1}(t), u^{\mu_2}(t), u^{\nu_1}(t), u^{\nu_2}(t) ),
\ee
where quadratic terms $A_{\mu, \nu}(\cdot, \cdot)$, cubic terms $ B_{ \tau, \kappa, \iota}(\cdot, \cdot, \cdot)$, and quartic terms $ E_{\mu_1, \mu_2,\nu_1, \nu_2}(\cdot, \cdot, \cdot,\cdot)$ are to be determined.

From the equation satisfied by $u(t)$ in (\ref{complexversion}) and definition of $v(t)$ in (\ref{normalformatransfor}). We can compute the equation satisfied by ``$v(t)$''. We  substitute $u(t)$ by $v(t)$ inside the nonlinearity of $\p_t v(t)$  via (\ref{normalformatransfor}) several times such that, up to quartic terms, they are all in terms of $v(t)$.  As a result, we have
\[
(\p_t + i \Lambda) v = \sum_{\mu, \nu \in \{+,-\}} \tilde{Q}_{\mu, \nu}(v^{\mu}, v^{\nu}) + \sum_{\tau, \kappa, \iota\in\{+,-\}}\tilde{C}_{\tau, \kappa, \iota}(v^{\tau}, v^{\kappa}, v^{\iota}) \]
\be\label{realvariableweighted}
+ \sum_{ \mu_1, \mu_2,\nu_1, \nu_2 \in \{+,-\} } \tilde{D}_{\mu_1, \mu_2,\nu_1, \nu_2}(v^{\mu_1}(t), v^{\mu_2}(t), v^{\nu_1}(t), v^{\nu_2}(t) ) + \mathcal{R}_1(t),
\ee
where $\mathcal{R}_1(t)$ is the quintic and higher order terms. The quadratic terms are given as follows, 
\be\label{normalformquadratic}
\tilde{Q}_{\mu, \nu}(v^{\mu},v^{\nu})= Q_{\mu, \nu}(v^{\mu}, v^{\nu}) + i \Lambda ( A_{\mu, \nu}(v^{\mu}, v^{\nu}))  -i \mu A_{\mu, \nu}( \Lambda v^{\mu}, v^{\nu}) - i \nu A_{\mu, \nu}(  v^{\mu}, \Lambda v^{\nu}).
\ee

 Because we need to identify the symmetric structure inside the cubic terms, we also give the detailed formula of $\tilde{C}_{\tau, \kappa, \iota}(v^{\tau}, v^{\kappa}, v^{\iota}) $ here.  	
\[
\tilde{C}_{\tau, \kappa, \iota}(v^{\tau}, v^{\kappa}, v^{\iota}) := \widehat{C}_{\tau, \kappa, \iota}(v^{\tau}, v^{\kappa}, v^{\iota})  + i \Lambda( B_{ \tau, \kappa, \iota}( v^{\tau},    v^\kappa , v^{\iota})  )  -i\tau   B_{ \tau, \kappa, \iota}(\Lambda v^{\tau},    v^\kappa , v^{\iota})  
 \]
 \be\label{e98709}
-i\kappa B_{ \tau, \kappa, \iota}( v^{\tau},    \Lambda v^\kappa , v^{\iota})  -i\iota   B_{ \tau, \kappa, \iota}(  v^{\tau},    v^\kappa , \Lambda v^{\iota}),
\ee
\emph{ where the cubic term   $\widehat{C}_{\tau, \kappa, \iota}(v^{\tau}, v^{\kappa}, v^{\iota}) $ is the unique cubic term associated with the following equality, such that 
  $v^{\tau}$ has the largest frequency among inputs $v^{\tau}, v^{\kappa},$ and $ v^{\iota}$ after we rearrange the inputs,}
\[
\sum_{ \tau, \kappa, \iota \in  \{+,-\}}\widehat{C}_{\tau, \kappa, \iota}(v^{\tau}, v^{\kappa}, v^{\iota}) = 	\sum_{\tau,\kappa, \iota\in\{+,-\} } C_{\tau,\kappa, \iota}(v^{\tau},v^{\kappa}, v^{\iota}) + \sum_{\mu, \nu,\mu_1, \nu_1\in\{+,-\}} A_{\mu, \nu}( P_{\mu}[Q_{\mu_1, \nu_1}(v^{\mu_1},v^{\nu_1})], v^{\nu} )\]
\be\label{modifiedcubiciteration}
+ A_{\mu, \nu}(  v^{\nu},P_{\nu}[Q_{\mu_1, \nu_1}(v^{\mu_1}, v^{\nu_1})] ) - \tilde{Q}_{\mu, \nu}\big( P_{\mu}\big(A_{\mu_1, \nu_1}(v^{\mu_1}, u^{\nu_1})\big), v^{\nu}	\big) - \tilde{Q}_{\mu, \nu}\big(  v^{\mu},  P_{\nu}\big(A_{\mu_1, \nu_1}(v^{\mu_1}, v^{\nu_1})\big)	\big).
\ee
For the quartic terms, we have
\[
\tilde{D}_{\mu_1, \mu_2,\nu_1, \nu_2}(v^{\mu_1}(t), v^{\mu_2}(t), v^{\nu_1}(t), v^{\nu_2}(t) )= \widehat{D}_{\mu_1, \mu_2,\nu_1, \nu_2}(v^{\mu_1}(t), v^{\mu_2}(t), v^{\nu_1}(t), v^{\nu_2}(t) )
\]
\[
 + i \Lambda(E_{\mu_1, \mu_2,\nu_1, \nu_2}(v^{\mu_1}(t), v^{\mu_2}(t), v^{\nu_1}(t), v^{\nu_2}(t) ))- i\mu_1 E_{\mu_1, \mu_2,\nu_1, \nu_2}( \Lambda  v^{\mu_1}(t), v^{\mu_2}(t), v^{\nu_1}(t), v^{\nu_2}(t) )\]
 \[ - i\mu_2 E_{\mu_1, \mu_2,\nu_1, \nu_2}(  v^{\mu_1}(t),  \Lambda v^{\mu_2}(t), v^{\nu_1}(t), v^{\nu_2}(t) )- i\nu_1 E_{\mu_1, \mu_2,\nu_1, \nu_2}(  v^{\mu_1}(t),  v^{\mu_2}(t),  \Lambda v^{\nu_1}(t), v^{\nu_2}(t) )
\]
\be
- i\nu_2 E_{\mu_1, \mu_2,\nu_1, \nu_2}(  v^{\mu_1}(t),  v^{\mu_2}(t),  v^{\nu_1}(t),  \Lambda v^{\nu_2}(t) ),
\ee
 \emph{where   $\widehat{D}_{\mu_1, \mu_2,\nu_1, \nu_2}( v^{\mu_1}, v^{\mu_2}, v^{\nu_1} , v^{\nu_2})$ is the unique decomposition associated with the quartic terms  such that $v^{\mu_1}$ has the largest frequency among  the four inputs}. The detail formula of $\widehat{D}_{\mu_1, \mu_2,\nu_1, \nu_2}(\cdot, \cdot,\cdot, \cdot)$
   can be  obtained explicitly from $A_{\mu, \nu}(\cdot, \cdot),$ $ B_{\tau, \kappa, \iota}(\cdot, \cdot, \cdot),$ $ Q_{\mu, \nu}(\cdot, \cdot, \cdot)$,   $C_{\tau, \kappa, \iota}(\cdot, \cdot, \cdot)$, and the quartic terms  $ D_{\mu_1, \mu_2,\nu_1, \nu_2}(\cdot, \cdot, \cdot, \cdot)$ in (\ref{complexversion}).  Since the detailed formulas are not necessary in later argument, for simplicity, we omit detail formulas here.

In the following context, we discuss which part of frequencies that we want to remove by the normal form transformation we used in (\ref{realvariableweighted}).

Firstly, we consider  the quadratic terms. When  $|\eta|\ll |\xi|$, $\mu=-$ or $|\xi|\ll |\eta|$, $\mu\nu=+$, we know that    ``$\nabla_\xi\,\textup{phase}$ '' is very big, which is not what we want,  and the size of phase is relatively big. Also note that, when $(\xi, \eta)$ lies inside a small neighborhood of $(\xi, \xi/2)$(the space resonance set), ``$\nabla_\eta\,\textup{phase}$ '' is very small, which is also not what we want,  and the size of phase is also relatively big. To cancel out those parts, it would be sufficient to choose our the symbol $a_{\mu, \nu}(\cdot, \cdot)$ of the bilinear operator $A_{\mu, \nu}(\cdot, \cdot)$   as follows, 
\[
a_{\mu, \nu}(\xi-\eta, \eta)= \sum_{k_2 \in \mathbb{Z}}   \frac{i q_{\mu, \nu}(\xi-\eta, \eta)}{  \Phi^{\mu, \nu}(\xi  , \eta)}\psi_{k_2}(\eta)\big(  \psi_{\leq k_2-5}(\xi-2\eta)\psi_{\leq k_2+4}(\xi-\eta)\psi_{\geq k_2-5}(\xi)\]
\be\label{e87900} 
 + \mathbf{1}_{\{-\}}(\mu) \psi_{\geq k_2+5}(\xi-\eta)  + \mathbf{1}_{\{+\}}(\mu\nu) \psi_{\leq k_2-5} (\xi)\psi_{\leq k_2+4}(\xi-\eta)   ),
\ee
where $\mathbf{1}_S(\cdot)$ denotes the characteristic function of  set $S$.

Now, we proceed to consider   the cubic terms. Note that  we have the  following scenarios such that we are close to the space resonance set but not the time resonance set, i.e., the phase is of size $|\xi|^{2}(1+|\xi|)^{-1/2}$, which is   highly oscillating,
\begin{enumerate}
	\item[$\bullet$] When $\tau=-$  and $|\eta|, |\sigma|\ll |\xi|.$
\item[$\bullet$]  When $\eta$ is very close to $\xi/2$ (space resonance in $\eta$ set) and $\sigma \ll  |\eta|$ .
\item[$\bullet$] When $\eta$ is very close to $2\xi/3$ and $\sigma$ is very close to $\xi/3$, i.e., $(\xi-\eta, \eta-\sigma, \sigma)$ is close to $(\xi/3, \xi/3, \xi/3)$,  which is the space resonance in $\eta$ and $\sigma$ set.
\item[$\bullet$] When $(\xi-\eta,\eta-\sigma,\sigma)$ is very close to $(-\tau\xi, -\kappa\xi, -\iota\xi)$, which is the space resonance in $\eta$ and $\sigma$ set, where $(\tau, \kappa, \iota)\in \widetilde{S}:=  \{ (+,-,-), (-,+,-), (-,-,+)\}$. See subsubsection (\ref{allcomparable}) for more details.
\end{enumerate}

 To cancel out those parts of frequencies, we choose the symbol $b_{  \tau, \kappa,\iota}(\cdot, \cdot, \cdot)$ of the trilinear operator $B_{ \tau, \kappa, \iota}(\cdot, \cdot, \cdot)$   as follows, 
\[
b_{  \tau, \kappa,\iota}(\xi-\eta, \eta-\sigma, \sigma)= \frac{ i \widehat{c}_{ \tau, \kappa,\iota}(\xi-\eta, \eta-\sigma, \sigma)}{\Phi^{ \tau, \kappa,\iota}(\xi, \eta, \sigma)} \sum_{k\in \mathbb{Z}}  \psi_k(\xi) \big(  \mathbf{1}_{\widetilde{S} } ((\tau,\kappa, \iota)) \psi_{\leq k-10}((1+\tau)\xi-\eta)\psi_{\leq k-10}(\sigma+\iota \xi)
\]
\be\label{eqn200}
 +\psi_{\leq k-10}(\eta-2\xi/3)\psi_{\leq k-10}(\sigma-\xi/3) + \psi_{\leq k-10}(\eta-\xi/2)\psi_{\leq k-10}(\sigma)   +   \mathbf{1}_{\{-\}}(\tau)  \psi_{\leq k-10}(\eta-\sigma)\psi_{\leq k-10}( \sigma)\big),
\ee 
where $\widehat{c}_{\tau, \kappa, \iota}(\cdot, \cdot, \cdot)$  is the  associated symbol  of cubic term  $\widehat{C}_{\tau, \kappa,\iota}(\cdot, \cdot, \cdot)$ in (\ref{modifiedcubiciteration}).

Very similarly, for the quartic terms, we cancel out the case when $|\eta|,|\sigma|,|\kappa|\ll |\xi|$, $\mu_1=-$ and the case when $|\eta-\xi/2|, |\sigma|,|\kappa| \ll |\xi|$ by choosing the symbol $e_{  \mu_1, \mu_2,\nu_1, \nu_2}(\cdot, \cdot, \cdot)$ of the quartic term $E_{\mu_1, \mu_2,\nu_1, \nu_2}(\cdot, \cdot, \cdot,\cdot)$   as follows, 
\[
e_{  \mu_1, \mu_2,\nu_1, \nu_2}(\xi-\eta, \eta-\sigma, \sigma-\kappa, \kappa)= \frac{ i \widehat{d}_{\mu_1, \mu_2,\nu_1, \nu_2}(\xi-\eta, \eta-\sigma, \sigma-\kappa, \kappa)} {\Phi^{\mu_1, \mu_2,\nu_1, \nu_2}(\xi, \eta, \sigma,\kappa)}\sum_{k\in \mathbb{Z}}\psi_k(\xi)
\]
\be\label{quarticsymbolnormalform}
\times \big( \psi_{\leq k-10}(\eta-\xi/2) \psi_{\leq k-10}(\sigma-\kappa)\psi_{\leq k-10}( \kappa)  +    \mathbf{1}_{\{-\}}(\mu_1) \psi_{\leq k-10}(\eta)\psi_{\leq k-10}(\sigma-\kappa)\psi_{\leq k-10}( \kappa) \big),
\ee
 where  $\widehat{d}_{\mu_1, \mu_2, \nu_1, \nu_2}(\cdot, \cdot, \cdot, \cdot)$  is the associated symbol of quartic term $\widehat{ D}_{\mu_1, \mu_2, \nu_1, \nu_2}(\cdot, \cdot, \cdot,\cdot)$. 

 Because of the rearrangement of inputs inside  $\tilde{C}_{\tau, \kappa,\iota}(\cdot, \cdot, \cdot)$  and $\tilde{ D}_{\mu_1, \mu_2, \nu_1, \nu_2}(\cdot, \cdot, \cdot,\cdot)$, we know that   \emph{the following estimate holds inside the support of symbol $ \tilde{c}_{ \tau, \kappa,\iota}(\xi-\eta, \eta-\sigma, \sigma)$},
 \[
|\xi-\eta|\gtrsim |\eta-\sigma| \gtrsim |\sigma|.
\]
and  \emph{the following estimate holds inside the support of symbol $ \tilde{d}_{\mu_1, \mu_2,\nu_1, \nu_2}(\xi-\eta, \eta-\sigma, \sigma-\kappa, \kappa)$},
 \[
|\xi-\eta|\gtrsim |\eta-\sigma| \gtrsim |\sigma-\kappa|\gtrsim |\kappa|.
\]

  Define the profile of $v(t)$ as $g(t):=e^{i t\Lambda} v(t)$. From above discussion and (\ref{realvariableweighted}), we have 
\[
\p_t g (t, \xi) \psi_k(\xi) = \sum_{  \mu, \nu \in \{+, -\}} \sum_{k_1,k_2\in \mathbb{Z}} B^{\mu, \nu}_{k,k_1,k_2}(t,\xi)  + \sum_{  \tau, \kappa, \iota  \in \{+, -\}}  \sum_{k_3\leq k_2\leq k_1} T^{\tau, \kappa,\iota}_{k,k_1,k_2,k_3}(t, \xi)
\]
\be\label{realduhamel}
 + \sum_{\mu_1, \mu_2,\nu_1, \nu_2 \in \{+,-\}} \sum_{k_4\leq k_3\leq k_2\leq k_1}  K^{\mu_1, \mu_2,\nu_1, \nu_2}_{k,k_1,k_2,k_3,k_4}(t, \xi ) + e^{it\Lambda(\xi)} \widehat{\mathcal{R}_{1}}(t, \xi)\psi_k(\xi),
\ee
where  
\be\label{eqn650}
B^{\mu, \nu}_{k,k_1,k_2}(t,\xi):=\int_{\R^2 } e^{i t\Phi^{\mu, \nu}(\xi, \eta)}  \tilde{q}_{\mu, \nu}(\xi-\eta, \eta)  \widehat{g^{\mu}_{k_1}}(t, \xi-\eta) \widehat{g^{\nu}_{k_2}}(\eta) \psi_k(\xi) d \eta,
\ee
\be\label{eqn440}
T^{\tau, \kappa,\iota}_{k,k_1,k_2,k_3}(t, \xi) = \int_{\R^2} \int_{\R^2} e^{i t \Phi^{\tau,\kappa, \iota}(\xi, \eta, \sigma)}   \tilde{d}_{\tau,\kappa, \iota}(\xi-\eta, \eta-\sigma, \sigma) \widehat{g^{\tau}_{k_1}}(t, \xi-\eta) \widehat{g^{\kappa}_{k_2}}(t,\eta-\sigma) \widehat{g^{\iota}_{k_3}}(t,\sigma) \psi_k(\xi) d \eta d \sigma,	
\ee
\[
K^{\mu_1, \mu_2,\nu_1, \nu_2}_{k,k_1,k_2,k_3,k_4}(t, \xi ) = \int_{\R^2} \int_{\R^2} e^{i t \Phi^{\mu_1, \mu_2,\nu_1, \nu_2}(\xi, \eta, \sigma,\kappa)}   \tilde{e}_{\mu_1, \mu_2,\nu_1, \nu_2}(\xi-\eta, \eta-\sigma, \sigma-\kappa, \kappa)
\]
\be\label{eqn441}
\times  \widehat{g^{\mu_1}_{k_1}}(t, \xi-\eta)  \widehat{g^{\mu_2}_{k_2}}(t,\eta-\sigma) \widehat{g^{\nu_1}_{k_3}}(t,\sigma-\kappa) \widehat{g^{\nu_2}_{k_4}}(t, \kappa) \psi_k(\xi) d \eta d \sigma d\kappa,
\ee
where
\[
\tilde{q}_{\mu, \nu}(\xi-\eta, \eta)= \sum_{k_2\in \mathbb{Z}} q_{\mu, \nu}(\xi-\eta, \eta) \psi_{k_2}(\eta) \big(   \psi_{\geq k_2-9}(\xi-2\eta)\psi_{\leq k_2+4}(\xi-\eta)\psi_{\geq k_2-5}(\xi)\]
\be\label{eqn1}
 +\frac{1+\mu}{2} \psi_{ \geq k_2+5}(\xi-\eta)  + \frac{ (1-\mu\nu)  }{2}\psi_{\leq k_2-5} (\xi) \psi_{\leq k_2+4}(\xi-\eta)  \big),
\ee
\be\label{eqn1643}
\tilde{d}_{ \tau, \kappa, \iota }(\xi-\eta, \eta-\sigma, \sigma)= \tilde{c}_{ \tau, \kappa, \iota }(\xi-\eta, \eta-\sigma, \sigma)+ i b_{ \tau, \kappa, \iota }(\xi-\eta, \eta-\sigma,\sigma) \Phi^{ \tau, \kappa, \iota }(\xi, \eta, \sigma),
\ee
\[
\tilde{e}_{\mu_1, \mu_2,\nu_1, \nu_2}(\xi-\eta, \eta-\sigma, \sigma-\kappa, \kappa) =  \tilde{d}_{\mu_1, \mu_2,\nu_1, \nu_2}(\xi-\eta, \eta-\sigma, \sigma-\kappa, \kappa)\]
\be\label{quarticsymbolnormalform2}
+ i e_{\mu_1, \mu_2,\nu_1, \nu_2}(\xi-\eta, \eta-\sigma, \sigma-\kappa, \kappa) \Phi^{\mu_1, \mu_2,\nu_1, \nu_2	}(\xi, \eta, \sigma,\kappa).
\ee
Recall that we rearranged the inputs in the construction of good substitution. 
   Hence, in (\ref{realduhamel}), we have $k_3\leq k_2 \leq k_1 $ and $k_4\leq k_3\leq k_2\leq k_1$. 

 From Lemma \ref{Snorm}, (\ref{symbolquadraticrough}), and the fact that phases  are all of size $\max\{|\xi|, |\eta|\}^{2}(1+ \max\{|\xi|, |\eta|\})^{-1/2}$ in the support of symbols of normal form transformations, the following estimate holds, 
\[
\|a_{\mu, \nu}(\xi-\eta, \eta)\psi_k(\xi)\psi_{k_1}(\xi-\eta)\psi_{k_2}(\eta)\|_{\mathcal{S}^\infty } + \|b_{\tau, \kappa, \iota}(\xi-\eta, \eta)\psi_k(\xi)\psi_{k_1}(\xi-\eta)\psi_{k_2}(\eta-\sigma)\psi_{k_3}(\sigma)\|_{\mathcal{S}^\infty}
\] 
\be\label{normaformsize} 
+ \|e_{  \mu_1, \mu_2,\nu_1, \nu_2}(\xi-\eta, \eta-\sigma, \sigma-\kappa, \kappa)\psi_k(\xi)\psi_{k_1}(\xi-\eta)\psi_{k_2}(\eta-\sigma)\psi_{k_3}(\sigma-\kappa)\psi_{k_4}(\kappa)\|_{\mathcal{S}^\infty } \lesssim 2^{k_{1,+}} .  
\ee

From (\ref{eqn1}), it is easy to verify that the following identity holds when $|\eta|\leq 2^{-10} |\xi|$,  
\be\label{eqn900}
\tilde{q}_{-, \nu}(\xi-\eta, \eta)=0, \quad \tilde{q}_{+, \nu}(\xi-\eta, \eta)= q_{+, \nu}(\xi-\eta,\eta).
\ee  
Moreover, when $|\xi|\ll |\eta|$, we have
\be\label{eqn902}
\tilde{q}_{\mu, \mu}(\xi-\eta, \eta)=0, \quad \quad \mu\in\{+,-\}. 
\ee

Recall (\ref{symmetricsymbol}). From the explicit formula, we can identify the leading part ``$c(\xi)$'' of $q_{+, \nu}(\xi-\eta, \eta)$ in the case when $|\eta|\ll |\xi|$ as follows, 
\be\label{eqn939}
c(\xi):= \frac{c_{+}}{2} \tilde{\lambda}(|\xi|^2)|\xi|^2\big(1- \tanh(|\xi|)^2 \big).
\ee
After taking $c(\xi)$ out of the symbol $\tilde{q}_{\mu, \nu}(\cdot, \cdot)$, it behaves better. More precisely, from Lemma \ref{Snorm}, the following estimate holds,
\be\label{eqn932}
\| \big( \tilde{q}_{+, \nu}(\xi-\eta, \eta)- c(\xi)\big) \psi_{k_1}(\xi-\eta) \psi_{k_2}(\eta) \|_{\mathcal{S}^\infty}\lesssim 2^{k_2+k_1}, \quad \textup{if $k_2\leq k_1-10$}.
\ee

In later high order weighted norm estimate, we will also need to use the hidden symmetry inside the symbol   $\tilde{d}_{ \tau, \kappa, \iota }(\xi-\eta, \eta-\sigma, \sigma)$ when $|\sigma|, |\eta|\ll |\xi|$. To this end, we identify the leading  symbol inside $\tilde{d}_{ \tau, \kappa, \iota }(\xi-\eta, \eta-\sigma, \sigma)$ first.   From   (\ref{eqn200}) and (\ref{eqn1643}), we know that we only have to consider the case when $\tau=+$ and the leading part of $\tilde{d}_{\tau, \kappa, \iota}(\xi-\eta, \eta-\sigma, \sigma)$ is same as the leading part of $\tilde{c}_{\tau, \kappa, \iota}(\xi-\eta, \eta-\sigma, \sigma)$. Recall (\ref{modifiedcubiciteration}) and (\ref{e87900}). It is easy to verify the following estimate holds when $ k_2, k_3\leq k_1-10,$
\be\label{e88987}
\| (\tilde{d}_{+,   \kappa,\iota}(\xi-\eta, \eta-\sigma, \sigma) - e(\xi) )\psi_{k_2}(\eta-\sigma) \psi_{k_3}(\sigma)\psi_{k_1}(\xi-\eta)\|_{\mathcal{S}^\infty} \lesssim 2^{\max\{k_2,k_3\}+k_1+4k_{1,+}}. 
\ee
where
\be\label{e88988}
e(\xi):= \frac{c_{+}}{4}d (\xi) -\frac{i c(\xi)^2}{\Lambda(|\xi|)},
\ee
 where ``$d(\xi)$'' is defined in (\ref{eqn630}).  The first part of $e(\xi)$ comes from the cubic term $ C_{\tau,\kappa, \iota}(u^{\tau},u^{\kappa}, u^{\iota})$ in  (\ref{modifiedcubiciteration}), see   (\ref{removebulk}) in Lemma  \ref{symbolcubicandquartic}. The second part of  $e(\xi)$ comes from   the composition of quadratic terms  and the normal form transformation in  (\ref{modifiedcubiciteration}).

Lastly, we consider the symbol of quartic terms. 
The precise formulas of  symbols $\tilde{e}_{\mu_1, \mu_2,\nu_1, \nu_2}(\xi-\eta, \eta-\sigma, \sigma-\kappa, \kappa)$  is not so important here. It is good enough to know that it satisfies a similar estimate  as in  
(\ref{symbolquartic}). The proof of this claim follows easily from estimate (\ref{symbolquartic}) in Lemma   \ref{symbolcubicandquartic}.

\subsection{Analysis of phases}
Recall that $  \Lambda(|\xi|):=|\xi|^{3/2}\sqrt{\tanh |\xi|}$. Let    $\lambda( x):= \Lambda(\sqrt{x})$.  The following approximation holds when $|\xi|$ is very close to zero, 
\be\label{expansion1}
 \Lambda(|\xi|)\approx |\xi|^2- \frac{1}{6}|\xi|^4, \quad \lambda(  |\xi|) \approx  |\xi| - \frac{1}{6} |\xi|^2, \quad |\xi|\ll 1. 
\ee
 Recall (\ref{phases}), we have the following expansion when $|\eta|\ll |\xi|,$
\[
\Phi^{+,\nu}(\xi, \eta)= \Lambda(|\xi|) - \Lambda(|\xi-\eta|) - \nu \Lambda(|\eta|)
\]
\be\label{degeneratephase}
= \lambda(|\xi|^2)  - \lambda(|\xi|^2-2\xi\cdot \eta + |\eta|^2) - \nu \lambda(|\eta|^2)= 2\lambda'(|\xi|^2) \xi \cdot \eta + \mathcal{O}(|\eta|^2).
\ee

When $|\xi|\ll |\eta|$, the following approximation holds for the phase $\Phi^{\mu, \nu}(\xi, \eta)$ when $\nu=-\mu$, 
\[
\Phi^{\mu, -\mu}(\xi, \eta)= \lambda(|\xi|^2) - \mu\big( \lambda(|\xi-\eta|^2) - \lambda(|\eta|^2) \big)
\]
\be\label{eqn920}
=  \lambda(|\xi|^2) - \mu\big( \lambda(|\xi|^2-2\xi \cdot \eta +|\eta|^2) - \lambda(|\eta|^2) \big)= 2\mu \lambda'(|\eta|^2) \xi\cdot \eta  + \mathcal{O}(|\xi|^2).
\ee

Note that, when $\eta$ is not very close to $\xi/2$ (space resonance set), e.g., $|\eta-\xi/2|\geq 2^{-10} |\xi|$,  the following estimates hold ,
\be\label{eqn928}
|\nabla_\eta \Phi^{\mu, \nu}(\xi, \eta)| = 2\big| \mu \lambda'(|\xi-\eta|^2) (\xi-\eta) - \nu \lambda'(|\eta|^2) \eta   \big| \gtrsim |\xi| \big( |\xi-\eta|+|\eta|+1\big)^{-1/2},
\ee
 \be\label{eqn929}
|\nabla_\eta \Phi^{\mu, \nu}(\xi, \eta)|  + |\nabla_\xi \Phi^{\mu, \nu}(\xi, \eta)| \lesssim  \max\{|\xi|, |\eta|\}(  |\xi| +|\eta| +1)^{-1/2}.
 \ee
Suppose that $|\eta|\sim 2^{k_2},  |\xi|\sim 2^{k}, |\xi-\eta|\sim 2^{k_1}, k_2\leq k_1 $. From (\ref{eqn928}),  it is easy to verify that the following estimate holds inside the support of $\tilde{q}_{\mu, \nu}(\xi-\eta, \eta)$, 
 \be\label{hitsphasedenominator}
2^{k_2}\frac{\nabla_\eta^2  \Phi^{\mu, \nu}(\xi, \eta) }{ | \nabla_\eta   \Phi^{\mu, \nu}(\xi, \eta)| } + 2^{k}\frac{\nabla_\eta\nabla\xi  \Phi^{\mu, \nu}(\xi, \eta) }{ | \nabla_\eta   \Phi^{\mu, \nu}(\xi, \eta)| } \lesssim 1.
  \ee

\subsection{The set-up of $Z$-norm estimate}
Recall the   $Z_1$-normed space and the  $Z_2$-normed space we defined in (\ref{loworderweightnrom}) and (\ref{highorderweightnorm}). The spatial localization  function $\varphi_j^k(x)$ used there is defined as follows, 
  \begin{equation}\label{spatiallocalization}
 \varphi_j^{k}(x):=\left\{\begin{array}{cc}
\tilde{\psi}_{(-\infty, -k]}(x) & \textup{if}\, k + j =0 \,\textup{and}\, k \leq 0,\\
\tilde{\psi}_{(-\infty, 0]}(x) & \textup{if}\,  j =0 \,\textup{and}\, k \geq 0,\\
\tilde{\psi}_j(x) & \textup{if}\,  k+j \geq 1 \,\textup{and}\, j \geq 1.\\
\end{array}\right.
 \end{equation}

We first show that the $L^\infty$ norm of $v(t)$ and the $L^\infty$ norm of $u(t)$ are comparable.  More precisely, the following Lemma holds,
 \begin{lemma}\label{lemmacomparabledecay}
Under the bootstrap assumption \textup{(\ref{bootstrapassumption})}, we have
\be\label{comparabledecay}
\sup_{t\in[0,T]}(1+t) \|v(t)-u(t)\|_{W^{6,1+\alpha}} + \| v(t)-u(t)\|_{H^{N_0-10	}}\lesssim \epsilon_0.
\ee
\end{lemma}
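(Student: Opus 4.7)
The plan is to unfold the definition of the normal form substitution in \textup{(\ref{normalformatransfor})} so that
\[
v(t)-u(t) = \sum_{\mu,\nu} A_{\mu,\nu}(u^\mu,u^\nu) + \sum_{\tau,\kappa,\iota} B_{\tau,\kappa,\iota}(u^\tau,u^\kappa,u^\iota) + \sum_{\mu_i,\nu_i} E_{\mu_1,\mu_2,\nu_1,\nu_2}(u^{\mu_1},u^{\mu_2},u^{\nu_1},u^{\nu_2}),
\]
and then estimate each multilinear operator separately by Lemma \ref{multilinearestimate}, using the uniform symbol bounds $\|a_{\mu,\nu}\|_{\mathcal{S}^\infty_{k,k_1,k_2}},\|b_{\tau,\kappa,\iota}\|_{\mathcal{S}^\infty_{k,k_1,k_2,k_3}},\|e_{\mu_1,\mu_2,\nu_1,\nu_2}\|_{\mathcal{S}^\infty_{k,k_1,k_2,k_3,k_4}} \lesssim 2^{k_{1,+}}$ from \textup{(\ref{normaformsize})}, combined with the bootstrap assumption \textup{(\ref{bootstrapassumption})}.

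For the $W^{6,1+\alpha}$ piece, I would put all inputs in $L^\infty$ and use \textup{(\ref{bilinearesetimate})}--\textup{(\ref{trilinearesetimate})}. The bootstrap gives $\|P_{k_i} u\|_{L^\infty} \lesssim \epsilon_1(1+t)^{-1} 2^{-(1+\alpha)k_i - 6 k_{i,+}}$, so for the bilinear piece
\[
(2^{(1+\alpha)k}+2^{6k_+})\,\|P_k A_{\mu,\nu}(u^\mu,u^\nu)\|_{L^\infty} \lesssim \epsilon_1^2 (1+t)^{-2}\!\!\sum_{k_1,k_2}\!\! 2^{k_{1,+}}(2^{(1+\alpha)k}+2^{6k_+})\prod_{i=1,2}\!\! 2^{-(1+\alpha)k_i - 6 k_{i,+}}.
\]
The support constraint in the convolution forces $k\leq\max(k_1,k_2)+O(1)$; in High$\times$Low the factor $2^{-6k_{1,+}}$ absorbs $2^{k_{1,+}}\cdot 2^{6k_+}$ with exponential slack, and in High$\times$High a geometric gain in $\max(k_1,k_2)-k$ handles the low output. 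Summing in $k$ gives $\|A_{\mu,\nu}(u^\mu,u^\nu)\|_{W^{6,1+\alpha}}\lesssim \epsilon_1^2(1+t)^{-2}$. An analogous triple/quadruple loop handles the $B$ and $E$ pieces (the extra factor $(1+t)^{-1}$ or $(1+t)^{-2}$ from the extra inputs only improves matters). Multiplying through by $(1+t)$ yields the $W^{6,1+\alpha}$ bound $\lesssim \epsilon_1^2(1+t)^{-1}\lesssim \epsilon_0$, using $\epsilon_1=\epsilon_0^{5/6}$.

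For the $H^{N_0-10}$ piece, I would instead put the input with the largest frequency in $L^2$ (using $\|P_{k_1}u\|_{L^2}\lesssim \epsilon_1(1+t)^\delta 2^{-N_0 k_{1,+}-k_{1,-}/2}$, coming from the bootstrap energy bound on $(\tilde\Lambda h,\psi)$) and the remaining inputs in $L^\infty$. For the bilinear piece,
\[
2^{(N_0-10)k_+}\|P_k A_{\mu,\nu}(u^\mu,u^\nu)\|_{L^2}\lesssim \sum_{k_1,k_2} 2^{(N_0-10)k_+} 2^{k_{1,+}} \|P_{k_1}u\|_{L^2}\|P_{k_2}u\|_{L^\infty},
\]
and because $k\leq k_1+O(1)$ in the support, $2^{(N_0-10)k_+-N_0 k_{1,+}+k_{1,+}}\lesssim 2^{-9 k_{1,+}}$ produces enough decay to sum, yielding $\|A_{\mu,\nu}(u^\mu,u^\nu)\|_{H^{N_0-10}}\lesssim \epsilon_1^2 (1+t)^{-1+\delta}\lesssim \epsilon_1^2\lesssim \epsilon_0$. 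The cubic and quartic pieces gain at least one further factor of $(1+t)^{-1}$ from the additional $L^\infty$ input, so the same scheme works with plenty of room.

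The main genuine concern is the frequency summation: the $2^{k_{1,+}}$ loss from dividing by the phase (encoded in \textup{(\ref{normaformsize})}) has to be absorbed uniformly in all four interaction regimes (High$\times$Low, Low$\times$High, comparable High$\times$High, and High$\times$High$\to$low) enabled by the cutoffs in \textup{(\ref{e87900})}. However, the high regularity weights $6$ in $W^{6,1+\alpha}$ and $N_0$ in the energy leave many spare derivatives, so the book-keeping, while tedious, does not require any structural gain beyond \textup{(\ref{normaformsize})} itself; no cancellation in the symbol $\tilde q_{\mu,\nu}$ is needed at this stage, which is why this lemma is placed before the more delicate weighted-norm analysis.
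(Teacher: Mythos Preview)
Your skeleton is right, but the $W^{6,1+\alpha}$ argument has a genuine gap: placing \emph{all} inputs in $L^\infty$ does not close the dyadic sum at low input frequencies. From the bootstrap you only know $\|P_{k'}u\|_{L^\infty}\lesssim \epsilon_1(1+t)^{-1}(2^{6k'}+2^{(1+\alpha)k'})^{-1}$, so in the High$\times$High$\to$Low block of $A_{\mu,\nu}$ (the third cutoff in \textup{(\ref{e87900})}, with $k_1\sim k_2<0$ and $k\le k_1$) the contribution after summing the output weight in $k$ is $\sim\epsilon_1^2(1+t)^{-2}2^{-(1+\alpha)k_1}$, and the remaining sum over $k_1<0$ diverges --- your ``geometric gain in $\max(k_1,k_2)-k$'' handles the sum in $k$ but not the sum in $k_1$. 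The same defect appears in the High$\times$Low block (second cutoff, $\mu=-$): after the output weight is absorbed by the large input, you are left with $\sum_{k_2}\|P_{k_2}u\|_{L^\infty}$, which $W^{6,1+\alpha}$ alone does not bound.

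The fix --- and the content of the paper's terse interpolated estimate $\|u\|_{W^{6,1+\alpha}}^{4/3}\|u\|_{H^{N_0}}^{2/3}$ --- is to bring in the energy norm for the very low frequency input via Bernstein: use $\|P_{k'}u\|_{L^\infty}\lesssim 2^{k'}\|P_{k'}u\|_{L^2}\lesssim 2^{k'}\epsilon_1(1+t)^\delta$ below a threshold $k'\lesssim -m/(2+\alpha)$ (writing $t\sim 2^m$) and the $W^{6,1+\alpha}$ bound above it. The resulting decay is strictly better than $(1+t)^{-1}$ (roughly $(1+t)^{-1-1/(2+\alpha)+O(\delta)}$ for the bilinear piece), which suffices after multiplying by $(1+t)$. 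Your $H^{N_0-10}$ argument is essentially the paper's (largest input in $L^2$, others in $L^\infty$), but drop the stray $2^{-k_{1,-}/2}$ factor in your $L^2$ bound --- the energy bootstrap gives no low-frequency gain --- and note that the low-frequency $L^\infty$ sum there is again closed only by the same Bernstein-plus-energy trick.
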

\begin{proof}
From the $L^\infty-L^\infty$ type bilinear estimate and (\ref{normaformsize}),  the following estimate holds after putting the input with the largest frequency in $L^\infty$, 
\[
\| v(t)-u(t)\|_{W^{6, 1+\alpha}} \lesssim \| u(t)\|_{W^{6,1+\alpha}}^{4/3 }\| u(t)\|_{H^{N_0}}^{2/3}\lesssim (1+t)^{-6/5}\epsilon_1^2\lesssim (1+t)^{-6/5}\epsilon_0,
\]
\[
 \| v(t)-u(t)\|_{H^{N_0-10	}} \lesssim \| u(t)\|_{H^{N_0}} \| u(t)\|_{W^{4,0}} \lesssim \epsilon_1^2\lesssim \epsilon_0.
\]
\end{proof}

As a result of above Lemma,  it would be sufficient   to prove the improved $L^\infty$ type estimate for $v(t)$. Recall the definitions of $Z_1$ norm and $Z_2$ norm in (\ref{loworderweightnrom}) and (\ref{highorderweightnorm}),  we expect that the $Z_1$ norm of the profile $g(t)$ doesn't grow and the $Z_2$ norm of the profile only grows appropriately. 
Hence, we make the   bootstrap assumption as follows for some $T'\in(0,T]$,
\be\label{smallness}
\sup_{t\in[0,T']} 
    (1+t)   \| e^{-i t \Lambda} g(t)\|_{W^{6,1+\alpha}}+   \| g (t)\|_{Z_1}+ (1+t)^{-\tilde{\delta}} \| g(t)\|_{Z_2} \lesssim \epsilon_1:=\epsilon_0^{5/6}.
\ee
As a direct consequence, we  derive the following estimates  for any $t\in[2^{m-1}, 2^m]\subset[0,T'],$
\[
 \| e^{-i t \Lambda} g_k(t)\|_{L^\infty} \lesssim \min\{2^{-m-(1+\alpha)k -6k_{+}} , 2^{-m+\tilde{\delta}m -k} \} \epsilon_1,\quad \| g_{k}(t)\|_{L^2} \lesssim 2^{-(N_0-10)k_{+}+\delta m }, 
\] 
 \[
 \| g_{k,j}\|_{L^2}\lesssim \min\{ 2^{-j-(1+\alpha)k -8k_{+}} , 2^{-2j-2k + \tilde{\delta} m }\}\epsilon_1,\quad \| g_k\|_{L^1} \lesssim \sum_{j\geq -k_{-}} 2^{k+j} \| g_{k,j}\|_{L^2} \lesssim 2^{  \tilde{\delta} m  }\epsilon_1,
\]

To close the argument, it would be sufficient if we could prove the following  estimate,
\be\label{desired1}
\sup_{t_1, t_2\in[2^{m-1}, 2^m]}  \| g(t_2)- g(t_1)\|_{Z_1}\lesssim 2^{-\delta m }\epsilon_0,
\ee
\be\label{desired2}
 \sup_{t_1, t_2\in[2^{m-1}, 2^m]} \| g (t_2)\|_{Z_2}^2- \|g (t_1)\|_{Z_2}^2\lesssim 2^{ 2 \tilde{\delta} m }\epsilon_0.
\ee

\section{The improved estimate of the low order weighted norm }\label{loworderweight}

In this subsection, we mainly prove (\ref{desired1}) under the 
bootstrap assumption (\ref{smallness}). Recall (\ref{realduhamel}).  In the first subsection, we will estimate the quadratic terms $B^{\mu, \nu}_{k,k_1,k_2}(\xi, \eta)$ in details. In the second subsection, we will handle the cubic terms $T^{\tau, \kappa,\iota}_{k,k_1,k_2,k_3}(t, \xi )$ and quartic terms $K^{\mu_1, \mu_2,\nu_1, \nu_2}_{k,k_1,k_2,k_3,k_4}(t, \xi )   $ together  as the methods we  will use are very similar. For the quintic and higher order remainder term  $\widehat{\mathcal{R}_{1}}(t, \xi)$, estimate (\ref{eqnj878}) in Lemma \ref{remaindertermweightednorm} is very sufficient.

\subsection{The $Z_1$-norm estimate of quadratic terms: when $|k_1-k_2|\leq 10$} 
Note that, from the $L^2\rightarrow L^1$ type Sobolev embedding and $L^2-L^2$ type estimate, the following rough estimate holds for any $\mu, \nu\in\{+, -\}$,
\[
\|  \mathcal{F}^{-1}[\int_{t_1}^{t_2} B^{\mu, \nu}_{k,k_1,k_2}(t, \xi) d t]\|_{B_{k,j}} \lesssim 2^{(2+\alpha) k + m + j+ 2k_1 + 10 k_{1,+}} \| g_{k_1}\|_{L^2} \| g_{k_2}\|_{L^2} \]
\[\lesssim 2^{(2+\alpha) k + m + j+ (2-2\alpha)k_1 -(N_0-12) k_{1,+}}\epsilon_0.
\]
From above estimate, we can first rule out the case when $k\leq -(1+\delta)(m+j)/ (2+\alpha)$   or $ k_1\leq  -(1+\delta)(m+j)/ (4-\alpha)$ or $k_1\geq (m+j)/(N_0-30)$. As a result, it is sufficient to consider the case when $k$ and $k_1$ are restricted in the following range, 
\be\label{ruledoutregion}
 -(1+\delta)(m+j)/ (2+\alpha)\leq k \leq k_1\leq (m+j)/( N_0-30),\quad 
k_1 \geq -(1+\delta)(m+j)/ (4-\alpha).
\ee

Recall (\ref{eqn650}). We  do spatial localizations for two inputs and have the   decomposition as follows, 
\be\label{eqn310}
  B^{\mu, \nu}_{k,k_1, k_2  }(t, \xi)=\sum_{j_1\geq -k_{1,-}, j_2\geq -k_{2,-}} B^{\mu, \nu,j_1,j_2}_{k,k_1 ,k_2 }(t, \xi),
 \ee
 \be\label{spatiallocalized}
 B^{\mu, \nu,j_1,j_2}_{k,k_1 ,k_2 }(t, \xi)=  \int_{\R^2 } e^{i t\Phi^{\mu, \nu}(\xi, \eta)}  \tilde{q}_{\mu, \nu}(\xi-\eta, \eta) \widehat{g^{\mu}_{k_1,j_1}}(t, \xi-\eta) \widehat{g^{\nu}_{k_2,j_2}}(\eta)\psi_k(\xi) d \eta.
\ee
 {where we used the abbreviation $g_{k_1,j_1}:=P_{[k-2,k+2]}\big[ \varphi_{j_1}^{k_1}(x) P_{k_1} g]$}. We will also use this abbreviation throughout this paper.

After using the inverse Fourier transform, we have
 \be\label{inversefourier}
 \mathcal{F}^{-1}[ B^{\mu, \nu,j_1,j_2}_{k,k_1 ,k_2 }(t, \xi)](x)= \int_{\R^2\times \R^2 } e^{i x \cdot \xi + i t\Phi^{\mu, \nu}(\xi, \eta)}  \tilde{q}_{\mu, \nu}(\xi-\eta, \eta) \widehat{g^{\mu}_{k_1,j_1}}(t, \xi-\eta) \widehat{g^{\nu}_{k_2,j_2}}(\eta) \psi_k(\xi) d \eta d \xi.
 \ee

\noindent $\bullet$\quad When $j \geq (1+\delta)\max\{m + k_{1 }, -k_{-}\}+  2\tilde{\delta}m  $.\quad We first consider the case  when $\min\{j_1, j_2\}\geq  j-\delta j -\delta m $, the following estimate holds,
\[
\sum_{\min\{j_1, j_2\}\geq  j-\delta j -\delta m  } \|  \mathcal{F}^{-1}[ \int_{t_1}^{t_2} B^{\mu, \nu}_{k_1,j_1,k_2,j_2}(t,\xi)  d t ]\|_{B_{k,j}} \lesssim \sum_{\min\{j_1, j_2\} \geq  j-\delta j -\delta m  }  2^{(2+\alpha)k + m + j + 10k_{+}+  2k_1}  
\]
\[
\times \| g_{k_1,j_1}\|_{L^2} \| g_{k_2,j_2}\|_{L^2} \lesssim 2^{(2+\alpha)k+m+\tilde{\delta}m + 10\delta m  -(2-2\delta)j + (2-2\alpha)k_1- 6k_{1,+}}\epsilon_0 \lesssim 2^{-2\delta m -2\delta j}\epsilon_0.
\]
Now we proceed to consider the case   $\min\{j_1, j_2\}\leq  j-\delta j -\delta m$.   For this case, we do integration by parts in ``$\xi$'' for (\ref{inversefourier}) many times to see rapidly decay.  
Note that the following estimate holds from (\ref{eqn929}), 
 \be\label{hitsphase}
|\nabla_\xi\big(x\cdot\xi + t \Phi^{\mu, \nu}(\xi, \eta)\big)|= \big|x + t\nabla_\xi \Phi^{\mu, \nu}(\xi, \eta) \big|\varphi_k^j(x)\sim 2^{j}.
 \ee
If $j_2=\min\{j_1,j_2\}$, then we do change of variables to switch the role of $\xi-\eta$ and $\eta$.  As a result, the following estimate holds, 
\[
|\nabla_\xi\big(x\cdot\xi + t \Phi^{\mu, \nu}(\xi, \xi-\eta)\big)|= \big|x + t\nabla_\xi \Phi^{\mu, \nu}(\xi, \xi-\eta) \big|\varphi_k^j(x)\sim 2^{j}.
\]
In whichever case, by doing integration by parts in $\xi$ once, we gain $2^{-j}$ by paying the price of at most $ \max\{2^{\min\{j_1,j_2\}}, 2^{-k}\} $. Hence, the net gain of doing integration by parts in ``$\xi$'' once is at least $2^{-\delta m -\delta j}$. After doing this process many times, we can see rapidly decay.

\noindent $\bullet$\quad When   $j\leq (1+\delta) \max\{m + k_{1 }, -k_{-}\}+  2\tilde{\delta}m$.  As $j$ is bounded from above now, 
from (\ref{ruledoutregion}), we have the following upper bound and lower bound for $k$ and $k_1$,
\be\label{rangehh1}
 -m/(1+\alpha/3)\leq k\leq k_1\leq 2 \beta m,\quad j\leq  \max\{m + k_{1 }, -k_{-}\}+  3\tilde{\delta}m, \quad \beta:=1/(N_0-50),
\ee

Hence, it would be sufficient to consider fixed $k$ and $k_1$  inside the range (\ref{rangehh1}), as  there are at most $m^3$ cases to consider, which is only a logarithmic loss. 

After doing integration by parts in $\eta$ many times, we can rule out the case when $\max\{j_1, j_2\} \leq m + k_{-} - 3\beta m $. It remains to consider the case when $\max\{j_1, j_2\} \geq m + k_{-} - 3\beta m $. From $L^2-L^\infty$ type bilinear estimate in Lemma \ref{multilinearestimate}, the following estimate holds after putting the input with  the maximum spatial concentration in $L^2$ and the other input in $L^\infty$, 
\[
\sum_{\max\{j_1, j_2\} \geq m + k_{-} -3 \beta m}   \| \mathcal{F}^{-1}[ \int_{t_1}^{t_2} B^{\mu, \nu}_{k_1,j_1,k_2,j_2}(t,\xi) d t ] \|_{B_{k,j}}\]
\[\lesssim 2^{(1+\alpha)k + m + j +2k_1 + 10 k_{+}-m- (1+\alpha) k_1 } \min\{ 2^{- m-k_{-}-(1+\alpha)k_1+6\beta m 	}, 2^{ -2k_1 - 2(m + k_{-} -3 \beta m) +\tilde{\delta}m	} \} \epsilon_1^2\]
\be\label{e890}
\lesssim \min\{2^{\alpha k +12k_{+}+ (1-2\alpha)k_1 +10\beta m},2^{-(1-\alpha)k+12k_{+} -\alpha k _1-m+10\beta m } \}\epsilon_0
\lesssim 2^{-10\delta m }\epsilon_0.
\ee

\subsection{The $Z_1$-norm estimate of quadratic terms: when $k_2\leq k_1- 10$}
 Recall (\ref{eqn900}). \emph{For the case we are considering, we have $\mu=+$.}   Recall (\ref{eqn939}) and (\ref{eqn932}). It motivates us to    split the symbol ``$\tilde{q}_{+, \nu}(\xi, \eta)$'' into two parts as follows,
 \[
 \tilde{q}_{+, \nu}(\xi-\eta, \eta) =   q^1_{+, \nu}(\xi-\eta, \eta) + q^2_{+, \nu}(\xi-\eta, \eta), 
 \] 
\be\label{symboldecomposition}
  q_{+, \nu}^1(\xi-\eta, \eta)= c(\xi), \quad q_{+, \nu}^2(\xi-\eta, \eta)= q_{+, \nu}(\xi-\eta, \eta)-    c(\xi).
\ee
Hence, we do the decomposition as follows, 
\[
 \sum_{  \nu \in\{+,-\}} \int_{t_1}^{t_2}B^{+, \nu}_{k,k_1, k_2  }(t, \xi)  d t =  \sum_{i=1,2} I^{  i}_{k,k_1, k_2}, \]
 \[
I^{  i}_{k,k_1, k_2}= \sum_{  \nu\in\{+,-\}}    \int_{t_1}^{t_2} \int_{\mathbb{R}^2}  e^{i t\Phi^{+, \nu}(\xi, \eta)}   q^i_{\mu, \nu}(\xi-\eta, \eta)   \widehat{g_{k_1}^{ }}(t,\xi-\eta)  \widehat{g^\nu_{k_2}}(t,\eta) \psi_k(\xi) d \eta d t,\quad i=1,2.\]
Recall (\ref{symboldecomposition}). Since $ q^1_{\mu, \nu}(\xi-\eta, \eta) $ actually doesn't depend on the sign ``$\nu$'', we have
 \[
I^{1}_{k,k_1,k_2}= 2\int_{t_1}^{t_2} \int_{\mathbb{R}^2}   e^{i t(\Lambda(|\xi|)-\Lambda(|\xi-\eta|)}  c(\xi)   \widehat{g_{k_1}^{ }}(t,\xi-\eta) \widehat{\textup{Re}(v) }(t,\eta)  \psi_{k_2}(\eta) \psi_k(\xi) d \eta d t.  
\] 
From (\ref{normalformatransfor})  and estimate (\ref{eqn400}) in Lemma \ref{Linftyxi}, the following estimate holds after using the volume of the support of ``$\eta$'',
\[
\| I^{  1}_{k_1,k_2}\|_{B_{k,j}} \lesssim \sup_{t\in [2^{m-1},2^m]} 2^{(3+\alpha)k+ m + j+ 10 k_{+}} \| g_{k_1}(t)\|_{L^2} 2^{2k_2}  \| \widehat{\textup{Re}(v)}(t, \xi)\psi_{k_2}(\xi)\|_{L^\infty_\xi}
\]
\[
\lesssim 2^{(3+\alpha)k+ m + \delta m + j +2k_2 -(N_0-30)k_+}( \| \widehat{h}(t, \xi)\psi_{k_2}(\xi)\|_{L^\infty_\xi} + \| u\|_{H^{10}}^2+  \| u\|_{H^{10}}^3 +  \| u\|_{H^{10}}^4 ) 
\]
\be\label{eqn302}
\lesssim 2^{(3+\alpha)k+ 2m +10\delta m + j + 3k_2 -(N_0-30)k_+} \epsilon_0+ 2^{(3+\alpha)k + 3m+10\delta m  + j +4k_2 -(N_0-30)k_+}  \epsilon_0.
\ee
 
Now we proceed to estimate $I^{  2}_{k_1,k_2}$. Recall (\ref{symboldecomposition}) and (\ref{eqn932}).  From the $L^2-L^\infty$ type  bilinear  estimate (\ref{bilinearesetimate}) in Lemma \ref{multilinearestimate} and $L^\infty\rightarrow L^2$ type Sobolev embedding, we have
\[
\| I^{ 2}_{k_1,k_2}\|_{B_{k,j}} \lesssim \sup_{t\in [2^{m-1},2^m]} 2^{(2+\alpha)k+ m + j +k_2+k_1 +10 k_{+}} \| g_{k_1}(t)\|_{L^2} \| e^{i t\Lambda} g_{k_2}(t)\|_{L^\infty}
\]
\be\label{eqn303}
\lesssim  2^{(3+\alpha)k -(N_0-10)k_+ + m + j + 	2k_2 +2\delta m } \epsilon_0.
\ee

To sum up, from (\ref{eqn302}) and (\ref{eqn303}), we can rule out the  case when 
$
k_2\lesssim -(1+5\delta)\max\{ (m+j)/2, (3m +j)/4  \}  
$  or $k\geq 4(m+j)/(N_0-40)$.  Now, we only need to consider the case when $k_2$ is restricted in the following range, 
\be\label{lowerboundk2}
 -(1+5\delta)\max\{ (m+j)/2, (3m +j)/4 \} 	\leq k_2 \leq k\leq  (3m+j)/(N_0-40).
\ee

Very similar to what we did in the case when $|k_1-k_2|\leq 10$, we separate into two cases based on the size of ``$j$'' as follows.

\noindent $\bullet$ When $j\geq (1+\delta)\max\{m + k ,-k_{-}\} +10\delta m$. We first consider the case   when $\min\{j_1, j_2\}\leq j-\delta j - \delta m $. Same as we considered in the High $\times$ High type interaction, we also do  integration by parts in $\xi$ many times to see rapidly decay. 
Now, we proceed to consider
 the case when $\min\{j_1, j_2\} \geq j-\delta j -\delta m $. From $L^2-L^\infty$ type bilinear estimate and $L^\infty\rightarrow L^2$ type Sobolev embedding, we have
\[
\| I_{k_1,k_2}^1\|_{B_{k,j}}\lesssim \sum_{ \min\{j_1, j_2\} \geq j-\delta j -\delta m}2^{(1+\alpha)k + 10 k_{+}+ m + j + 2k_1  + k_2}\| g_{k_1,j_1}\|_{L^2} \| g_{k_2,j_2}\|_{L^2}
\]
\[
\lesssim 2^{(1+\alpha)k  +k_2+ (1+50\beta)m  -(1-50\beta)j}  2^{-j/2-k_2/2} \epsilon_1^2\lesssim 2^{-\beta m } \epsilon_0.
\]

\noindent $\bullet$\quad    When $j\leq (1+\delta) \max\{m + k ,-k_{-}\} +10\delta m $. For this case,  whether $j_1$ is less than $j_2$ makes a difference.

\noindent $\oplus$\quad  If $j_1 \leq j_2$.\quad   For this case, we don't need to do change of coordinates to switch the role between $\xi-\eta$ and  $\eta$. Note that $|\nabla_\xi \Phi^{+, \nu}(\xi, \eta)|\lesssim |\eta|$, hence we can do better for $j$. More precisely, we can rule out the case when $j\geq \max\{m + k_2 , -k_{-}\} +100\beta m$ and $j_1\leq j-\delta m$ by doing integration by parts in $\xi$ many times.  If $j\geq  \max\{m + k_2 , -k_{-}\} +100\beta m $ and  $j-\delta m \leq j_1\leq j_2$, then the following estimate holds after using the $L^2-L^\infty$ type bilinear estimate and $L^\infty\rightarrow L^2$ type Sobolev embedding,
\[
\sum_{j-\delta m \leq j_1\leq j_2}  \| \mathcal{F}^{-1}[ \int_{t_1}^{t_2} B^{+, \nu}_{k_1,j_1,k_2,j_2}(t,\xi) d t ] \|_{B_{k,j}} \lesssim \sum_{j-\delta m \leq j_1\leq j_2} 2^{(1+\alpha)k + 10k_{+} + m + j + 2k_1} \| g_{k_1, j_1}\|_{L^2} 
\]
\[
 \times 2^{k_2} \| g_{k_2,j_2}\|_{L^2} \lesssim 2^{(1+\alpha)k  +k_2+ (1+50\beta)m  -(1-50\beta)j}  2^{-25\beta j -25\beta k_2 } \epsilon_1^2\lesssim 2^{-\beta m } \epsilon_0.
\]

It remains to consider the case when  $j\leq \max\{m + k_2 , -k_{-}\} +100\beta m $. When $k_{-}+k_2\leq -m  +\beta m $, it is easy to see our desired estimate holds from (\ref{eqn302}) and (\ref{eqn303}). Hence, we only have to consider the case when $k_{-}+k_2\geq -m  +\beta m $. For this case, we have $j\leq m +k_2 +100\beta m $.  Recall (\ref{lowerboundk2}), we know that $k_2 \geq -4m/5-30\beta m. $

 Same as in (\ref{eqn310}), we also do spatial localizations for two inputs.  After doing integration by parts in ``$\eta$'' many times, we can rule out the case when $j_2 \leq   m + k_{1,-}-10\delta m.$ Therefore, it remains to consider the case when $j_2\geq m +k_{1,-}-10\delta m$. After putting $g_{k_2,j_2}$ in  $L^2$ and  putting $g_{k_1, j_1}$  in  $L^\infty$, we have 
\[
\sum_{ j_2\geq \max\{m + k_{1,-}-10\delta m, j_1\} } \| \mathcal{F}^{-1}[ \int_{t_1}^{t_2} B^{+, \nu}_{k_1,j_1,k_2,j_2}(t,\xi) d t ] \|_{B_{k,j}} \lesssim \sum_{ j_2\geq \max\{m +  k_{1,-}-10\delta m, j_1\} } 2^{(1+\alpha) k+10 k_+  }
\]
\[
 \times  2^{2k_1+ m+ j }\sup_{t\in[2^{m}, 2^{m+1}] }\| e^{-it \Lambda} g_{k_1,j_1}(t)\|_{L^\infty} \| g_{k_2, j_2}(t)\|_{L^2}\lesssim 2^{-m-k_2+150\beta m }\epsilon_1^2 \lesssim 2^{-\beta m }\epsilon_0.
\]

\noindent $\oplus$\quad If $-k_2\leq j_2\leq j_1$. We first consider the case when $k_1+k_2\leq -4m/5 $.  From the $L^2-L^\infty$ type bilinear estimate and $L^\infty\rightarrow L^2$ type Sobolev embedding, the following estimate holds,
\[
\sum_{ j_2\leq j_1}  \| \mathcal{F}^{-1}[ \int_{t_1}^{t_2} B^{+, \nu}_{k_1,j_1,k_2,j_2}(t,\xi) d t ] \|_{B_{k,j}} \lesssim \sum_{j_2 \leq j_1} 2^{(1+\alpha)k +10k_{+}+ m + j +2k_1} \| g_{k_1, j_1}\|_{L^2} 2^{k_2} \|g_{k_2,j_2}\|_{L^2}
\]
\[
\lesssim \sum_{-k_2 \leq j_1} 2^{2m+ (4+\alpha)k_1 +k_2} 2^{-2k_1 -2j_1+ 50\beta m } \epsilon_1^2 \lesssim 2^{(2+\alpha)k +3k_2+2m+ 50\beta m }\epsilon_1^2  \lesssim 2^{- \beta m }\epsilon_0.
\]

It remains to consider the case when $k_1+k_2 \geq -4m/5 $. For this case, we do integration by parts in $\eta$ many times to rule out the case when $j_1 \leq m+k_{1,-}-10\delta m $. For the case when $j_1 \geq m + k_{1,-}-10\delta m $, the following estimate holds from the $L^2-L^\infty$ type bilinear estimate,
\[
\sum_{  j_1\geq \max\{j_1, m+k_{1,-}-10\delta m \}}  \| \mathcal{F}^{-1}[ \int_{t_1}^{t_2} B^{+, \nu}_{k_1,j_1,k_2,j_2}(t,\xi) d t ] \|_{B_{k,j}} \lesssim \sum_{  j_1\geq \max\{j_1, m+k_{1,-}-10\delta m \}}2^{(1+\alpha)k + 10k_{+}  } \]
\[\times  2^{m + j +2k_1} \sup_{t\in[2^{m-1},2^{m}]}\| g_{k_1, j_1}(t)\|_{L^2}   \| e^{-i t\Lambda} g_{k_2,j_2}(t)\|_{L^\infty}  \lesssim 2^{-m  -(1+\alpha)k_2 + 50\beta m  }\epsilon_1^2\lesssim 2^{- \beta m }\epsilon_0.
\]
 
\begin{lemma}\label{Linftyxi}
Under the bootstrap assumption \textup{(\ref{bootstrapassumption})}, the following estimate holds for $t \in[2^{m-1}, 2^{m}]\subset [0,T]$, $m \in \mathbb{N}$ and $k\in \mathbb{Z}, k\leq 0$,
\be\label{eqn400}
\| \widehat{h}(t,\xi)\psi_k(\xi)\|_{L^\infty_\xi} \lesssim 2^{2\delta m }\big( 2^{2k + 2m} + 2^{k+ m}\big)\epsilon_0.
\ee
\end{lemma}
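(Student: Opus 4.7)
\emph{Plan.} The strategy is to use the equation $\partial_t h = G(h)\psi$ and Duhamel:
\[
\widehat h(t,\xi)\psi_k(\xi) = \widehat{h_0}(\xi)\psi_k(\xi) + \int_0^t \widehat{G(h)\psi}(s,\xi)\psi_k(\xi)\,ds.
\]
By \eqref{e10} we split $G(h)\psi = |\nabla|\tanh|\nabla|\psi + \Lambda_{\geq 2}[G(h)\psi]$, where $\Lambda_{\geq 2}[G(h)\psi]$ has a divergence-type structure and therefore gains at least one factor of $|\xi|$ at the output on the Fourier side. The contribution of each piece will be tracked separately, with the conservation $\widehat h(t,0)=\widehat{h_0}(0)$ used to tame the low-frequency behavior.

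For the initial data contribution, I combine $\|h_0\|_{H^{N_0+1/2}}\lesssim\epsilon_0$ with the vector-field smoothness of $\widehat{h_0}$ at the origin, encoded by $\|Lh_0\|_{H^{10+1/2}}+\|L^2h_0\|_{H^{1/2}}\lesssim\epsilon_0$. This gives $|\widehat{h_0}(\xi)|\psi_k(\xi)\lesssim\epsilon_0$, which is absorbed by the right-hand side in the regime $k\geq -m$; for $k\leq -m$, one uses the mean-value representation $\widehat{h_0}(\xi)-\widehat{h_0}(0) = \xi\cdot\int_0^1 \nabla_\xi \widehat{h_0}(s\xi)\,ds$ combined with the Fourier-side interpretation of $Lh_0$, which gives the extra smallness needed at very low frequencies.

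For the linear contribution, at frequencies $|\xi|\sim 2^k$ with $k\leq 0$ the symbol satisfies $|\xi|\tanh|\xi|\sim 2^{2k}$. Writing $\widehat{\tilde\psi}(s,\xi) = \mathrm{Im}(\widehat u(s,\xi)) = \mathrm{Im}(e^{-is\Lambda(\xi)}\widehat f(s,\xi))$ and handling the normal-form correction $\psi-\tilde\psi = T_{|\nabla|\tanh|\nabla|\psi}h$ as a bilinear perturbation, the main task becomes controlling $|\widehat f(s,\xi)|\psi_k(\xi)$ at low frequency. A Picard-iteration of the profile Duhamel formula \eqref{duhamel} combined with the bootstrap \eqref{smallness} yields $|\widehat f(s,\xi)|\psi_k(\xi)\lesssim s\cdot 2^{\delta m}\epsilon_0$ at these scales, and integrating against the $2^{2k}$ prefactor produces
\[
\int_0^t 2^{2k}\cdot s\cdot 2^{\delta m}\epsilon_0\,ds\lesssim 2^{2k+2m+\delta m}\epsilon_0,
\]
which accounts for the first term on the right-hand side of \eqref{eqn400}.

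For the quadratic and higher contributions, the divergence structure of $\Lambda_2[G(h)\psi] = -\nabla\cdot(h\nabla\psi) - |\nabla|\tanh|\nabla|(h|\nabla|\tanh|\nabla|\psi)$ (see \eqref{quadraticterm}) gives
\[
|\widehat{\Lambda_2[G(h)\psi]}(s,\xi)|\psi_k(\xi) \lesssim 2^k\bigl(\|h\|_{L^2}\|\nabla\psi\|_{L^2}+\|h\|_{H^1}\|\psi\|_{H^1}\bigr)\lesssim 2^k\epsilon_0^2,
\]
and integrating over $s\in[0,t]$ produces the $2^{k+m}\epsilon_0$ term. Cubic and higher contributions are controlled by the symbol estimates of Lemma \ref{symbolcubicandquartic} and remain strictly subleading. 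The main obstacle will be the Picard-type bound on $|\widehat f(s,\xi)|\psi_k(\xi)$ at low frequencies used in the linear step: it requires a careful bilinear analysis of \eqref{duhamel} exploiting both the smallness of the symbols \eqref{symbolquadraticrough}, the high-frequency cancellations from the normal-form reduction \eqref{normaformsize}, and the weighted-norm control on the profile $g$ coming from \eqref{smallness} — together these keep the growth of $\widehat f$ at very low frequency linear in $s$ up to a small $2^{\delta m}$ loss.
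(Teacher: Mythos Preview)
Your outline is structurally the same as the paper's (Duhamel for $h$, split $G(h)\psi$ into its linear part with symbol $|\xi|\tanh|\xi|\sim 2^{2k}$ and the divergence-form remainder gaining one $2^k$), but you overcomplicate the key step and invoke the wrong bootstrap.

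For the bound on $|\widehat f(s,\xi)\psi_k(\xi)|$ the paper's argument is a single line: apply Cauchy--Schwarz in $\eta$ (and $\sigma,\kappa$) to each multilinear term of \eqref{duhamel} to get $|\partial_s\widehat f(s,\xi)|\lesssim \|f(s)\|_{H^{10}}^2$ uniformly in $\xi$, then integrate the energy bound from \eqref{bootstrapassumption} to obtain $\|\widehat f(s,\xi)\psi_k\|_{L^\infty_\xi}\lesssim \epsilon_0+\int_0^s\|f\|_{H^{10}}^2\lesssim (1+s)^{1+2\delta}\epsilon_0$. No Picard iteration, no normal-form structure, and no weighted-norm information is needed. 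Your appeal to \eqref{smallness} is in fact out of place: the lemma is stated under \eqref{bootstrapassumption} alone and is itself an input to the $Z$-norm argument that eventually closes \eqref{smallness}, so using \eqref{smallness} (and the profile $g$ of the normal-form variable $v$, rather than $f$) is both a detour and a forward reference in the logical scheme. For the $\Lambda_{\geq 2}[G(h)\psi]$ piece your description matches the paper's exactly.

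Your careful treatment of the initial data via $L,L^2$ and a mean-value expansion is more than the paper does (the paper simply bounds $|\widehat{h_0}(\xi)|\lesssim\epsilon_0$). Note, however, that since $\widehat h(t,0)=\widehat{h_0}(0)$ is merely conserved and need not vanish, neither your mean-value argument nor the paper's estimate literally yields the stated right-hand side when $k+m\ll 0$; in every application of the lemma one has $k+m\gtrsim 0$, so this is harmless.
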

\begin{proof}
Recall (\ref{duhamel}), it is easy to see the following estimate holds for any $t\in[2^{m-1}, 2^{m}]$ and $k\leq 0,$
\be\label{eqn420}
\| \widehat{f}(t,\xi)\psi_k(\xi)\|_{L^\infty_\xi}\lesssim \epsilon_0 + \int_0^t \|  {f}(s)\|_{H^{10}}^2  d s \lesssim 2^{m+2\delta m } \epsilon_0.
\ee
 Recall the  equation satisfied by height ``$h(t)$'' in    (\ref{waterwaves}) and the Taylor expansion (\ref{e10}), we have
\[
\p_t  \widehat{h}(t, \xi)  =|\xi| \tanh(|\xi|)\widehat{\psi}(t, \xi) + \mathcal{F}[\Lambda_2[G(h)\psi]](\xi) + \mathcal{F}[\Lambda_{\geq 3}[G(h)\psi]](\xi).
\]
Hence, from $L^2-L^2$ type bilinear estimate (\ref{bilinearesetimate}) in Lemma \ref{multilinearestimate}  and (\ref{eqn420}),  the following estimate holds for any $k\leq 0$, 
\[
\| \widehat{h}(t, \xi)\psi_k(\xi)\|_{L^\infty_\xi} \lesssim \epsilon_0 + \int_0^t 2^{2k } \| \widehat{\psi}(s, \xi)\psi_k(\xi)\|_{L^\infty_\xi} d  s  + \int_0^t 2^{k } \| h(s)\|_{H^{10}} \| \psi(s)\|_{H^{10}} d  s 
\]
\be\label{improvedestimate}
\lesssim \epsilon_0 + \int_0^t 2^{2k } \| \widehat{f}(s, \xi)\psi_k(\xi)\|_{L^\infty_\xi} d  s  + \int_0^t 2^{k } \| f(s)\|_{H^{10}}^2 d  s \lesssim 2^{2\delta m }\big( 2^{2k + 2m} + 2^{k+ m}\big)\epsilon_0.
\ee
 
\end{proof}

\subsection{The $Z_1$ estimates of cubic terms and quartic terms.} 
The main goal of this subsection is to prove the following Proposition,
\begin{proposition}\label{eqq298}
Under the bootstrap assumption \textup{(\ref{smallness})}, the following estimate holds, 
\be\label{Z1estimatecubicandquartic}
  \sum_{k_3\leq k_2\leq k_1} \| \mathcal{F}^{-1}[T^{\tau, \kappa,\iota}_{k,k_1,k_2,k_3}(t, \xi) ]\|_{Z_1}+   \sum_{k_4\leq k_3\leq k_2\leq k_1} \| \mathcal{F}^{-1}[ K^{\mu_1, \mu_2,\nu_1, \nu_2}_{k,k_1,k_2,k_3,k_4}(t, \xi )]\|_{Z_1} \lesssim 2^{-m-\beta m}\epsilon_0,
\ee
where $T^{\tau, \kappa,\iota}_{k,k_1,k_2,k_3}(t, \xi)$ and $ K^{\mu_1, \mu_2,\nu_1, \nu_2}_{k,k_1,k_2,k_3,k_4}(t, \xi ) $ are defined in \textup{(\ref{eqn440})} and \textup{(\ref{eqn441})} respectively. 
\end{proposition}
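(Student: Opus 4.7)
The plan is to control both the cubic and quartic contributions by direct multilinear $L^2$-$L^\infty$-$\cdots$-$L^\infty$ estimates. Two facts do most of the work: (i) the symbol bound $\|\tilde{d}_{\tau,\kappa,\iota}\|_{\mathcal{S}^\infty_{k,k_1,k_2,k_3}} \lesssim 2^{2k_1 + C k_{1,+}}$ (and the analogous quartic bound for $\tilde{e}_{\mu_1,\mu_2,\nu_1,\nu_2}$), obtained by combining Lemma \ref{symbolcubicandquartic} with (\ref{normaformsize}) and the definitions (\ref{eqn1643}), (\ref{quarticsymbolnormalform2}); and (ii) the sharp $(1+t)^{-1}$ decay of the $(1+\alpha)$ derivatives of the profile, built into the bootstrap (\ref{smallness}). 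Because the rearrangement in (\ref{modifiedcubiciteration}) places the top frequency in the first slot, the $2^{2k_1+Ck_{1,+}}$ symbol factor is absorbed by $\|g_{k_1}\|_{L^2} \lesssim 2^{-(N_0-10)k_{1,+}+\delta m}\epsilon_1$.

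First I would dispose of the large-$j$ regime via integration by parts in $\xi$. On the support of $\varphi_j^k(x)$, once $j \gtrsim m + k_1 + k_{1,+}/2 + C\delta m$, the gradient $|x + t\nabla_\xi \Phi^{\tau,\kappa,\iota}|$ is $\sim 2^j$, and each integration by parts in $\xi$ gains $2^{-j}$ at the cost of at most a bounded factor (the $1/\Phi^{\tau,\kappa,\iota}$ factor in the normal form symbol (\ref{eqn200}) contributes only a polynomial in $2^{k_1+k_{1,+}/2}$). Iterating this process yields rapid decay and reduces the problem to $j \lesssim m + k_{1,+} + C\delta m$.

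In the main range I would estimate via
\[
\|\mathcal{F}^{-1}\int_{t_1}^{t_2} T^{\tau,\kappa,\iota}_{k,k_1,k_2,k_3}\, dt\|_{B_{k,j}} \lesssim 2^{(1+\alpha)k+10k_+ + j+m}\sup_{t}\|\mathcal{F}^{-1}T^{\tau,\kappa,\iota}\|_{L^2},
\]
applying Lemma \ref{multilinearestimate} with $g_{k_1} \in L^2$ and $e^{-it\Lambda}v_{k_2}, e^{-it\Lambda}v_{k_3} \in L^\infty$. For $k_i \geq 0$ the bootstrap gives $\|e^{-it\Lambda}v_{k_i}\|_{L^\infty} \lesssim 2^{-m-(1+\alpha)k_i-6k_{i,+}}\epsilon_1$; for $k_i < 0$, where this bound degenerates, I would instead use Lemma \ref{lineardecay} with $\theta$ slightly less than $1$ together with $\|v_{k_i}\|_{L^1}\lesssim 2^{\tilde{\delta}m}\epsilon_1$ from the $Z_1$ bootstrap, giving $\|e^{-it\Lambda}v_{k_i}\|_{L^\infty} \lesssim 2^{-m(1+\theta)/2 + (1-\theta)k_i/2 + \tilde{\delta}m}\epsilon_1$; the $2^{(1-\theta)k_i/2}$ factor secures low-frequency summability in $k_i$. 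Assembling all the bounds produces a net factor of $2^{-m+C\tilde{\delta}m}\epsilon_0$, which beats $2^{-m-\beta m}\epsilon_0$ since $\tilde{\delta}=400\delta \ll \beta := 1/(N_0-50)$. The quartic estimate follows the same template with three $L^\infty$-factors, yielding $2^{-3m}$ before time integration and thus an extra $2^{-m}$ margin. The only substantive obstacle is ensuring uniform low-frequency summability across several dyadic indices simultaneously, and this is handled by the $\theta<1$ refinement of Lemma \ref{lineardecay}; everything else is routine bookkeeping.
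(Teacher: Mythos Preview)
Your proposal has a genuine gap in the cubic case, and the arithmetic claim at the end is backwards. You assert that the direct multilinear estimate produces a net factor $2^{-m+C\tilde{\delta}m}\epsilon_0$ which ``beats'' $2^{-m-\beta m}\epsilon_0$. But $2^{-m+C\tilde{\delta}m}>2^{-m}>2^{-m-\beta m}$ regardless of how small $\tilde{\delta}$ is compared to $\beta$; the inequality you need goes the other way. In fact the direct $L^2$--$L^\infty$--$L^\infty$ bound is genuinely stuck at $2^{-m}$: at frequencies $k\sim k_1\sim k_2\sim k_3\sim 0$ with $j\sim m$ (which is allowed after your integration-by-parts-in-$\xi$ step), the $B_{k,j}$ weight contributes $2^{j}\sim 2^{m}$, the symbol is $O(1)$, $\|g_{k_1}\|_{L^2}\lesssim\epsilon_1$, and the two $L^\infty$ inputs give $2^{-2m}\epsilon_1^2$, for a total of $2^{-m}\epsilon_1^3$. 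No choice of which input goes in $L^2$ improves this, and the $\theta<1$ interpolation from Lemma~\ref{lineardecay} only affects low-frequency summability, not the endpoint $k_i\sim 0$. (There is also a misreading of the statement: the proposition bounds $T^{\tau,\kappa,\iota}_{k,k_1,k_2,k_3}(t,\xi)$ at a \emph{fixed} time, not a time integral; your extra factor $2^m$ makes the discrepancy even worse.)

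What the paper does to gain the missing $2^{-\beta m}$ (and usually much more) is integration by parts in $\eta$, exploiting the lower bound $|\nabla_\eta\Phi^{\tau,\kappa,\iota}|\gtrsim 2^{k_1-k_{1,+}/2}$ away from the space resonance set. After spatially localizing the inputs as $g_{k_i,j_i}$, repeated integration by parts in $\eta$ forces $\max\{j_1,j_2\}\geq m+k_{1,-}-\beta m$, and then the $Z_1$-constraint $\|g_{k_i,j_i}\|_{L^2}\lesssim 2^{-j_i-(1+\alpha)k_i}\epsilon_1$ converts this large-$j_i$ restriction into the required extra decay (see e.g.\ the estimate leading to (\ref{integrationbypartsinkappa})). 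When $k_2\leq k_1-10$ one first uses the sharper bound $|\nabla_\xi\Phi^{+,\kappa,\iota}|\lesssim 2^{k_2}$ from (\ref{derivativephase}) to tighten the admissible range of $j$ to $j\lesssim m+k_2$ before running this argument. When $k_1\sim k_2\sim k_3$ (subsubsection~\ref{allcomparable}), one must in addition localize around the space resonance set $(\xi-\eta,\eta-\sigma,\sigma)=(-\tau\xi,-\kappa\xi,-\iota\xi)$ or $(\xi/3,\xi/3,\xi/3)$ via the threshold cutoffs $\varphi_{l;\bar l_\tau}$ and treat the sign configurations in $\mathcal S_1,\dots,\mathcal S_4$ separately; near the resonance one uses a volume argument, and away from it integration by parts in $\eta$ or $\sigma$. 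None of this structure is present in your sketch. Your quartic argument is essentially fine---three $L^\infty$ factors do give $2^{-2m}$ after absorbing $2^j\lesssim 2^m$---but the cubic case needs the oscillatory gain.
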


   Same as before, we can do integration by parts in ``$\xi$'' many times to rule out the case when $j
  \geq (1+\delta)\max\{m +k_1, -k_{-}\} + 2 \tilde{\delta} m $.\emph{ Hence, in the rest of this section, we restrict ourself to the case when  $j\leq (1+\delta)\max\{m +k_1, -k_{-}\} + 2 \tilde{\delta} m $}.

  From the $L^2-L^\infty-L^\infty$ type trilinear estimate in Lemma \ref{multilinearestimate}, the following estimate holds, 
  \[
\| \mathcal{F}^{-1}[T^{\tau, \kappa,\iota}_{k,k_1,k_2,k_3}(t, \xi)]\|_{B_{k,j}}\lesssim \sup_{t\in[2^{m-1}, 2^m]} 2^{(1+\alpha) k  	+j +2k_1+ 2k_{1,+}+10 k_{+}} \| e^{-it \Lambda} g_{k_1}\|_{L^\infty} \| g_{k_2}\|_{L^2} 
  \] 
\be\label{roughestimatecubic}
\times \| e^{-it \Lambda} g_{k_3}\|_{L^\infty}  \lesssim \min\{2^{(1+\alpha)k +2k_1 + k_3 +20\beta m  }, 2^{(1+\alpha)k +3 k_1-(N_0-30)k_{1,+}+ k_3+  m  +  \beta m}\} \epsilon_0.
\ee
  \[
\| \mathcal{F}^{-1}[K^{\mu_1, \mu_2,\nu_1, \nu_2}_{k,k_1,k_2,k_3,k_4}(t, \xi )]\|_{B_{k,j}}\lesssim \sup_{t\in[2^{m-1}, 2^m]} 2^{(1+\alpha) k +10 k_+   +j +2k_1+ 2k_{1,+}} \| e^{-it \Lambda} g_{k_1}\|_{L^\infty} \|e^{-it \Lambda} g_{k_2}\|_{L^2} 
  \] 
\be\label{roughestimatequartic}
\times \| g_{k_3}\|_{L^2} \| e^{-it \Lambda} g_{k_4}\|_{L^\infty} \lesssim  2^{(1+\alpha)k  +k_4 +20\beta m  }\min\{2^{ 2k_1 - m/2  }, 2^{ 3k_1-(N_0-30)k_{1,+} +  m/2    } \}\epsilon_0.
\ee

From the rough estimate  (\ref{roughestimatecubic}), we can rule out the case when $ 
k_3   \leq  -m - 30\beta m 
$, or $k_1\geq 2\beta m$ or $k\leq -m/(1+\alpha/2)$ for the cubic terms. From the rough estimate
  (\ref{roughestimatequartic}), we can rule out the case when 
  $k_4\leq -m/2-30\beta m$ or $k_1\geq 2\beta m$ or $k\leq -m/(2+\alpha )$ for the quartic terms. 

Therefore,  it is sufficient to consider fixed $k$, $k_1$, $k_2$, $k_3$ for cubic terms and  fixed $k$, $k_1$, $k_2$, $k_3$, $k_4$ for quartic terms in the following ranges respectively, 
\be\label{restrictedrange}
 \noindent (\textup{Cubic terms})\quad -m - 30\beta m \leq k_3 \leq k_2\leq k_1\leq 2\beta m, \quad  -m/(1+\alpha/2)\leq k \leq 2\beta m ,
\ee
\be\label{restrictedrangequartic}
  \noindent  (\textup{Quartic terms})\quad -m/2 - 30\beta m \leq k_4\leq   k_3 \leq k_2\leq k_1\leq 2\beta m, \quad  -m/(2+\alpha )\leq k \leq 2\beta m .
\ee
\subsubsection{ When $k_2 \leq k_1 -10$}\label{lowhighcubic}  
Recall the normal form transformation   we did in subsection \ref{goodvariable}.  As $k_2\leq k_1-10$, the case when ``$\tau=-$'' is canceled out. \emph{Hence,  we only have to consider the case ``$\tau=+$''}.

 Note that, the following estimate holds for the derivatives of phase, 
\[
|\nabla_\xi  \Phi^{+, \kappa, \iota}(\xi, \eta,\sigma)|=|\nabla_\xi  \Phi^{+, \mu_2, \nu_1, \nu_2}(\xi, \eta,\sigma)|= \big| \Lambda'(|\xi|)\frac{\xi}{|\xi|} - \Lambda'(|\xi-\eta|)\frac{\xi-\eta}{|\xi-\eta|} \big| 
\]
\be\label{derivativephase}
\lesssim  \max\{2^{k_1}\angle(\xi, \xi-\eta), |\xi|-|\xi-\eta|\}\sim |\eta| \lesssim 2^{k_2 }.
\ee
\be\label{derivativephaseeta}
|\nabla_\eta  \Phi^{+, \kappa, \iota}(\xi, \eta,\sigma)|= \big|   \Lambda'(|\xi-\eta|)\frac{\xi-\eta}{|\xi-\eta|} + \kappa \Lambda'(| \eta-\sigma|)\frac{  \eta-\sigma}{|  \eta-\sigma|}  \big|  \sim 2^{k_1 -k_{1,+}/2}.
\ee

After doing spatial localizations for the inputs $\widehat{g_{k_1}}(\cdot)$ and $\widehat{g_{k_2}}(\cdot)$, we have the decomposition as follows, 
\[
T^{\tau, \kappa,\iota}_{k,k_1,k_2,k_3}(t, \xi)= \sum_{j_1\geq - k_{1,-}, j_2\geq -k_{2,-}}  T^{\tau, \kappa,\iota}_{k_1,j_1,k_2,j_2 }(t,\xi), \quad 
\]
\be\label{eqn455}
T^{\tau, \kappa,\iota}_{k_1,j_1,k_2,j_2 }(t,\xi) =  \int_{\R^2} e^{i t  \Phi^{\tau, \kappa, \iota}(\xi, \eta,\sigma)}  \tilde{d}_{\tau,\kappa, \iota}(\xi-\eta, \eta-\sigma, \sigma)  \widehat{g^{\tau}_{k_1,j_1}}(t  , \xi-\eta) \widehat{g^{\kappa}_{k_2,j_2}}(t, \eta-\sigma) \widehat{g^{\iota}_{k_3}}(t , \sigma) d \sigma  d \eta , 
\ee
\[
K^{\mu_1, \mu_2,\nu_1, \nu_2}_{k,k_1,k_2,k_3,k_4}(t, \xi )= \sum_{j_1\geq - k_{1,-}, j_2\geq -k_{2,-}}  K^{\mu_1, \mu_2,\nu_1, \nu_2}_{k_1,j_1,k_2,j_2 }(t,\xi),\]
\[ K^{\mu_1, \mu_2,\nu_1, \nu_2}_{k_1,j_1,k_2,j_2 }(t,\xi)=  \int_{\R^2} e^{i t  \Phi^{\mu_1, \mu_2, \nu_1, \nu_2}(\xi, \eta,\sigma,\kappa)}  \tilde{e}_{\mu_1, \mu_2, \nu_1, \nu_2}(\xi-\eta, \eta-\sigma, \sigma-\kappa, \kappa) 
\]
\be\label{eqn460}
\times  \widehat{g^{\mu_1}_{k_1,j_1}}(t  , \xi-\eta)  \widehat{g^{\mu_2}_{k_2,j_2}}(t, \eta-\sigma)  \widehat{g^{\nu_1}_{k_3}}(t , \sigma-\kappa) \widehat{g^{\nu_2}_{k_4}}(t , \kappa) d \kappa d \sigma  d \eta  . 
\ee

 Recall (\ref{derivativephase}). By doing integration by parts in ``$\xi$'' many times, we can rule out the case when $j\geq \max\{m +  k_2 , -k_{1,-}\}+ \beta m$ and $j_1 \leq j-\delta m $. For the case when  $j\geq \max\{m +  k_2 , -k_{1,-}\}+ \beta m $ and $j_1 \geq j-\delta m $, the following estimate holds from $L^2-L^\infty-L^\infty$ type trilinear estimate in Lemma \ref{multilinearestimate}, 
\[
\| \sum_{j_1 \geq j-\delta m }\mathcal{F}^{-1}[T^{\tau, \kappa,\iota}_{k_1,j_1,k_2,j_2}(t, \xi) ]\|_{B_{k,j}} \lesssim  \sum_{j_1 \geq j-\delta m }  2^{(1+\alpha)k   + j +2k_1+ 2k_{1,+}+10 k_+ } \| g_{k_1, j_1}(t)\|_{L^2} 
\]
\[
\times  2^{k_2  }  \| g_{k_2  }(t)\|_{L^2} \| e^{-i t\Lambda} g_{k_3}(t)\|_{L^\infty} \lesssim 2^{-m/2+30\beta m } 2^{ k_2   -j } \epsilon_0 \lesssim 2^{-3m/2+40\beta m } \epsilon_0.
\]
\[
\| \sum_{j_1 \geq j-\delta m }\mathcal{F}^{-1}[ K^{\mu_1, \mu_2,\nu_1, \nu_2}_{k_1,j_1,k_2,j_2}(t, \xi )]\|_{B_{k,j}} \lesssim  \sum_{j_1 \geq j-\delta m }  2^{(1+\alpha)k + j +2k_1+ 2k_{1,+} +10k_+} \]
\[
\times \| g_{k_1, j_1}(t)\|_{L^2} 
 2^{k_2  }  \| g_{k_2  }(t)\|_{L^2} \| e^{-i t\Lambda} g_{k_3}(t)\|_{L^\infty}\| e^{-i t\Lambda} g_{k_4}(t)\|_{L^\infty}   \lesssim 2^{-2m +40\beta m } \epsilon_0.
\]

Therefore, it remains  to consider the case when  $j\leq \max\{m +  k_2 , -k_{1,-}\}+ \beta m $. From the $L^2-L^\infty-L^\infty-L^\infty$ type multilinear  estimate, we have
\[
\|  \mathcal{F}^{-1}[K^{\mu_1, \mu_2,\nu_1, \nu_2}_{k,k_1,k_2,k_3,k_4}(t, \xi ) ]\|_{B_{k,j}}\lesssim 2^{(1+\alpha)k + 10k_+  +j + 2k_1+ 2k_{1,+}} \| e^{-it\Lambda} g_{k_1}\|_{L^\infty} \| e^{-it\Lambda} g_{k_2}\|_{L^\infty}
\]
\[
\times \| e^{-it\Lambda} g_{k_3}\|_{L^\infty} \| g_{k_4}\|_{L^2} \lesssim 2^{-3m/2 + 40\beta m} \epsilon_0.	
\]

Now we proceed to estimate the cubic terms ``$T^{+, \kappa,\iota}_{k,k_1,k_2,k_3}(t, \xi)$''. 
 If   $k_1 +k_2  \leq -m/2 -12\beta m $, then the following estimate holds from the $L^2-L^\infty-L^\infty$ type trilinear estimate (\ref{trilinearesetimate}) in Lemma \ref{multilinearestimate} and $L^\infty\rightarrow L^2$ type Sobolev embedding,
\[
 \|\mathcal{F}^{-1}[T^{+, \kappa,\iota}_{k,k_1,k_2,k_3}(t, \xi)]\|_{B_{k,j}} \lesssim  2^{(1+\alpha)k + 10k_+  + j + 2k_1+ 2k_{1,+} } \| e^{-it \Lambda} g_{k_1 }(t)\|_{L^\infty}2^{k_2 }   \| g_{k_2 }(t)\|_{L^2} \| g_{k_3}(t)\|_{L^2}
\]
\[
   \lesssim  2^{2 k_1 + 2k_2 +20\beta m }\epsilon_0 + 2^{k_1+2k_2+20\beta m} \epsilon_0 \lesssim 2^{-m-\beta m }\epsilon_0.
\]

Now, we proceed to consider the case when $k_1 +k_2 \geq -m/2-12\beta m $. Recall (\ref{derivativephaseeta}). By doing integration by parts in ``$\eta$'' many times, we can   rule out the case when $\max\{j_1  , j_2 \}\leq m+k_{1,-}  - \beta m $.   For the case  when $\max\{j_1 ,j_2 \}\geq m+k_{1,-}  - \beta m $, the following estimate holds from $L^2-L^\infty-L^\infty$ type trilinear estimate (\ref{trilinearesetimate}) in Lemma \ref{multilinearestimate}, 
\[
 \sum_{\max\{j_1 ,j_2 \}\geq m+k_{1,-} - \beta m }\|\mathcal{F}^{-1}[T^{\tau, \kappa,\iota}_{k_1,j_1,k_2,j_2 }(t,\xi) ]\|_{B_{k,j}} \lesssim   \sum_{j_1 \geq \max\{m+k_{1,-} - \beta m ,j_2 \}} 2^{(1+\alpha)k +10 k_+ + 2k_1+  j   }
 \]
 \[
 \times   \| g_{k_1 , j_1 }(t)\|_{L^2}     \| e^{-it \Lambda }g_{k_2 ,j_2  }(t)\|_{L^\infty} \| e^{-i t\Lambda} g_{k_3}(t)\|_{L^\infty} +  \sum_{j_2 \geq \max\{m+k_{1,-} - \beta m ,j_1 \}} 2^{(1+\alpha)k +10k_+ + j +2k_1  } \]
 \be\label{integrationbypartsinkappa}
 \times     \| g_{k_2  , j_2 }(t)\|_{L^2}   \| e^{-it \Lambda }g_{k_1 ,j_1  }(t)\|_{L^\infty} \| e^{-i t\Lambda} g_{k_3}(t)\|_{L^\infty} \lesssim 2^{-5m/2+50\beta m - k_2} \epsilon_0 \lesssim 2^{-m-\beta  m} \epsilon_0.
 \ee
\subsubsection{When $ k_1 -10\leq k_2 \leq k_1 $ and $k_3\leq k_2 -10$} 
The estimate of quartic terms is straightforward, from $L^2-L^\infty-L^\infty-L^\infty$ type mutilinear estimate, we have, 
\[
\|  \mathcal{F}^{-1}[K^{\mu_1, \mu_2,\nu_1, \nu_2}_{k,k_1,k_2,k_3,k_4}(t, \xi ) ]\|_{B_{k,j}} \lesssim    2^{(1+\alpha)k+ 10k_+  +j + 2k_1} \| e^{-it \Lambda} g_{k_1}(t)\|_{L^\infty }  \| e^{-it \Lambda} g_{k_2}(t)\|_{L^\infty } \]
\be\label{eqn450}
\times \| e^{-it \Lambda} g_{k_3}(t)\|_{L^\infty } \| g_{k_4}(t)\|_{L^2}  \lesssim 2^{-3m/2 +40\beta  m}\epsilon_0.
\ee

Now, it remains to  estimate  the cubic terms ``$T^{\tau, \kappa,\iota}_{k,k_1,k_2,k_3}(t, \xi)$''.  Recall the normal form transformation we did in subsection \ref{goodvariable}. Note that    the case when $\eta$ is close to $\xi/2$ is canceled, see (\ref{eqn200}). Hence, the following estimate always holds for the case we are considering, 
\be\label{derivativephase3}
 |\nabla_\eta \Phi^{\tau,\kappa, \iota}(\xi, \eta, \sigma)|\gtrsim 2^{k-k_{1,+}/2}.
\ee

 After putting $g_{k_3}$ in $L^2$ and the other two inputs in $L^\infty$, the following estimate holds from the $L^2-L^\infty-L^\infty$ type trilinear estimate (\ref{trilinearesetimate}) in Lemma \ref{multilinearestimate} when $k\leq -2\beta m $, 
\[
 \|\mathcal{F}^{-1}[T^{\tau, \kappa,\iota}_{k,k_1,k_2,k_3}(t, \xi)]\|_{B_{k,j}} \lesssim 2^{(1+\alpha)k  + j + 2k_1+2k_{1,+}} \| e^{-it \Lambda} g_{k_1 }(t)\|_{L^\infty}  \| e^{-it \Lambda} g_{k_2 }(t)\|_{L^2} \| g_{k_3}(t)\|_{L^2}
\]
\[
  \lesssim  \max\{2^{\alpha k -2m + 2\beta m  }, 2^{(1+\alpha) k -m  +\beta m }\} \epsilon_1^3 \lesssim 2^{-m-\beta m }\epsilon_0.
\]

Hence, it remains to consider the case when $k\geq -2\beta m $. Recall (\ref{derivativephase3}).  We can rule out the case when $\max\{j_1,j_2\}\leq   m+k_{-}   - 3\beta m $ by doing integration by parts in ``$\eta$'' many times. Hence, we only have to consider the case when $\max\{j_1,j_2\}\geq   m+k_{-}   - 3\beta m$ . From the $L^2-L^\infty-L^\infty$ type estimate (\ref{trilinearesetimate}) in Lemma \ref{multilinearestimate}, the following estimate holds, 
\[
\sum_{\max\{j_1,j_2\}\geq   m+k_{-}   - 3\beta m } \| \mathcal{F}^{-1}[  T^{\tau, \kappa,\iota}_{k_1,j_1,k_2,j_2 }(t,\xi)]\|_{B_{k,j}} \lesssim   \sum_{\max\{j_1,j_2\}\geq  m+k_{-}   - 3\beta m} 2^{(1+\alpha)k+10k_{+}+j + 2k_1}\]
\[
\times   \| g_{k_1,j_1}\|_{L^2} 2^{k_2} \| g_{k_2,j_2}\|_{L^2}  \| e^{-it \Lambda} g_{k_3}\|_{L^\infty}
\lesssim 2^{-3m/2+50\beta m }\epsilon_0.
 \]

\subsubsection{When $ k_1 -10\leq k_2  \leq k_1  $ and $k_2\leq k_3 -10\leq  k_2 $}\label{allcomparable}
 Note that the estimate (\ref{eqn450}) still holds as the size of $k_3$ plays little role there. Hence, we only have to estimate the cubic term  ``$T^{+, \kappa,\iota}_{k,k_1,k_2,k_3}(t, \xi)$''for this case.
Define
\be\label{eqn719}
 \mathcal{S}_1:=  \{(+,-,-),(-,+,+) \}, \quad  \mathcal{S}_2:=  (+,-,+), (-,+,-)\},
\ee
\be\label{eqn720}
\mathcal{S}_3:= \{(+,+,-),(-,-,+)\},\quad  \mathcal{S}_4:= \{(+,+,+),(-,-,-) \}.
\ee
Recall (\ref{phaseofcubic}), we have
\[ \nabla_\eta \Phi^{\tau , \kappa, \iota}(\xi, \eta, \sigma) =   -\tau \Lambda'(|\xi-\eta|) \frac{ \eta-\xi}{| \eta-\xi|}- \kappa \Lambda'(|\eta-\sigma|) \frac{\eta-\sigma}{|\eta-\sigma|},
\]
\[
\nabla_\sigma \Phi^{\tau , \kappa, \iota}(\xi, \eta, \sigma)  = -\kappa \Lambda'(|\eta -\sigma|) \frac{\sigma-\eta}{| \sigma-\eta|} - \iota \Lambda'(|\sigma|) \frac{ \sigma}{|\sigma|}.
\]
Correspondingly, the space resonance set in ``$\eta$'' and in ``$\sigma$'' is defined as follows, 
\[
\mathcal{R}_{\tau, \kappa, \iota}:=\{(\xi, \eta, \sigma): \nabla_\eta \Phi^{\tau , \kappa, \iota}(\xi, \eta, \sigma)=\nabla_\sigma \Phi^{\tau , \kappa, \iota}(\xi, \eta, \sigma)=0 \} \]
\[= \{ (\xi, \eta, \sigma): \xi  = \big((1+\tau \kappa)(1+\kappa \iota) -\tau \kappa \big) \sigma,   \eta= (1+\kappa \iota) \sigma \},\quad \tau, \kappa, \iota\in\{+,-\}.
\]
More specifically, we have
\[
\mathcal{R}_{\tau, \kappa, \iota}= \{(\xi, \eta, \sigma): \xi  =  \sigma,   \eta= 2\sigma  \}, \quad (  \xi-\eta, \eta-\sigma, \sigma)\big|_{  \mathcal{R}_{\tau, \kappa, \iota}}=(-\xi, \xi, \xi),  \quad  {(\tau, \kappa, \iota)\in \mathcal{S}_1}, \]
 \[
 \mathcal{R}_{\tau, \kappa, \iota}= \{  (\xi, \eta, \sigma): \xi  =   \sigma,   \eta= 0  \},\quad (  \xi-\eta, \eta-\sigma, \sigma)\big|_{ \mathcal{R}_{\tau, \kappa, \iota}}=(\xi, -\xi, \xi), \quad  {(\tau, \kappa, \iota)\in \mathcal{S}_2}, \]
 \[
 \mathcal{R}_{\tau, \kappa, \iota}= \{ (\xi, \eta, \sigma): \xi  = - \sigma,   \eta=0  \},\quad (  \xi-\eta, \eta-\sigma, \sigma)\big|_{ 	\mathcal{R}_{\tau, \kappa, \iota}}=(\xi, \xi, -\xi), \quad  {(\tau, \kappa, \iota)\in \mathcal{S}_3}, \]
\[
\mathcal{R}_{\tau, \kappa, \iota}= \{ (\xi, \eta, \sigma): \xi  =  3\sigma,   \eta= 2\sigma  \},\quad (  \xi-\eta, \eta-\sigma, \sigma)\big|_{ 	\mathcal{R}_{\tau, \kappa, \iota}}=(\xi/3, \xi/3, \xi/3), \quad  {(\tau, \kappa, \iota)\in \mathcal{S}_4}. \]

\noindent $\bullet$\quad When $  (\tau, \kappa, \iota)\in \mathcal{S}_1\cup \mathcal{S}_2\cup \mathcal{S}_3 $. Note that, after changing of variables, those three cases are symmetric. Hence, it is sufficient to estimate the case when $ (\tau, \kappa, \iota) \in \mathcal{S}_1$ in details.  We do change of coordinates for ``$ T^{\tau, \kappa,\iota}_{k,k_1,k_2,k_3}(t, \xi)$'' as follows,
\[
T^{\tau, \kappa,\iota}_{k,k_1,k_2,k_3}(t, \xi)=  \int_{\R^2} e^{i t  \widetilde{\Phi}^{\tau, \kappa, \iota}(\xi, \eta, \sigma)} \widetilde{c}(\xi, 2\xi+\eta+\sigma,\xi+\sigma)   \widehat{g^{\tau }_{k_1 }}(t  , -\xi-\eta-\sigma)  \widehat{g^{\kappa}_{k_2 }}(t, \xi+\eta ) \widehat{g^{\iota}_{k_3}}(t ,  \xi+\sigma) d \sigma  d \eta  , 
\]
where  the phase $\widetilde{\Phi}^{\tau, \kappa, \iota}(\xi, \eta, \sigma)$ is defined as follows, 
\be\label{eqn724}
\widetilde{\Phi}^{\tau, \kappa, \iota}(\xi, \eta, \sigma):= \Lambda(|\xi|)-\tau \Lambda(|\xi+\eta+\sigma|)-\kappa \Lambda(|\xi+\eta |)- \iota \Lambda(|\xi+\sigma|), \quad (\tau, \kappa, \iota)\in \mathcal{S}_1.
\ee

We localize both $\eta$ and $\sigma$ around zero (the space resonance set) with a well chosen threshold   and 
 decompose the cubic term as follows, 
\be\label{cubicdecomposition24}
T^{\tau, \kappa,\iota}_{k,k_1,k_2,k_3}(t, \xi)= \sum_{l_1, l_2 \geq \bar{l}_{\tau}}C^{\tau , l_1, l_2},\quad C^{\tau , l_1, l_2}= \sum_{j_1 \geq -k_{1,-} , j_2 \geq -k_{2,-}  } C^{\tau , l_1, l_2}_{j_1  ,  j_2  }, 
\ee
where the thresholds $\bar{l}_{-}:= -2m/5-10\beta m $ and $\bar{l}_{+}:=  k_{-}-10 $ and  $ C^{\tau , l_1, l_2}_{j_1  ,  j_2  }$ is defined as follows,
 \[    C^{\tau , l_1, l_2}_{j_1 ,  j_2 }:= \int_{\R^2} e^{i t  \widetilde{\Phi}^{\tau, \kappa, \iota}(\xi, \eta, \sigma)}  \widetilde{c}(\xi, 2\xi+\eta+\sigma,\xi+\sigma)  \widehat{g^{\tau }_{k_1,j_1 }}(t  , -\xi-\eta-\sigma) \widehat{g^{\kappa}_{k_2,j_2}}(t, \xi+\eta ) \]
\be\label{cubicdecompose}
\times  \widehat{g^{\iota}_{k_3}}(t ,  \xi+\sigma)  \varphi_{l_1;\bar{l}_{\tau }}(\eta) \varphi_{l_2;\bar{l}_{\tau  }}(\sigma) d \sigma  d \eta,
\ee
where the cutoff function $\varphi_{l;\bar{l}}(\cdot)$ with the threshold $\bar{l}$ is defined as follows, 
\be\label{thresholdcutoff}
\varphi_{ {l};\bar{l}}(x):= \left\{ 
\begin{array}{ll}
\psi_{\leq \bar{l}}(x) &   \textup{if} \,  l =\bar{l} \\
\psi_l(x) &   \textup{if} \,  l >\bar{l}.  \\
\end{array}\right. 
\ee
 
$\oplus$ If $\tau=+$, i.e., $(\tau, \kappa, \iota)=(+,-,-)$. \quad Recall the normal form transformation that we did in subsection \ref{goodvariable} , see (\ref{normalformatransfor}) and (\ref{eqn200}). For the case we are considering, i.e., $(\tau, \kappa, \iota)\in \widetilde{S}$, we already canceled out the case  when    $\max\{l_1, l_2\} = \bar{l}_{+}$. Hence it would be sufficient   to  consider the case when $\max\{l_1, l_2\} > \bar{l}_{-}$. By symmetry, we might assume that $l_2 = \max\{l_1, l_2\} > \bar{l}_{+}:=k_{-}-10 .$ For this case, we take the advantage of the fact that $\nabla_\eta  \widetilde{\Phi}^{\tau, \kappa, \iota}(\xi, \eta, \sigma) $ is relatively big. More precisely, we have
 \be\label{eqn723}
 \big|\nabla_\eta  \widetilde{\Phi}^{+,-,-}(\xi, \eta, \sigma)\big| = \big|  \Lambda'(|\xi+\eta+\sigma|) \frac{\xi+\eta+\sigma}{|\xi+\eta+\sigma|} -  \Lambda'(|\xi+\eta  |) \frac{\xi+\eta }{|\xi+\eta |} \big|   \gtrsim 2^{l_2 }.
\ee
Hence, we can do integration by parts in ``$\eta$'' many times to rule out the case when $\max\{j_1 , j_2  \} \leq m + k_{-} - \beta m $. From the $L^2-L^\infty-L^\infty$  type trilinear estimate (\ref{trilinearesetimate}) in Lemma \ref{multilinearestimate} and the $L^\infty\rightarrow L^2$ type Sobolev embedding,   the following estimate holds, 
\[
\sum_{\max\{j_1, j_2 \} \geq m + k_{-} - \beta m  } \| \mathcal{F}^{-1}[ C^{+,l_1, l_2}_{  j_1, j_2}] \|_{B_{k,j}}  \lesssim \sum_{\max\{j_1, j_2 \} \geq m +   k_{-} - \beta m  }  2^{(1+\alpha)k +10k_{+}   + j +2k_1 } 
\]
\[
\times  \| e^{-it \Lambda} g_{k_3}(t)\|_{L^\infty} \| g_{k_2, j_2 }(t)\|_{L^2} 2^{k_2} \| g_{k_1 , j_1 }(t)\|_{L^2} \lesssim 2^{-3m/2+40\beta m } \epsilon_0.
\]

$\oplus$ If $\tau =-$, i.e., $(\tau, \kappa, \iota)=(-,+,+)$ . As before, by symmetry, we might assume that $l_2 = \max\{l_1, l_2\} $. Recall (\ref{eqn724}).  We have
\[
\big|\nabla_\xi  \widetilde{\Phi}^{-,+,+}(\xi, \eta, \sigma)\big|   = \Big|\Lambda'(|\xi|) \frac{\xi}{|\xi|} + \Lambda'(|\xi+\eta|) \frac{\xi+\eta+\sigma}{|\xi+\eta+\sigma|}  - \Lambda'(|\xi+\eta |) \frac{\xi+\eta }{|\xi+\eta |} -\Lambda'(|\xi+\sigma|) \frac{\xi+\sigma}{|\xi+\sigma| }\Big|, 
\]
\[
\big|\nabla_\eta  \widetilde{\Phi}^{-,+,+}(\xi, \eta, \sigma)\big|= \Big|\Lambda'(|\xi+\eta+\sigma|) \frac{\xi+\eta+\sigma}{|\xi+\eta+\sigma|}  - \Lambda'(|\xi+\eta |) \frac{\xi+\eta }{|\xi+\eta |}\Big|
\]
Now, it is easy to see that 
\be\label{eqn739}
\big |\nabla_\xi  \widetilde{\Phi}^{-,+,+}(\xi, \eta, \sigma) \big|   \lesssim  2^{l_2},\quad \big|\nabla_\eta  \widetilde{\Phi}^{-,+,+}(\xi, \eta, \sigma)\big| \gtrsim 2^{l_2-k_{+}/2}.
\ee
Hence, we can first rule out the case when $j \geq m + l_2 +2\beta m  $ by doing integration by parts in ``$\xi$'' many times. It would be sufficient to consider the case when $j \leq m + l_2 +2\beta m  $.

 We first consider the case when $\max\{l_1,l_2\} = \bar{l}_{-}=-2m/5-10\beta m $.  After using the volume of supports in $\eta$ and $\sigma$, the following estimate holds, 
\[
\| \mathcal{F}^{-1}[C^{-, \bar{l}_{-}, \bar{l}_{-}}]\|_{B_{k, j}} \lesssim  2^{(1+\alpha)k +10k_{+}   + j + 2k_1 } 2^{4\bar{l}}\| g_{k_1 }(t)\|_{L^2} \| g_{k_2 }(t)\|_{L^1}\| g_{k_3 }(t)\|_{L^1}\]
\[\lesssim 2^{5 \bar{l}+ m +30\beta m } \epsilon_1^3\lesssim 2^{-m-\beta m } \epsilon_0.
\]

Now, we proceed to consider the case when $\max\{l_1, l_2\}> \bar{l}_{-}=-2m/5-10\beta m$.   For this case, we do integration by parts in $\eta$ many times to rule out the case when $\max\{j_1, j_2 \}\leq m  + l_2-4\beta m $. From  the $L^2-L^\infty-L^\infty$ type trilinear estimate (\ref{trilinearesetimate}) in Lemma \ref{multilinearestimate}, the following estimate holds when $\max\{j_1, j_2 \}\geq m  + l_2-4\beta m $, 
\be\label{generalestimate3}
\sum_{\max\{j_1, j_2  \} \geq m + l_2 -4\beta m  }\|\mathcal{F}^{-1}[ C^{-, l_1, l_2}_{j_1,j_2}] \|_{B_{k,j }} \lesssim  2^{(1+\alpha)k+10k_{+}   + j + 2k_1 } \| e^{-it \Lambda}g_{k_3 }(t)\|_{L^\infty} 
 \ee
\[\times \big[  \sum_{j_2 \geq \max\{m + l_2 -4\beta m ,j_1 \}}  \| g_{k_2,j_2}(t)\|_{L^2} \| e^{-it \Lambda} g_{k_1,j_1}(t)\|_{L^\infty} +  \sum_{j_1 \geq \max\{m + l_2 -4\beta m ,j_2 \}}   \| e^{-it \Lambda}g_{k_2 ,j_2 }(t)\|_{L^\infty}\]
\[\times  \| g_{k_1 ,j_1 }(t)\|_{L^2} \big]
 \lesssim 2^{-2m-l_2-m/2+40\beta m } \epsilon_0\lesssim 2^{-m-\beta m } \epsilon_0.
\]

 \noindent$\bullet$\quad When $(\tau, \kappa,\iota) \in \mathcal{S}_4$. \quad Very similarly, we localize around the space resonance set ``$(\xi/3,\xi/3,\xi/3)$'' by doing change of variables    for   ``$T^{\tau, \kappa,\iota}_{k_1,k_2,k_3}(t,\xi)$'' as follows, 
\[
T^{\tau, \kappa,\iota}_{k_1,k_2,k_3}(t,\xi)=  \int_{\R^2} e^{i t  \widehat{\Phi}^{\tau, \kappa, \iota}(\xi, \eta, \sigma)} \widetilde{c}(\xi, 2\xi/3+\eta+\sigma, \xi/3 +\sigma ) \widehat{g^{\tau }_{k_1 }}(t  ,\xi/3 -\eta-\sigma)\]
\[\times    \widehat{g^{\kappa}_{k_2 }}(t, \xi/3+\eta  ) \widehat{g^{\iota}_{k_3}}(t ,  \xi/3 +\sigma ) d \sigma  d \eta , 
\]
where  the phase $ \widehat{\Phi}^{\tau, \kappa, \iota}(\xi, \eta, \sigma)$ is defined as follows, 
\[
\widehat{\Phi}^{\tau, \kappa, \iota}(\xi, \eta, \sigma):= \Lambda(|\xi|)-\tau \Lambda(|\xi/3-\eta-\sigma|)-\kappa \Lambda(|\xi/3+\eta )- \iota \Lambda(|\xi/3+\sigma|), \quad (\tau, \kappa, \iota)\in \mathcal{S}_4.
\]

Recall the normal form transformation that we did in subsection \ref{goodvariable}. The symbol around a neighborhood of $(\xi/3, \xi/3,\xi/3)$ is removed, see (\ref{eqn200}) and (\ref{eqn1643}). Hence, the following decomposition holds,
\[
T^{\tau, \kappa,\iota}_{k,k_1,k_2,k_3}(t, \xi) = \sum_{i=1,2} T^{\tau, \kappa,\iota}_{k_1,k_2,k_3;i}(t,\xi),\quad  T^{\tau, \kappa,\iota}_{k_1,k_2,k_3;1}(t,\xi)= \sum_{j_1\geq -k_{1,-}, j_2\geq -k_{2,-}}  T^{\tau, \kappa,\iota}_{k_1,j_1,k_2,j_2;1}(t,\xi), \]
\[
  T^{\tau, \kappa,\iota}_{k_1,k_2,k_3;2}(t,\xi)= \sum_{j_1\geq -k_{1,-}, j_3\geq -k_{3,-}}  T^{\tau, \kappa,\iota}_{k_1,j_1,k_3,j_3;2}(t,\xi),
\]
\[
T^{\tau, \kappa,\iota}_{k_1,j_1,k_2,j_2;1}(t,\xi)=  \int_{\R^2} e^{i t  \widehat{\Phi}^{\tau, \kappa, \iota}(\xi, \eta, \sigma)}\psi_{\geq k-20}(2\eta+\sigma) \widetilde{c}(\xi, 2\xi/3+\eta+\sigma, \xi/3+\sigma)
\]
 \be\label{e1020}
\times  \widehat{g^{\tau }_{k_1,j_1}}(t  ,\xi/3-\eta-\sigma) \widehat{g^{\kappa}_{k_2,j_2}}(t, \xi/3+\eta ) \widehat{g^{\iota}_{k_3}}(t ,  \xi/3+\sigma)  d \sigma  d \eta   ,
\ee
\[
T^{\tau, \kappa,\iota}_{k_1,j_1,k_3,j_3;2}(t,\xi)=  \int_{\R^2} e^{i t  \widehat{\Phi}^{\tau, \kappa, \iota}(\xi, \eta, \sigma)}\widetilde{c}(\xi, 2\xi/3+\eta+\sigma, \xi/3+\sigma)  \psi_{\geq k-20}(2\sigma+\eta)\psi_{\leq k-20}(2\eta+	\sigma)
\]
 \be\label{e1021}
\times  \widehat{g^{\tau }_{k_1,j_1 }}(t  ,\xi/3-\eta-\sigma) \widehat{g^{\kappa}_{k_2  }}(t, \xi/3+\eta ) \widehat{g^{\iota}_{k_3,j_3}}(t ,  \xi/3+\sigma) d \sigma  d \eta  .
\ee
The estimates of  ``$T^{\tau, \kappa,\iota}_{k_1,k_2,k_3;1}(t,\xi)$'' and ``$T^{\tau, \kappa,\iota}_{k_1,k_2,k_3;2}(t,\xi)$'' are very similar. For simplicity, we only estimate  $T^{\tau, \kappa,\iota}_{k_1,k_2,k_3;1}(t,\xi)$ in details here. 
 For this case, note that ``$2\eta +\sigma$'' has a good upper bound. Hence, the size of $\nabla_\eta  \widehat{\Phi}^{\tau, \kappa, \iota}(\xi, \eta, \sigma)$ is bounded from blow by $2^{k-k_{+}/2}$. 

 Hence, by doing integration by parts many times in ``$\eta$'', we can rule out the case when $\max\{j_1 ,j_2\}\leq m + k_{-} -2\beta m $. For the case when $\max\{j_1,j_2\}\geq m + k_{-} -2\beta m $, a similar estimate as in (\ref{generalestimate3}) holds,
\[
\sum_{\max\{j_1,j_2\}\geq m + k_{-} -2\beta m} \| \mathcal{F}^{-1}[ T^{\tau, \kappa,\iota}_{k_1,j_1,k_2,j_2;1}(t,\xi)] \|_{B_{k,j }} \lesssim  \textup{R.H.S. of (\ref{generalestimate3}) }  \lesssim 2^{-m-\beta  m} \epsilon_0.
\]

\section{The improved estimate of the high order weighted norm}\label{highorderweighted}
Our main goal in this section is to prove (\ref{desired2}) under the smallness assumption (\ref{smallness}). Recall that $L:=x\cdot\nabla +2$ and $\Omega:=x^{\perp}\cdot \nabla$ and the $Z_2$ norm is defined in (\ref{highorderweightnorm}). Define
\[
  \hat{\Omega}_\xi:= - \xi^{\perp}\cdot \nabla_\xi,\quad d_{\Omega}:=0,\quad \xi_{\Omega}:=-\xi^{\perp},\quad \hat{L}_\xi:=  - \xi\cdot \nabla_\xi , \quad d_{L}:=-2, \quad \xi_{L}:= -\xi.
\]
\[
\chi_k^1:=\{(k_1,k_2): |k_1-k_2|\leq 10, k\leq k_1+10\}, \quad \chi_k^2:=\{(k_1,k_2): k_2\leq k_1- 10, |k_1-k|\leq 10\}.
\]
Note that, 
\[
\hat{\Omega}_\xi\widehat{g}(t, \xi)= \widehat{\Omega g}(t, \xi), \quad \hat{L}_\xi\widehat{g}(t, \xi)= \widehat{L g}(t, \xi),\quad 
\]
\be\label{eqn940} 
\|g(t)\|_{Z_2}    \sim \sum_{\Gamma^1_\xi, \Gamma^2_\xi\in \{\hat{\Omega}_\xi, \hat{L}_\xi\}}  \|  \Gamma^1_\xi\Gamma^2_\xi \widehat{g} (t,\xi) \|_{L^2}  + \| \Gamma^1_\xi \widehat{g} (t,\xi) \|_{L^2}.
\ee

Therefore, to close the argument, it would be sufficient to prove the following desired estimate for any $\Gamma_\xi, \Gamma^1_\xi, \Gamma^2_\xi \in \{\hat{L}_\xi, \hat{\Omega}_\xi \} $( correspondingly, $\Gamma, \Gamma^1, \Gamma^2\in\{L, \Omega\}$) and any $t_1,t_2\in[2^{m-1}, 2^{m}]$, 
 
\be\label{eqn1000}
\Big| \textup{Re}\big[ \int_{t_1}^{t_2} \int_{\R^2} \overline{\Gamma_\xi  \widehat{g}(t, \xi )} \Gamma_\xi  \p_t \widehat{g}(t, \xi ) d \xi d t \big]\Big| + \Big| \textup{Re}\big[ \int_{t_1}^{t_2} \overline{\Gamma^1_\xi \Gamma^2_\xi \widehat{g}(t, \xi )} \Gamma^1_\xi \Gamma^2_\xi \p_t \widehat{g}(t, \xi ) d \xi d t \big]\Big|  \lesssim 2^{2\tilde{\delta}m }.
\ee

\emph{The estimate of the first part of the left hand side of (\ref{eqn1000}) is similar and also much easier than the second part. Hence, for simplicity, we only estimate the second part in details.} 

Recall (\ref{realduhamel}) and (\ref{eqn650}).   From the  direct computations, we have the following identity for the    quadratic terms,
 \[
  \int_{t_1}^{t_2}  \int_{\R^2}\overline{\Gamma^1_\xi \Gamma^2_\xi \widehat{g_k}(t, \xi )} \Gamma^1_\xi \Gamma^2_\xi  B^{\mu, \nu}_{k,k_1,k_2}(t,\xi)  d \xi d t =  \int_{t_1}^{t_2}  \int_{\R^2}\overline{\Gamma^1_\xi \Gamma^2_\xi \widehat{g_k}(t, \xi )} e^{i t\Phi^{\mu, \nu}(\xi, \eta)}\Big[ \Gamma^1_\xi \Gamma^2_\xi \big(\tilde{q}_{\mu, \nu}(\xi-\eta, \eta)   
\]
\[
\times\widehat{g_{k_1}^{\mu}}(t, \xi-\eta) \big)  \widehat{g^{\nu}_{k_2}}(t, \eta)+ \sum_{{l,n}=\{1,2\}}  it \big(\Gamma^l_\xi \Phi^{\mu, \nu}(\xi, \eta)\big) \Gamma^n_\xi \big(\tilde{q}_{\mu, \nu}(\xi-\eta, \eta) \widehat{g_{k_1}^{\mu}}(t, \xi-\eta) \big)  \widehat{g^{\nu}_{k_2}}(t, \eta)  
\]
\be\label{eqn711}
-  t^2    \Gamma^1_\xi \Phi^{\mu, \nu}(\xi, \eta) \Gamma^2_\xi \Phi^{\mu, \nu}(\xi, \eta)  \tilde{q}_{\mu, \nu}(\xi-\eta, \eta) \widehat{g_{k_1}^{\mu}}(t, \xi-\eta)    \widehat{g^{\nu}_{k_2}}(t, \eta) \Big]d \eta d \xi d t,
\ee

Since  the formulation   (\ref{eqn711})  lacks the requisite symmetry, in order to utilize the hidden symmetry, we need to   make one of the inputs to be the same type as the output $  \Gamma^1_\xi \Gamma^2_\xi \widehat{g_k}(t, \xi )$. Hence, we split  $\Gamma_\xi^i$,  $i\in\{1,2\}$, into two parts as follows,  
\[
\Gamma_\xi^i \widehat{g}(t, \xi-\eta) = \Gamma_{\xi-\eta}^i \widehat{g}(t, \xi-\eta)-\Gamma_\eta^i\widehat{g}(t, \xi-\eta). 
\]
We do integration by parts in ``$\eta$'' in (\ref{eqn711}) to move the derivative in front of $\Gamma_\eta^i\widehat{g}(t, \xi-\eta) $ around. As a result, the following identity holds,   
\be\label{eqn1006}
 \textup{Re}\Big[\int_{t_1}^{t_2} \int_{\R^2} \overline{\Gamma^1_\xi \Gamma^2_\xi \widehat{g}_k(t, \xi)}  \Gamma^1_\xi \Gamma^2_\xi B^{\mu, \nu}_{k,k_1,k_2}(t,\xi) d \xi d t\Big] = \sum_{i=1,2,3,4} \textup{Re}[P_{k,k_1,k_2}^i],
\ee
where
\[
P_{k,k_1,k_2}^1:=  \sum_{\{l,n\}=\{1,2\}} \int_{t_1}^{t_2} \int_{\R^2} \int_{\R^2 } \overline{\widehat{\Gamma^1 \Gamma^2 g_k}(t, \xi)} e^{i t\Phi^{\mu, \nu}(\xi, \eta)} it   (\Gamma_\xi^l +\Gamma_\eta^l )\Phi^{\mu, \nu}(\xi, \eta) \]
\[
\times \big[\tilde{q}_{\mu, \nu}(\xi-\eta, \eta)  \big(  \widehat{\Gamma^n  g_{k_1}^{\mu}}(t, \xi-\eta)\widehat{g_{k_2}^\nu}(t, \eta) +   \widehat{g_{k_1}^{\mu}}(t, \xi-\eta) \widehat{\Gamma^n g_{k_2}^\nu}(t, \eta)\big) + (\Gamma_\xi^n +\Gamma_\eta^n + d_{\Gamma^n} )\tilde{q}_{\mu, \nu}(\xi-\eta, \eta)\]
\be\label{eqn1010} 
 \times \widehat{   g_{k_1}^{\mu}}(t, \xi-\eta)\widehat{g_{k_2}^\nu}(t, \eta) \big] d \eta d \xi d t,
\ee
\[
P_{k,k_1,k_2}^2:= - \int_{t_1}^{t_2} \int_{\R^2} \int_{\R^2 } \overline{\widehat{\Gamma^1 \Gamma^2 g_k}(t, \xi)} e^{i t\Phi^{\mu, \nu}(\xi, \eta)}  t^2 (\Gamma_\xi^1 +\Gamma_\eta^1 ) \Phi^{\mu, \nu}(\xi, \eta) (\Gamma_\xi^2 +\Gamma_\eta^2 ) \Phi^{\mu, \nu}(\xi, \eta)\]
\be\label{eqn1011}
\times  \tilde{q}_{\mu, \nu}(\xi-\eta, \eta)  \widehat{g_{k_1}^{\mu}}(t, \xi-\eta) \widehat{  g_{k_2}^\nu}(t, \eta)   d \eta d \xi d t,
\ee
\[
P_{k,k_1,k_2}^3:=  \int_{t_1}^{t_2} \int_{\R^2} \int_{\R^2 } \overline{\widehat{\Gamma^1 \Gamma^2 g_k}(t, \xi)} e^{i t\Phi^{\mu, \nu}(\xi, \eta)} \Big(    \tilde{q}_{\mu, \nu}(\xi-\eta, \eta) \big(\widehat{  \Gamma^1   \Gamma^2 g_{k_1}^{\mu}}(t, \xi-\eta)\widehat{g_{k_2}^\nu}(t, \eta) \]
  \[+ \widehat{  g_{k_1}^{\mu}}(t, \xi-\eta)\widehat{\Gamma^1   \Gamma^2 g_{k_2}^\nu}(t, \eta )\big)   + (\Gamma_\xi^1 +\Gamma_\eta^1 + d_{\Gamma^1} )(\Gamma_\xi^2 +\Gamma_\eta^2 + d_{\Gamma^2} )\tilde{q}_{\mu, \nu}(\xi-\eta, \eta) \widehat{  g_{k_1}^{\mu}}(t, \xi-\eta) \widehat{  g_{k_2}^\nu}(t, \eta)\]
\be\label{eqn1012}
  + (\Gamma_\xi^l +\Gamma_\eta^l + d_{\Gamma^l} )\tilde{q}_{\mu, \nu}(\xi-\eta, \eta)\big(\widehat{ \Gamma^n g_{k_1}^{\mu}}(t, \xi-\eta) \widehat{  g_{k_2}^\nu}(t, \eta) + \widehat{ g_{k_1}^{\mu}}(t, \xi-\eta) \widehat{ \Gamma^n  g_{k_2}^\nu}(t, \eta) \big) \Big) d \eta d\xi d t, 
\ee
\[
P_{k,k_1,k_2}^4:= \sum_{j_1\geq -k_{1,-}, j_2\geq -k_{2,-}}P_{k,k_1,k_2}^{4,j_1,j_2}, \quad  P_{k,k_1,k_2}^{4,j_1,j_2}:= \sum_{ \{l,n\}=\{1,2\}} \int_{t_1}^{t_2} \int_{\R^2} \int_{\R^2 } \overline{\widehat{\Gamma^1 \Gamma^2 g_k}(t, \xi)} \]
\be\label{eqn1013}
\times  e^{i t\Phi^{\mu, \nu}(\xi, \eta)}   \tilde{q}_{\mu, \nu}(\xi-\eta, \eta) \widehat{ \Gamma^l g_{k_1,j_1}^{\mu}}(t, \xi-\eta) \widehat{\Gamma^n g_{k_2,j_2}^\nu}(t, \eta) d \eta d\xi dt. 
\ee

Note that, the following equalities hold  when $|\eta|\ll |\xi|$ and $\mu=+$, 
\[
\big(\hat{L}_\xi + \hat{L}_\eta\big) \Phi^{\mu, \nu}(\xi, \eta)=    -2\xi \cdot \big( \lambda'(|\xi|^2) \xi -    \lambda'(|\xi-\eta|^2)  (\xi-\eta)\big) -2\eta \cdot \big( -\lambda'(|\xi-\eta|^2)  (\eta-\xi)
\]
\be\label{eqn945}
- \nu \lambda'(|\eta|)\eta\big) = -4\big(   \lambda'(|\xi|^2) +  \lambda''(|\xi|^2)|\xi|^2  \big) \xi\cdot \eta + \mathcal{O}(|\eta|^2),
\ee
\[
(\hat{\Omega}_\xi + \hat{\Omega}_\eta ) \Phi^{\mu, \nu}(\xi, \eta)= -2\xi^{\perp}\cdot \big(\lambda'(|\xi|^2) \xi  -\mu\lambda(|\xi-\eta|^2)(\xi-\eta)\big)\]
\be\label{eqn989}
- 2\eta^{\perp} \cdot \big( -\mu\lambda(|\xi-\eta|^2)(\eta-\xi)-\nu \lambda'(|\eta|^2)\eta\big) = - 2\mu \lambda'(|\xi-\eta|^2)\big( \xi^{\perp}\cdot \eta + \eta^{\perp}\cdot \xi)=0.
\ee
Meanwhile, the following equalities hold when   $|\xi|\ll |\eta|$ and $\mu\nu=-,$
\[
\big(\hat{L}_\xi + \hat{L}_\eta\big) \Phi^{\mu, \nu}(\xi, \eta)=  -2\lambda'(|\xi|^2) |\xi|^2 + \mu 2\lambda'(|\xi-\eta|^2) \xi\cdot(\xi-\eta) +2\mu \lambda'(|\xi-\eta|^2) \eta\cdot (\eta-\xi) 
\]
\be\label{eqn944}
+2 \nu\lambda'(|\eta|^2) |\eta|^2= -4\mu\big( \lambda'(|\eta|^2)  +  \lambda''(|\eta|^2)|\eta|^2 \big) \xi \cdot \eta + \mathcal{O}(|\xi|^2),
\ee
\be\label{eqn990}
(\hat{\Omega}_\xi + \hat{\Omega}_\eta ) \Phi^{\mu, \nu}(\xi, \eta)=-2\mu \lambda'(|\xi-\eta|^2)\big( \xi^{\perp}\cdot \eta + \eta^{\perp}\cdot \xi)=0.
\ee
Hence,  from (\ref{degeneratephase}) and (\ref{eqn945}),  the following identity holds when $|\eta|\ll |\xi|$ and $\mu=+$, 
\be\label{eqn1301}
\big(\hat{L}_\xi + \hat{L}_\eta\big) \Phi^{\mu, \nu}(\xi, \eta) = \tilde{c}(\xi-\eta)\Phi^{\mu, \nu}(\xi, \eta) + \mathcal{O}(|\eta|^2),\quad \tilde{c}(\xi):= -\frac{ 2 \lambda''(|\xi|^2)|\xi|^2 + 2\lambda'(|\xi|^2) }{ \lambda'(|\xi|^2) }.
\ee
Meanwhile,
from (\ref{eqn920}) and (\ref{eqn944}), the following identity holds when $|\xi|\ll |\eta|$ and  $\mu\nu=-,$
\be\label{eqn951}
\big(\hat{L}_\xi + \hat{L}_\eta\big) \Phi^{\mu, \nu}(\xi, \eta) = \tilde{c}(\xi-\eta)\Phi^{\mu, \nu}(\xi, \eta) + \mathcal{O}(|\xi|^2) .
\ee
  
\subsection{The $Z_2$-norm estimate of quadratic terms: when $|k_1-k_2|\leq 10$ and $k\leq k_1-10$.}  Recall the normal form transformation we did in subsection \ref{goodvariable}. For the case we are considering, \emph{we have $\mu\nu=-$.}

Recall 
(\ref{eqn1006}). For the High $\times$ High type interaction that we are considering, the estimates of ``$ P_{k,k_1,k_2}^3$'' (see (\ref{eqn1013})) and ``$ P_{k,k_1,k_2}^4$'' (see(\ref{eqn1012}))  are much easier. Hence we estimate them first. From the $L^2-L^\infty$ type bilinear estimate (\ref{bilinearesetimate}) in Lemma \ref{multilinearestimate}, we have
\[
\big| \sum_{|k_1-k_2|\leq 10, k\leq k_1+5}  P_{k,k_1,k_2}^3\big| \lesssim \sup_{t_1,t_2\in[2^{m-1},2^m]}  \sum_{|k_1-k_2|\leq 10} 2^{m+2k_1} \| P_{\leq k_1 +5} \Gamma^1 \Gamma^2 g(t)\|_{L^2}   \big( \|e^{-it \Lambda}  g_{k_1}(t)\|_{L^\infty} 
\]
\be\label{eqn998}
+  \|e^{-it \Lambda}  g_{k_2}(t)\|_{L^\infty} \big)  \big(\sum_{l,m\in\{1,2\}}\| \Gamma^1 \Gamma^2 g_{k_m}(t)\|_{L^2}    + \| \Gamma^l g_{k_m}\|_{L^2}  + \| g_{k_m}(t)\|_{L^2} \big) \lesssim 2^{2\tilde{\delta} m }\epsilon_0^2.
\ee
The estimate of $P_{k,k_1,k_2}^4 $ is similar but slightly different. The spatial concentrations of inputs play a role. We  put
the input with smaller spatial concentration ``$j$'' in $L^\infty$ and the other input in $L^2$. As a result, we have
\[
\big| \sum_{|k_1-k_2|\leq 10, k\leq k_1+5}  P_{k,k_1,k_2}^4	\big| \lesssim \sup_{t_1,t_2\in[2^{m-1},2^m]} \sum_{|k_1-k_2|\leq 10}  \sum_{ \{l,m\}=\{1,2\}} 2^{m+2k_1}   \| P_{\leq k_1 +5} \Gamma^1 \Gamma^2 g(t)\|_{L^2} 
\] 
\[
\times \big( \sum_{j_1 \geq j_2}\| \Gamma^l g_{k_1,j_1} \|_{L^2} \| e^{-it \Lambda} \Gamma^n g_{k_2,j_2}\|_{L^\infty} +   \sum_{j_2 \geq j_1} \| e^{-it \Lambda}\Gamma^l g_{k_1,j_1}\|_{L^\infty}\|\Gamma^n g_{k_2,j_2} \|_{L^2} \big)\lesssim \sum_{j_2} 2^{2k_1+2j_2}
\]
\be\label{eqn999}
\times \| g_{k_2,j_2}(t)\|_{L^2} \big(\sum_{j_1\geq j_2} 2^{ 2 \tilde{\delta} m -j_1}\epsilon_1 \big) + \sum_{j_1} 2^{2k_1+2j_1} \| g_{k_1,j_1}(t)\|_{L^2} \big(\sum_{j_2\geq j_1} 2^{ 2\tilde{\delta} m -j_2} \epsilon_1 \big)\lesssim 2^{2\tilde{\delta}m }\epsilon_0^2.
\ee
Note that, in above estimate we used the following simple facts, 
\[
 \| \Gamma^l g_{k ,j } \|_{L^2}\lesssim  2^{-k -j +\tilde{\delta} m }\epsilon_1, \quad \| e^{-it \Lambda}\Gamma^l g_{k ,j }\|_{L^\infty}\lesssim 2^{-m+ k  +2j }  \| g_{k ,j }\|_{L^2}.
\] 
 
Now our main goal is reduced to estimate ``$P_{k,k_1,k_2}^1$'' (see (\ref{eqn1010})) and ``$P_{k,k_1,k_2}^2$'' (see (\ref{eqn1011})). From the $L^2-L^\infty$ type bilinear estimate (\ref{bilinearesetimate}) in Lemma \ref{multilinearestimate}, the following estimate holds, 
\[
|P_{k,k_1,k_2}^1| + |P_{k,k_1,k_2}^2| \lesssim  ( 2^{2m +k+3k_1} + 2^{3m+2k+4k_1} )\| \Gamma^1 \Gamma^2 g_{k}\|_{L^2}\big(\sum_{i=1,2} \| g_{k_i}(t)\|_{L^2} +  2^{k_1}\| \nabla_\xi \widehat{g}_{k_i}(t, \xi)\|_{L^2}  \big)
\]
\be\label{eqn1069}
 \times  \big(\sum_{i=1,2} \| e^{-i t\Lambda} g_{k_i}(t)\|_{L^\infty}   \big)\lesssim  2^{ \tilde{\delta}m  +\delta m}(2^{m+k+k_{1,-}- 15k_{1,+}} + 2^{2m+2k+2k_{1,-}-14k_{1,+}})\epsilon_0^2 .
\ee

From above rough estimate (\ref{eqn1069}), we can rule out  the case when $k+k_{1,-}\leq -m +\tilde{\delta} m/3$ or $k_1\geq m/5$. Note that there are only   $m^2$ cases left to consider, which is only a logarithmic loss.  \emph{For the rest of this subsection, we restrict ourself to the case when $k$ and $k_1$ are fixed such that $k+k_{1,-}\geq -m +\tilde{\delta }  m/3$ and $k_1\leq m/5$}.

\subsubsection{ The estimate of $P_{k,k_1,k_2}^1$}\label{highhighinteraction1}  Recall (\ref{eqn1010}) and (\ref{eqn990}). We know that the integral inside $P_{k,k_1,k_2}^1$ actually vanishes when $\Gamma^l=\hat{\Omega}_\xi$. Hence, we only need to consider the case when $\Gamma^l_\xi=\hat{L}_\xi$. 

 Due to the $l^2$ type structure with respect to $k$ (corresponding to the output frequency $|\xi|\sim 2^k$)  of the $Z_2$-normed space (see (\ref{highorderweightnorm})), after doing integration by parts in $\eta$, we confront a summability issue with respect to $k$. 

 To get around this difficulty, we use the good decomposition of  ``$(\hat{L}_\xi + \hat{L}_\eta) \Phi^{\mu, \nu}(\xi, \eta)$''. Recall (\ref{eqn951}).  To take the advantage of this decomposition,    we  decompose $P_{k,k_1,k_2}^1$ into two parts and have the following estimate,  
\be\label{eqn1019}
|P_{k,k_1,k_2}^1|\leq \sum_{\Gamma\in\{L,\Omega\}} | \Gamma_{k,k_1,,k_2}^{1 ,1}| +|  \Gamma_{k,k_1,,k_2}^{1,2}|,  
\ee
where
\[
 \Gamma_{k,k_1,,k_2}^{1,i}:=  \int_{t_1}^{t_2} \int_{\R^2} \int_{\R^2 } \overline{\widehat{\Gamma^1 \Gamma^2 g_k}(t, \xi)} e^{i t\Phi^{\mu, \nu}(\xi, \eta)} it  \tilde{q}^{i}_{\mu, \nu} (\xi-\eta, \eta)  \big[\tilde{q}_{\mu, \nu}(\xi-\eta, \eta)  \big(  \widehat{\Gamma  g_{k_1}^{\mu}}(t, \xi-\eta)  \]
\be\label{eqn1405}
\times   \widehat{g_{k_2}^\nu}(t, \eta)+   \widehat{g_{k_1}^{\mu}}(t, \xi-\eta) \widehat{\Gamma  g_{k_2}^\nu}(t, \eta)\big) + (\Gamma_\xi  +\Gamma_\eta + d_\Gamma )\tilde{q}_{\mu, \nu}(\xi-\eta, \eta)  \widehat{   g_{k_1}^{\mu}}(t, \xi-\eta)\widehat{g_{k_2}^\nu}(t, \eta) \big] d \eta d \xi d t, 
\ee
where
\be\label{eqn1120}
 \tilde{q}^{1}_{\mu, \nu} (\xi-\eta, \eta)=  \tilde{c}(\xi-\eta)\Phi^{\mu, \nu}(\xi, \eta), \quad  \tilde{q}^{ 2}_{\mu, \nu} (\xi-\eta, \eta):=(\hat{L}_\xi + \hat{L}_\eta) \Phi^{\mu, \nu}(\xi, \eta)- \tilde{c}(\xi-\eta)\Phi^{\mu, \nu}(\xi, \eta).
\ee

For $\Gamma_{k,k_1,k_2}^{1,1}$, we do integration by parts in time. As a result, we have
\be\label{eqn1425}
\Gamma_{k,k_1,k_2}^{1,1}= \sum_{i=1,2}\widetilde{\Gamma}_{k,k_1,k_2}^{1,i},\quad \widetilde{\Gamma}_{k,k_1,k_2}^{1,1}=\sum_{j_1\geq -k_{1,-}, j_2\geq-k_{2,-}} \widetilde{\Gamma}_{k,k_1,k_2}^{j_1,j_2,1,1}, \ee
\[  \widetilde{\Gamma}_{k,k_1,k_2}^{j_1,j_2,1,1}:= -\int_{t_1}^{t_2} \int_{\R^2} \int_{\R^2 } \overline{\widehat{\Gamma^1 \Gamma^2 g_k}(t, \xi)} e^{i t\Phi^{\mu, \nu}(\xi, \eta)} \tilde{c}(\xi-\eta)\big[\tilde{q}_{\mu, \nu}(\xi-\eta, \eta) \big(  \widehat{g_{k_2,j_2}^\nu}(t, \eta) \widehat{\Gamma   g_{k_1,j_1}^{\mu}}(t, \xi-\eta)  \]
\[     +   \widehat{g_{k_1,j_1}^{\mu}}(t, \xi-\eta) \widehat{\Gamma  g_{k_2,j_2}^\nu}(t, \eta)\big) + (\Gamma_\xi  +\Gamma_\eta  + d_\Gamma)\tilde{q}_{\mu, \nu}(\xi, \eta)   \widehat{   g_{k_1,j_1}^{\mu}}(t, \xi-\eta) \widehat{g_{k_2,j_2}^\nu}(t, \eta) \big] d \eta d \xi d t
\]
\[
 + \sum_{i=1,2} (-1)^i \int_{\R^2} \int_{\R^2 } \overline{\widehat{\Gamma^1 \Gamma^2 g_k}(t_i, \xi)} e^{i t_i \Phi^{\mu, \nu}(\xi, \eta)} t_i \tilde{c}(\xi-\eta)\big[\tilde{q}_{\mu, \nu}(\xi-\eta, \eta)    \big(  \widehat{\Gamma   g_{k_1,j_1}^{\mu}}(t_i, \xi-\eta) \widehat{g_{k_2j_2}^\nu}(t_i, \eta)   \]
\be\label{eqn1025}
  +   \widehat{g_{k_1,j_1}^{\mu}}(t_i, \xi-\eta) \widehat{\Gamma  g_{k_2,j_2}^\nu}(t_i, \eta)\big) + (\Gamma_\xi  +\Gamma_\eta  + d_\Gamma)\tilde{q}_{\mu, \nu}(\xi-\eta, \eta)   \widehat{   g_{k_1,j_1}^{\mu}}(t_i, \xi-\eta)\widehat{g_{k_2,j_2}^\nu}(t_i, \eta) \big] d \eta d \xi,
\ee
\[
\widetilde{\Gamma}_{k,k_1,k_2}^{1,2} = -\int_{t_1}^{t_2} \int_{\R^2} \int_{\R^2 } e^{i t\Phi^{\mu, \nu}(\xi, \eta)}  t   \tilde{c}(\xi-\eta)   \big[\tilde{q}_{\mu, \nu}(\xi-\eta, \eta)  \p_t \big( \overline{\widehat{\Gamma^1 \Gamma^2 g_k}(t, \xi)}\big(  \widehat{\Gamma   g_{k_1}^{\mu}}(t, \xi-\eta)   \widehat{g_{k_2}^\nu}(t, \eta) \]
\be\label{eqn1026}
+   \widehat{g_{k_1}^{\mu}}(t, \xi-\eta) \widehat{\Gamma  g_{k_2}^\nu}(t, \eta)\big) \big) + (\Gamma_\xi  +\Gamma_\eta + d_\Gamma )\tilde{q}_{\mu, \nu}(\xi-\eta, \eta) \p_t \big( \overline{\widehat{\Gamma^1 \Gamma^2 g_k}(t, \xi)} \widehat{   g_{k_1}^{\mu}}(t, \xi-\eta)\widehat{g_{k_2}^\nu}(t, \eta)\big) \big] d \eta d \xi d t.
\ee

 Since the symbol of  $\Gamma_{k,k_1,k_2}^{1,2}$ contributes the smallness of $|\xi|^2$ instead of $|\xi|$, which makes the summability with respect to $k$ not a issue any more. For $\Gamma_{k,k_1,k_2}^{1,2}$, we are safely to do integration by parts in ``$\eta$''. More precisely, the following Lemma holds,
\begin{lemma}\label{errorestimate}
Under the bootstrap assumption \textup{(\ref{smallness})}, the following estimate holds,
\be\label{eqn112900}
 \sum_{k\leq k_1+2, |k_1-k_2|\leq 10}\sum_{\Gamma\in\{L, \Omega\}} \big|  \Gamma_{k,k_1,,k_2}^{1,2}   \big| \lesssim  2^{2\tilde{\delta} m  }\epsilon_0^2. 
\ee
\end{lemma}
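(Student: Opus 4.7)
The plan is to exploit the extra smallness in $|\xi|$ provided by the symbol $\tilde{q}^2_{\mu,\nu}$ in order to resolve the summability issue in the output frequency $k$. In the High $\times$ High regime ($|k_1-k_2|\leq 10$, $k\leq k_1+2$) the normal form cancels the $\mu\nu=+$ pieces by \eqref{eqn902}, so only $\mu\nu=-$ contributes; hence the decomposition \eqref{eqn951} applies and gives $\tilde{q}^2_{\mu,\nu}(\xi-\eta,\eta)=(\hat L_\xi+\hat L_\eta)\Phi^{\mu,\nu}-\tilde c(\xi-\eta)\Phi^{\mu,\nu}=\mathcal{O}(|\xi|^2)$ on the support under consideration. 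Quantitatively, using Lemma \ref{Snorm} together with the explicit expansion of the phase, one gets
\[\|\tilde{q}^2_{\mu,\nu}(\xi-\eta,\eta)\psi_k(\xi)\psi_{k_1}(\xi-\eta)\psi_{k_2}(\eta)\|_{\mathcal{S}^\infty}\lesssim 2^{2k+Ck_{1,+}}.\]
This gain of an extra factor $2^{k}$ over the analogous symbol in $\Gamma^{1,1}_{k,k_1,k_2}$ is precisely the smallness needed to close the sum in $k\leq k_1+2$.

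Next I would repeat, verbatim, the rough bilinear analysis that produced \eqref{eqn1069}, but now with the improved symbol bound above. Since the integrand in \eqref{eqn1405} is multilinear in $\widehat{\Gamma^1\Gamma^2 g_k}$, $\widehat{\Gamma g_{k_1}^\mu}$, $\widehat{g^\nu_{k_2}}$ (together with three additional permutations where $\Gamma$ hits the other input or the symbol), an $L^2$–$L^2$–$L^\infty$ H\"older split places $\Gamma^1\Gamma^2 g_k$ and one $\Gamma g_{k_i}$ factor in $L^2$ (controlled by $Z_2$, costing $2^{\tilde\delta m}$) and the remaining factor in $L^\infty$ (controlled by the sharp decay $\|e^{-it\Lambda}g_{k_i}\|_{L^\infty}\lesssim 2^{-m-(1+\alpha)k_i-6k_{i,+}}\epsilon_1$ that follows from the $Z_1$ assumption via Lemma \ref{lineardecay}). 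The symbol product $\tilde{q}^2_{\mu,\nu}\cdot\tilde{q}_{\mu,\nu}$, or the derivatives $(\Gamma_\xi+\Gamma_\eta+d_\Gamma)\tilde{q}_{\mu,\nu}$ produced when $\Gamma$ hits the symbol, is bounded by $2^{2k+2k_1+Ck_{1,+}}$ in $\mathcal{S}^\infty$ thanks to \eqref{symbolquadraticrough}, \eqref{eqn1}, and Lemma \ref{Snorm}. Together with the $2^m$ loss from the explicit time factor in $\Gamma^{1,2}_{k,k_1,k_2}$, this produces a bound of schematic form
\[|\Gamma^{1,2}_{k,k_1,k_2}|\lesssim 2^{m+2k+2k_1+Ck_{1,+}}\cdot 2^{\tilde\delta m}\cdot 2^{-m-(1+\alpha)k_1-6k_{1,+}}\cdot 2^{\tilde\delta m}\epsilon_0^2\lesssim 2^{2k+(1-\alpha)k_1-(6-C)k_{1,+}+2\tilde\delta m}\epsilon_0^2.\]
Summing first in $k\leq k_1+2$ uses the $2^{2k}$ factor and yields $2^{(3-\alpha)k_1-(6-C)k_{1,+}+2\tilde\delta m}\epsilon_0^2$; the resulting power in $k_1$ is summable after noting the truncations $-m+\tilde\delta m/3\leq k+k_{1,-}$ and $k_1\leq m/5$ inherited from the rough estimate \eqref{eqn1069}, so the total contribution is $\lesssim 2^{2\tilde\delta m}\epsilon_0^2$ as required.

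Two refinements are needed to make this honest. First, for the part of the range where the crude $L^2$–$L^2$–$L^\infty$ estimate is wasteful, I would introduce spatial localizations $g_{k_i}=\sum_{j_i}g_{k_i,j_i}$ and separate two regimes: when $\max\{j_1,j_2\}\geq m+k_{1,-}+c_1 m$ I use the dispersive bound $\|e^{-it\Lambda}g_{k,j}\|_{L^\infty}\lesssim 2^{-m+2j-2k}$ on the small-$j$ factor and the $L^2$ bound on the large-$j$ factor; when $\max\{j_1,j_2\}$ is smaller I integrate by parts in $\eta$, using the lower bound $|\nabla_\eta\Phi^{\mu,-\mu}(\xi,\eta)|\gtrsim 2^{k_1-k_{1,+}/2}$ from \eqref{eqn928}, which is available here since $|\xi|\ll|\eta|$ forces $\eta$ to stay away from the space-resonant set $\eta=\xi/2$. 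Secondly, the two boundary terms $t=t_1,t_2$ coming from the integration by parts in time that produced $\Gamma^{1,2}_{k,k_1,k_2}$ are handled by the same bilinear estimate without the $2^m$ loss, which is easier. The main obstacle I anticipate is the bookkeeping of the terms where $\Gamma_\xi+\Gamma_\eta+d_\Gamma$ hits $\tilde{q}_{\mu,\nu}$: one must check that the derivative preserves the $\mathcal{S}^\infty$ bound $2^{2k_1+Ck_{1,+}}$ on the symbol, which follows from the explicit formula \eqref{symmetricsymbol} together with Lemma \ref{Snorm} and does not require any cancellation.
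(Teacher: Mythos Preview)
Your sketch has the right starting observation --- the symbol bound $\|\tilde q^2_{\mu,\nu}\|_{\mathcal S^\infty}\lesssim 2^{2k}$ from \eqref{eqn951} is exactly what the paper uses --- but the execution breaks down in two places.

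First, the direct $L^2$--$L^2$--$L^\infty$ bound you write is off by a full factor of $2^m$. The integral defining $\Gamma^{1,2}_{k,k_1,k_2}$ in \eqref{eqn1405} carries both the explicit weight $it\sim 2^m$ \emph{and} the time integral $\int_{t_1}^{t_2}dt\sim 2^m$, so the honest rough bound is $2^{m+2k+(1-\alpha)k_1+\cdots}\epsilon_0^2$, not $2^{2k+(1-\alpha)k_1+\cdots}\epsilon_0^2$. After summing in $k$ this leaves you $2^m$ short of closing, so integration by parts in $\eta$ is not a ``refinement to make things honest'' --- it is the whole argument. (Relatedly, there are no boundary terms here: $\Gamma^{1,2}$ arises from the symbol decomposition \eqref{eqn1120}, not from an integration by parts in time; the IBP-in-time produces $\widetilde\Gamma^{1,1},\widetilde\Gamma^{1,2}$ from $\Gamma^{1,1}$, which is a different object.)

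Second, your lower bound $|\nabla_\eta\Phi^{\mu,-\mu}(\xi,\eta)|\gtrsim 2^{k_1-k_{1,+}/2}$ is incorrect in this regime. When $|\xi|\ll|\eta|\sim|\xi-\eta|$ and $\mu\nu=-$, the two terms in $\nabla_\eta\Phi$ cancel to leading order and one only gets $|\nabla_\eta\Phi|\gtrsim |\xi|(1+|\eta|)^{-1/2}\sim 2^{k-k_{1,+}/2}$, exactly as stated in \eqref{eqn928}. This matters: with the correct bound the gain from one IBP in $\eta$ is $2^{-m-k+k_{1,+}/2}$, which combines with the extra $2^{2k}$ from $\tilde q^2_{\mu,\nu}$ to leave a net $2^{k}$ after the $it$ is killed --- precisely the summable factor in $k$ you need. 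The paper does exactly this: one IBP in $\eta$ (not many), then an $L^2$--$L^\infty$ split together with a spatial localization for the cross term where $\nabla_\eta$ lands on each of the two inputs, yielding $|\Gamma^{1,2}_{k,k_1,k_2}|\lesssim 2^{(1-\alpha)k-10k_{1,+}+2\tilde\delta m}\epsilon_0^2$, which sums over the full range of $(k,k_1)$ without invoking the truncations from \eqref{eqn1069}.
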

\begin{proof}
Recall (\ref{eqn1120}) and (\ref{eqn951}).  From  Lemma \ref{Snorm}, the following estimate holds, 
\be\label{eqn1121}
\| \tilde{q}_{\mu, \nu}^2(\xi-\eta, \eta) \psi_{k}(\xi) \psi_{k_1}(\xi-\eta)\psi_{k_2}(\eta)\|_{\mathcal{S}^\infty}\lesssim 2^{2k},\quad k\leq k_1-10.
\ee
After doing integration by parts in ``$\eta$'' once, the following estimate holds, 
\[
| \Gamma_{k,k_1,,k_2}^{1,2} | \lesssim   \| \Gamma^1 \Gamma^2 g_k(t)\|_{L^2}  2^{m+k+k_1+k_{1,+}}\big(\sum_{i=0,1,2}2^{i k_1}\| \nabla_\xi^i \widehat{g_{k_1}}(t, \xi)\|_{L^2} +2^{i k_1} \| \nabla_\xi^i \widehat{g_{k_2}}(t, \xi)\|_{L^2}  \big)
\]
\[
\times \big(\sum_{i=1,2} \| e^{-it \Lambda} g_{k_i}\|_{L^\infty} \big)+ \sum_{j_1\geq j_2} 2^{ k+3k_1+k_{1,+}}\| \Gamma^1 \Gamma^2 g_k(t)\|_{L^2}    2^{2j_2} \| g_{k_2,j_2}\|_{L^2} 2^{j_1}\| g_{k_1,j_1}\|_{L^2}+  \sum_{j_2\geq j_1}2^{ k+3k_1+k_{1,+}}  \]
\be
\times\| \Gamma^1 \Gamma^2 g_k(t)\|_{L^2}  2^{2j_1} \| g_{k_2,j_2}\|_{L^2} 2^{j_2}\| g_{k_1,j_1}\|_{L^2} \lesssim 2^{(1-\alpha)k-10k_{1,+} +2\tilde{\delta}m} \epsilon_0^2,
\ee
which is very sufficient to derive our desired estimate (\ref{eqn112900}).
\end{proof}

\begin{lemma}
Under the bootstrap assumption \textup{(\ref{smallness})}, the following estimate holds,
\be
 \sum_{\Gamma\in\{L,\Omega\}} \big|  \widetilde{\Gamma}_{k,k_1,k_2}^{1,1} \big| \lesssim 2^{9\tilde{\delta} m/5}\epsilon_0^2.
\ee
\end{lemma}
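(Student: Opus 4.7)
The plan is to exploit the integration by parts in time that produced $\widetilde{\Gamma}^{1,1}_{k,k_1,k_2}$ from $\Gamma^{1,1}_{k,k_1,k_2}$. Because $\tilde{q}^1_{\mu,\nu}=\tilde{c}(\xi-\eta)\Phi^{\mu,\nu}$ is divisible by the phase, the formal $1/\Phi$ singularity cancels, leaving a bulk time-integral without the dangerous $t$ factor present in $P^1_{k,k_1,k_2}$, together with boundary contributions at $t_1,t_2$ carrying only a single power of $t$. Both are bilinear forms with effective symbols $\tilde{c}(\xi-\eta)\tilde{q}_{\mu,\nu}$ or $\tilde{c}(\xi-\eta)(\Gamma_\xi+\Gamma_\eta+d_\Gamma)\tilde{q}_{\mu,\nu}$, each of $\mathcal{S}^\infty$-norm $\lesssim 2^{2k_1+O(k_{1,+})}$ by Lemma \ref{Snorm} and the symbol estimate \textup{(\ref{symbolquadraticrough})}.

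For the bulk I would apply the $L^2$--$L^\infty$ bilinear estimate of Lemma \ref{multilinearestimate}: the output $\Gamma^1\Gamma^2 g_k$ sits in $L^2$ (controlled by the $Z_2$-bootstrap, $\lesssim 2^{\tilde{\delta}m}\epsilon_1$), the $\Gamma$-differentiated input sits in $L^2$, and the remaining undifferentiated input sits in $L^\infty$ via the dispersion $\|e^{-it\Lambda}g_{k_2}\|_{L^\infty}\lesssim t^{-1}(2^{-(1+\alpha)k_2}+2^{-6k_{2,+}})\epsilon_1$. Since $\int_{2^{m-1}}^{2^m}t^{-1}\,dt=O(1)$, the time integration costs no $m$-factor. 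The boundary terms admit the identical estimate: the explicit $t\sim 2^m$ factor is canceled by the pointwise $L^\infty$ dispersion $\sim 2^{-m}$. A delicate point is the $\Gamma$-differentiated input: using the spatial-localization bound $\|\Gamma g_{k_1,j_1}\|_{L^2}\lesssim (2^{j_1}+2^{k_1})\|g_{k_1,j_1}\|_{L^2}$ together with the $Z_1$-based bound on $\|g_{k_1,j_1}\|_{L^2}$ saves a factor of $2^{\tilde{\delta}m}$ relative to the naive $Z_2$-bound on $\Gamma g$, which is precisely what allows the final exponent to drop from $2\tilde{\delta}m$ to $9\tilde{\delta}m/5$.

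Summation is then routine: over $j_1,j_2$ via standard $\ell^1$ bounds, over $k_1\in[-m/2,m/5]$ using $|k_1-k_2|\leq 10$ (the per-$k_1$ bound $\lesssim 2^{(1-\alpha)k_1-4k_{1,+}}\cdot 2^{(9/5)\tilde{\delta}m}\epsilon_1^3$ is $\ell^1$-summable), and over $k\leq k_1-10$ by Cauchy--Schwarz in $k$ (using $\sum_k\|\Gamma^1\Gamma^2 g_k\|_{L^2}^2=\|\Gamma^1\Gamma^2 g\|_{L^2}^2\lesssim 2^{2\tilde{\delta}m}\epsilon_1^2$ and that only $O(m)$ dyadic values contribute by the reduced range following \textup{(\ref{eqn1069})}, giving a harmless $m^{1/2}$ polynomial loss absorbed by $\epsilon_1=\epsilon_0^{5/6}$). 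The main obstacle is exactly this $k$-summability at large output frequencies $k\leq k_1-10$: the integration by parts in time is essential because $\tilde{c}(\xi-\eta)$ provides no smallness in $\xi$, in contrast to the residual symbol $\tilde{q}^2$ whose extra smallness $2^{2k}$ (cf.\ Lemma \ref{errorestimate}) permits direct $L^2$--$L^\infty$ estimation without the time manipulation.
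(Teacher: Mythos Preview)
Your direct $L^2$--$L^\infty$ route misses the key step and, as written, does not close. Three concrete issues:

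\textbf{(i) The localization inequality is wrong.} For $\Gamma\in\{L,\Omega\}$ one has $|\Gamma g_{k_1,j_1}|\lesssim |x|\,|\nabla g_{k_1,j_1}|+|g_{k_1,j_1}|$, hence $\|\Gamma g_{k_1,j_1}\|_{L^2}\lesssim 2^{j_1+k_1}\|g_{k_1,j_1}\|_{L^2}$, not $(2^{j_1}+2^{k_1})\|g_{k_1,j_1}\|_{L^2}$. With the correct factor $2^{j_1+k_1}$, the $Z_1$ bound $\|g_{k_1,j_1}\|_{L^2}\lesssim 2^{-j_1-(1+\alpha)k_1}\epsilon_1$ gives a contribution $2^{-\alpha k_1}\epsilon_1$ that is \emph{constant in $j_1$}, so your ``routine $\ell^1$ sum over $j_1$'' diverges. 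The claimed saving of $2^{\tilde\delta m}$ does not follow from the $Z_1$ bound alone; it would require an interpolation between the $Z_1$ and the $Z_2$-derived pointwise bound $\|g_{k_1,j_1}\|_{L^2}\lesssim 2^{-2j_1-2k_1+\tilde\delta m}\epsilon_1$, with a case split at $j_1\sim \tilde\delta m-(1-\alpha)k_1$. You never set this up.

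\textbf{(ii) Without that saving the exponent is too large.} The straight $L^2$--$L^\infty$ estimate with $\|\Gamma^1\Gamma^2 g_k\|_{L^2}\lesssim 2^{\tilde\delta m}\epsilon_1$ and $\|\Gamma g_{k_1}\|_{L^2}\lesssim 2^{\tilde\delta m}\epsilon_1$ only yields $2^{2\tilde\delta m}\epsilon_0^2$ for each fixed $(k,k_1,k_2)$. The lemma is stated (and used) for \emph{fixed} $k,k_1,k_2$ in the already-restricted range; after the subsequent sum over the $O(m^2)$ admissible triples this becomes $m^2\,2^{2\tilde\delta m}\epsilon_0^2$, which does not fit under $2^{2\tilde\delta m}\epsilon_0^2$. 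This is exactly why the target is $2^{9\tilde\delta m/5}$ rather than $2^{2\tilde\delta m}$.

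\textbf{(iii) The paper's mechanism is different.} The proof in the paper does one integration by parts in $\eta$, gaining $2^{-m-k+k_{1,+}/2}$ from $|\nabla_\eta\Phi^{\mu,\nu}|\gtrsim 2^{k-k_{1,+}/2}$ (see (\ref{eqn928})). Combined with the restriction $k+k_{1,-}\geq -m+\tilde\delta m/3$ recorded after (\ref{eqn1069}), this converts $2^{-m-k-k_1+2\tilde\delta m+\delta m}$ into $2^{5\tilde\delta m/3+\delta m}\lesssim 2^{9\tilde\delta m/5}$. You neither integrate by parts in $\eta$ nor invoke this restriction; both are essential here. Your discussion of summing over $k$ by Cauchy--Schwarz is also misplaced, since $k$ is fixed in the lemma.
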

\begin{proof}
Recall (\ref{eqn1425}) and  (\ref{eqn1025}). For $ \widetilde{\Gamma}_{k,k_1,k_2}^{1,1}$, we do integration by parts in ``$\eta$'' once. As a result,  the following estimate holds from the $L^2-L^\infty$ type bilinear estimate (\ref{bilinearesetimate}) in Lemma \ref{multilinearestimate},
\[
  |\widetilde{\Gamma}_{k,k_1,k_2}^{1,1}| \lesssim   2^{k_{1,+}} \| \Gamma^1 \Gamma^2 g_{k}\|_{L^2}\Big[2^{ -k+ k_1}\big(\sum_{i=0,1,2} 2^{i k_1}\| \nabla_\xi^i \widehat{g}_{k_1}(t, \xi)\|_{L^2}+ 2^{i k_1}\| \nabla_\xi^i \widehat{g}_{k_2}(t, \xi)\|_{L^2}\big)
\]
\[
  \times \big(\sum_{i=1,2} \| e^{-i t\Lambda} g_{k_i}(t)\|_{L^\infty}   \big) + \sum_{j_1 \geq j_2} 2^{-k+3k_1 +j_1 } \| e^{-it \Lambda}\mathcal{F}^{-1}\big[ \nabla_\xi \widehat{g}_{k_2,j_2} \big]\|_{L^\infty} \| g_{k_1,j_1}\|_{L^2} 
 \]
 \[+  \sum_{j_2 \geq j_1} 2^{-k+3k_1 +j_2 } \| e^{-it \Lambda}\mathcal{F}^{-1}\big[ \nabla_\xi \widehat{g}_{k_1,j_1} \big]\|_{L^\infty} \| g_{k_2,j_2}\|_{L^2} \Big] \lesssim 2^{-m-k-k_1+2\tilde{\delta} m +\delta m }\epsilon_0^2\lesssim 2^{9\tilde{\delta} m/5}\epsilon_0^2. 
\]
In above estimate, we used the fact that $k+k_{1,-}\geq -m + \tilde{\delta}m /3$.
\end{proof}
\begin{lemma}
Under the bootstrap assumption \textup{(\ref{smallness})}, the following estimate holds,
\be
\sum_{\Gamma\in\{L,\Omega\}} \big|   \widetilde{\Gamma}_{k,k_1,k_2}^{1,2} \big| \lesssim  2^{9\tilde{\delta} m/5}\epsilon_0^2.
\ee
\end{lemma}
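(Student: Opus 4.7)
The plan is to substitute the equations satisfied by $\partial_t \widehat{g}$ into (\ref{eqn1026}) and split $\widetilde{\Gamma}_{k,k_1,k_2}^{1,2}$ into three sub-integrals according to which of the three time-dependent factors $\overline{\widehat{\Gamma^1\Gamma^2 g_k}}$, $\widehat{g_{k_1}^{\mu}}$ (or $\widehat{\Gamma g_{k_1}^{\mu}}$), $\widehat{g_{k_2}^{\nu}}$ (or $\widehat{\Gamma g_{k_2}^{\nu}}$) receives the $\partial_t$. For each of these three positions, $\partial_t$ applied via (\ref{realduhamel}) produces a sum of quadratic, cubic, and higher order expressions in the profile (and its $\Gamma$-derivatives); substitution turns $\widetilde{\Gamma}_{k,k_1,k_2}^{1,2}$ into a finite sum of trilinear, quadrilinear, and higher multilinear integrals that we estimate directly.

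First I will dispose of the case where $\partial_t$ hits an input profile $\widehat{g_{k_i}}$ or $\widehat{\Gamma g_{k_i}}$. After substitution via (\ref{realduhamel}), the integral becomes a trilinear form (plus cubic/quartic analogues) in which only the output $\overline{\widehat{\Gamma^1\Gamma^2 g_k}}$ carries two vector fields. Using $L^2$-$L^2$-$L^\infty$ trilinear estimates from Lemma \ref{multilinearestimate}, with $\overline{\Gamma^1\Gamma^2 g_k}$ and one element of the newly produced quadratic factor placed in $L^2$, the remaining two inputs in $L^\infty$, and the symbol bounds (\ref{symbolquadraticrough}), (\ref{eqn1121}), the extra $t\sim 2^m$ from the integration by parts in time is beaten by the two dispersive $2^{-m}$-decays of the $L^\infty$-placed factors. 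Using the restricted range $k+k_{1,-}\geq -m + \tilde{\delta}m/3$, $k_1 \leq m/5$ from (\ref{eqn1069}), this contribution is $\lesssim 2^{3\tilde{\delta} m/2} \epsilon_0^2$ after summing the $j_1,j_2$ spatial localizations using the bounds $\|g_{k_i,j_i}\|_{L^2} \lesssim 2^{-j_i-(1+\alpha)k_i - 8k_{i,+}}\epsilon_1$.

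The main obstacle is the case where $\partial_t$ hits $\overline{\widehat{\Gamma^1\Gamma^2 g_k}}$. Writing $\partial_t (\Gamma^1\Gamma^2 g) = \Gamma^1\Gamma^2 \partial_t g + [\partial_t, \Gamma^1\Gamma^2] g$ and using that $\Omega$ commutes with $\Lambda$ while $[L,\Lambda]$ is of the same order as $\Lambda$ itself, the commutator contributes terms of the same form (at most with a harmless constant). Expanding $\partial_t g$ via (\ref{realduhamel}) and distributing $\Gamma^1\Gamma^2$ through the quadratic form then produces expressions in which \emph{two} factors now carry vector fields (hence each grows like $2^{\tilde{\delta}m}$) while only one additional dispersive factor is available for decay. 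I will use an $L^2$-$L^2$-$L^\infty$-$L^\infty$ quadrilinear estimate: place the two vector-field-weighted factors in $L^2$, and the two original inputs $g_{k_1}, g_{k_2}$ (or partial $\Gamma$-versions) in $L^\infty$. The constraint $|k_1-k_2|\leq 10$ together with the sharp decay $\|e^{-it\Lambda}g_{k_i}\|_{L^\infty}\lesssim 2^{-m-(1+\alpha)k_i - 6k_{i,+}}\epsilon_1$ gives a $2^{-2m}$ bi-dispersive saving which, against the $2^m$ from integration by parts in time and the $2^{2\tilde{\delta}m}$ from the two weighted factors, produces a net bound of $2^{-m + 2\tilde{\delta}m + (2+\alpha)k + \cdots}\epsilon_0^2$; the output frequency range $k \leq k_1 - 10$ forces the $2^{2k}$ symbol saving from (\ref{eqn1121}) (which still applies after splitting off the $\tilde{c}(\xi-\eta)\Phi^{\mu,\nu}$ piece absorbed into $\widetilde{\Gamma}^{1,1}$ — here only the leading $\tilde{q}_{\mu,\nu}$ of size $2^{2k_1}$ survives, but the required cancellation is restored because we paired it with the $(1+t)^{-1}$-decay of one of the $L^\infty$ inputs at high frequency).

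Finally I will sum these pointwise-in-$(k,k_1,k_2)$ bounds against the dyadic ranges ruled in by (\ref{eqn1069}), paying only a logarithmic ($m^2$) loss absorbed into the $2^{2\tilde{\delta}m/5}$ room between $2^{3\tilde{\delta}m/2}$ and $2^{9\tilde{\delta}m/5}$. The main technical subtlety — and the obstacle where the argument will be most delicate — is ensuring that, in the case $\partial_t$ hits $\Gamma^1\Gamma^2 g_k$, the quadratic-factor-type term produced has a suitably placed vector field so that the $L^2$-$L^\infty$ assignment is consistent with the decay rate hierarchy of the bootstrap assumption (\ref{smallness}); this is where the hidden symmetry of $\tilde q_{\mu,\nu}$ reflected in (\ref{eqn900})–(\ref{eqn932}), together with the fact that $\mu\nu=-$ in the regime $k\leq k_1-10$, $|k_1-k_2|\leq 10$, enables us to pass the $\Gamma$ onto the desired input without losing any power of $t$.
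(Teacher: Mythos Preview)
Your proposal has a genuine gap: you do not account for the extra powers of $t$ that arise when the vector fields $\Gamma_\xi$ hit the phases of the new bilinear forms produced by substituting (\ref{realduhamel}).

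Concretely, when $\partial_t$ hits $\widehat{\Gamma g_{k_1}^{\mu}}$ (or $\widehat{\Gamma g_{k_2}^{\nu}}$), you must compute $\partial_t(\Gamma g_{k_1}) = \Gamma(\partial_t g_{k_1})$, and applying $\Gamma_{\xi-\eta}$ to the bilinear form $B^{\mu',\nu'}_{k_1,k_1',k_2'}$ produces, among other things, a term $it\,\Gamma_{\xi-\eta}\Phi^{\mu',\nu'}(\xi-\eta,\sigma)\cdot\tilde{q}_{\mu',\nu'}\,\widehat{g_{k_1'}}\widehat{g_{k_2'}}$. Combined with the prefactor $t$ in (\ref{eqn1026}), this gives a quadrilinear integral with $t^2$ in front, not $t$. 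Your $L^2$--$L^2$--$L^\infty$ trilinear bound with ``two dispersive $2^{-m}$-decays'' is then off by a full factor of $2^m$. The situation is worse still when $\partial_t$ hits $\overline{\widehat{\Gamma^1\Gamma^2 g_k}}$: distributing $\Gamma^1_\xi\Gamma^2_\xi$ through $B^{\mu',\nu'}_{k,k_1',k_2'}$ produces a term where both vector fields hit the phase, giving $t^2(\Gamma^1_\xi\Phi')(\Gamma^2_\xi\Phi')$; together with the original $t$ this is $t^3$, and your $L^2$--$L^2$--$L^\infty$--$L^\infty$ quadrilinear estimate recovers only $2^{-2m}$, leaving $2^{m}$ growth. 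Your proposal explicitly considers only the case where the $\Gamma$'s land on inputs.

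There is also a symbol confusion: the integrand in (\ref{eqn1026}) carries $\tilde{c}(\xi-\eta)\tilde{q}_{\mu,\nu}(\xi-\eta,\eta)$, which has size $2^{2k_1}$, not the symbol $\tilde{q}^2_{\mu,\nu}$ of size $2^{2k}$ from (\ref{eqn1121}); the latter belongs to the sibling term $\Gamma^{1,2}_{k,k_1,k_2}$, not to $\widetilde{\Gamma}^{1,2}_{k,k_1,k_2}$. So you do not have the $2^{2k}$ saving you claim.

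The paper's proof repairs all of this by (i) splitting off from $\partial_t\widehat{\Gamma^1\Gamma^2 g_k}$ the bulk piece $\widetilde{B}^{+,\nu'}_{k,k_1',k_2'}$ with $(k_1',k_2')\in\chi_k^2$ and invoking Lemma~\ref{derivativeL2estimate2} for the remainder (that lemma already contains the needed integrations by parts in $\eta$); (ii) treating the bulk piece $\widehat{\Gamma}^2$ by an integration by parts in $\eta$, using that $|k_1'-k|\leq 10$ gives $2^{2k}$ from the new symbol; and (iii) isolating the $\Gamma$-hits-phase contribution when $\partial_t$ lands on $\widehat{\Gamma g_{k_i}}$ as $\widehat{\Gamma}^5$ and doing one or two integrations by parts in $\sigma$ (using the lower bound on $|\nabla_\sigma\Phi^{\tau,\iota}|$) to beat the extra $t$'s. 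None of these mechanisms appears in your outline, and without them the argument does not close.
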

\begin{proof}
Since`` $\p_t$'' can hit every input inside $\widetilde{\Gamma}^{1,2}_{k,k_1,k_2}$, which creates many terms. We  put terms that have similar structures together and    split $\widetilde{\Gamma}^{1,2}_{k,k_2,k_2}$ into five parts  as follows, 
\be\label{eqn1467}
\widetilde{\Gamma}^{1,2}_{k,k_1,k_2}= \sum_{i=1,2,3,4,5 } \widehat{\Gamma}^{i}_{k,k_1,k_2}, \quad \widehat{\Gamma}^{1}_{k,k_1,k_2} =- \int_{t_1}^{t_2} \int_{\R^2} \int_{\R^2 } e^{i t\Phi^{\mu, \nu}(\xi, \eta)}  t  \tilde{c}(\xi-\eta)  \big[  \tilde{q}_{\mu, \nu}(\xi-\eta, \eta)    
 \ee
\[ \times\big(  \widehat{\Gamma   g_{k_1}^{\mu}}(t, \xi-\eta)  \widehat{g_{k_2}^\nu}(t, \eta)  + \widehat{g_{k_1}^{\mu}}(t, \xi-\eta)  \widehat{\Gamma  g_{k_2}^\nu}(t, \eta)\big) +  (\Gamma_\xi  +\Gamma_\eta + d_{\Gamma} )\tilde{q}_{\mu, \nu}(\xi-\eta, \eta)   \widehat{   g_{k_1}^{\mu}}(t , \xi-\eta)\widehat{g_{k_2}^\nu}(t , \eta) \big]  \]
\be\label{eqn1031}
 \times \overline{ \big(\p_t \widehat{\Gamma^1 \Gamma^2 g_k}(t, \xi) -   \sum_{\nu'\in\{+,-\}} \sum_{(k_1',k_2')\in \chi_k^2} \widetilde{B}^{+, \nu'}_{k,k_1',k_2'}(t, \xi) \big)}d\eta d \xi d t, 
\ee
where $\widetilde{B}^{+, \nu'}_{k,k_1',k_2'}(t, \xi)$ is defined in (\ref{eqn892}),
\[
 \widehat{\Gamma}^{2}_{k,k_1,k_2} =   \sum_{  \nu'\in\{+,-\}} \sum_{(k_1',k_2')\in \chi_k^2}- \int_{t_1}^{t_2} \int_{\R^2} \int_{\R^2} \int_{\R^2 } \overline{ e^{it \Phi^{+, \nu'}(\xi, \kappa)} \widehat{\Gamma^1\Gamma^2 g_{k_1'}}(t, \xi-\kappa) \widehat{g}_{k_2'}(t, \kappa) \tilde{q}_{\mu',\nu'}(\xi-\kappa, \kappa) } \]
 \[\times  t  e^{i t\Phi^{\mu, \nu}(\xi, \eta)} \tilde{c}(\xi-\eta)  \big[  \tilde{q}_{\mu, \nu}(\xi-\eta, \eta) \big(  \widehat{\Gamma   g_{k_1}^{\mu}}(t, \xi-\eta)   \widehat{g_{k_2}^\nu}(t, \eta) +   \widehat{g_{k_1}^{\mu}}(t, \xi-\eta)  \widehat{\Gamma g_{k_2}^\nu}(t, \eta)\big)    \]
\be\label{eqn1030} 
 +  (\Gamma_\xi+\Gamma_\eta  + d_\Gamma)\tilde{q}_{\mu, \nu}(\xi-\eta, \eta)  \widehat{   g_{k_1}^{\mu}}(t , \xi-\eta)\widehat{g_{k_2}^\nu}(t , \eta) \big] d \kappa d\eta d \xi d t, 
\ee
\[
\widehat{\Gamma}^{3}_{k,k_1,k_2}  = -\int_{t_1}^{t_2} \int_{\R^2} \int_{\R^2 } \overline{\widehat{\Gamma^1 \Gamma^2 g_k}(t, \xi)}e^{i t\Phi^{\mu, \nu}(\xi, \eta)}  t    \tilde{c}(\xi-\eta)   \big[\tilde{q}_{\mu, \nu}(\xi-\eta, \eta)   \big(  \widehat{\Gamma  g_{k_1}^{\mu}}(t, \xi-\eta)\p_t \widehat{g_{k_2}^\nu}(t, \eta)   \]
\[
+  \p_t\widehat{g_{k_1}^{\mu}}(t, \xi-\eta) \widehat{\Gamma  g_{k_2}^\nu}(t, \eta)+ \widehat{\Gamma \Lambda_{\geq 3}[ \p_t g_{k_1}^{\mu}]}(t, \xi-\eta) \widehat{g_{k_2}^\nu}(t, \eta)   +  \widehat{g_{k_1}^{\mu}}(t, \xi-\eta) \widehat{\Gamma \Lambda_{\geq 3}[ \p_t g_{k_2}^\nu]}(t, \eta)    \big)   
\]
\be\label{eqn1040}
   + (\Gamma_\xi  +\Gamma_\eta + d_\Gamma )\tilde{q}_{\mu, \nu}(\xi-\eta, \eta)  \p_t \big(   \widehat{   g_{k_1}^{\mu}}(t, \xi-\eta)\widehat{g_{k_2}^\nu}(t, \eta) \big)\big] d \eta d \xi d t,
\ee
 \be\label{e9809}
\widehat{\Gamma}^{i}_{k,k_1,k_2}  =  \sum_{k_1',k_2'\in \mathbb{Z}} \Gamma_{k,k_1,k_2}^{k_1', k_2' ;i-3}, \quad \Gamma_{k,k_1,k_2}^{k_1', ,k_2' ;i-4}:= \sum_{j_1'\geq -k_{1,-}', j_2'\geq -k_{2,-}'  } \Gamma_{k,k_1,k_2}^{k_1',j_1',k_2',j_2';i-4}, \quad i\in\{ 4,5 \}, 
\ee
\[
\Gamma_{k,k_1,k_2}^{k_1',j_1',k_2',j_2';1}:=  \sum_{\tau, \iota\in\{+,-\}} -\int_{t_1}^{t_2} \int_{\R^2} \int_{\R^2} \int_{\R^2 } \overline{\widehat{\Gamma^1 \Gamma^2 g_k}(t, \xi)}e^{i t\Phi^{\mu, \nu}(\xi, \eta)}  t   \tilde{c}(\xi-\eta)    \tilde{q}_{\mu, \nu}(\xi-\eta, \eta)      \]
\[\times  \big(  P_{\mu}[e^{i t\Phi^{\tau, \iota}(\xi-\eta, \sigma)}\tilde{q}_{\tau, \iota}(\xi-\eta-\sigma, \sigma)    \widehat{g_{k_2',j_2'}^{\iota}}(t, \sigma) \Gamma_{\xi-\eta} \widehat{   g_{k_1',j_1'}^{\tau}}(t, \xi-\eta-\sigma) ] \widehat{g_{k_2}^\nu}(t, \eta) \]
\be\label{eqn1039}
 +  \widehat{g_{k_1}^{\mu}}(t, \xi-\eta) P_{\nu}[ e^{it \Phi^{\tau,\iota}(\eta, \sigma)}  \tilde{q}_{\tau, \iota}( \eta-\sigma, \sigma)    \Gamma_\eta \widehat{ g_{k_1',j_1'}^\tau}(t, \eta-\sigma) \widehat{g_{k_2',j_2'}^{\iota}}(t,\sigma)]\big) d\sigma  d \eta d \xi d t.
\ee
 \[
\Gamma_{k,k_1,k_2}^{k_1',j_1',k_2',j_2';2}  =  \sum_{\tau, \iota\in\{+,-\}}  - \int_{t_1}^{t_2} \int_{\R^2} \int_{\R^2} \int_{\R^2 } \overline{\widehat{\Gamma^1 \Gamma^2 g_k}(t, \xi)}e^{i t\Phi^{\mu, \nu}(\xi, \eta)}  it^2 \tilde{c}(\xi-\eta)    \tilde{q}_{\mu, \nu}(\xi-\eta, \eta)      \]
\[\times  \big(  P_{\mu}[e^{i t\Phi^{\tau, \iota}(\xi-\eta, \sigma)} \Gamma_{\xi-\eta} \Phi^{\tau, \iota}(\xi-\eta, \sigma) \tilde{q}_{\tau, \iota}(\xi-\eta-\sigma, \sigma)    \widehat{g_{k_2',j_2'}^{\iota}}(t, \sigma)   \widehat{   g_{k_1',j_1'}^{\tau}}(t, \xi-\eta-\sigma) ] \widehat{g_{k_2}^\nu}(t, \eta) \]
\be\label{eqn1060} 
+   \widehat{g_{k_1}^{\mu}}(t, \xi-\eta) P_{\nu}[ e^{it \Phi^{\tau,\iota}(\eta, \sigma)} \Gamma_\eta \Phi^{\tau,\iota}(\eta, \sigma) \tilde{q}_{\tau, \iota}( \eta-\sigma, \sigma)     \widehat{ g_{k_1',j_1'}^\tau}(t, \eta-\sigma) \widehat{g_{k_2',j_2'}^{\iota}}(t,\sigma)]\big) d\sigma  d \eta d \xi d t.
\ee
 
Recall (\ref{eqn1031}). For $\widehat{\Gamma}^{1}_{k,k_2,k_2}$, we do integration by parts in ``$\eta$'' once. From (\ref{eqn730}) in Lemma \ref{derivativeL2estimate2} and the $L^2-L^\infty$ type bilinear   estimate (\ref{bilinearesetimate}) in Lemma \ref{multilinearestimate}, the  following estimate holds, 
\[
   |\widehat{\Gamma}^{1}_{k,k_1,k_2} | \lesssim \sup_{t\in [2^{m-1}, 2^m]}  2^{-k+5k_{1,+} +  k_1}\big( 2^{ \tilde{\delta} m + \delta m  } + 2^{3\tilde{\delta} m +  k } \big)  \big( (\sum_{i=0,1,2} 2^{i k_1} \| \nabla_\xi^i  \widehat{g_{k_1}}(t, \xi) \|_{L^2} ) \| e^{-it\Lambda}  g_{k_2}(t)\|_{L^\infty}
\]
\[+  (\sum_{i=0,1,2} 2^{i k_1} \| \nabla_\xi^i  \widehat{g_{k_2}}(t, \xi) \|_{L^2} ) \| e^{-it\Lambda}  g_{k_1}\|_{L^\infty} + \sum_{j_1 \geq j_2} 2^{-m + 2j_2 +j_1+k_1+k_2}\| g_{k_1,j_1}(t)\|_{L^2} \| g_{k_2,j_2}(t)\|_{L^2}   \]
\[+ \sum_{j_2 \geq j_1} 2^{-m + 2j_1 +j_2+k_1+k_2}\| g_{k_1,j_1}(t)\|_{L^2} \| g_{k_2,j_2}(t)\|_{L^2}  \big) \lesssim 2^{-m+ 2\tilde{\delta}m  + \delta m -k-k_1}\epsilon_0^2 +2^{-m + 4\beta m } \epsilon_0^2 \lesssim  2^{9\tilde{\delta} m/5  }\epsilon_0^2.
\]
In above estimate, we used the fact that $k+k_1\geq -m + \tilde{\delta}m/3.$

Recall (\ref{eqn1030}). For $ \widehat{\Gamma}^{2}_{k,k_2,k_2}$, we do integration by parts in `` $\eta$'' once. Recall that $|k_1'-k|\leq 10$.  The loss of $2^{-k}$ from integration by parts in $\eta$ is compensated by the smallness of $2^{2k_1'}$ from the symbol $\tilde{q}_{\mu',\nu'}(\xi-\kappa, \kappa) $.  As a result, the following estimate holds from the $L^2-L^\infty  $ type bilinear estimate (\ref{bilinearesetimate}) in Lemma \ref{multilinearestimate}
\[
 |\widehat{\Gamma}^{2}_{k,k_1,k_2}| \lesssim \sum_{k_2'\leq k-10}2^{ m+  k+k_1}    \| \Gamma^1\Gamma^2  g_{k_1'}\|_{L^2} \| e^{-it \Lambda} g_{k_2'}(t)\|_{L^\infty}\big(\sum_{i=0,1,2} 2^{i k_1} \| \nabla_\xi^i \widehat{g}_{k_1}(t, \xi) \|_{L^2}  \]
\[+ 2^{i k_1} \| \nabla_\xi^i \widehat{g}_{k_2}(t, \xi) \|_{L^2} \big) \big( \sum_{i=1,2} 2^{k_1}\| e^{-it\Lambda} \mathcal{F}^{-1}[\nabla_\xi \widehat{g}_{k_i}]\|_{L^\infty}  +\| e^{-it\Lambda}  {g}_{k_i} \|_{L^\infty}   \big) \lesssim 2^{-m/2+\beta m }\epsilon_0^2.
\]

Now, we proceed to estimate $\widehat{\Gamma}^{3}_{k,k_2,k_2}$. Recall (\ref{eqn1040}). From estimate (\ref{eqn751}) in Lemma  \ref{derivativeL2estimate1}, estimate (\ref{Z1estimatecubicandquartic})  in Proposition \ref{eqq298}, (\ref{eqnj878}) in Lemma \ref{remaindertermweightednorm},  and the $L^2-L^\infty$ type bilinear estimate (\ref{bilinearesetimate}) in Lemma \ref{multilinearestimate}, the following estimate holds, 
\[
  |\widehat{\Gamma}^{3}_{k,k_1,k_2} | \lesssim \sum_{l=1,2}  2^{2m+(2-\alpha)k_1} \| \Gamma^1 \Gamma^2 g_k\|_{L^2}\Big[ \big(\|  \p_t \widehat{g}_{k_l}(t, \xi) - \sum_{\mu, \nu\in\{+,-\}} \sum_{(k_1',k_2')\in \chi_{k_l}^1} B^{\mu, \nu}_{k_l, k_1',k_2'}(t, \xi)\|_{L^2}  \]
  \[
  \times  \| e^{-it \Lambda} \Gamma g_{k_{3-l}}\|_{L^\infty} +  \|  \Gamma   g_{k_{3-l}}\|_{L^2}  \sum_{(k_1',k_2')\in \chi_{k_l}^1}\| e^{-it \Lambda}\mathcal{F}^{-1}[  B^{\mu, \nu}_{k_l, k_1',k_2'}(t, \xi)]\|_{L^\infty}
  \]
\[ 
 + \| \Lambda_{\geq 3}[\p_t g_{k_l}]\|_{Z_1} \|  e^{-it \Lambda} g_{k_{3-l}} \|_{L^\infty}\Big] 
\lesssim 2^{3\tilde{\delta} m/2   }\epsilon_0^2.
\]

Lastly,  we   estimate $\widehat{\Gamma}_{k,k_1,k_2}^4$ and  $\widehat{\Gamma}_{k,k_1,k_2}^5$. Recall (\ref{e9809}). Based on the size of difference between $k_1'$ and $k_2'$ and the size of $k_{1,-}'+k_2$, we split into three cases as follows, 

$\oplus$\quad If $|k_1'-k_2'|\leq 10$.\quad  For this case,  we know that $\nabla_\sigma \Phi^{\tau,\iota}(\cdot, \cdot) $ is bounded from blow by $2^{k_{1,-}-k_{1,+}'}$. Hence, to take advantage of this fact, we do integration by parts in $\sigma$ once for $\Gamma_{k,k_1,k_2}^{k_1', k_2' ;1}$ and do integration by parts in ``$\sigma$'' twice for   $\Gamma_{k,k_1,k_2}^{k_1', k_2' ;2}$. As a result, the following estimate holds from the $L^2-L^\infty$ type bilinear estimate (\ref{bilinearesetimate}) in Lemma \ref{multilinearestimate},
\[
\sum_{|k_1'-k_2'|\leq 10} \sum_{|k_1-k_2|\leq 10 } \sum_{i=1,2}|\Gamma_{k,k_1,k_2}^{k_1', k_2' ;i}|  \lesssim  \sum_{|k_1'-k_2'|\leq 10} \sum_{|k_1-k_2|\leq 10 } 2^{m+k_1+k_1' +2k_{1,+}'} \| \Gamma^1\Gamma^2 g_k\|_{L^2}\]
\[\times \big( \sum_{i=0,1,2} 2^{i k_1'} \| \nabla_\xi^i \widehat{g_{k_1'}}(t, \xi)\|_{L^2} + 2^{i k_1'} \| \nabla_\xi^i \widehat{g_{k_2'}}(t, \xi)\|_{L^2}  \big)  \big( \sum_{i=1,2} \| e^{-it \Lambda} g_{k_i}\|_{L^\infty}\big)
\]  
\[
\times \big( \sum_{i=1,2} 2^{k_1}\| e^{-it\Lambda} \mathcal{F}^{-1}[\nabla_\xi \widehat{g}_{k_i'}]\|_{L^\infty}  +\| e^{-it\Lambda}  {g}_{k_i'} \|_{L^\infty}   \big)\lesssim 2^{-\beta m }\epsilon_0^2.
\]
 
 $\oplus$\quad If $k_2'\leq k_1'-10$ and $k_{1,-}'+ k_2' \leq -19m/20$. \quad  Note that $|k_1'-k_1|\leq 10$. For this case,  we use the same strategy that we used in the estimates  (\ref{eqn302}) and (\ref{eqn303}).  From estimate  (\ref{eqn400}) in Lemma \ref{Linftyxi}, we have
\[
 \sum_{k_2'\leq k_1-10, k_2'+k_{1,-}\leq -9m/10} \sum_{i=1,2}|\Gamma_{k,k_1,k_2}^{k_1', k_2' ;i}|  \lesssim \sum_{k_2'\leq k_1-10, k_2'+k_{1,-}\leq -9m/10}  \| \Gamma^1\Gamma^2 g_k\|_{L^2}   \big(  2^{3k_2'} \| \widehat{g}_{k_2'}(t, \xi)\|_{L^\infty_\xi} 
\]
\[
+2^{k_1'+2k_2'} \| \widehat{\textup{Re}[v]}(t, \xi)\psi_{k_2'}(\xi)\|_{L^\infty_\xi}  \big)\big(\sum_{i=1,2}\| e^{-it \Lambda} g_{k_i}\|_{L^\infty} \big)   \big(2^{2m+2k_1+k_1'}\| \Gamma   g_{k_1'}\|_{L^2} + 2^{3m+2k_1+2k_1'+k_2'}\| g_{k_1'}(t)\|_{L^2}\big)\]
\[
\lesssim \sum_{k_2'\leq k_1-10, k_2'+k_{1,-}\leq -9m/10} 2^{3\tilde{\delta}m + 2m+2k_1+3k_2'}(1+ 2^{ 2  m+ 2k_{1,-}+2k_2'})\epsilon_0^2 \lesssim 2^{-\beta m }\epsilon_0^2.
\]

$\oplus$\quad If $k_2'\leq k_1'-10$ and $k_{1,-}+k_2'\geq -19m/20$.\quad   For this case, we do integration by parts in $\sigma$ many times to rule out the case when $\max\{j_1',j_2'\} \leq m +k_{1,-}- \beta m $. From    the $L^2-L^\infty-L^\infty$ type trilinear estimate (\ref{trilinearesetimate}) in Lemma \ref{multilinearestimate}, the following estimate holds when  $\max\{j_1',j_2'\} \geq m +k_{1,-}- \beta m $, 
\[
  \sum_{i=1,2} \sum_{\max\{j_1',j_2'\}\geq m +k_{1,-}- \beta m}|\Gamma_{k,k_1,k_2}^{k_1', j_1', k_2',j_2' ;i}|  \lesssim  \Big[ \sum_{ j_1'\geq \max\{j_2', m +k_{1,-}- \beta m \} }\big( 2^{ m+j_1'+5k_1} + 2^{2m+5k_1+k_2'}\big)
\]
\[
  \times \| g_{k_1',j_1'}\|_{L^2} \| g_{k_2',j_2'}\|_{L^1} \big( \sum_{i=1,2} \| e^{-it \Lambda} g_{k_i}\|_{L^\infty}\big) + \sum_{ j_2'\geq \max\{j_1', m +k_{1,-}- \beta m \} }  \big( 2^{ m+j_2'+5k_1} + 2^{2m+5k_1+k_2'}\big)
\]
\be\label{eqn1073}
\times  \| g_{k_2',j_2'}\|_{L^2} \| g_{k_1',j_1'}\|_{L^1} \big( \sum_{i=1,2} \| e^{-it \Lambda} g_{k_i}\|_{L^\infty}\big)\Big] \| \Gamma^1\Gamma^2 g_k\|_{L^2} \lesssim 2^{-m-k_2'+10\beta m}\epsilon_0^2\lesssim 2^{-\beta m}\epsilon_0^2.
\ee
Note that, we only have at most $m^4$ cases to consider. Hence, estimate (\ref{eqn1073} ) is very  sufficient.
\end{proof}

\subsubsection{ The estimate of $P_{k,k_1,k_2}^2$} Recall (\ref{eqn1011}),  (\ref{eqn990}) and (\ref{eqn951}). We know that $P_{k,k_1,k_2}^2$ vanishes except when \emph{$\Gamma^1=\Gamma^2=L$}. Very similar to what we did in (\ref{eqn1019}).   We decompose ``$P_{k,k_1,k_2}^2$'' it into two parts and have the following estimate, 
\[
|P_{k,k_1,k_2}^2| \leq  |\widetilde{P }_{k,k_1,k_2}^1| + |\widetilde{P }_{k,k_1,k_2}^2|,
\]
where $\widetilde{P }_{k,k_1,k_2}^i$,  $i\in\{1,2\}$, is defined as follow,
\be\label{eqn1133}
 \widetilde{P}_{k,k_1,k_2}^i=  -\int_{t_1}^{t_2} \int_{\R^2} \int_{\R^2 } \overline{\widehat{ LL g_k}(t, \xi)} e^{i t\Phi^{\mu, \nu}(\xi, \eta)}  t^2  \widehat{q}^i_{\mu, \nu}(\xi, \eta)    \widehat{g_{k_1}^{\mu}}(t, \xi-\eta) \widehat{  g_{k_2}^\nu}(t, \eta)   d \eta d \xi d t,  
 \ee
 \be\label{eqn1501}
   \widehat{q}^1_{\mu, \nu}(\xi, \eta)=  \tilde{q}_{\mu, \nu}(\xi-\eta, \eta)(\hat{L}_\xi+ \hat{L}_\eta)\Phi^{\mu, \nu}(\xi, \eta) \tilde{c}(\xi-\eta)\Phi^{\mu, \nu}(\xi, \eta), 
\ee
\be\label{eqn1132}
 \widehat{q}^2_{\mu, \nu}(\xi, \eta)= \tilde{q}_{\mu, \nu}(\xi-\eta, \eta)(\hat{L}_\xi+ \hat{L}_\eta)\Phi^{\mu, \nu}(\xi, \eta) \big((\hat{L}_\xi+ \hat{L}_\eta)\Phi^{\mu, \nu}(\xi, \eta)-\tilde{c}(\xi-\eta)\Phi^{\mu, \nu}(\xi, \eta)\big).
\ee
For $\widetilde{P}_{k,k_1,k_2}^1 $, we do integration by parts in time once. As a result, we have
\[
\widetilde{P }_{k,k_1,k_2}^1= \sum_{i=1,2,3,4,5} \widehat{P  }_{k,k_1,k_2}^{i},\]
\[ \widehat{P }_{k,k_1,k_2}^{1}= \sum_{i=1,2} (-1)^i \int_{\R^2} \int_{\R^2 } \overline{\widehat{LL  g_k}(t_i, \xi)} e^{i t_i\Phi^{\mu, \nu}(\xi, \eta)} i t_i^2  \widehat{p}^1_{\mu, \nu}(\xi, \eta)  \widehat{  g_{k_2}^\nu}(t_i, \eta)    \widehat{g_{k_1}^{\mu}}(t_i, \xi-\eta)    d \eta d\xi\]
\be\label{eq3}
 - \int_{t_1}^{t_2} \int_{\R^2} \int_{\R^2 } \overline{\widehat{LL g_k}(t , \xi)} e^{i t \Phi^{\mu, \nu}(\xi, \eta)} i  2 t  \widehat{p}^1_{\mu, \nu}(\xi, \eta)  \widehat{g_{k_1}^{\mu}}(t, \xi-\eta) \widehat{  g_{k_2}^\nu}(t, \eta)   d \eta d \xi  d t.
\ee
\[
\widehat{P }_{k,k_1,k_2}^{2} =  -\int_{t_1}^{t_2} \int_{\R^2} \int_{\R^2 } \overline{ \big(\p_t \widehat{LL g_k}(t, \xi) -   \sum_{\nu\in\{+,-\}} \sum_{(k_1',k_2')\in \chi_k^2} \widetilde{B}^{+, \nu}_{k,k_1',k_2'}(t, \xi) \big)}\]
\be\label{eq4}
\times e^{i t \Phi^{\mu, \nu}(\xi, \eta)} i t^2  \widehat{p}^1_{\mu, \nu}(\xi, \eta) \widehat{g_{k_1}^{\mu}}(t, \xi-\eta) \widehat{  g_{k_2}^\nu}(t, \eta)   d \eta d \xi  d t,
\ee
\[
  \widehat{P }_{k,k_1 ,k_2  }^{3}:=\sum_{j_1\geq -k_{1,-}, j_2\geq -k_{2,-}} \widehat{P }_{k,k_1,k_2}^{3,j_1,j_2},\quad   
\widehat{P }_{k,k_1,k_2}^{3,j_1,j_2}=  \sum_{  \nu'\in\{+,-\}} \sum_{(k_1',k_2')\in \chi_k^2}-\int_{t_1}^{t_2} \int_{\R^2} \int_{\R^2} \int_{\R^2 }e^{i t\Phi^{\mu, \nu}(\xi, \eta)}  i t^2   \]
\be\label{eqn1090} 
 e^{-it\Phi^{+, \nu'}(\xi, \kappa)}\overline{    \widehat{LL g_{k_1'}}(t, \xi-\kappa) \widehat{g^{\nu'}_{k_2'}}(t, \kappa) \tilde{q}_{+,\nu'}(\xi-\kappa, \kappa) } \widehat{p}^1_{\mu, \nu}(\xi, \eta)\widehat{g_{k_1,j_1}^{\mu}}(t, \xi-\eta) \widehat{  g_{k_2,j_2}^\nu}(t, \eta)  d\kappa d \eta  d\xi d t, 
\ee
\[
\widehat{P }_{k,k_1,k_2}^{4} := -\int_{t_1}^{t_2} \int_{\R^2} \int_{\R^2 } e^{i t\Phi^{\mu, \nu}(\xi, \eta)}    i    t^2 \widehat{p}^1_{\mu, \nu}(\xi, \eta)    \overline{\widehat{\Gamma^1 \Gamma^2 g_k}(t, \xi)}   \big(  \widehat{   \Lambda_{\geq 3}[\p_t  g^{\mu}_{k_1 }]}(t, \xi-\eta)   \widehat{g_{k_2}^\nu}(t, \eta) \]
\be\label{eqq326}
 +   \widehat{g_{k_1}^{\mu}}(t, \xi-\eta) \widehat{  \Lambda_{\geq 3}[\p_t  g^{\nu}_{k_2 }]}(t, \eta)\big)      d \eta d \xi d t,
\ee
 \be\label{eqn1110}
\widehat{P }_{k,k_1,k_2}^{5} = \sum_{k_1',k_2'\in \mathbb{Z}} \widehat{P'}_{k,k_1,k_2}^{k_1',k_2'}, \quad \widehat{P'}_{k,k_1,k_2}^{k_1',k_2' } = \sum_{j_1'\geq -k_{1,-}', j_2'\geq -k_{2,-}'}\widehat{P'}_{k,k_1,k_2}^{k_1',j_1', k_2',j_2' }, 
\ee
\[
\widehat{P'}_{k,k_1,k_2}^{k_1',j_1', k_2',j_2'}:=\sum_{\mu', \nu'\in\{+,-\}} -\int_{t_1}^{t_2} \int_{\R^2} \int_{\R^2 } \overline{\widehat{\Gamma^1 \Gamma^2 g_k}(t , \xi)} e^{i t \Phi^{\mu, \nu}(\xi, \eta)} i    t^2 \widehat{p}^1_{\mu, \nu}(\xi, \eta) \]
\[\times \big[   P_{\mu}[ e^{it \Phi^{\mu', \nu'}(\xi-\eta, \sigma)} \tilde{q}_{\mu', \nu'}(\xi-\eta-\sigma, \sigma) \widehat{g_{k_1',j_1'}^{\mu'}}(t, \xi-\eta-\sigma) \widehat{g_{k_2',j_2'}^{\nu'}}(t, \sigma)   ]\psi_{k_1}(\xi-\eta)\widehat{g^{\nu}_{k_2}}(t, \eta) \]
\be\label{eqn1096} 
+ \widehat{g_{k_1}^{\mu}} (t, \xi-\eta) P_{\nu} [ e^{it \Phi^{\mu', \nu'}(\eta, \sigma)} \tilde{q}_{\mu', \nu'}(\eta-\sigma, \sigma) \widehat{g_{k_1',j_1'}^{\mu'}}(t, \eta-\sigma) \widehat{g_{k_2',j_2'}^{\nu'}}(t, \sigma) ] \psi_{k_2}(\eta)   \big] d \sigma d\eta d \xi d t,
 \ee
 where
 \be\label{eqn1100}
\widehat{p}^1_{\mu, \nu}(\xi, \eta) = \tilde{q}_{\mu, \nu}(\xi-\eta, \eta)(\hat{L}_\xi+ \hat{L}_\eta)\Phi^{\mu, \nu}(\xi, \eta) \tilde{c}(\xi-\eta).
 \ee
From Lemma \ref{Snorm} and (\ref{eqn1301}), the following estimate holds,
\be\label{eqn1101}
\| \widehat{p}^1_{\mu, \nu}(\xi, \eta)\psi_k(\xi)\psi_{k_1}(\xi-\eta)\psi_{k_2}(\eta)\|_{\mathcal{S}^\infty}\lesssim 2^{k+3k_1},\quad k\leq k_1-10.
\ee

Similar to the estimate (\ref{eqn112900}) in Lemma  \ref{errorestimate},   the error term of decomposition can be handled very easily because of the extra smallness of $|\xi|$. More precisely, we have
\begin{lemma}\label{errorestimate2}
Under the bootstrap assumption \textup{(\ref{smallness})}, the following estimate holds,
\be\label{eq10}
 \sum_{k\leq k_1+2, |k_1-k_2|\leq 10} \big|   \widetilde{P }_{k,k_1,k_2}^2  \big| \lesssim  2^{2\tilde{\delta} m  }\epsilon_0^2. 
\ee
\end{lemma}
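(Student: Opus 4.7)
The plan is to mimic the strategy used for Lemma \ref{errorestimate} but to exploit the \emph{extra} smallness of order $|\xi|^{2}$ that the identity (\ref{eqn951}) encodes in the second factor of $\widehat{q}^2_{\mu,\nu}$ defined in (\ref{eqn1132}). Because of this additional smallness, no integration by parts in $\eta$ is needed; a straightforward $L^{2}$--$L^{2}$--$L^{\infty}$ bilinear estimate combined with a careful symbol bound is enough to close the argument, with the $2^{2k}$ factor handling the summation in the output frequency.

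First, I would establish the key symbol estimate for $\widehat q^{2}_{\mu,\nu}$ in the High $\times$ High regime we are considering ($k\le k_{1}-10$, $|k_{1}-k_{2}|\le 10$, in which case $\mu\nu=-$ by (\ref{eqn902})). From (\ref{symbolquadraticrough}) we have $\|\tilde q_{\mu,\nu}\|_{\mathcal{S}^{\infty}_{k,k_{1},k_{2}}}\lesssim 2^{2k_{1}}$, while the smooth multiplier $(\hat L_\xi+\hat L_\eta)\Phi^{\mu,\nu}(\xi,\eta)$, viewed together with the localizations, has size $\lesssim 2^{k+k_{1}-k_{1,+}/2}$ by inspection of the explicit formulas for $\nabla_\xi\Phi$ and $\nabla_\eta\Phi$ in this regime. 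The crucial input is (\ref{eqn951}): the difference $(\hat L_\xi+\hat L_\eta)\Phi^{\mu,\nu}-\tilde c(\xi-\eta)\Phi^{\mu,\nu}=\mathcal{O}(|\xi|^{2})$, together with the analogous bounds on its derivatives, which follow from Taylor-expanding (\ref{eqn944}) in $\xi$ at the origin just as in the derivation of (\ref{eqn951}). Applying Lemma \ref{Snorm} and the product rule (\ref{productofsymbol}) then yields
\begin{equation}\label{symbolq2}
\|\widehat q^{2}_{\mu,\nu}(\xi,\eta)\psi_{k}(\xi)\psi_{k_{1}}(\xi-\eta)\psi_{k_{2}}(\eta)\|_{\mathcal{S}^{\infty}}\lesssim 2^{3k+3k_{1}+3k_{1,+}},
\end{equation}
where the $2^{2k}$ improvement over the naive product of sizes is precisely the gain from (\ref{eqn951}).

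Next, I would apply the $L^{2}$--$L^{2}$--$L^{\infty}$ bilinear estimate (\ref{bilinearesetimate}) to the integrand in (\ref{eqn1133}): put $\widehat{LLg_{k}}$ and $\widehat{g^{\nu}_{k_{2}}}$ in $L^{2}$, and place $e^{-it\Lambda}g^{\mu}_{k_{1}}$ in $L^{\infty}$ via (\ref{highdecay})--(\ref{lowdecay}). Using the bootstrap bounds $\|LL g_{k}\|_{L^{2}}\lesssim 2^{\tilde\delta m}\epsilon_{1}$, $\|g_{k_{2}}\|_{L^{2}}\lesssim 2^{\delta m-(N_{0}-10)k_{2,+}}\epsilon_{1}$, and $\|e^{-it\Lambda}g_{k_{1}}\|_{L^{\infty}}\lesssim 2^{-m-(1+\alpha)k_{1}-6k_{1,+}}\epsilon_{1}$ from (\ref{smallness}), together with the $t^{2}\sim 2^{2m}$ in (\ref{eqn1133}) and the symbol bound (\ref{symbolq2}), I obtain
\[
|\widetilde P^{2}_{k,k_{1},k_{2}}|\lesssim 2^{\,m+3k+(2-\alpha)k_{1}-10k_{1,+}+\tilde\delta m+\delta m}\,\epsilon_{0}^{2}.
\]

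Finally, I would sum over $k\le k_{1}+2$ and $|k_{1}-k_{2}|\le 10$. The factor $2^{3k}$ is geometrically summable in $k$, and a rough $L^{2}$--$L^{\infty}$ argument analogous to (\ref{eqn1069})---without any integration by parts---rules out the ranges $k_{1}\ge m/5$ and $k+k_{1,-}\le -m+\tilde\delta m/3$, leaving only $\lesssim m^{2}$ admissible pairs $(k,k_{1})$, which is absorbed into the target $2^{2\tilde\delta m}$. Combining these gives the claimed bound. The main obstacle is purely the symbol estimate (\ref{symbolq2}): one must verify that the $|\xi|^{2}$ cancellation persists after taking enough derivatives in $\xi$ and $\eta$ to invoke Lemma \ref{Snorm}. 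This, however, is a direct computation from the explicit formula (\ref{gamma}) and the expansions (\ref{degeneratephase})--(\ref{eqn951}), entirely parallel to the derivation of the bound (\ref{eqn1121}) used in Lemma \ref{errorestimate}. After this symbol estimate is in place, the argument above is routine, which is why the author remarks that the error term is handled ``very easily''.
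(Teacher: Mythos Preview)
Your approach has a genuine gap: the direct $L^{2}$--$L^{\infty}$ estimate without any integration by parts in $\eta$ cannot absorb the $t^{2}$ factor in \eqref{eqn1133}. First, your arithmetic is off by one power of $2^{m}$: the time integral contributes $\int_{t_{1}}^{t_{2}}t^{2}\,dt\sim 2^{3m}$, and after the single $2^{-m}$ from $\|e^{-it\Lambda}g_{k_{1}}\|_{L^{\infty}}$ you are left with $2^{2m}$, not $2^{m}$. More importantly, even granting your displayed bound $2^{m+3k+(2-\alpha)k_{1}+\cdots}$, take $k_{1}\sim k_{2}\sim 0$ and $k$ just below $k_{1}$ (which lies in the admissible range $k+k_{1,-}\ge -m+\tilde{\delta}m/3$): the bound is then $\sim 2^{m}$, far larger than the target $2^{2\tilde{\delta}m}$. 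The factor $2^{3k}$ gives geometric summability in $k$ but no smallness when $k$ is near $k_{1}\sim 0$.

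The paper's proof differs exactly here: it establishes the same symbol estimate $\|\widehat q^{2}_{\mu,\nu}\|_{\mathcal{S}^{\infty}_{k,k_{1},k_{2}}}\lesssim 2^{3k+3k_{1}}$ but then integrates by parts in $\eta$ \emph{twice}. Each integration by parts gains $(t|\nabla_{\eta}\Phi|)^{-1}\sim 2^{-m-k+k_{1,+}/2}$ (using \eqref{eqn928}), so two of them precisely cancel the $t^{2}$ at the cost of $2^{-2k}$, leaving an effective symbol size $2^{k+3k_{1}}$ and a manageable prefactor $2^{m+k+k_{1}+k_{1,+}}$, exactly parallel to Lemma~\ref{errorestimate} (where one factor of $t$ was handled by one integration by parts). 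The role of the extra $|\xi|^{2}$ from \eqref{eqn951} is therefore not to replace the integration by parts but to \emph{enable} it: without that gain the symbol would only be $2^{k+3k_{1}}$, and the two integrations by parts would produce a nonsummable $2^{-k}$. Once you perform the two integrations by parts in $\eta$, the rest of your outline (bootstrap bounds, summing in $k$ and $k_{1}$) goes through as you describe.
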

\begin{proof}
Recall (\ref{eqn1133}),  (\ref{eqn1132}) and (\ref{eqn951}). The following estimate holds from Lemma \ref{Snorm}, 
\be\label{eqn1134}
\|  \widehat{q}^2_{\mu, \nu}(\xi, \eta) \psi_k(\xi) \psi_{k_1}(\xi-\eta)\psi_{k_2}(\eta)\|_{\mathcal{S}^\infty} \lesssim 2^{3k+3k_1}.
\ee
After doing integration by parts in ``$\eta$'' twice, the following estimate holds, 
\[
 \sum_{k\leq k_1+2, |k_1-k_2|\leq 10} \big|   \widetilde{P }_{k,k_1,k_2}^2  \big| \lesssim   \sum_{k\leq k_1+2, |k_1-k_2|\leq 10}  2^{m+k+k_1+k_{1,+}} \| \Gamma^1 \Gamma^2 g_k(t)\|_{L^2}\big(\sum_{i=0,1,2}2^{i k_1} \| \nabla_\xi^i \widehat{g_{k_1}}(t, \xi)\|_{L^2}
\]
\[
 +2^{i k_1} \| \nabla_\xi^i \widehat{g_{k_2}}(t, \xi)\|_{L^2}  \big) \big(\sum_{i=1,2} \| e^{-it \Lambda} g_{k_i}\|_{L^\infty} \big)+    \sum_{k\leq k_1+2, |k_1-k_2|\leq 10}\| \Gamma^1 \Gamma^2 g_k(t)\|_{L^2} \Big( \sum_{j_1\geq j_2} 2^{ k+3k_1+k_{1,+}} \]
\be
\times 2^{2j_2}  \| g_{k_2,j_2}\|_{L^2}   2^{j_1}\| g_{k_1,j_1}\|_{L^2} +   \sum_{j_2\geq j_1} 2^{ k+3k_1+k_{1,+}} 2^{2j_1} \| g_{k_2,j_2}\|_{L^2} 2^{j_2}\| g_{k_1,j_1}\|_{L^2} \Big)\lesssim 2^{ 2\tilde{\delta}m} \epsilon_0^2.
\ee
\end{proof}
The rest of this subsection is devoted to prove the following Lemma, 
\begin{lemma}
Under the bootstrap assumption \textup{(\ref{smallness})}, the following estimate holds,
\be
\big|\widetilde{P }_{k,k_1,k_2}^1\big| \lesssim \sum_{i=1,2,3,4,5}  \big|   \widehat{P  }_{k,k_1,k_2}^{i} \big| \lesssim  2^{9\tilde{\delta} m/5}\epsilon_0^2. 
\ee
\end{lemma}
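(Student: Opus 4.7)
The plan is to bound each of the five terms $\widehat{P}^{i}_{k,k_1,k_2}$, $i=1,\ldots,5$, arising from integration by parts in time applied to $\widetilde{P}^1_{k,k_1,k_2}$. The key structural advantage is the favorable size of the symbol $\widehat{p}^1_{\mu,\nu}(\xi,\eta)$, which by (\ref{eqn1101}) satisfies $\|\widehat{p}^1_{\mu,\nu}\|_{\mathcal{S}^\infty_{k,k_1,k_2}}\lesssim 2^{k+3k_1}$: the factor $2^k$ (rather than just $2^{k_1}$) is exactly what resolves the summability in $k$ that motivated the decomposition in (\ref{eqn1132}), and compensates the $2^{-k}$ loss incurred from integration by parts in $\eta$. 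Throughout, we exploit the standing reduction $k+k_{1,-}\geq -m+\tilde\delta m/3$ and $k_1\leq m/5$, which gives us margin relative to the target bound $2^{9\tilde\delta m/5}\epsilon_0^2$.

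For $\widehat{P}^1$, which collects the boundary contributions at $t=t_1,t_2$ together with the $\partial_t t^2 = 2t$ piece, I perform one integration by parts in $\eta$. The loss $2^{-k}$ is absorbed by the factor $2^{k}$ in the symbol; then an $L^2$--$L^\infty$ bilinear estimate in the style of (\ref{bilinearesetimate}) applied with $\overline{\widehat{LLg_k}}$ in $L^2$ (against $g_{k_1}$, $g_{k_2}$ and their derivatives in $\xi$) yields a total bound of order $2^{2m+3k_1}\cdot 2^{-(1+\alpha)k_1-m}\cdot(\epsilon_1^2)$, which, after using $k+k_{1,-}\geq -m+\tilde\delta m/3$, fits into $2^{9\tilde\delta m/5}\epsilon_0^2$. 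For $\widehat{P}^2$, I invoke the improved remainder estimate (\ref{eqn751}) in Lemma \ref{derivativeL2estimate1} which precisely controls $\partial_t\widehat{LLg_k}$ minus the problematic High$\times$High piece $\widetilde{B}^{+,\nu'}_{k,k_1',k_2'}$, and combine with the symbol smallness as in the $\widehat{\Gamma}^3$ estimate from the previous lemma. For $\widehat{P}^3$, which carries the subtracted High$\times$High piece explicitly, I again do one integration by parts in $\eta$; the $2^{-k}$ loss is now compensated by the $2^{2k_1'}\sim 2^{2k}$ smallness of the quadratic symbol $\tilde q_{+,\nu'}(\xi-\kappa,\kappa)$ (since $|k_1'-k|\leq 10$), and the rest is an $L^2$--$L^\infty$--$L^\infty$ trilinear estimate.

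For $\widehat{P}^4$, the quintic and higher-order contribution from $\Lambda_{\geq 3}[\partial_t g_{k_i}]$, I use the $Z_1$ bound on cubic and quartic terms from Proposition \ref{eqq298} together with the remainder estimate (\ref{eqnj878}) in Lemma \ref{remaindertermweightednorm}, paired with an $L^2$--$L^\infty$ split; the gain is of order $2^{-m}$ beyond what is needed. For $\widehat{P}^5$, which arises when $\partial_t$ hits one of the quadratic pieces of $\partial_t g_{k_i}$ and produces a cubic-type integral over $(\xi,\eta,\sigma)$, I split into the three subcases used earlier for $\widehat{\Gamma}^{4,5}$: (i) if $|k_1'-k_2'|\leq 10$ (High$\times$High inner interaction), do two integrations by parts in $\sigma$, since $|\nabla_\sigma\Phi^{\mu',\nu'}|\gtrsim 2^{k_{1,-}-k_{1,+}'/2}$; (ii) if $k_2'\leq k_1'-10$ with $k_{1,-}+k_2'\leq -19m/20$, use the improved low-frequency estimate (\ref{eqn400}) on $\widehat{h}$ in the spirit of (\ref{eqn302})--(\ref{eqn303}); (iii) otherwise, do integration by parts in $\sigma$ to localize $\max\{j_1',j_2'\}\geq m+k_{1,-}-\beta m$, then apply an $L^2$--$L^\infty$--$L^\infty$ trilinear estimate exactly as in (\ref{eqn1073}). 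In every subcase the extra $t^2$ factor is controlled by the symbol smallness $2^{k+3k_1}$ and the remaining margin from $k+k_{1,-}\geq -m+\tilde\delta m/3$.

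The main obstacle is the subterm $\widehat{P}^1$: because two vector fields $L$ act on the output $g_k$, the weighted norm $\|LLg_k\|_{L^2}$ is only controlled by $\epsilon_1(1+t)^{\tilde\delta}$, so the available room is genuinely tight; if one naively used $\tilde q_{\mu,\nu}$ with symbol size $2^{2k_1}$ and did not exploit the decomposition (\ref{eqn1132}) isolating the principal part $\widehat{p}^1$ of size $2^{k+3k_1}$, the $k$-summation would diverge. The whole argument hinges on having extracted the factor $\Phi^{\mu,\nu}\tilde c(\xi-\eta)$ so that the remainder $\widehat{p}^2$ enjoys the extra smallness $2^{3k}$ recorded in (\ref{eqn1134}) (handled in Lemma \ref{errorestimate2}), while the principal part $\widehat{p}^1$ supports integration by parts in time, converting one power of $t$ into a $Z_1$-controlled quantity via Lemma \ref{derivativeL2estimate1} and Proposition \ref{eqq298}.
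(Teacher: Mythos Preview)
There is a genuine gap in your treatment of $\widehat{P}^1$ (and by extension $\widehat{P}^2$): a single integration by parts in $\eta$ does not close. The boundary terms in (\ref{eq3}) carry a factor $t_i^2\sim 2^{2m}$, and the symbol $\widehat{p}^1_{\mu,\nu}$ has size $2^{k+3k_1}$ by (\ref{eqn1101}). One integration by parts in $\eta$ gains only $2^{-m-k}$ (since $|\nabla_\eta\Phi^{\mu,\nu}|\sim 2^{k-k_{1,+}/2}$), leaving a factor $2^{m+3k_1}$. Pairing with $\|LLg_k\|_{L^2}\lesssim 2^{\tilde\delta m}\epsilon_1$, $\|\nabla_\xi\widehat{g}_{k_1}\|_{L^2}\lesssim 2^{-k_1+\tilde\delta m}\epsilon_1$, and $\|e^{-it\Lambda}g_{k_2}\|_{L^\infty}\lesssim 2^{-m-(1+\alpha)k_1}\epsilon_1$ yields at best $2^{(1-\alpha)k_1+2\tilde\delta m}\epsilon_1^3$, which at $k_1$ near $0$ is $2^{2\tilde\delta m}\epsilon_1^3$ and exceeds the target $2^{9\tilde\delta m/5}\epsilon_0^2$ for large $m$. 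Your own arithmetic, $2^{2m+3k_1}\cdot 2^{-(1+\alpha)k_1-m}\epsilon_1^2=2^{m+(2-\alpha)k_1}\epsilon_1^2$, already blows up at $k_1=0$, and the constraint $k+k_{1,-}\geq -m+\tilde\delta m/3$ does not even appear in this expression. The paper performs \emph{two} integrations by parts in $\eta$ for both $\widehat{P}^1$ and $\widehat{P}^2$, gaining an extra $2^{-m-k}$; this produces the factor $2^{-m-k-k_1}$ that is then absorbed via $k+k_{1,-}\geq -m+\tilde\delta m/3$, leaving $2^{5\tilde\delta m/3+\delta m}\epsilon_0^2\lesssim 2^{9\tilde\delta m/5}\epsilon_0^2$.

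Two further issues. For $\widehat{P}^2$ you cite (\ref{eqn751}), which controls $\partial_t\widehat{g}_k$ minus the $\chi_k^1$ quadratic piece; what (\ref{eq4}) actually contains is $\partial_t\widehat{LLg_k}$ minus $\widetilde{B}^{+,\nu}_{k,k_1',k_2'}$ over $\chi_k^2$, so the relevant bound is (\ref{eqn730}) in Lemma~\ref{derivativeL2estimate2}. For $\widehat{P}^3$, one integration by parts in $\eta$ is again too coarse: the paper instead does spatial localization and integrates by parts in $\eta$ many times to restrict to $\max\{j_1,j_2\}\geq m+k_{-}-k_{1,+}-\beta m$, after which an $L^2$--$L^\infty$--$L^\infty$ estimate closes at $2^{-m/2+10\beta m}\epsilon_0^2$. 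For $\widehat{P}^5$, the paper's case split in the regime $k_2'\leq k_1'-10$ is finer than your two subcases: it also distinguishes whether $k\lessgtr k_2'+20$, which determines whether $|\nabla_\eta(\Phi^{\mu,\nu}+\mu\Phi^{\mu',\nu'})|$ has a usable lower bound and hence whether to integrate by parts in $\eta$ rather than $\sigma$ (see (\ref{eqn760})).
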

\begin{proof}
Recall (\ref{eq3}) and (\ref{eq4}). For $\widehat{P }_{k,k_1,k_2}^1$ and $\widehat{P }_{k,k_1,k_2}^2$ , we do integration by parts in $\eta$ twice. As a result, the following estimate holds from the $L^2-L^\infty$ type bilinear estimate (\ref{bilinearesetimate}) in Lemma \ref{multilinearestimate} and estimate (\ref{eqn730}) in Lemma \ref{derivativeL2estimate2} , 
\[
 \sum_{i=1,2}|\widehat{P}_{k,k_1,k_2}^i |   \lesssim  2^{k_1 + 6 k_{1,+}+\delta m }\big(  2^{\tilde{\delta} m -k} + 2^{3\tilde{\delta}  m  } \big) \Big[\big(\sum_{i=0,1,2} 2^{i k_1}\| \nabla_\xi^i \widehat{g}_{k_1}(t, \xi)\|_{L^2} + 2^{i k_1}\| \nabla_\xi^i \widehat{g}_{k_2}(t, \xi)\|_{L^2}\big) 
\]
\[
 \times  \big(\sum_{i=1,2} \| e^{-i t\Lambda} g_{k_i}(t)\|_{L^\infty}   \big)+ \sum_{j_1 \geq j_2} 2^{2k_1 +j_1 } \| e^{-it \Lambda}\mathcal{F}^{-1}\big[ \nabla_\xi \widehat{g}_{k_2,j_2} \big]\|_{L^\infty} \| g_{k_1,j_1}\|_{L^2} +  \sum_{j_2 \geq j_1} 2^{2k_1 +j_2 }
 \]
 \[  \times \| e^{-it \Lambda}\mathcal{F}^{-1}\big[ \nabla_\xi \widehat{g}_{k_1,j_1} \big]\|_{L^\infty} \| g_{k_2,j_2}\|_{L^2} \Big]\lesssim 2^{-m-k-k_1+2\tilde{\delta} m +\delta m }\epsilon_0^2 + 2^{-m + 4\beta m }\epsilon_0^2\lesssim 2^{9\tilde{\delta} m/5   }\epsilon_0^2. 
\]
 In above estimate, we used the fact that $k+k_1\geq -m + \tilde{\delta} m /3.$

Now, we proceed to estimate $\widehat{P }_{k,k_1,k_2}^3$.  Recall (\ref{eqn1090}).  Note that $(k_1',k_2')\in \chi_k^2$, i.e., $|k_1'-k|\leq 10$. Hence the  symbol $\tilde{q}_{+,\nu'}(\xi-\kappa, \kappa)$ contributes the smallness of ``$2^{2k}$''.
 By doing integration by parts in ``$\eta$'' many times, we can rule out the case  when $\max\{j_1,j_2\}\leq m +k_{-}-k_{1,+}- \beta m $.  From the $L^2-L^\infty$ type bilinear estimate (\ref{bilinearesetimate}) in Lemma \ref{multilinearestimate},  the following estimate holds when    $\max\{j_1,j_2\}\geq  m +k_{-}-k_{1,+}- \beta m $
\[
\sum_{k_2'\leq k_1'-10}  \sum_{\max\{j_1,j_2\}\geq m +k_{-}-k_{1,+}- \beta m } | \widehat{P'}_{k,k_1,j_1,k_2,j_2}^{3}| \lesssim  \sum_{k_2'\leq k_1'-10}  \sum_{\max\{j_1,j_2\}\geq m +k_{-}-k_{1,+}- \beta m } 2^{3m+ 3k+3k_1}  \]
\[\times \| LL g_{k_1'}\|_{L^2}\| e^{-it \Lambda} g_{k_2'}\|_{L^\infty}     \big( \sum_{j_1 \geq \max\{j_2, m +k_{-}-k_{1,+}- \beta m  \}} \| e^{-it \Lambda} g_{k_2,j_2}\|_{L^\infty} \| g_{k_1,j_1}\|_{L^2}\]
\[ + \sum_{j_2 \geq \max\{j_1, m +k_{-}-k_{1,+}- \beta m \}}  \| g_{k_2,j_2}\|_{L^2}   \| e^{-it \Lambda} g_{k_1,j_1}\|_{L^\infty} \big) \lesssim 2^{-m/2 +10\beta m }\epsilon_0^2.
\]

Now, we proceed to estimate   $\widehat{P }_{k,k_1,k_2}^4$. Recall (\ref{eqq326}) and (\ref{eqn1101}).  For this case, we do integration by parts in ``$\eta$'' once. As a result, the following estimate holds from estimate (\ref{Z1estimatecubicandquartic}) in Proposition (\ref{eqq298}), estimate (\ref{eqnj878}) in Lemma (\ref{remaindertermweightednorm}), and $L^2-L^\infty$ type bilinear estimate (\ref{bilinearesetimate}) in Lemma \ref{multilinearestimate}, 
\[
| \widehat{P }_{k,k_1,k_2}^4| \lesssim \sup_{t\in [2^{m-1}, 2^m]} 2^{2m +(2-\alpha)k_1 +k_{1,+}} \| \Gamma^1\Gamma^2 g_{k}(t)\|_{L^2} \big(   \| \Lambda_{\geq 3}[\p_t g_{k_1}]\|_{Z_1} \| e^{-it \Lambda} g_{k_2}(t)\|_{L^\infty}
\]
\[+    \| \Lambda_{\geq 3}[\p_t g_{k_2}]\|_{Z_1} \| e^{-it \Lambda} g_{k_1}(t)\|_{L^\infty} \big) \lesssim 2^{-\beta m +2\tilde{\delta}m}\epsilon_0^2\lesssim 2^{-\delta m }\epsilon_0^2.
\]

Lastly, we proceed to estimate   $\widehat{P }_{k,k_1,k_2}^5$. Recall (\ref{eqn1110}) and (\ref{eqn1096}).  We first consider the case when $|k_1'-k_2'|\leq 10$. By doing integration by parts in ``$\sigma $'' many times, we can rule out the case when $\max\{j_1',j_2'\} \leq m +k_{1,-}-k_{1,+}'- \beta m $. By using the $L^2-L^\infty-L^\infty$ type trilinear estimate (\ref{trilinearesetimate}) in Lemma \ref{multilinearestimate}, the following estimate holds when $\max\{j_1',j_2'\} \geq m +k_{1,-}-k_{1,+}'- \beta m $,
\[
  \sum_{|k_1'-k_2'|\leq 10, k_1\leq k_1'+10}\sum_{ \max\{j_1',j_2'\} \geq m +k_{1,-}-k_{1,+}'- \beta m} | \widehat{P'}_{k,k_1,k_2}^{k_1',j_1', k_2',j_2'}| \lesssim  \sum_{|k_1'-k_2'|\leq 10, k_1\leq k_1'+10}  2^{3m+k+3k_1+2k_1'}  \]
\[\times \| LL g_k\|_{L^2}\big(\sum_{i=1,2} \| e^{-it \Lambda} g_{k_i}(t)\|_{L^\infty} \big)\big(\sum_{j_1'\geq \max\{ j_2',   m +k_{1,-}-k_{1,+}'- \beta m\} } \| g_{k_1',j_1'}\|_{L^2} \| e^{-it \Lambda} g_{k_2',j_2'}\|_{L^\infty}   \]
\[ + \sum_{j_2'\geq \max\{ j_1',  m +k_{1,-}-k_{1,+}'- \beta m\} } \| g_{k_2',j_2'}\|_{L^2} \| e^{-it \Lambda} g_{k_1',j_1'}\|_{L^\infty} \big) \lesssim 2^{-m +10\beta m }\epsilon_0^2.
\]
 
 It remains to consider the case when $k_2'\leq k_1'-10$. We split it into four cases based on the size of $k_1'+k_2'$ and whether $k$ is greater than $k_2'$ as follows, 

$\oplus$ If  $k_{1,-}'+k_2'\leq -19m/20$ and $k\leq k_2'+20$.\quad  By using the same strategy that we used in the estimates  (\ref{eqn302}) and (\ref{eqn303}), the following estimate holds from  estimate (\ref{eqn400}) in Lemma \ref{Linftyxi}, 
\[
 \sum_{k_2'\leq k_1'-10, |k_1-k_1'|\leq 20}  | \widehat{P'}_{k,k_1,k_2}^{k_1',  k_2' }| \lesssim  \sum_{k_2'\leq k_1'-10, |k_1-k_1'|\leq 20} 2^{3m+ k+4k_1 }\| LL g_k\|_{L^2}\| g_{k_1'}(t)\|_{L^2}
 \]
\[
\times \big(  \| e^{-it \Lambda} g_{k_1}(t)\|_{L^\infty} + \| e^{-it \Lambda} g_{k_2 }(t)\|_{L^\infty} \big) 
 \big(  2^{k_1'+2k_2'} \|\widehat{\textup{Re}[v]}(t, \xi)\psi_{k_2'}(\xi)\|_{L^\infty_\xi} +  2^{3k_2'} \| \widehat{g}_{k_2'}(t, \xi)\|_{L^\infty_\xi}\big)\]
 \[\lesssim 2^{3m+2\tilde{\delta} m+ 4k_2'+3k_1-15 k_{1,+}} (1+ 2^{m+k_1+k_2'}) \lesssim 2^{-\beta m}\epsilon_0^2.
\]

$\oplus$ If   $k_{1,-}'+k_2'\leq -19m/20$ and $k \geq k_2'+20 $.\quad For the case we are considering, we have $|\sigma|\ll |\xi|\ll |\eta| $. Hence,  the following estimate holds,
\[
|\nabla_\eta\big(\Phi^{\mu,\nu}(\xi, \eta)+\nu(\Phi^{\mu', \nu'})(\eta, \sigma) \big) |+ |\nabla_\eta\big(\Phi^{\mu,\nu}(\xi, \eta)+\mu(\Phi^{\mu', \nu'})(\xi-\eta, \sigma) \big) |
\]  
\be\label{eqn760}
 \gtrsim |\xi-\sigma|(1+|\eta|)^{-1/2}\sim2^{k-k_{1,+}/2}.
\ee
To take advantage of this fact, we do integration by parts in ``$\eta$'' once. As a result, the following estimate holds    from  estimate (\ref{eqn400}) in Lemma \ref{Linftyxi},
\[
 | \widehat{P'}_{k,k_1,k_2}^{k_1',  k_2' }|\lesssim 2^{2m + 3k_1 +k_{1,+}}\| LL g_k\|_{L^2}\big(\sum_{i=1,2} \| e^{-it \Lambda} g_{k_i}(t)\|_{L^\infty} + 2^{k_1} \| e^{-it \Lambda} \mathcal{F}^{-1}[\nabla_\xi \widehat{g_{k_i}}(t, \xi)]\|_{L^\infty} \big)\]
 \[\times \big( \sum_{i=1,2} \| g_{k_1'}(t)\|_{L^2} + 2^{k_1}\| \nabla_\xi \widehat{g}_{k_i}(t, \xi)\|_{L^2}+  2^{k_1}\| \nabla_\xi \widehat{g}_{k_1'}(t, \xi)\|_{L^2}\big)  
\]\[
\times\big(  2^{k_1'+2k_2'} \| \widehat{\textup{Re}[v]}(t, \xi)\psi_{k_2'}(\xi)\|_{L^\infty_\xi} +  2^{3k_2'} \| \widehat{g}_{k_2'}(t, \xi)\|_{L^\infty_\xi}\big)\lesssim 2^{-\beta m}\epsilon_0^2.
\]

$\oplus$ If  $k_{1,-}'+k_2'\geq -19m/20$ and $k\leq k_2'+20$.\quad By doing integration by parts in ``$\sigma$'' many times, we can rule out the case when $\max\{j_1',j_2'\}\leq m +k_{1,-}- \beta m$. From the $L^2-L^\infty-L^\infty$ type trilinear estimate (\ref{trilinearesetimate}) in Lemma \ref{multilinearestimate}, the following estimate holds when $\max\{j_1',j_2'\}\geq m +k_{1,-}- \beta m$, 
\[
\sum_{ \max\{j_1',j_2'\}\geq  m +k_{1,-}- \beta m }| \widehat{P'}_{k,k_1,k_2}^{k_1',j_1', k_2',j_2'}| \lesssim 2^{3m+k+3k_1+2k_1'} \| LL g_k\|_{L^2}\big(\sum_{i=1,2} \| e^{-it \Lambda} g_{k_i}(t)\|_{L^\infty} \big)  \]
\[\times \big(\sum_{j_1'\geq \max\{ j_2', m +k_{1,-} - \beta m\} } \| g_{k_1',j_1'}\|_{L^2} \| e^{-it \Lambda} g_{k_2',j_2'}\|_{L^\infty}  + \sum_{j_2'\geq \max\{ j_1', m  +k_{1,-}- \beta m\} } \| g_{k_2',j_2'}\|_{L^2} \]
\[\times \| e^{-it \Lambda} g_{k_1',j_1'}\|_{L^\infty} \big) \lesssim 2^{-m -k_2'+ 10\beta m  }\epsilon_0^2\lesssim 2^{-\beta  m }\epsilon_0^2.
\]

$\oplus$ If  $k_{1,-}'+k_2'\geq -19m/20$ and $k\geq k_2'+20$.\quad   By doing integration by parts in ``$\sigma$'' many times, we can rule out the case when $\max\{j_1',j_2'\}\leq m +k_{1,-}-\beta m$.
Now, it remains to consider the case when $\max\{j_1',j_2'\}\geq  m +k_{1,-}-\beta m$.  As $k\geq k_2'+20$, then estimate (\ref{eqn760}) still holds.For this case, we do integration by parts in ``$\eta$'' once. As a result, the following estimate holds from the $L^2-L^\infty-L^\infty$ type estimate, 
\[
\sum_{ \max\{j_1',j_2'\}\geq m +k_{1,-}-\beta m}| \widehat{P'}_{k,k_1,k_2}^{k_1',j_1', k_2',j_2'}| \lesssim 2^{2m + 4k_1}\| LL g_k\|_{L^2}\big(\sum_{i=1,2}  2^{k_1} \| e^{-it \Lambda} \mathcal{F}^{-1}[\nabla_\xi \widehat{g_{k_i}}(t, \xi)]\|_{L^\infty}   \]
\[+ \| e^{-it \Lambda} g_{k_i}(t)\|_{L^\infty}\big)\big( \sum_{j_1'\geq \max\{j_2', m+k_{1,-} - \beta m \}} 2^{j_1'} \| g_{k_1',j_1'}\|_{L^2} \| e^{-it \Lambda} g_{k_2',j_2'}\|_{L^\infty} 
 \]
 \[ +\sum_{j_2'\geq \max\{j_1', m +k_{1,-}- \beta m\}} 2^{-m+2j_1'} \|  g_{k_1',j_1'}(t)\|_{L^2} \| g_{k_2',j_2'}\|_{L^2}\big)  \lesssim  2^{-m/2+10\beta m }\epsilon_0^2 + 2^{-m -k_2' }\epsilon_0^2\lesssim 2^{-\beta m}\epsilon_0^2.
\] 
\end{proof}
\subsection{The $Z_2$-norm estimate of the quadratic terms: when  $|k_1-k_2|\leq 10$ and $|k-k_1|\leq 10 $.}   
 For this case, the summability with respect to $k$ is no longer a issue, as $|k-k_1|\leq 10$ and we can gain the smallness of $2^{2k_1}$ from the symbol. Hence, it is not necessary any more to use the good decomposition of $(\hat{L}_\xi + \hat{L}_\eta) \Phi^{\mu, \nu}(\xi,\eta).$ Moreover, recall that a small neighborhood of $(\xi, \xi/2)$ is removed, there is no issue to do integration by parts in ``$\eta$''.

  The estimate of $ P_{k,k_1,k_2}^3$ is same as what we did in (\ref{eqn998}). The estimate of $ P_{k,k_1,k_2}^4$ is same as what we did in (\ref{eqn999}). The estimate of $ P_{k,k_1,k_2}^1$ is same as what we did in the estimate of $\Gamma_{k,k_1,k_2}^{1,2}$ in (\ref{eqn112900}).  The estimate of $ P_{k,k_1,k_2}^2$ is same as what we did in the estimate of $\widetilde{P }_{k,k_1,k_2}^2 $ in (\ref{eq10}). We omit details here.

\subsection{The $Z_2$-norm estimate of the quadratic terms: when $k_2\leq k_1-10$.}   
\emph{  Note that, for the case we are considering, we have $\mu=+$  (see  (\ref{eqn900}))}.  We first rule out 
the very high frequency case and very low frequency case. For both cases, we  use the formulation  (\ref{eqn711}).

  We first consider the case when $k_{1 }+k_2\leq -19m/20$.  By using the same strategy that we used in the estimates of (\ref{eqn302}) and (\ref{eqn303}), the following estimate holds from estimate (\ref{eqn400}) in Lemma \ref{Linftyxi}  
\[
\big|  \int_{t_1}^{t_2}  \int_{\R^2}\overline{\Gamma^1_\xi \Gamma^2_\xi \widehat{g}(t, \xi )} \Gamma^1_\xi \Gamma^2_\xi  B^{+, \nu}_{k,k_1,k_2}(t,\xi)  d \xi d t \big| \lesssim \sup_{t\in[2^{m-1}, 2^m]} \| \Gamma_1 \Gamma_2 g_k(t)\|_{L^2}\big(\sum_{i=0,1,2} 2^{i k_1} \| \nabla_\xi^{i}   \widehat{g}_{k_1}(t, \xi)\|_{L^2} \big) 
\]
\[
\times 2^{m+k_1}\big( 1+ 2^{2m+2k_2+2k_1} \big)    \min\big\{  2^{k_1 +2k_2 } \| \widehat{\textup{Re}[v]}_{ }(t, \xi)\psi_{k_2}(\xi)\|_{L^\infty_\xi} 
+  2^{3k_2 } \| \widehat{g}_{k_2  }(t, \xi)\|_{L^\infty_\xi} , 2^{k_1+k_2}\| g_{k_2}(t)\|_{L^2}\big\}\]
\[\lesssim 2^{3\tilde{\delta} m} \min\{2^{ m + 2k_1+ k_2}(1 + 2^{2m+2k_1+2k_2}), 2^{2m+k_1+3k_2}( 1 + 2^{3m+3k_1+3k_2}) \} \lesssim     2^{-\beta m }\epsilon_0^2.
\]

Now, we remove the case when $k_1$ is relatively big. More precisely, we consider the case when  $k_1\geq 5\beta m $ and $k_1+k_2\geq -19m/20$. Recall (\ref{eqn711}). Note that $ {\Gamma}_\xi \widehat{g_{k_1}}(t,\xi-\eta)=  -\xi_{\Gamma}\cdot \nabla_\eta \widehat{g_{k_1}}(t,\xi-\eta)$.  When $ {\Gamma}_\xi$ hits $\widehat{g_{k_1}}(t,\xi-\eta)$,  we do integration by parts in ``$\eta$ ''to move around the derivative $\nabla_\eta$ in front of $\widehat{g_{k_1}}(t,\xi-\eta)$. As a result, the following estimate holds from the $L^2-L^\infty$ type bilinear estimate,
\[
\sum_{k_1\geq 5\beta m, k_2\geq -m-k_1}\big|  \int_{t_1}^{t_2}  \int_{\R^2}\overline{\Gamma^1_\xi \Gamma^2_\xi \widehat{g}(t, \xi )} \Gamma^1_\xi \Gamma^2_\xi  B^{+, \nu}_{k,k_1,k_2}(t,\xi)  d \xi d t \big|\lesssim \sum_{k_1\geq 5\beta m, k_2\geq -m-k_1} \sup_{t\in[2^{m-1}, 2^m]}  \]
\[\times \| \Gamma_1 \Gamma_2 g_k(t)\|_{L^2} \| g_{k_1}(t)\|_{L^2}2^{k_2}\big(2^{-2k_2} \| g_{k_2}\|_{L^2}+2^{- k_2} \| \nabla_\xi \widehat{g}_{k_2}(t)\|_{L^2}
\]
\be\label{e789}
  + \| \nabla_\xi^2\widehat{g}_{k_2}(t)\|_{L^2} \big) \lesssim \sum_{k_1\geq 5\beta m, k_2\geq -m-k_1} 2^{3m +\beta m +4k_1-k_2 -(N_0-30)k_{1,+}} \epsilon_0 \lesssim 2^{-\beta m}\epsilon_0. 
\ee

Hence, for the rest of this subsection,  we restrict ourself to   \emph{the case when $k_{1 }+k_2\geq -19m/20$ and $k_1\leq 5\beta m$}. The symmetric structure inside (\ref{waterwaves}) is very essential for this case. To utilize symmetry, we use the formulation (\ref{eqn1006}).   Although all terms inside $P_{k,k_1,k_2}^i$, $i\in\{1,2,4\},$  can still be handled by the same method, terms inside $P_{k,k_1,k_2}^3$ (see (\ref{eqn1012})) cannot be treated in the same way since now $k_1$ and $k_2$ are not close to each other. We decompose $P_{k,k_1,k_2}^3  $ into three parts as follows, 
\be\label{eqn1170}
P_{k,k_1,k_2}^3 = \sum_{i=1,2,3} Q_{k,k_1,k_2}^i,
\ee
where
\be\label{eqn1300}
Q_{k,k_1,k_2}^1= \int_{t_1}^{t_2} \int_{\R^2} \int_{\R^2 } \overline{\widehat{\Gamma^1 \Gamma^2 g_k}(t, \xi)} e^{i t\Phi^{+, \nu}(\xi, \eta)}     \tilde{q}_{+, \nu}(\xi-\eta, \eta) \widehat{  \Gamma^1   \Gamma^2 g_{k_1} }(t, \xi-\eta)\widehat{g_{k_2}^\nu}(t, \eta) d \eta d \xi d t, 
\ee
\be\label{eqn1230}
Q_{k,k_1,k_2}^2= \int_{t_1}^{t_2} \int_{\R^2} \int_{\R^2 } \overline{\widehat{\Gamma^1 \Gamma^2 g_k}(t, \xi)} e^{i t\Phi^{+, \nu}(\xi, \eta)}     \tilde{q}_{+, \nu}(\xi-\eta, \eta) \widehat{  g_{k_1} }(t, \xi-\eta)\widehat{ \Gamma^1   \Gamma^2 g_{k_2}^\nu}(t, \eta) d \eta d \xi d t, 
\ee
\[
Q_{k,k_1,k_2}^3= \sum_{j_1\geq -k_{1,-},j_2\geq -k_{2,-}}Q_{k,k_1,k_2}^{j_1,j_2, 3}, \quad Q_{k,k_1,k_2}^{j_1,j_2, 3}:=  \int_{t_1}^{t_2} \int_{\R^2} \int_{\R^2 } \overline{\widehat{\Gamma^1 \Gamma^2 g_k}(t, \xi)} e^{i t\Phi^{+, \nu}(\xi, \eta)}\]
\[\times \big[ \sum_{\{l,n\}=\{1,2\}}(\Gamma_\xi^l +\Gamma_\eta^l + d_{\Gamma^l} )\tilde{q}_{+, \nu}(\xi-\eta, \eta)\big(\widehat{ \Gamma^n g_{k_1,j_1}^{ }}(t, \xi-\eta) \widehat{  g_{k_2,j_2}^\nu}(t, \eta) + \widehat{ g_{k_1,j_1}^{ }}(t, \xi-\eta) \widehat{ \Gamma^n  g_{k_2,j_2}^\nu}(t, \eta) \big)   \]
 \be\label{eqn1160}
 +    (\Gamma_\xi^1 +\Gamma_\eta^1 + d_{\Gamma^1} ) (\Gamma_\xi^2 +\Gamma_\eta^2 + d_{\Gamma^2} )\tilde{q}_{ +, \nu}(\xi-\eta, \eta) \widehat{  g_{k_1,j_1}^{ }}(t, \xi-\eta)   \widehat{  g_{k_2,j_2}^\nu}(t, \eta)  \big] d \eta d\xi d t.
\ee

\subsubsection{The estimate of $P_{k,k_1,k_2}^1$}
Recall (\ref{eqn1010}) and  (\ref{eqn989}). Same as in the High $\times$ High type interaction, we know that the integral inside $P_{k,k_1,k_2}^1$ vanishes if $\Gamma^l =\Omega.$ \emph{Hence, we only have to consider the case when $\Gamma^l=L$}. Recall  (\ref{eqn1301}). We know that  similar decompositions as in (\ref{eqn1019}) and (\ref{eqn1120}) also hold. To simplify notations, we still the notations used in subsubsection \ref{highhighinteraction1}.   Hence, to estimate $P_{k,k_1,k_2}^1$, it is sufficient to estimate $\Gamma_{k,k_1,k_2}^{1,1}$ and 
 $\Gamma_{k,k_1,k_2}^{1,2}$. 

We first consider $\Gamma_{k,k_1,k_2}^{1,2}$, which is relatively easier. Since the symbol of  $\Gamma_{k,k_1,k_2}^{1,2}$ contributes the smallness of $|\eta|^2$, which makes the decay rate of the input $g_{k_2}(t)$ sharp when it is putted in $L^\infty$. Hence, a simple integration by parts in $\eta$ is sufficient. More precisely, we have 
 \begin{lemma}
Under the bootstrap assumption \textup{(\ref{smallness})}, the following estimate holds,
\be\label{eqn1129001}
 \sum_{k_2\leq k_1-10, |k_1-k |\leq 10}\sum_{\Gamma\in\{L, \Omega\}} \big|  \Gamma_{k,k_1,,k_2}^{1,2}   \big| \lesssim  2^{2\tilde{\delta} m  }\epsilon_0^2. 
\ee
\end{lemma}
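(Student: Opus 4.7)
The strategy parallels the proof of Lemma \ref{errorestimate} for the High $\times$ High case; the key difference is that now the smallness produced by the good decomposition (\ref{eqn1301}) is encoded in the small frequency $|\eta|\sim 2^{k_2}$ rather than in the output frequency $|\xi|\sim 2^k$. First, since we are in the regime $|\eta|\ll |\xi|$ with $\mu=+$, the identity (\ref{eqn1301}) gives $\tilde{q}^2_{+,\nu}(\xi-\eta,\eta) = (\hat{L}_\xi+\hat{L}_\eta)\Phi^{+,\nu}(\xi,\eta) - \tilde{c}(\xi-\eta)\Phi^{+,\nu}(\xi,\eta) = \mathcal{O}(|\eta|^2)$, and Lemma \ref{Snorm} upgrades this to the pointwise bound $\|\tilde{q}^2_{+,\nu}(\xi-\eta,\eta)\psi_k(\xi)\psi_{k_1}(\xi-\eta)\psi_{k_2}(\eta)\|_{\mathcal{S}^\infty}\lesssim 2^{2k_2}$ (up to harmless $k_{1,+}$ factors), the direct analogue of (\ref{eqn1121}) with smallness now controlled by $|\eta|$ rather than by $|\xi|$.

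Next I would perform a single integration by parts in $\eta$. Since the normal form (\ref{eqn1}) removes a neighborhood of the space resonance $\eta=\xi/2$, the lower bound $|\nabla_\eta \Phi^{+,\nu}(\xi,\eta)|\gtrsim 2^{k_1-k_{1,+}/2}$ coming from (\ref{eqn928}) is available throughout the support of the integrand, with the commutator estimate (\ref{hitsphasedenominator}) controlling higher derivatives. This converts the prefactor $t\sim 2^m$ into $2^{-k_1+k_{1,+}/2}$, at the cost of distributing one $\eta$--derivative across the composite symbol $\tilde{q}_{+,\nu}\,\tilde{q}^2_{+,\nu}$ and the two profiles $\widehat{g_{k_1}}(\xi-\eta)$ and $\widehat{g_{k_2}}(\eta)$, or equivalently of introducing spatial localizations $g_{k_i,j_i}$ and summing in $j_1,j_2$.

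After integrating by parts I would apply the $L^2$--$L^2$--$L^\infty$ scheme with $\Gamma^1\Gamma^2 g_k$ and the high-frequency input (or its $\nabla_\xi\widehat{g}_{k_1}$ incarnation) in $L^2$, and $e^{-it\Lambda}g_{k_2}$ in $L^\infty$. Three ingredients drive the bookkeeping: the sharp dispersive estimate $\|e^{-it\Lambda}g_{k_2}\|_{L^\infty}\lesssim 2^{-m-(1+\alpha)k_2-6k_{2,+}}\epsilon_1$ coming from the $Z_1$--norm in (\ref{smallness}); the weighted Sobolev bound $\sum_{i=0,1,2} 2^{ik_1}\|\nabla_\xi^i \widehat{g_{k_1}}(t,\xi)\|_{L^2}\lesssim 2^{-10k_{1,+}+\tilde{\delta}m}\epsilon_0$, obtained by interpolating the $Z_2$--control on $\Gamma g_{k_1}, \Gamma^1\Gamma^2 g_{k_1}$ against $\|g\|_{H^{N_0}}$ (precisely the bound invoked in the parallel Lemma \ref{errorestimate}); and the $Z_2$ bound $\|\Gamma^1\Gamma^2 g_k\|_{L^2}\lesssim 2^{\tilde{\delta}m}\epsilon_1$. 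Combining the IBP loss, the symbol size $2^{2k_2+2k_1+ck_{1,+}}$, and the time interval length $2^m$ with these norms leads to the pointwise estimate $|\Gamma^{1,2}_{k,k_1,k_2}|\lesssim 2^{2\tilde{\delta}m+(1-\alpha)k_2-6k_{2,+}+k_1-9k_{1,+}}\epsilon_0^2$; this bound is summable over $k_2\leq k_1-10$ (the bottom range via $1-\alpha=9/10>0$, the top range via the $-6k_{2,+}$ weight) and over $k_1$ with $|k-k_1|\leq 10$ (top range via $-9k_{1,+}$, bottom range via the positive coefficient $k_1$), delivering the required aggregate bound $2^{2\tilde{\delta}m}\epsilon_0^2$.

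The spatial--localization terms arising from the IBP (when $\nabla_\eta$ falls on a profile and is converted to $2^{j_i}\|g_{k_i,j_i}\|_{L^2}$ via Plancherel) are treated exactly as in the corresponding summation in Lemma \ref{errorestimate}; the smallness $2^{2k_2}$ continues to compensate for the growing spatial weights. The main obstacle is therefore purely organizational: tracking the various $k_{1,+}$ powers to confirm summability at both extremes of $(k_1,k_2)$, and verifying that the weighted Sobolev ingredient really is at our disposal for $\Gamma g_{k_1}$ (which it is, through the interpolation used in the High $\times$ High case). No new analytic mechanism is required beyond the symbol identity (\ref{eqn1301}) and the bounds already in place.
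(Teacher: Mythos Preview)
Your overall plan is correct and matches the paper for most sub-terms: the symbol bound $\|\tilde{q}^{2}_{+,\nu}\|_{\mathcal{S}^\infty}\lesssim 2^{2k_2}$ from (\ref{eqn1301}), a single integration by parts in $\eta$, and an $L^2$--$L^\infty$ distribution do close the bulk of the estimate. There is, however, a genuine gap. After the IBP, one piece has $\widehat{\Gamma g_{k_1}}(\xi-\eta)$ sitting \emph{outside} the $\nabla_\eta$ while $\nabla_\eta$ acts on $\frac{\nabla_\eta\Phi}{|\nabla_\eta\Phi|^2}\tilde{q}^2_{+,\nu}\tilde{q}_{+,\nu}\widehat{g^{\nu}_{k_2}}(\eta)$; this is precisely what the paper calls $\Gamma^{1,2;2}_{k,k_1,k_2}$. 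When $\nabla_\eta$ lands on $\tilde{q}^2_{+,\nu}$ or on the cutoff $\psi_{k_2}(\eta)$ one full power of $2^{k_2}$ is lost from the symbol, leaving the profile pair $\widehat{\Gamma g_{k_1}}$, $\widehat{g^{\nu}_{k_2}}$. With your stated scheme ($\Gamma g_{k_1}$ in $L^2$ via $Z_2$, $e^{-it\Lambda}g_{k_2}$ in $L^\infty$) this term is bounded by $\sim 2^{2\tilde{\delta}m+k_1-\alpha k_2}\epsilon_1^3$, which is \emph{not} summable over $k_2\le k_1-10$; even with the ambient restriction $k_{1}+k_2\ge -19m/20$ the sum is of size $\sim 2^{\alpha m}=2^{m/10}$. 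Your claimed pointwise bound with factor $2^{(1-\alpha)k_2}$ is valid for the sub-terms where $\nabla_\eta$ falls on a profile, but not for this symbol-hit sub-term.

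The paper does not treat this piece ``exactly as in Lemma~\ref{errorestimate}''. It isolates it as $\Gamma^{1,2;2}_{k,k_1,k_2}$ and performs \emph{repeated} integration by parts in $\eta$ to force $\max\{j_1,j_2\}\ge m+k_{1,-}-\beta m$; the large spatial concentration then provides the missing decay via $2^{k_1+j_1}\|g_{k_1,j_1}\|_{L^2}\lesssim 2^{-j_1+\tilde{\delta}m}\epsilon_1\lesssim 2^{-m+\tilde{\delta}m+\beta m}\epsilon_1$. This extra step is genuinely needed here and absent in the High$\times$High analogue: there the corresponding derivative loss is only $2^{-k_1}\sim 2^{-k_2}$, which is harmless, whereas in the present High$\times$Low regime the loss $2^{-k_2}$ with $k_2\ll k_1$ is what breaks the direct argument.
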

\begin{proof}
Recall (\ref{eqn1120}) and (\ref{eqn1301}).  From  Lemma \ref{Snorm}, the following estimate holds, 
\be\label{eqn112109}
\| \tilde{q}_{+, \nu}^2(\xi , \eta) \psi_{k}(\xi) \psi_{k_1}(\xi-\eta)\psi_{k_2}(\eta)\|_{\mathcal{S}^\infty}\lesssim 2^{2k_2},\quad k_2\leq k_1-10.
\ee
After doing integration by parts in ``$\eta$'' once, the following decomposition holds, 
\[
| \Gamma_{k,k_1,k_2}^{1,2} | \lesssim | \Gamma_{k,k_1,k_2}^{1,2;1} |+ | \Gamma_{k,k_1,k_2}^{1,2;2} |,\quad
\]
where
\[
\Gamma_{k,k_1,,k_2}^{1,2;1} :=  
  \int_{t_1}^{t_2} \int_{\R^2} \int_{\R^2 } \overline{\widehat{\Gamma^1 \Gamma^2 g_k}(t, \xi)} e^{i t\Phi^{+, \nu}(\xi, \eta)} \nabla_\eta \cdot \Big( \frac{\nabla_\eta \Phi^{+, \nu}(\xi, \eta)}{|\nabla_\eta \Phi^{+, \nu}(\xi, \eta) |^2}  \tilde{q}^{2}_{+, \nu} (\xi-\eta, \eta)  \big[\tilde{q}_{+, \nu}(\xi-\eta, \eta)  \]
\[
\times    \big(  \widehat{\Gamma  g_{k_1}^{}}(t, \xi-\eta) \widehat{g_{k_2}^\nu}(t, \eta)+   \widehat{g_{k_1}^{}}(t, \xi-\eta) \widehat{\Gamma  g_{k_2}^\nu}(t, \eta)\big)+ (\Gamma_\xi  +\Gamma_\eta + d_\Gamma )\tilde{q}_{+, \nu}(\xi-\eta, \eta) \widehat{   g_{k_1}^{}}(t, \xi-\eta)\widehat{g_{k_2}^\nu}(t, \eta) \big]\Big)  \]
\[- \overline{\widehat{\Gamma^1 \Gamma^2 g_k}(t, \xi)} e^{i t\Phi^{+, \nu}(\xi, \eta)} \nabla_\eta \cdot \Big( \frac{\nabla_\eta \Phi^{+, \nu}(\xi, \eta)}{|\nabla_\eta \Phi^{+, \nu}(\xi, \eta) |^2} \tilde{q}^{2}_{+, \nu} (\xi-\eta, \eta)   \tilde{q}_{+, \nu}(\xi-\eta, \eta)  \widehat{g_{k_2}^\nu}(t, \eta) \Big) \widehat{\Gamma  g_{k_1}^{}}(t, \xi-\eta)  d \eta d \xi d t, 
\]
 \[
 \Gamma_{k,k_1,k_2}^{1,2;2} := \sum_{j_1\geq -k_{1,-}, j_2\geq -k_{2,-}} \Gamma_{k,k_1,j_1,k_2,j_2}^{1,2;2}, \quad \Gamma_{k,k_1,j_1,k_2,j_2}^{1,2;2} :=    \int_{t_1}^{t_2} \int_{\R^2} \int_{\R^2 } \overline{\widehat{\Gamma^1 \Gamma^2 g_k}(t, \xi)} e^{i t\Phi^{+, \nu}(\xi, \eta)}\]
\[  \times  \widehat{\Gamma  g_{k_1,j_1}^{}}(t, \xi-\eta)   \nabla_\eta \cdot \Big( \frac{\nabla_\eta \Phi^{+, \nu}(\xi, \eta)}{|\nabla_\eta \Phi^{+, \nu}(\xi, \eta) |^2} \tilde{q}^{2}_{+, \nu} (\xi-\eta, \eta)\tilde{q}_{+, \nu}(\xi-\eta, \eta)  \widehat{g_{k_2,j_2}^\nu}(t, \eta) \Big) d \eta d \xi d t.
\]
From the $L^2-L^\infty$ type bilinear estimate, the following estimate holds,
\[
 \sum_{k_2\leq k_1-10, |k_1-k |\leq 10} | \Gamma_{k,k_1,,k_2}^{1,2;1} | \lesssim  \sum_{k_2\leq k_1-10, |k_1-k |\leq 10} \sum_{i=1,2} 2^{m+2k_2}\| \Gamma^1 \Gamma^2 g_{k}\|_{L^2}
\]
\[
 \times \Big[ \big( 2^{2k_1} \| \nabla_\xi^2 \widehat{g}_{k_1}(t, \xi)\|_{L^2} + 2^{k_1} \| \nabla_\xi \widehat{g}_{k_1}(t, \xi)\|_{L^2}  )     \| e^{-it \Lambda} g_{k_2}\|_{L^\infty}+ 2^{   k_1 }  \| e^{-it \Lambda} g_{k_1}\|_{L^\infty} \big( 2^{k_2}\| \nabla_\xi^2 \widehat{g}_{k_2}(t, \xi)\|_{L^2} 
 \]
 \[ + \| \nabla_\xi  \widehat{g}_{k_2}(t, \xi)\|_{L^2} +2^{-k_2} \|    {g}_{k_2}(t )\|_{L^2}\big)+ \sum_{j_1\geq j_2} 2^{ -m+k_2+k_1} 2^{2j_2}\| g_{k_2,j_2}\|_{L^2} 2^{j_1}\| g_{k_1,j_1}\|_{L^2}   \]
  \[
 +\sum_{j_2\geq j_1} 2^{-m+ k_2+k_1}  2^{2j_1}  \| g_{k_2,j_2}\|_{L^2}  2^{j_2}\| g_{k_1,j_1}\|_{L^2}\Big] \lesssim 2^{2\tilde{\delta}m}\epsilon_0^2.
\]
Now, we proceed to estimate  $\Gamma_{k,k_1,,k_2}^{1,2;2}$. By doing integration by parts in $\eta$ many times, we can rule out the case when $\max\{j_1,j_2\}\leq m + k_{1,-}-\beta m $. From the $L^2-L^\infty$ type bilinear estimate, the following estimate holds when $\max\{j_1,j_2\}\geq m + k_{1,-}-\beta m $,
\[
\sum_{  \max\{j_1,j_2\}\geq m + k_{1,-}-\beta m  } \big| \Gamma_{k,k_1,j_1,k_2,j_2}^{1,2;2}  \big|\lesssim  2^{m+2k_2 +k_1} \| \Gamma^1 \Gamma^2 g_{k}\|_{L^2} \big[\sum_{j_1\geq \max\{j_2, m+k_{1,-}-\beta m \}} 2^{k_1+j_1} \]
\[\times \| g_{k_1,j_1}(t)\|_{L^2} \big(2^{-k_2}\| e^{-it\Lambda} g_{k_2,j_2}\|_{L^\infty}  + \| e^{-it\Lambda}\mathcal{F}^{-1}[\nabla_\xi g_{k_2,j_2}(t)]\|_{L^\infty}\big) +  \sum_{j_2\geq \max\{j_1, m+k_{1,-}-\beta m \}} 2^{ j_2} 
 \]
 \[
 \times \| g_{k_2,j_2}(t)\|_{L^2} \| e^{-it\Lambda} \Gamma g_{k_1,j_1}(t)\|_{L^\infty}\big] \lesssim 2^{-m/2	+20\beta m }\epsilon_0^2.
 \]
 Hence finishing the proof.
\end{proof}

Now, we proceed to consider $\Gamma_{k,k_1,k_2}^{1,1}$. Same as in the High  $\times$ High interaction, we do integration by parts in time once. As a result, we have the same formulations as in (\ref{eqn1425}), (\ref{eqn1025}) and (\ref{eqn1026}).
\begin{lemma}
Under the bootstrap assumption \textup{(\ref{smallness})}, the following estimate holds, 
\be\label{eqn1434}
 |\widetilde{\Gamma}_{k,k_1,k_2}^{1,1}  |\lesssim 2^{ -\beta m } \epsilon_0^2.
\ee
\end{lemma}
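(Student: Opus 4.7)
The plan is to estimate $\widetilde{\Gamma}^{1,1}_{k,k_1,k_2}$ in the Low $\times$ High regime $k_2\le k_1-10$, $|k-k_1|\le 10$ by exactly the two-piece symbol split that was already proved very useful at the quadratic level. First, following the argument that produced (\ref{e789}), I dispose of the very small and very large frequency regimes, reducing to the window $k_1+k_2\ge -19m/20$ and $k_1\le 5\beta m$. In this window the identity $\mu=+$ forced by (\ref{eqn900}) applies, and the symbol splits by (\ref{eqn939}), (\ref{eqn932}), and (\ref{symboldecomposition}) as $\tilde q_{+,\nu}(\xi-\eta,\eta)=c(\xi)+r(\xi,\eta)$, where the leading piece $c(\xi)$ has size $2^{2k_1}$ and is independent of $\eta$, while the remainder $r$ is bounded in $\mathcal{S}^\infty$ by $2^{k_1+k_2}$.

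For the leading $c(\xi)$ contribution to both the time integral and the two boundary terms in (\ref{eqn1025}), the $\eta$-integral collapses to a pointwise evaluation of $\widehat{\mathrm{Re}(v)}(t,\eta)\psi_{k_2}(\eta)$. Exactly as was done in (\ref{eqn302})--(\ref{eqn303}), combining Lemma \ref{Linftyxi} with the expansion $\mathrm{Re}(v)=\tilde\Lambda h+\mathcal{O}(u^2)$ coming from (\ref{normalformatransfor}) yields $\|\widehat{\mathrm{Re}(v)}(t,\cdot)\psi_{k_2}\|_{L^\infty_\xi}\lesssim 2^{2\delta m}(2^{2k_2+2m}+2^{k_2+m})\epsilon_0$. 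Inserting this into the trivial $L^2$-estimate for the remaining factors, the worst boundary term is bounded by
\[
2^{2k_1+m+\tilde\delta m}\,\|g_{k_1}\|_{L^2}\,\|\Gamma^1\Gamma^2 g_k\|_{L^2}\cdot 2^{2\delta m}(2^{2k_2+2m}+2^{k_2+m})\epsilon_0,
\]
which within the frequency window $k_1+k_2\ge -19m/20$, $k_1\le 5\beta m$ produces the claimed surplus $2^{-\beta m}$.

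For the remainder $r(\xi,\eta)$ contribution I perform one integration by parts in $\eta$, which is legitimate because the normal form (\ref{e87900}) has already removed a neighborhood of the $\eta=\xi/2$ space-resonance set and by (\ref{eqn928}) $|\nabla_\eta\Phi^{+,\nu}|\gtrsim 2^{k_1-k_{1,+}/2}$ on the support of $\tilde q^i_{+,\nu}$. The loss $2^{-k_1+k_{1,+}/2}$ together with the $\mathcal{S}^\infty$ size $2^{k_1+k_2}$ of $r$ (and the factor $\tilde c(\xi-\eta)$ which is harmless here) yields a net gain of order $2^{k_2}$ per $\eta$ integration by parts. A dyadic decomposition in the spatial-concentration indices $(j_1,j_2)$, followed by the $L^2\times L^\infty$ bilinear estimate (\ref{bilinearesetimate}) with the $\nabla_\xi\widehat g$ controls that were encoded in the estimates used for the High $\times$ High case (cf.\ the manipulation carried out in the proof of (\ref{eqn112900})), then yields a contribution bounded by $2^{-\beta m}\epsilon_0^2$ for both the time integral and the two boundary pieces.

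The main obstacle is precisely the leading $c(\xi)$ boundary term: the explicit factor $t_i\sim 2^m$ appearing in (\ref{eqn1025}) cannot be defeated by integration by parts in $\eta$ because $c(\xi)$ is $\eta$-independent, and the rough $L^2\times L^\infty$ estimate would lose. It is here that the conservation of momentum (\ref{conservedmom}) enters in an essential way, through the improved Fourier bound for $\widehat h$ in Lemma \ref{Linftyxi}: the quadratic-in-$2^{m+k_2}$ growth of $\|\widehat{\mathrm{Re}(v)}(t,\cdot)\psi_{k_2}\|_{L^\infty_\xi}$ beats both the boundary factor of $t$ and the $L^2$-norm losses from the symbol, leaving the needed margin of $2^{-\beta m}$ precisely inside the permitted window $k_1+k_2\ge -19m/20$.
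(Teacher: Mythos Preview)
Your strategy has a genuine gap in the $c(\xi)$-piece argument. The improved bound from Lemma~\ref{Linftyxi}, namely $\|\widehat{\mathrm{Re}(v)}(t,\cdot)\psi_{k_2}\|_{L^\infty_\xi}\lesssim 2^{2\delta m}(2^{2k_2+2m}+2^{k_2+m})\epsilon_0$, is only useful when $k_2$ is very negative (roughly $k_2\lesssim -m$); this is precisely how it is deployed in (\ref{eqn302})--(\ref{eqn303}), to dispose of the regime $k_2\lesssim -(3m+j)/4$. But in the window you work in, $k_2$ ranges all the way up to $k_1-10\le 5\beta m-10$. Take $k_1\sim 0$, $k_2\sim -10$: your own displayed boundary-term bound becomes of order $2^{m}\cdot\epsilon_1^2\cdot 2^{2m}\epsilon_0\sim 2^{3m}\epsilon_0^3$, not $2^{-\beta m}\epsilon_0^2$. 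There is in addition a missing volume factor $2^{2k_2}$ in your display, but even granting your expression as written the estimate blows up. The conservation-of-momentum mechanism is simply irrelevant once $k_2$ is of moderate size. A second, related problem: the integrand in (\ref{eqn1025}) also contains the piece $\widehat{g_{k_1}}(t,\xi-\eta)\,\widehat{\Gamma g_{k_2}^\nu}(t,\eta)$, and summing this over $\nu$ does not produce $\widehat{\mathrm{Re}(v)}$ but rather involves $\Gamma g$, to which Lemma~\ref{Linftyxi} does not apply.

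The paper's argument is much more direct and does not split the symbol at all. Since the space-resonance neighborhood was already removed by the normal form, $|\nabla_\eta\Phi^{+,\nu}|\gtrsim 2^{k_1-k_{1,+}/2}$ holds uniformly on the support, and one simply performs repeated (non-stationary-phase) integration by parts in $\eta$ to rule out $\max\{j_1,j_2\}\le m+k_{1,-}-\beta m$. On the complementary set a single $L^2\times L^\infty$ bilinear estimate, placing the input with the larger spatial concentration in $L^2$, gives $\lesssim 2^{-m-k_2+20\beta m}\epsilon_0^2\lesssim 2^{-\beta m}\epsilon_0^2$ directly from the restriction $k_1+k_2\ge -19m/20$. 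Neither the $c(\xi)/r$ decomposition nor Lemma~\ref{Linftyxi} is needed here.
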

\begin{proof}

Recall  (\ref{eqn1425}). By doing integration by parts in ``$\eta$'' many times, we can rule out the case when $\max\{j_1,j_2\}\leq m+k_{1,-}- \beta m$. From the $L^2-L^\infty$ type bilinear estimate (\ref{bilinearesetimate}) in Lemma \ref{multilinearestimate}, the following estimate holds when $\max\{j_1,j_2\} \geq m +k_{1,-}- \beta m $.
\[
\sum_{\max\{j_1,j_2\} \geq m +k_{1,-}- \beta m } | \widetilde{\Gamma}_{k,k_1,k_2}^{j_1,j_2,1,1} | \lesssim 2^{m+2k_1} \| \Gamma^1\Gamma^2 g_k\|_{L^2}\big[ \sum_{j_1 \geq \max\{j_2, m +k_{1,-}- \beta m  \}} 2^{k_1+j_1 } \| g_{k_1,j_1}\|_{L^2}\]
\[\times  \big( \| e^{-it \Lambda} g_{k_2,j_2}\|_{L^\infty} + 2^{k_2}\| e^{-it \Lambda}\mathcal{F}^{-1}[\nabla_\xi \widehat{g}_{k_2,j_2} (t)]\|_{L^\infty} \big)+ \sum_{j_2 \geq \max\{j_1, m +k_{1,-}- \beta m  \}} 2^{ k_2+j_2}  \| g_{k_2,j_2}\|_{L^2} 
\]
\[
\times(\| e^{-it \Lambda} g_{k_1,j_1}\|_{L^\infty}+2^{k_1}\| e^{-it \Lambda}\mathcal{F}^{-1}[\nabla_\xi \widehat{g}_{k_1,j_1} (t)]\|_{L^\infty} )  \big]\lesssim 2^{-m-k_2+20\beta m  }\epsilon_0^2\lesssim 2^{ -\beta m } \epsilon_0^2.
\]
\end{proof}

Now, we proceed to estimate $\widetilde{\Gamma}_{k,k_1,k_2}^{1,2} $. Recall (\ref{eqn1026}). Since now $k_1$ and $k_2$ are not comparable, different from the decomposition we did in (\ref{eqn1467}) in the High $\times$ High type interaction, we do decomposition as follows, 
 \be\label{eqn811}
\widetilde{\Gamma}_{k,k_1,k_2}^{1,2}= \sum_{i=1}^7  \widetilde{\Gamma}_{k,k_1,k_2}^{1,2;i }
\ee
\be\label{eqnnn811}
\widetilde{\Gamma}_{k,k_1,k_2}^{1,2;2 }=\sum_{k_2'\leq k_1'+10} \widehat{\Gamma}_{k,k_1,k_2 }^{  k_1' , k_2' ,1} , \quad   \widehat{\Gamma}_{k,k_1,k_2 }^{  k_1' , k_2' ,1} =\sum_{j_2\geq -k_{2,-}, j_1'\geq-k_{1,-}',j_2'\geq-k_{2,-}' }\widehat{\Gamma}_{k,k_1,k_2,j_2}^{  k_1',j_1', k_2',j_2',1},
\ee
\be\label{eqn802}
\widetilde{\Gamma}_{k,k_1,k_2}^{1,2;3 }=\sum_{k_2'\leq k_1'+10} \widehat{\Gamma}_{k,k_1,k_2 }^{  k_1' , k_2' ,2} , \quad \widehat{\Gamma}_{k,k_1,k_2 }^{  k_1' , k_2' ,2} =\sum_{j_1\geq -k_{1,-}, j_1'\geq-k_{1,-}',j_2'\geq-k_{2,-}' }\widehat{\Gamma}_{k,k_1, j_1, k_2 }^{  k_1',j_1', k_2',j_2',2}, 
 \ee
\[
 \widetilde{\Gamma}_{k,k_1,k_2}^{1,2;i}= \sum_{j_1\geq -k_{1,-}, j_2\geq -k_{2,-}}  \widetilde{\Gamma}_{k,k_1,j_1,k_2,j_2}^{ 1,2;i},\quad i\in\{4,5\},
\]

where
\[  \widetilde{\Gamma}_{k,k_1,k_2}^{1,2;1}: =   -  \int_{t_1}^{t_2} \int_{\R^2} \int_{\R^2 } \overline{\widehat{\Gamma^1 \Gamma^2 g_k}(t, \xi)}  e^{i t\Phi^{+, \nu}(\xi, \eta)} \tilde{c}(\xi-\eta)  t\p_t \widehat{g_{k_2}^\nu}(t, \eta)  \big( \tilde{q}_{+, \nu}(\xi, \eta)      \widehat{\Gamma  g_{k_1}}(t, \xi-\eta) 
\]
\be\label{eqn1497}
  + (\Gamma_\xi +\Gamma_\eta+ d_\Gamma )\tilde{q}_{+, \nu}(\xi, \eta) 
   \widehat{   g_{k_1}}(t, \xi-\eta) \big) d\eta d \xi d t,
\ee
which is resulted from the case  when $\p_t $ hits the input ``$\widehat{g_{k_2}}(t, \xi-\eta)$'' in  (\ref{eqn1026}),
\[ \widehat{\Gamma}_{k,k_1,k_2,j_2}^{ k_1',j_1', k_2',j_2',1}:=\sum_{\mu',\nu'\in\{+,-\}}  - \int_{t_1}^{t_2} \int_{\R^2} \int_{\R^2 } \overline{\widehat{\Gamma^1 \Gamma^2 g_k}(t, \xi)}  e^{i t\Phi^{+, \nu}(\xi, \eta)} t \tilde{c}(\xi-\eta)  e^{it \Phi^{\mu', \nu'}(\xi-\eta, \sigma)}    \]
 \[\times  \tilde{q}_{\mu', \nu'}(\xi-\eta-\sigma, \sigma) \psi_{k_1}(\xi-\eta) \widehat{g^{\mu'}_{k_1',j_1'}}(t, \xi-\eta-\sigma) \widehat{g^{\nu'}_{k_2',j_2'}}(t, \sigma)  \big( \tilde{q}_{+, \nu}(\xi-\eta, \eta)  \widehat{\Gamma g_{k_2,j_2}^\nu}(t, \eta) \]
 \be\label{eqn805}
+  (\Gamma_\xi +\Gamma_\eta+ d_\Gamma )\tilde{q}_{+, \nu}(\xi-\eta, \eta)  \widehat{  g_{k_2,j_2}^\nu}(t, \eta)\big)
  d\eta d \xi d t,
\ee
which is resulted from the quartic terms when $\p_t $ hits the input ``$\widehat{g_{k_1}}(t, \xi-\eta)$'' in  (\ref{eqn1026}),
\[
\widehat{\Gamma}_{k,k_1,j_1,k_2}^{ k_1',j_1',k_2',j_2',2}:=  \sum_{\mu', \nu'\in \{+,-\}} - \int_{t_1}^{t_2} \int_{\R^2} \int_{\R^2 } \int_{\R^2 }\overline{\widehat{\Gamma^1 \Gamma^2 g_k}(t, \xi)}  e^{i t\Phi^{+, \nu}(\xi, \eta)} \tilde{c}(\xi-\eta)  t \tilde{q}_{+, \nu}(\xi-\eta, \eta)\widehat{g_{k_1,j_1} }(t, \xi-\eta)
\]
\[
  \times   e^{it\Phi^{\mu',\nu'}(\eta, \sigma)} \big[   \Gamma_\eta \big(\tilde{q}_{\mu', \nu'}(\eta-\sigma, \sigma)\widehat{g^{\mu'}_{k_1',j_1'}}(t, \eta-\sigma)\big) + it  \Gamma_\eta  \Phi^{\mu',\nu'}(\eta, \sigma)\tilde{q}_{\mu', \nu'}(\eta-\sigma, \sigma) \widehat{g^{\mu'}_{k_1',j_1'}}(t, \eta-\sigma)  \big]\]
  \be\label{eqn809}
  \times     \widehat{g^{\nu'}_{k_2',j_2'}}(t, \sigma) d \sigma d\eta d \xi d t.
\ee
 which is resulted from the quartic terms when $\p_t $ hits the input ``$\widehat{\Gamma g_{k_2}}(t,  \eta)$'' in  (\ref{eqn1026}),
 \[
\widetilde{\Gamma}_{k,k_1,j_1,k_2,j_2}^{ 1,2;4}:= -\int_{t_1}^{t_2} \int_{\R^2} \int_{\R^2 } \overline{\widehat{\Gamma^1 \Gamma^2 g_k}(t, \xi)} e^{i t\Phi^{+, \nu}(\xi, \eta)}  t   \tilde{c}(\xi-\eta)     \big[\big(       \widehat{\Gamma  \Lambda_{\geq 3}[\p_t  g }]_{k_1,j_1}(t, \xi-\eta)\widehat{g_{k_2,j_2}^\nu}(t, \eta) \]
\[
  +    \widehat{g_{k_1,j_1} }(t, \xi-\eta) \widehat{\Gamma  \Lambda_{\geq 3}[\p_t  g^{\nu}]_{k_2,j_2}}(t, \eta)+  \widehat{\Lambda_{\geq 3}[\p_t g ]_{k_1,j_1}}(t, \xi-\eta) \widehat{\Gamma  g^{\nu}_{k_2,j_2}}(t, \eta) \big)\tilde{q}_{+, \nu}(\xi-\eta, \eta)    \]
\be\label{eqq546}
+   (\Gamma_\xi +\Gamma_\eta+ d_\Gamma ) \tilde{q}_{+, \nu}(\xi-\eta, \eta)    \widehat{ \Lambda_{\geq 3}[\p_t  g ]_{k_1,j_1}}(t, \xi-\eta)\widehat{g_{k_2,j_2}^{\nu}}(t,  \eta)  \big)      d \eta d \xi d t,
\ee
 which is resulted from the quintic and higher order terms when $\p_t $ hits the inputs ``$g_{k_1}(t)$'', ``$\Gamma g_{k_1}(t)$'', and  ``$\Gamma g_{k_2}(t)$'' in  (\ref{eqn1026}),
\[ 
\widetilde{\Gamma}_{k,k_1,j_1,k_2,j_2}^{1, 2;5} =     -\int_{t_1}^{t_2} \int_{\R^2} \int_{\R^2 }   e^{i t\Phi^{+, \nu}(\xi, \eta)} t\tilde{c}(\xi-\eta) \overline{  \big(\p_t \widehat{\Gamma^1 \Gamma^2 g_k}(t, \xi) -   \sum_{\nu\in\{+,-\}} \sum_{(k_1',k_2')\in \chi_k^2} \widetilde{B}^{+, \nu}_{k,k_1',k_2'}(t, \xi) \big) }
\]
\[  
\times       \big(   \tilde{q}_{+, \nu}(\xi-\eta, \eta)  (    \widehat{  g_{k_1,j_1}}(t, \xi-\eta) \widehat{\Gamma g_{k_2,j_2}^\nu}(t, \eta)+      \widehat{\Gamma  g_{k_1,j_1}}(t, \xi-\eta) \widehat{g_{k_2,j_2}^\nu}(t, \eta)) + (\Gamma_\xi + \Gamma_\eta) \tilde{q}_{+, \nu}(\xi-\eta, \eta) 
\]
 \be\label{eqn812}
 \times \widehat{   g_{k_1,j_1}}(t, \xi-\eta) \widehat{g_{k_2,j_2}^\nu}(t, \eta) \big) d \eta d \xi d t ,
 \ee
which is resulted from the good error terms when $\p_t $ hits  ``$\Gamma^1 \Gamma^2 g_{k }(t)$''   in  (\ref{eqn1026}),
\be\label{eqn820}
\widetilde{\Gamma}_{k,k_1,k_2}^{1,2;6}= \sum_{|k_1'-k_2'|\leq 10}   \widehat{\Gamma}_{k,k_1,k_2;1 }^{ k_1' ,k_2' ,3} + \sum_{k_2'\leq k_1'-10} \widehat{\Gamma}_{k,k_1,k_2;2 }^{ k_1' ,k_2' ,3},
\ee
\be\label{eq399}
   \widehat{\Gamma}_{k,k_1,k_2;i}^{ k_1' ,k_2' ,3}= \sum_{j_1'\geq -k_{1,-}', j_2'\geq -k_{2,-}', j_2\geq -k_{2,-}}  \widehat{\Gamma}_{k,k_1,k_2,j_2;i}^{ k_1',j_1',k_2',j_2',3},\quad i \in \{1,2\},
\ee
\[  \widehat{\Gamma}_{k,k_1,k_2,j_2;1}^{ k_1',j_1',k_2',j_2',3}:= \sum_{\mu', \nu'\in \{+,-\}} - \int_{t_1}^{t_2} \int_{\R^2} \int_{\R^2 }   \overline{\widehat{\Gamma^1 \Gamma^2 g_k}(t, \xi)} e^{i t\Phi^{+, \nu}(\xi, \eta)}\tilde{c}(\xi-\eta)  t \tilde{q}_{+, \nu}(\xi-\eta, \eta)  \widehat{g^{\nu}_{k_2,j_2}} (t, \eta) \]
\[\times \psi_{k_1}(\xi-\eta)    e^{ i t\Phi^{\mu', \nu'}(\xi-\eta, \sigma)}\widehat{g^{\nu'}_{k_2',j_2'}}(t, \sigma) \Big[it\Gamma_{\xi-\eta }\Phi^{\mu', \nu'}(\xi-\eta, \sigma)  \tilde{q}_{\mu', \nu'}(\xi-\eta-\sigma, \sigma)   \]
 \be\label{eq823}
 \times \widehat{g^{\mu'}_{k_1',j_1'}}(t, \xi-\eta-\sigma)  + \Gamma_{\xi-\eta} \big( \tilde{q}_{\mu', \nu'}(\xi-\eta-\sigma, \sigma)  \widehat{g^{\mu'}_{k_1',j_1'}}(t, \xi-\eta-\sigma)\big)  \Big]  d \sigma d \eta d \xi d t ,
\ee
 which is resulted from the quartic terms when $\p_t $ hits the input ``$\widehat{\Gamma g_{k_1}}(t, \xi- \eta)$'' in  (\ref{eqn1026}) and moreover two inputs inside $\Lambda_{2}[ \p_t \widehat{\Gamma g_{k_1}}(t,  \xi-\eta)]$ have comparable sizes of frequencies, 
\[  \widehat{\Gamma}_{k,k_1,k_2,j_2;2}^{ k_1',j_1',k_2',j_2',3}:= \sum_{ \nu'\in \{+,-\}} - \int_{t_1}^{t_2} \int_{\R^2} \int_{\R^2 }   \overline{\widehat{\Gamma^1 \Gamma^2 g_k}(t, \xi)} e^{i t\Phi^{+, \nu}(\xi, \eta)}\tilde{c}(\xi-\eta)  t \tilde{q}_{+, \nu}(\xi-\eta, \eta)  \widehat{g^{\nu}_{k_2,j_2}} (t, \eta) \]
\[\times \psi_{k_1}(\xi-\eta)    e^{ i t\Phi^{+, \nu'}(\xi-\eta, \sigma)}\widehat{g^{\nu'}_{k_2',j_2'}}(t, \sigma) \Big[it\Gamma_{\xi-\eta} \Phi^{+, \nu'}(\xi-\eta, \sigma)  \tilde{q}_{+, \nu'}(\xi-\eta-\sigma, \sigma) \widehat{g^{ }_{k_1',j_1'}}(t, \xi-\eta-\sigma)  \]
 \be\label{eqn823}
   + \Gamma_{\xi-\eta} \big( \tilde{q}_{+, \nu'}(\xi-\eta-\sigma, \sigma)  \widehat{g^{}_{k_1',j_1'}}(t, \xi-\eta-\sigma)\big)   -  \widehat{\Gamma g^{}_{k_1',j_1'}}(t, \xi-\eta-\sigma)  \tilde{q}_{+, \nu'}(\xi-\eta-\sigma, \sigma) \Big]  d\sigma d \eta d \xi d t ,
\ee
 which is resulted from the quartic terms when $\p_t $ hits the input ``$\widehat{\Gamma g_{k_1}}(t, \xi- \eta)$'' in  (\ref{eqn1026}) and moreover two inputs inside $\Lambda_{2}[ \p_t \widehat{\Gamma g_{k_1}}(t,  \xi-\eta)]$ have  different size of frequencies and the bulk term of this scenario is removed, 
\[
\widetilde{\Gamma}_{k,k_1,k_2}^{1,2;7}=   \sum_{k_2'\leq k_1'-10, |k_1-k_1'|\leq 10} \sum_{\nu'\in\{+,-\}} - \int_{t_1}^{t_2} \int_{\R^2}  \int_{\R^2 }  e^{i t\Phi^{+, \nu}(\xi, \eta)}  t \tilde{c}(\xi-\eta) \tilde{q}_{+,\nu}(\xi-\eta, \eta) \Big[ \widehat{    \Gamma g_{k_1}}(t, \xi-\eta)  
\]
\[
\times   \widehat{g^{\nu}_{k_2}}(t,\eta)  \overline{e^{it\Phi^{+, \nu}(\xi, \kappa)} \widehat{\Gamma^1 \Gamma^2 g_{k_1'}}(t, \xi-\kappa) \widehat{g^{\nu'}_{k_2'}(t, \kappa)} \tilde{q}_{+, \nu'}(\xi-\kappa, \kappa)}    + \overline{\widehat{\Gamma^1 \Gamma^2 g_k}(t, \xi)} \widehat{g^{\nu}_{k_2}}(t,\eta)   
\]
\[
\times  e^{i t\Phi^{+, \nu'}(\xi-\eta, \kappa)}  \tilde{q}_{+, \nu'}(\xi-\eta-\kappa, \kappa) \widehat{  \Gamma   g_{k_1'}}(t, \xi-\eta-\kappa) \widehat{g^{\nu'}_{k_2'}(t, \kappa)}  \Big]  d \kappa  d \eta d \xi d t.  
\]
\[
= \sum_{k_2'\leq k_1'-10, |k_1-k_1'|\leq 10} \sum_{\nu'\in\{+,-\}}  - \int_{t_1}^{t_2} \int_{\R^2}  \int_{\R^2 }    e^{i t\Phi^{+, \nu}(\xi, \eta)-it \Phi^{+, \nu'}(\xi,\kappa)}   t   {r}_{k_1,k_1'}^{\nu, \nu'}(\xi,\eta, \kappa)
\]
\[
\times   \widehat{  \Gamma  g_{  }}(t, \xi-\eta)   \widehat{g^{\nu}_{k_2}}(t,\eta)    \overline{  \widehat{\Gamma^1 \Gamma^2 g_{ }}(t, \xi-\kappa) } \widehat{g^{\nu'}_{k_2'}(t,- \kappa)}  d\kappa d \eta  d \xi d t,
\]
 which is resulted from putting the bulk term inside ``$\Lambda_{2}[\p_t\widehat{  \Gamma g_{k_1}}(t, \xi- \eta)]$'' and the bulk term inside ``$\Lambda_{2}[\p_t\widehat{\Gamma^1\Gamma^2 g_{ k}}(t, \xi )]$'' together, and the symbol $ {r}_{k_1,k_1'}^{\nu, \nu'}(\xi,\eta, \kappa)$ is given as follows, 
\[
    {r}_{k_1,k_1'}^{\nu, \nu'}(\xi,\eta, \kappa)=  \tilde{c}(\xi-\eta) \tilde{q}_{+, \nu}(\xi-\eta, \eta)  \overline{\tilde{q}_{+, -\nu'}(\xi-\kappa,\kappa)}  \psi_{k_1'}(\xi-\kappa)\psi_{k_1}(\xi-\eta)\psi_{k}(\xi) \]
 \[+ \tilde{c}(\xi-\eta-\kappa)  \tilde{q}_{+, \nu}(\xi- \kappa-\eta, \eta) {\tilde{q}_{+,  \nu'}(\xi- \eta ,-\kappa)}   \psi_{k_1'}(\xi-\eta)\psi_{k }(\xi-\kappa)\psi_{k_1}(\xi-\eta-\kappa).
\]
Recall (\ref{eqn939}) and (\ref{eqn932}). From Lemma \ref{Snorm}, the following estimate holds, 
\be\label{eqn826}
\|    {r}_{k_1,k_1'}^{\nu, \nu'}(\xi,\eta, \kappa)\psi_{k_2}(\eta)\psi_{k_2'}(\kappa)\|_{\mathcal{S}^\infty} \lesssim 2^{\max\{k_2, k_2'\}+3k_1}.
\ee
After doing spatial localizations for inputs $\Gamma  g_{k_1}$ and $g_{k_2}$ inside $\widetilde{\Gamma}_{k,k_1,k_2}^{1,2;7}$, we have 
\be\label{eqn829}
\widetilde{\Gamma}_{k,k_1,k_2}^{1,2;7}= \sum_{j_1\geq -k_{1,-}, j_2\geq -k_{2,-}} \widetilde{\Gamma}_{k,k_1,j_1,k_2,j_2}^{1,2;7}, 
\ee
\[
  \widetilde{\Gamma}_{k,k_1,j_1,k_2,j_2}^{1,2;7}:=	  \sum_{k_2'\leq k_1'-10, |k_1-k_1'|\leq 10} \sum_{\nu'\in\{+,-\}}   \int_{t_1}^{t_2} \int_{\R^2}  \int_{\R^2 }  e^{i t\Phi^{+, \nu}(\xi, \eta)-it \Phi^{+, \nu'}(\xi,\kappa)}  t   {r}_{k_1,k_1'}^{\nu, \nu'}(\xi,\eta, \kappa)
\]
\be\label{eqn830}
\times   \widehat{  \Gamma  g_{k_1,j_1}}(t, \xi-\eta)   \widehat{g^{\nu}_{k_2,j_2}}(t,\eta)    \overline{  \widehat{\Gamma^1 \Gamma^2 g_{k_1'}}(t, \xi-\kappa) } \widehat{g^{\nu'}_{k_2'}(t,- \kappa)}   d \eta d \kappa d \xi d t.
\ee

\begin{lemma}
Under the bootstrap assumption \textup{(\ref{smallness})}, the following estimate holds,
\be
  |  \widetilde{\Gamma}_{k,k_1,k_2}^{1,2}  | \lesssim \sum_{i=1}^7 \widetilde{\Gamma}_{k,k_1,k_2}^{1,2;i} |\lesssim 2^{-\beta  m } \epsilon_0^2.
\ee
\end{lemma}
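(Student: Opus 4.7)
The plan is to estimate each of the seven pieces $\widetilde{\Gamma}_{k,k_1,k_2}^{1,2;i}$ in \eqref{eqn811} separately, since each arises from a distinct way in which $\partial_t$ distributes after the integration by parts in time performed to obtain \eqref{eqn1026}. Throughout, we work in the reduced range $k_{1}+k_2 \ge -19m/20$ and $k_1 \le 5\beta m$ with $|k-k_1|\le 10$, and we exploit the fact that the overall symbol $t\,\tilde{c}(\xi-\eta)\tilde{q}_{+,\nu}(\xi-\eta,\eta)$ is of size at most $2^{m+2k_2+k_1}$ by virtue of \eqref{eqn939}, so the price of the factor $t$ is compensated by the smallness $2^{2k_2}$ coming from the low-frequency input.

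For $\widetilde{\Gamma}_{k,k_1,k_2}^{1,2;1}$ (when $\partial_t$ hits the low-frequency input $\widehat{g_{k_2}^\nu}(t,\eta)$), I would substitute the Duhamel equation \eqref{realduhamel} and estimate the resulting quadratic, cubic, quartic, and remainder contributions piece by piece; the quadratic substitution produces the same kind of High$\,\times\,$Low quadratic interactions handled previously, and I would use the improved low-frequency bound \eqref{eqn400} on $\widehat{h}$ when both sub-inputs are of size comparable to $1/t$, together with $L^2$-$L^\infty$ bilinear estimates otherwise. For $\widetilde{\Gamma}_{k,k_1,k_2}^{1,2;2}$, $\widetilde{\Gamma}_{k,k_1,k_2}^{1,2;3}$, and the two sub-cases of $\widetilde{\Gamma}_{k,k_1,k_2}^{1,2;6}$ (quartic terms produced by the quadratic substitution), I would perform spatial localizations of the newly introduced inputs $g_{k_1',j_1'}^{\mu'}$, $g_{k_2',j_2'}^{\nu'}$ and integrate by parts in the free variable $\sigma$ many times to rule out the low-concentration regime $\max\{j_1',j_2'\}\le m+k_{1,-}-\beta m$; the remaining high-concentration range is then controlled by putting the most spatially concentrated input in $L^2$ and all other inputs in $L^\infty$ using the linear dispersive decay \eqref{highdecay}-\eqref{lowdecay}. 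For $\widetilde{\Gamma}_{k,k_1,k_2}^{1,2;4}$, the quintic-and-higher contribution is handled by the remainder estimate \eqref{eqnj878} combined with the $Z_1$-control of the lower-order terms from Proposition \ref{eqq298}. For $\widetilde{\Gamma}_{k,k_1,k_2}^{1,2;5}$, the bootstrap assumption identifies $\partial_t \widehat{\Gamma^1\Gamma^2 g_k}-\sum \widetilde{B}^{+,\nu}_{k,k_1',k_2'}$ as a remainder governed by cubic-or-higher terms, which admits a favorable $L^2$ bound.

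The main obstacle, as anticipated in the introduction, is $\widetilde{\Gamma}_{k,k_1,k_2}^{1,2;7}$: here the two bulk pieces that cannot be removed by integration by parts in time (because they live arbitrarily close to the time resonance set of $Q(u,\bar u)$ type) are combined into a single quartic expression. A direct $L^2$-$L^2$-$L^\infty$-$L^\infty$ estimate only gives $(1+t)^{-1+2\delta}$, which is insufficient. The key is the hidden symmetry encoded in the combined symbol $r_{k_1,k_1'}^{\nu,\nu'}(\xi,\eta,\kappa)$: the two contributions nearly cancel, and by \eqref{eqn826} the symbol gains a smallness factor $2^{\max\{k_2,k_2'\}}$. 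This extra factor is exactly what upgrades the worst-scenario decay of $e^{-it\Lambda}g_{k_2}$ or $e^{-it\Lambda}g_{k_2'}$ from $(1+t)^{-1/2+\delta}$ to $(1+t)^{-1+\delta}$ in the critical regime $|\eta|,|\kappa|\sim (1+t)^{-1/2}$, $|\xi|\sim 1$, $\xi\cdot\eta,\xi\cdot\kappa\sim 1/t$, so an $L^2$-$L^2$-$L^\infty$-$L^\infty$ estimate then closes. Away from this threshold regime, either standard integration by parts in $\eta$ or $\kappa$ applies (using the non-degeneracy of $\nabla_\eta \Phi^{+,\nu}$ and $\nabla_\kappa \Phi^{+,\nu'}$ after the space-resonant normal form has removed the neighborhood of $\eta=\xi/2$, $\kappa=\xi/2$) or the volume of support argument suffices.

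Collecting all seven pieces, each contributes at most $2^{-\beta m}\epsilon_0^2$ (with a logarithmic loss of at most $m^4$ absorbed into $\beta$), yielding the claimed bound. The delicate step throughout is to ensure at each stage that the symbolic gain in the High$\,\times\,$Low part of the symbol $\tilde q_{+,\nu}$ (namely the factor $c(\xi)$ being independent of $\eta$, see \eqref{eqn932}) is used consistently with the decomposition from Lemma \ref{errorestimate2} so that the $l^2$-summation in $k$ from the $Z_2$-norm structure never becomes an issue in the reduced regime $|k-k_1|\le 10$.
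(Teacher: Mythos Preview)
Your overall architecture is right, but there is a genuine error in your symbol accounting that propagates into real gaps for several pieces.

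You claim that the overall symbol $t\,\tilde{c}(\xi-\eta)\tilde{q}_{+,\nu}(\xi-\eta,\eta)$ is of size $2^{m+2k_2+k_1}$. This is false: $\tilde{c}$ is $O(1)$ and $\tilde{q}_{+,\nu}$ is of size $2^{2k_1}$ by \eqref{symbolquadraticrough}, so the product with $t$ is of size $2^{m+2k_1}$. You appear to be confusing $\widetilde{\Gamma}_{k,k_1,k_2}^{1,2}$ (which is the piece of $\Gamma_{k,k_1,k_2}^{1,1}$ produced by integration by parts in time, see \eqref{eqn1026}) with the unrelated term $\Gamma_{k,k_1,k_2}^{1,2}$ in \eqref{eqn1405}, whose symbol $\tilde{q}^2_{+,\nu}$ does carry the $2^{2k_2}$ smallness via \eqref{eqn112109}. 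The term you are proving a lemma about does \emph{not} have this extra smallness.

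This matters most for pieces $i=4,5$. Without the spurious $2^{2k_2}$ gain, a direct $L^2$--$L^\infty$ estimate using only \eqref{eqnj878}, Proposition~\ref{eqq298}, or \eqref{eqn730} loses by $2^{m/2}$: you pick up $2^{2m+2k_1}$ from $t$ times symbol times time integration, and the worst-case decay $\|e^{-it\Lambda}g_{k_2}\|_{L^\infty}\sim 2^{-m/2+\tilde{\delta}m}$ is not enough to close. The paper handles both pieces by first integrating by parts in $\eta$ to force $\max\{j_1,j_2\}\ge m+k_{1,-}-\beta m$, and only then applies $L^2$--$L^\infty$ estimates together with \eqref{eqn730} and \eqref{eqq425}; this is the step your plan omits. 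Similarly, for piece $i=2$ the paper integrates by parts in $\eta$ (not $\sigma$) in the sub-case $|k_1'-k_2'|\le 5$, and combines integration by parts in both $\eta$ and $\sigma$ when $k_2'\le k_1'-5$ with $k_1+k_2'\ge -9m/10$; your uniform ``integrate by parts in $\sigma$'' prescription misses these.

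Your treatment of piece $i=7$ is essentially correct: the symmetry gain \eqref{eqn826} is indeed the key, and the paper also first integrates by parts in $\eta$ to localize $\max\{j_1,j_2\}\ge m+k_{1,-}-\beta m$ before applying the trilinear estimate. For piece $i=1$ your Duhamel-substitution plan would work, but it is more efficient to invoke the prepackaged bound \eqref{eqn52} on $\|\partial_t\widehat{g}_{k_2}\|_{L^2}$ directly.
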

\begin{proof}
From estimate (\ref{eqn52}) in  Lemma \ref{derivativeL2estimate1} and the $L^2-L^\infty$ type bilinear estimate, we have
\[
|   \widetilde{\Gamma}_{k,k_1,k_2}^{1,2;1}| \lesssim \sum_{i=1,2} 2^{2m+2k_1} \| \Gamma^1\Gamma^2 g_k\|_{L^2} \| \p_t \widehat{g_{k_2}}(t, \xi) \|_{L^2}\big(\| e^{-it\Lambda} \Gamma^i g_{k_1}\|_{L^\infty}+\| e^{-it\Lambda}   g_{k_1}\|_{L^\infty}\big)
\]
\[
\lesssim 2^{m+ 2\tilde{\delta}m}(2^{-21m/20} + 2^{-2m-k_2+2\tilde{\delta}m})\epsilon_0^2\lesssim 2^{-\beta m }\epsilon_0^2.
\]
 
Now, we proceed to estimate $ \widetilde{\Gamma}_{k,k_1,k_2}^{1,2;2} $. Recall (\ref{eqnnn811}) and (\ref{eqn805}). We split into two cases as follows based on the  size of difference between $k_1'$ and $k_2'$.

$\oplus$\quad If $|k_1'-k_2'|\leq 5$.\quad   Note that $k_1'\geq k_1-5\geq k_2+5$. 
By doing integration by parts in ``$\eta$'' many times, we can rule out the case when $\max\{j_1',j_2\}\leq m+k_{1,-}'-\beta m $. Hence, it would be sufficient to consider the case when $\max\{j_1',j_2\}\geq m+k_{1,-}'-\beta m $.   From the $L^2-L^\infty-L^\infty$ type  trilinear estimate (\ref{trilinearesetimate}) in Lemma \ref{multilinearestimate}, the following estimate holds,
\[
\sum_{|k_1'-k_2'|\leq 5}\big| \sum_{\max\{j_1',j_2\}\geq m+k_{1,-}'-\beta m  }  \widehat{\Gamma}_{k,k_1,k_2,j_2}^{  k_1',j_1', k_2',j_2',1} \big| \lesssim  \sum_{|k_1'-k_2'|\leq 5}   2^{2m+2k_1+2k_1'} \| \Gamma^1\Gamma^2 g_k(t)\|_{L^2 }  
\]
\[
\times \Big[\sum_{j_1'\geq \max\{  j_2,  m+k_{1,-}'-\beta m \} }  \| g_{k_1',j_1'}\|_{L^2}  \| e^{-it\Lambda} g_{k_2'}\|_{L^\infty}(  \| e^{-it\Lambda} g_{k_2,j_2}\|_{L^\infty} +  \| e^{-it\Lambda} \Gamma^n g_{k_2,j_2}\|_{L^\infty}) \]
\[+ \sum_{j_2\geq \max\{  j_1',  m+k_{1,-}'-\beta m  \} }  \| e^{-it \Lambda} g_{k_1',j_1'}\|_{L^\infty}
 2^{k_2+j_2}\| g_{k_2,j_2}\|_{L^2} \| e^{-it \Lambda} g_{k_2'}(t)\|_{L^\infty}\Big]\]
 \[\lesssim 2^{-m-k_2+20\beta m } \epsilon_0^2\lesssim 2^{-2\beta m }\epsilon_0^2.
\]

$\oplus$ If $k_2'\leq k_1'-5$.\quad For this case we have $|k_1-k_1'|\leq 2$ and $k_1'\geq k_2 + 5$. If moreover $k_1+k_2'\leq -9m/10$, then the following estimate holds from estimate (\ref{eqn400}) in Lemma \ref{Linftyxi}, 
\[
\sum_{k_2'\leq \min\{-9m/10-k_1, k_1-10\}}  |\widehat{\Gamma}_{k,k_1,k_2}^{  k_1',k_2',1} | \lesssim \sum_{k_2'\leq \min\{-9m/10-k_1, k_1-10\}} 2^{2m+3k_1}\| \Gamma^1 \Gamma^2 g_{k}(t)\|_{L^2}\|e ^{-it \Lambda} g_{k_1'}\|_{L^\infty}\]
\[\times \big( \| g_{k_2}\|_{L^2} + \| \Gamma^n g_{k_2}\|_{L^2} \big)\big( 2^{3k_2'}\| \widehat{g}_{k_2'}(t,\xi)\|_{L^\infty_\xi} + 2^{k_1+2k_2'} \| \widehat{\textup{Re}[v]}_{ }(t,\xi)\psi_{k_2'}(\xi)\|_{L^\infty_\xi}  \big)\lesssim 2^{-2\beta m }\epsilon_0^2.
\]

Lastly, if $k_1+k_2'\geq -9m/10$, we can do integration by part in ``$\sigma$'' many times  to rule out the case when $\max\{j_1',j_2'\} \leq  m+k_{1,-}-\beta m $.  Also, by doing integration by parts in ``$\eta$'' many times, we can rule out the case when $\max\{j_1',j_2 \}\leq m+k_{1,-}-\beta m $. Hence, it would be sufficient to consider the case when $\max\{j_1',j_2\} \geq  m+k_{1,-}-\beta m $ and $\max\{j_1',j_2'\} \geq  m+k_{1,-}-\beta m$.  As a result, either $j_1'\geq m +k_{1,-}-\beta m $ or $j_1'\leq m +k_{1,-}-\beta m $ and $j_2,j_2'\geq m +k_{1,-}-\beta m  $. 

From the  $L^2-L^\infty-L^\infty$ type  trilinear estimate (\ref{trilinearesetimate}) in Lemma \ref{multilinearestimate}, the following estimate holds, 
\[
\big|\sum_{ \max\{j_1',j_2'\}, \max\{j_1',j_2\} \geq  m +k_{1,-}-\beta m }  \widehat{\Gamma}_{k,k_1,k_2,j_2}^{  k_1',j_1', k_2',j_2',1} \big| \lesssim  2^{2m+4k_1 } \| \Gamma^1\Gamma^2 g_k(t)\|_{L^2 }\big( \sum_{j_1'\geq m +k_{1,-}-\beta m }  \| g_{k_1',j_1'}\|_{L^2} \]
\[\times \| e^{-it\Lambda} g_{k_2',j_2'}\|_{L^\infty} \big(\| e^{-it\Lambda} g_{k_2 ,j_2 }\|_{L^\infty} +\| e^{-it\Lambda} \Gamma g_{k_2 ,j_2 }\|_{L^\infty}\big)+ \sum_{j_2',j_2 \geq m +k_{1,-}-\beta m  }  2^{2k_2+j_2}\|   g_{k_2 ,j_2 }\|_{L^2} \]
\[\times  \| g_{k_2',j_2'}\|_{L^2} \| e^{-it\Lambda} g_{k_1',j_1'}\|_{L^\infty} \big) \lesssim 2^{-m-k_{2}' + 20\beta m }\epsilon_0^2\lesssim 2^{-2\beta m }\epsilon_0^2.
\]

Now, we proceed to estimate $ \widetilde{\Gamma}_{k,k_1,k_2}^{1,2;3} $. Recall (\ref{eqn802}) and (\ref{eqn809}).  We split into two cases as follows based on the  size of difference between $k_1'$ and $k_2'$. 

$\oplus$ If $|k_1'-k_2'|\leq 10$. \quad Note that $k_1'\geq k_2-5$. By doing integration by parts in ``$\sigma$'', we can rule out the case when $\max\{j_1',j_2'\} \leq m +k_{2,-}-k_{1,+}'-\beta m $.  From the  $L^2-L^\infty-L^\infty$ type  trilinear estimate (\ref{trilinearesetimate}) in Lemma \ref{multilinearestimate}, the following estimate holds when  $\max\{j_1',j_2'\} \geq m +k_{2,-}-k_{1,+}'-\beta m $,
\[	
\sum_{|k_1'-k_2'|\leq 10}\big|\sum_{\max\{j_1',j_2'\} \geq  m +k_{2,-}-k_{1,+}'-\beta m} \widehat{\Gamma}_{k,k_1,j_1,k_2}^{  k_1',j_1',k_2',j_2',2}\big| \lesssim \sup_{t\in[2^{m-1},2^m]}  \sum_{|k_1'-k_2'|\leq 10}  2^{2m+2k+2k_1'}\| \Gamma^1\Gamma^2 g_k(t)\|_{L^2 }\]
\[\times\| e^{-it \Lambda} g_{k_1}(t)\|_{L^\infty} \big(\sum_{ j_1'\geq\{j_2',  m +k_{2,-}-k_{1,+}'-\beta m\} } (2^{k_1'+j_1'} + 2^{m+k_2+k_1'}) \| g_{k_1',j_1'}(t)\|_{L^2} \| e^{-it \Lambda}g_{k_2',j_2'}(t)\|_{L^\infty}  \]
\[  +\sum_{ j_2'\geq\{j_1', m +k_{2,-}-k_{1,+}'-\beta m\} } (2^{k_1'+j_1'}+ 2^{m+k_2+k_1'})   \| g_{k_2',j_2'}(t)\|_{L^2} 2^{-m}\| g_{k_1',j_1'}(t)\|_{L^1} \big) \]
\[\lesssim 2^{-m-k_{2}  + 20\beta m }\epsilon_0^2\lesssim 2^{-2\beta m }\epsilon_0^2.
\]

$\oplus$ If $k_2'\leq k_1'- 10$. \quad For this case, we have $k_2-2\leq k_1'\leq k_2+2\leq k_1-5$. By doing integration by parts in ``$\eta$'', we can rule out the case when $\max\{j_1 ,j_1'\} \leq m +k_{1,-}-\beta m $.  From the  $L^2-L^\infty-L^\infty$ type  trilinear estimate (\ref{trilinearesetimate}) in Lemma \ref{multilinearestimate}, the following estimate holds when  $\max\{j_1 ,j_1'\} \geq m +k_{1,-}-\beta m $,
\[
\sum_{k_2'\leq k_1'-10} \big|\sum_{\max\{j_1 ,j_1'\} \geq m +k_{1,-}-\beta m } \widehat{\Gamma}_{k,k_1,j_1,k_2}^{ k_1',j_1',k_2',j_2',2}\big| \lesssim \sup_{t\in[2^{m-1},2^m]} \sum_{k_2'\leq k_1'-10} 2^{2m+2k+2k_1'} \| \Gamma^1\Gamma^2 g_k(t)\|_{L^2 }
\]
\[
\times\| e^{-it \Lambda} g_{k_2'}(t)\|_{L^\infty}\big( \sum_{j_1 \geq \max\{j_1' ,m +k_{1,-}-\beta m \}} (2^{k_1'+j_1'} + 2^{m+k_2+k_1'})\| g_{k_1,j_1}(t)\|_{L^2} 2^{-m} \|  g_{k_1',j_1'}(t)\|_{L^1} \]
\[+\sum_{j_1'\geq \max\{j_1 ,m +k_{1,-}-\beta m \}} (2^{k_1'+j_1'} + 2^{m+k_2+k_1'})\| g_{k_1',j_1'}(t)\|_{L^2}  \| e^{-it \Lambda} g_{k_1 ,j_1 }(t)\|_{L^\infty}  \big)\lesssim 2^{-m/2+20\beta m }\epsilon_0^2.
\]

Now, we proceed to estimate $ \widetilde{\Gamma}_{k,k_1,k_2}^{1,2;4} $ and  $ \widetilde{\Gamma}_{k,k_1,k_2}^{1,2;5} $ . Recall (\ref{eqn811}), (\ref{eqq546}), and (\ref{eqn812}). By doing integration by parts in `` $\eta$'', we can rule out the case when $\max\{j_1,j_2\} \leq m+k_{1,-}-\beta m $. From estimate (\ref{eqn730}) in Lemma \ref{derivativeL2estimate2}, (\ref{eqq425}) in Lemma \ref{Z2normcubicandhigher}, and the $L^2-L^\infty$ type bilinear estimate (\ref{bilinearesetimate}) in Lemma \ref{multilinearestimate}, the following estimate holds when $\max\{j_1,j_2\} \geq   m+k_{1,-}-\beta m$, 
\[
\sum_{i=4,5}\sum_{ \max\{j_1,j_2\} \geq  m+k_{1,-}-\beta m } |  \widetilde{\Gamma}_{k,k_1,j_1,k_2,j_2}^{1,2;i} | \lesssim 2^{m +2k_1+ \beta m  } \Big[\sum_{j_1\geq \max\{j_2,  m+k_{1,-}-\beta m\} }    (\| e^{-it \Lambda} g_{k_2,j_2}(t)\|_{L^\infty}     \]
\[+\| e^{-it \Lambda} \Gamma g_{k_2,j_2}(t)\|_{L^\infty}  \big) 2^{k_1+j_1}\big(2^{6 k_{+}}\|  g_{k_1,j_1}(t)\|_{L^2} +2^{m }\| \Lambda_{\geq 3}[\p_t  g(t)]_{k_1,j_1}\|_{L^2} \big) + 2^{m+k_2} \| g_{k_1,j_1}\|_{L^2} 2^{k_2+j_2} \]
\[
\times  \| \Lambda_{\geq 3}[\p_t  g(t)]_{k_2,j_2}\|_{L^2}   +  \sum_{j_2 \geq \max\{j_1,  m+k_{1,-}-\beta m \} } 2^{k_1+j_1}\big( 2^{6k_{+}}\| g_{k_1,j_1}\|_{L^2} + 2^{m }\| \Lambda_{\geq 3}[\p_t  g(t)]_{k_1,j_1}\|_{L^2} \big)   \]
\[\times  2^{k_2} \|g_{k_2,j_2}\|_{L^2}   +    2^{k_2 +j_2 }  \big(  2^{  6k_{+}}  \|g_{k_2,j_2}\|_{L^2} + 2^{m }\| \Lambda_{\geq 3}[\p_t  g(t)]_{k_2,j_2}\|_{L^2} \big)\| e^{-it\Lambda} g_{k_1,j_1}(t)\|_{L^\infty} \Big]	\]
\[\lesssim 2^{-m +40\beta  m -k_2}\epsilon_0^2 \lesssim 2^{-2\beta m }\epsilon_0^2.
\]
 
Now, we proceed to estimate $ \widetilde{\Gamma}_{k,k_1,k_2}^{1,2;6} $. Recall (\ref{eqn820}) and (\ref{eq399}). We split into three cases based on the difference between $k_1'$ and $k_2'$ and the size of $k_1'+k_2'$. 

$\oplus$ If $|k_1'-k_2'|\leq 10$, i.e., we are estimating $ \widehat{\Gamma}_{k,k_1,k_2;1 }^{ k_1' ,k_2' ,3} $. \quad   Note that we have $k_1'\geq k_1-5$. Recall (\ref{eq823}).  By doing integration by parts in ``$\sigma$'' many times, we can rule out the case when $\max\{j_1', j_2'\} \leq m + k_{1,-}-k_{1,+}'-\beta m . $ From the $L^2-L^\infty-L^\infty$ type trilinear estimate (\ref{trilinearesetimate}) in Lemma \ref{multilinearestimate}, the following estimate holds when $\max\{j_1', j_2'\} \geq m + k_{1,-}-k_{1,+}'-\beta m $,
\[
\big|\sum_{ \max\{j_1', j_2'\} \geq m + k_{1,-}-k_{1,+}'-\beta m   }   \widehat{\Gamma}_{k,k_1,k_2,j_2;1}^{ k_1',j_1',k_2',j_2',3} \big| \lesssim 2^{2m +2k + 2k_1'}\| \Gamma^1\Gamma^2 g_{k}(t)\|_{L^2}  \| e^{-it \Lambda} g_{k_2}(t)\|_{L^\infty} \]
\[\times \big(\sum_{j_1'\geq \max\{j_2', m + k_{1,-}-k_{1,+}'-\beta m \}} ( 2^{m+k_1+k_2'}+ 2^{k_2'+j_1'} )  \| g_{k_1',j_1'}\|_{L^2} \| e^{-it \Lambda} g_{k_2',j_2'}\|_{L^\infty} 
\]
\[  
+ \sum_{j_2'\geq \max\{j_1', m + k_{1,-}-k_{1,+}'-\beta m \}} ( 2^{m+k_1+k_2'}+ 2^{k_2'+j_1'} ) \| g_{k_2',j_2'}\|_{L^2} 2^{-m}\| g_{k_1',j_1'}\|_{L^1} \big) \lesssim 2^{-m/2+20\beta m }\epsilon_0^2.
\]

Now, we proceed to consider the case when $k_2'\leq k_1'-10$, i.e., we are estimating  $ \widehat{\Gamma}_{k,k_1,k_2;2  }^{ k_1' ,k_2' ,3} $. For this case, we have $|k_1-k_1'|\leq 2$.

$\oplus$ If $k_2'\leq k_1'- 10$ and  $k_1'+k_2'\leq -19m/20$.\quad For this case, we have $|k_1'-k_1|\leq 5$.   From estimate (\ref{eqn400}) in Lemma \ref{Linftyxi}, we have
\[
| \widehat{\Gamma}_{k,k_1,k_2;2}^{ k_1' ,k_2' ,3} |\lesssim 2^{2m +3k_1} \| \Gamma^1\Gamma^2 g_k(t)\|_{L^2} \| e^{-it \Lambda} g_{k_2}(t)\|_{L^\infty} \big( (2^{m+k_2'+k_1 } + 1) \| g_{k_1'}\|_{L^2} 
\]
\[
+\sum_{i=1,2} 2^{k_2'} \|  \nabla_\xi \widehat{g}_{k_1'}(t, \xi)\|_{L^2}\big) \big( 2^{3k_2'}\| \widehat{g}_{k_2'}(t)\|_{L^\infty_\xi} + 2^{k_1+2k_2'} \| \widehat{\textup{Re}[v]}_{}(t,\xi)\psi_{k_2'}(\xi)\|_{L^\infty_\xi}  \big)\lesssim 2^{-2\beta m }\epsilon_0^2.
\]

$\oplus$ If $k_2'\leq k_1'- 10$ and  $k_1'+k_2'\geq -19m/20$.\quad Recall (\ref{eqn823}). By doing integration by parts in ``$\sigma$'' many times, we can rule out the case when $\max\{j_1',j_2'\}\leq m +k_{1,-}-\beta m $. By doing integration by parts in ``$\eta$'' many times, we can rule out the case when $\max\{j_1',j_2\}\geq m +k_{1,-}-\beta m$. Therefore, we only need to consider the case when $\max\{j_1',j_2'\}\geq m +k_{1,-}-\beta m $ and $\max\{j_1',j_2  \}\geq m +k_{1,-}-\beta m $. In other words, either $j_1'\geq m +k_{1,-}-\beta m $ or $j_2', j_2\geq m +k_{1,-}-\beta m$. From the $L^2-L^\infty-L^\infty$ type trilinear estimate (\ref{trilinearesetimate}) in Lemma \ref{multilinearestimate}, the following estimate holds,
\[
\big|\sum_{ \max\{j_1', j_2'\}, \max\{j_1',j_2\} \geq m + k_{1,-}-\beta m  }   \widehat{\Gamma}_{k,k_1,k_2,j_2;2}^{  k_1',j_1',k_2',j_2',3} \big| \lesssim 2^{2m +4k  }\| \Gamma^1\Gamma^2 g_{k}(t)\|_{L^2} 
\]
\[
\times \big(\sum_{j_1'\geq   m +k_{1,-}-\beta m } ( 2^{m+k_1+k_2'}+ 2^{k_2'+j_1'} ) \| g_{k_1',j_1'}\|_{L^2} \| e^{-it \Lambda} g_{k_2',j_2'}\|_{L^\infty} \| e^{-it \Lambda} g_{k_2,j_2}(t)\|_{L^\infty} \]
\[
+\sum_{j_2',j_2\geq m+   k_{1,-}-\beta m }   ( 2^{m+k_1+k_2'} \| e^{-it \Lambda} g_{k_1',j_1'}\|_{L^\infty}  + 2^{k_2' } \| e^{-it \Lambda} \mathcal{F}^{-1}[\nabla_\xi \widehat{g_{k_1',j_1'}}]\|_{L^\infty}  )\|   g_{k_2',j_2'}\|_{L^2}   \]
\[\times  2^{k_2}\|   g_{k_2,j_2}(t)\|_{L^2} \big)\lesssim 2^{-m-k_2 +30\beta m }\epsilon_0^2\lesssim 2^{-2\beta m}\epsilon_0^2.
\]

Lastly, we estimate $ \widetilde{\Gamma}_{k,k_1,k_2}^{ 1,2;7} $. Recall (\ref{eqn829}) and (\ref{eqn830}). By doing integration by parts in ``$\eta$'' many times, we can rule out the case  when $\max\{j_1,j_2\} \leq m +k_{1,-}-\beta m $. From the $L^2-L^\infty-L^\infty$ type trilinear estimate (\ref{trilinearesetimate}) in Lemma \ref{multilinearestimate} and (\ref{eqn826}), the following estimate holds when $\max\{j_1,j_2\} \geq m +k_{1,-}-\beta m $,
\[
\sum_{ \max\{j_1,j_2\} \geq  m +k_{1,-}-\beta m   } |\widetilde{\Gamma}_{k,k_1,j_1,k_2,j_2}^{1,2;7} | \lesssim \sum_{k_2'\leq k_1-10}  2^{2m+\max\{k_2, k_2'\}+3k_1} \| \Gamma^1 \Gamma^2 g_k\|_{L^2} \| e^{-it \Lambda }g_{k_2'}\|_{L^\infty} \]
\[\times \big( \sum_{j_1\geq \max\{j_2,  m +k_{1,-}-\beta m \}}   \|\Gamma g_{k_1,j_1}\|_{L^2} \| e^{-it \Lambda} g_{k_2,j_2}\|_{L^\infty} + 	\sum_{j_2\geq \max\{j_1, m+k_{1,-}-\beta m \}}   \| e^{-it \Lambda} \Gamma g_{k_1,j_1}\|_{L^\infty}  \]
\be\label{eqn841}
\times\| g_{k_2,j_2}\|_{L^2} \big)\lesssim \sum_{k_2'\leq k_2}  2^{20\beta m } \| e^{-it \Lambda }g_{k_2'}\|_{L^\infty} \epsilon_1^2+ \sum_{k_2\leq k_2'\leq k_1-10} 2^{-m-k_2+20\beta m }\epsilon_0^2
  \lesssim 2^{-2\beta m }\epsilon_0^2.
\ee
 
\end{proof}

\subsubsection{The estimate of $P_{k,k_1,k_2}^2$}

Recall (\ref{eqn1011}) and (\ref{eqn989}).  Note that $P_{k,k_1,k_2}^2$ vanishes except when $\Gamma^1=\Gamma^2= L $. Hence, we only have to consider the case when $\Gamma^1=\Gamma^2=L$. We decompose it into two parts as follows, 
\[
P_{k,k_1,k_2}^2=\sum_{i=1,2}P_{k,k_1,k_2}^{2,i}, \quad P_{k,k_1,k_2}^{2,i} =  -\int_{t_1}^{t_2} \int_{\R^2} \int_{\R^2 }  \overline{\widehat{LL g_k}(t, \xi)}e^{i t\Phi^{+, \nu}(\xi, \eta)}  t^2   \widehat{q}^i_{+, \nu}(\xi , \eta) \]
\[\times  	   \widehat{g_{k_1}}(t, \xi-\eta) \widehat{  g_{k_2}^\nu}(t, \eta) d \eta d \xi d t,\quad i \in \{1,2\},
\]
where $\widehat{q}^i_{+, \nu}(\xi-\eta, \eta)$ is defined (\ref{eqn1501}) and (\ref{eqn1132}). 

For ``$ P_{k,k_1,k_2}^{2,1} $'', we do integration by parts in time once. As a result, we have
\be\label{eq5648}
P_{k,k_1,k_2}^{2,1} = \sum_{i=1,2,3,4,5} \widetilde{P}_{k,k_1,k_2}^{i}  , \quad  \widetilde{P}_{k,k_1,k_2}^{1}= \sum_{j_1\geq -k_{1,-}, j_2\geq -k_{2,-}} \widetilde{P}^{ j_1,j_2, 1}_{k,k_1,  k_2 }, 
\ee
\[
\widetilde{P}^{ j_1,j_2, 1}_{k,k_1,  k_2 }=\sum_{i=1,2}(-1)^i \int_{\R^2} \int_{\R^2 }  \overline{\widehat{\Gamma^1 \Gamma^2 g_k}(t_i, \xi)}  e^{i t_i \Phi^{+, \nu}(\xi, \eta)} 
  i t_i^2  \widehat{p}^1_{+, \nu}(\xi , \eta) \widehat{g_{k_1,j_1}}( t_i, \xi-\eta)  \widehat{  g_{k_2,j_2}^\nu}( t_i, \eta) d \eta   \]
 \[ -  \int_{t_1}^{t_2} \int_{\R^2} \int_{\R^2 }  \overline{\widehat{\Gamma^1 \Gamma^2 g_k}(t, \xi)}  e^{i t  \Phi^{+, \nu}(\xi, \eta)}    i 2 t  	 \widehat{p}^1_{+, \nu}(\xi , \eta) \widehat{g_{k_1,j_1}}( t  , \xi-\eta) \widehat{  g_{k_2,j_2}^\nu}( t , \eta)  
-   e^{i t  \Phi^{+, \nu}(\xi, \eta)}    i t^2 \]
\be\label{e340} 
\times  \widehat{p}^1_{+, \nu}(\xi , \eta)	   \widehat{g_{k_1,j_1 }}( t  , \xi-\eta) \widehat{  g_{k_2,j_2}^\nu}( t , \eta)\overline{  \big(\p_t \widehat{\Gamma^1 \Gamma^2 g_k}(t, \xi) -   \sum_{\nu\in\{+,-\}} \sum_{(k_1',k_2')\in \chi_k^2} \widetilde{B}^{+, \nu}_{k,k_1',k_2'}(t, \xi) \big) }  d \eta d \xi d t,
\ee

\be\label{eqn872}
\widetilde{P}^{  2 }_{k,k_1,k_2} = \sum_{k_2'\leq k_1' +10} \widehat{P}^{  2,k_1', k_2' }_{k,k_1,k_2},\quad  \widehat{P}^{ 2,k_1', k_2' }_{k,k_1,k_2}=\sum_{j_1\geq -k_{1,-},j_1'\geq -k_{1,-}',j_2'\geq -k_{2,-}'}\widehat{P}^{  2,k_1', j_1', k_2',j_2' }_{k,k_1,j_1, k_2},
\ee

\[\widehat{P}^{  2,k_1', j_1', k_2',j_2' }_{k,k_1,j_1, k_2}:=\sum_{\mu',\nu'\in\{+,-\}} - \int_{t_1}^{t_2} \int_{\R^2} \int_{\R^2 }  \overline{\widehat{\Gamma^1 \Gamma^2 g_k}(t, \xi)}  e^{i t  \Phi^{+, \nu}(\xi, \eta)}  i t^2  	 \widehat{p}^1_{+, \nu}(\xi , \eta)	   \widehat{g_{k_1, j_1 }}( t  , \xi-\eta)  \]
\be\label{eqn869}
\times P_{\nu}\big[ e^{it \Phi^{\mu', \nu'}(\eta, \sigma)} \tilde{q}_{\mu',\nu'}(\eta-\sigma, \sigma) \widehat{g_{k_1',j_1'}^{\mu'}}(t, \eta-\sigma)  \widehat{g_{k_2',j_2'}^{\nu'}}(t,   \sigma) \big]  d \eta d \xi d t, 
\ee

\be\label{eqn880}
\widetilde{P}^{ 3 }_{k,k_1,k_2} = \sum_{|k_1'-k_2'|\leq 10} \widehat{P}^{ 3,k_1',k_2' 	 }_{k,k_1,k_2},\quad \widehat{P}^{ 3,k_1',k_2' 	 }_{k,k_1,k_2}=\sum_{j_1'\geq-k_{1,-}', j_2'\geq -k_{2,-}'} \widehat{P}^{3,k_1',j_1',k_2',j_2'  	 }_{k,k_1,k_2},
\ee
\[
\widehat{P}^{3,k_1',j_1',k_2',j_2'  	 }_{k,k_1,k_2}= \sum_{\mu',\nu'\in\{+,-\}} - \int_{t_1}^{t_2} \int_{\R^2} \int_{\R^2 }  \overline{\widehat{\Gamma^1 \Gamma^2 g_k}(t, \xi)}  e^{i t  \Phi^{+, \nu}(\xi, \eta)}  i t^2 	 \widehat{p}^1_{+, \nu}(\xi , \eta)	   \]
\be\label{eqn881}
\times e^{it\Phi^{\mu',\nu'}(\xi-\eta,\sigma)} \tilde{q}_{\mu',\nu'}(\xi-\eta-\sigma, \sigma) \widehat{g_{k_1',j_1'}^{\mu'}}( t  , \xi-\eta-\sigma) \widehat{g_{k_2',j_2'}^{\nu'}}(t, \sigma) \widehat{  g_{k_2 }^\nu}( t , \eta) d \eta d \xi d t.
\ee

\[
\widetilde{P}_{k,k_1,k_2}^{ 4}=  \sum_{j_1\geq -k_{1,-}, j_2\geq -k_{2,-}} \widetilde{P}^{4}_{k,k_1,j_1, k_2,j_2} , \quad  \widetilde{P}^{4}_{k,k_1,j_1, k_2,j_2}:=- \int_{t_1}^{t_2} \int_{\R^2} \int_{\R^2 } e^{i t\Phi^{+, \nu}(\xi, \eta)}  it^2 	\widehat{p}^1_{+, \nu}(\xi , \eta)      \]
\be\label{eqq878}
\times \overline{\widehat{\Gamma^1 \Gamma^2 g_k}(t, \xi)} \big(  \widehat{  \Lambda_{\geq 3}[\p_t  g^{}]_{k_1,j_1}}(t, \xi-\eta)   \widehat{g_{k_2,j_2}^\nu}(t, \eta)+   \widehat{g_{k_1,j_1}^{}}(t, \xi-\eta) \widehat{   \Lambda_{\geq 3}[\p_t  g^{\nu}]_{k_2,j_2}}(t, \eta)\big)      d \eta d \xi d t,
\ee
\[
\widetilde{P}_{k,k_1,k_2}^{ 5}=   \sum_{k_2'\leq k_1'-10, |k_1-k_1'|\leq 10} \sum_{\nu'\in\{+,-\}} - \int_{t_1}^{t_2} \int_{\R^2}  \int_{\R^2 }  e^{i t\Phi^{+, \nu}(\xi, \eta)} i t^2 	\widehat{p}^1_{+, \nu}(\xi , \eta) \Big[ \widehat{     g_{k_1}}(t, \xi-\eta)  \widehat{g^{\nu}_{k_2}}(t,\eta) 
\]
\[
\times \overline{e^{it\Phi^{+, \nu'}(\xi, \kappa)} \widehat{\Gamma^1 \Gamma^2 g_{k_1'}}(t, \xi-\kappa) \widehat{g^{\nu'}_{k_2'}(t, \kappa)} \tilde{q}_{+, \nu'}(\xi-\kappa, \kappa)}    + \overline{\widehat{\Gamma^1 \Gamma^2 g_k}(t, \xi)} \widehat{g^{\nu}_{k_2}}(t,\eta)   
\]
\[
\times e^{i t\Phi^{+, \nu'}(\xi-\eta, \kappa)} \tilde{q}_{+, \nu'}(\xi-\eta-\kappa, \kappa) \widehat{    g_{k_1'}}(t, \xi-\eta-\kappa) \widehat{g^{\nu'}_{k_2'}(t, \kappa)}  \Big] d \eta d \kappa d \xi d t.  
\]
\[
= \sum_{k_2'\leq k_1'-10, |k_1-k_1'|\leq 10} \sum_{\nu'\in\{+,-\}}   - \int_{t_1}^{t_2} \int_{\R^2}  \int_{\R^2 }  e^{i t\Phi^{+, \nu}(\xi, \eta)-it \Phi^{+, \nu'}(\xi,\kappa)}  it^2 \widetilde{r}_{k_1,k_1'}^{\nu, \nu'}(\xi , \eta,\kappa)
\]
\[
\times   \widehat{    g }(t, \xi-\eta)   \widehat{g^{\nu}_{k_2}}(t,\eta)    \overline{  \widehat{\Gamma^1 \Gamma^2 g }(t, \xi-\kappa) } \widehat{g^{\nu'}_{k_2'}(t,- \kappa)}   d \eta d \kappa d \xi d t,
\]

where the symbol`` $\widehat{p}^1_{\mu, \nu}(\xi , \eta)$'' is defined in (\ref{eqn1100}) and the symbol $\widetilde{r}_{k_1,k_1'}^{\nu, \nu'}(\xi , \eta,\kappa)$  is defined as follows,
\[
\widetilde{r}_{k_1,k_1'}^{\nu, \nu'}(\xi , \eta,\kappa)= \widehat{p}^1_{+, \nu}(\xi , \eta) \overline{\tilde{q}_{+, -\nu'}(\xi-\kappa, \kappa) }\psi_{k_1'}(\xi-\kappa)\psi_{k_1}(\xi-\eta)\psi_{k}(\xi)  \]
\[+  \widehat{p}^1_{+, \nu}(\xi-\kappa , \eta) \tilde{q}_{+,  \nu'} (\xi-\eta , -\kappa) \psi_{k_1'}(\xi-\eta)\psi_{k_1}(\xi-\eta-\kappa)\psi_{k }(\xi-\kappa).
\]
Recall (\ref{eqn1100}), (\ref{eqn939}) and (\ref{eqn932}). From Lemma \ref{Snorm}, the following estimate holds,
\be\label{eqn860}
\| \widetilde{r}_{k_1,k_1'}^{\nu, \nu'}(\xi , \eta,\kappa)\psi_{k_2}(\eta)\psi_{k_2'}(\kappa)\|_{\mathcal{S}^\infty} \lesssim 2^{ \max\{k_2,k_2'\}+k_2+4k_1 }.
\ee
After doing spatial localizations for inputs $\widehat{g}_{k_1}(t)$ and $\widehat{g}_{k_2}(t)$ in $\widetilde{P}_{k,k_1,k_2}^{ 5}$,  the following decomposition holds, 
\be\label{eqn888}
\widetilde{P}_{k,k_1,k_2}^{5}=\sum_{k_2'\leq k_1'-10, |k_1-k_1'|\leq 10} \widehat{P}_{k,k_1,k_2}^{  5,k_1',k_2' }, \quad \widehat{P}_{k,k_1,k_2}^{  5,k_1',k_2' }=\sum_{j_1 \geq-k_{1,-},j_2\geq -k_{2,-}}\widehat{P}_{k,k_1,j_1,k_2,j_2}^{5,k_1',k_2' },
\ee
\[
\widehat{P}_{k,k_1,j_1,k_2,j_2}^{ 5,k_1',k_2' }= \sum_{\nu'\in\{+,-\}}   -\int_{t_1}^{t_2} \int_{\R^2}  \int_{\R^2 }  e^{i t\Phi^{+, \nu}(\xi, \eta)-it \Phi^{+, \nu'}(\xi,\kappa)}  i t^2 \widetilde{r}_{k_1,k_1'}^{\nu, \nu'}(\xi , \eta,\kappa)  \widehat{    g_{k_1,j_1}}(t, \xi-\eta)   
\]
\be\label{eqn870}
\times \widehat{g^{\nu}_{k_2,j_2}}(t,\eta)    \overline{  \widehat{\Gamma^1 \Gamma^2 g_{k_1'}}(t, \xi-\kappa) } \widehat{g^{\nu'}_{k_2'}(t,- \kappa)}   d \eta d \kappa d \xi d t.
\ee
\begin{lemma}
Under the bootstrap assumption \textup{(\ref{smallness})}, the following estimate holds,  
\be
  |   {P}_{k,k_1,k_2}^{2,1}  | \lesssim   \sum_{i=1,2,3,4,5} | \widetilde{P}_{k,k_1,k_2}^{i}| \lesssim 2^{-\beta  m } \epsilon_0^2.
\ee
\end{lemma}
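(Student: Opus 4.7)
The plan is to estimate the five pieces $\widetilde{P}_{k,k_1,k_2}^{i}$, $i=1,\ldots,5$, separately, following closely the strategy developed in the preceding lemma for $\widetilde{\Gamma}_{k,k_1,k_2}^{1,2}=\sum_{i=1}^{7}\widetilde{\Gamma}_{k,k_1,k_2}^{1,2;i}$. The overall structure is parallel: $\widetilde{P}^{1}$ plays the role of the ``diagonal'' terms, $\widetilde{P}^{2}$ and $\widetilde{P}^{3}$ come from $\partial_t$ landing on the quadratic evolution of $g_{k_2}$ and $g_{k_1}$ respectively, $\widetilde{P}^{4}$ collects the cubic-and-higher contribution, and $\widetilde{P}^{5}$ is the bulk--bulk term. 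The new feature compared to $\widetilde{\Gamma}^{1,2}$ is the extra factor of $t$ carried by $\widetilde{P}^{2,1}$; this loss is compensated by the fact that the symbol $\widehat{p}^{1}_{+,\nu}$ in (\ref{eqn1100}) contains an additional factor $(\hat{L}_\xi+\hat{L}_\eta)\Phi^{+,\nu}$, which by (\ref{eqn1301}) and $\Phi^{+,\nu}\sim |\eta|$ supplies an extra smallness of $2^{k_2}$ on top of the standard $2^{2k_1}$ bound on $\tilde{q}_{+,\nu}$ in the Low~$\times$~High regime. The reductions $k_1+k_2\geq -19m/20$, $k_1\leq 5\beta m$, and $j\leq(1+\delta)\max\{m+k_1,-k_-\}+2\tilde\delta m$ effected earlier in the subsection remain in force.

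For $\widetilde{P}_{k,k_1,k_2}^{1}$ (see (\ref{e340})), I would apply spatial localizations $j_1,j_2$ and integrate by parts in $\eta$ many times to discard the region $\max\{j_1,j_2\}\leq m+k_{1,-}-\beta m$, using the lower bound (\ref{eqn928}) together with the fact that $\tilde{q}_{+,\nu}$ already cuts out a neighborhood of $\eta=\xi/2$. On the remainder, an $L^2$--$L^\infty$ bilinear estimate via (\ref{bilinearesetimate}), combined with the weighted bound (\ref{eqn730}) in Lemma \ref{derivativeL2estimate2} controlling $\partial_t\widehat{\Gamma^1\Gamma^2 g_k}$ modulo its worst quadratic piece, the linear dispersion (\ref{highdecay})--(\ref{lowdecay}) in Lemma \ref{lineardecay}, and the $2^{k_2}$ gain built into $\widehat{p}^{1}$, close the estimate with room to spare. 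For $\widetilde{P}_{k,k_1,k_2}^{2}$ and $\widetilde{P}_{k,k_1,k_2}^{3}$, I would mimic exactly the case analysis used for $\widehat{\Gamma}_{k,k_1,k_2}^{k_1',k_2',1}$ and $\widehat{\Gamma}_{k,k_1,k_2}^{k_1',k_2',2}$: split on whether $|k_1'-k_2'|\leq 10$ or $k_2'\leq k_1'-10$, and in the latter case further split at $k_1+k_2'=-9m/10$. The small-$k_2'$ regime is handled by the improved Fourier-$L^\infty$ bound (\ref{eqn400}) on $\widehat{h}$; the remaining regime is closed by integrating by parts in $\sigma$ to localize $j_1',j_2'$ to $\max\{j_1',j_2'\}\geq m+k_{1,-}-\beta m$, followed by an $L^2$--$L^\infty$--$L^\infty$ trilinear estimate (\ref{trilinearesetimate}).

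The term $\widetilde{P}_{k,k_1,k_2}^{4}$ is disposed of by a single integration by parts in $\eta$ combined with the $Z_1$ bound in Proposition \ref{eqq298} and the remainder estimate for $\Lambda_{\geq 3}[\partial_t g]$ from Lemma \ref{remaindertermweightednorm}, since cubic-and-higher nonlinearities supply ample decay. The main obstacle is $\widetilde{P}_{k,k_1,k_2}^{5}$, where one pairs the bulk of $\partial_t\widehat{\Gamma^1\Gamma^2 g_k}$ against the bulk of $\partial_t\widehat{g_{k_1}}$: neither factor alone decays and a naive $L^2$--$L^\infty$ bound loses by a full power of $t$. The remedy, in direct analogy with the treatment of $\widetilde{\Gamma}_{k,k_1,k_2}^{1,2;7}$ culminating in (\ref{eqn841}), is to recombine the two bulk contributions into a single quadrilinear integral with composite symbol $\widetilde{r}_{k_1,k_1'}^{\nu,\nu'}$; the algebraic cancellation between the two bulks improves the generic $\mathcal{S}^\infty$ bound to (\ref{eqn860}), yielding the crucial gain of $2^{\max\{k_2,k_2'\}+k_2}$ over the naive $2^{4k_1}$. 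With this symmetry in hand, integration by parts in $\eta$ to enforce $\max\{j_1,j_2\}\geq m+k_{1,-}-\beta m$ and a dichotomy $k_2'\leq k_2$ versus $k_2\leq k_2'\leq k_1-10$, followed by an $L^2$--$L^\infty$--$L^\infty$--$L^\infty$ multilinear estimate, will yield a bound of order $2^{-2\beta m}\epsilon_0^2$. This last step is both the core algebraic observation and the main quantitative bottleneck of the lemma.
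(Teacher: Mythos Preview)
Your proposal is essentially correct and tracks the paper's own proof closely. The paper handles the five pieces just as you outline: for $\widetilde{P}^{1}$ it integrates by parts in $\eta$ many times to force $\max\{j_1,j_2\}\ge m+k_{1,-}-\beta m$ and then applies the $L^2$--$L^\infty$ bilinear estimate together with (\ref{eqn730}); for $\widetilde{P}^{4}$ it does the same spatial localization and then invokes (\ref{eqq425}) and (\ref{L2cubicandhigher}) rather than a single integration by parts; and for $\widetilde{P}^{5}$ it exploits the symbol cancellation (\ref{eqn860}) followed by integration by parts in $\eta$, exactly as you describe.

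The only noticeable deviations are in the case analysis for $\widetilde{P}^{2}$ and $\widetilde{P}^{3}$. The paper splits $\widetilde{P}^{2}$ according to whether $k_1'\ge k_1-5$ (then $|k_1'-k_2'|\le 5$ and one integrates by parts in $\sigma$) or $k_1'\le k_1-5$ (then one integrates by parts in $\eta$, using that $\nabla_\eta$ of the combined phase is $\gtrsim 2^{k_1}$), and never appeals to the $\widehat{h}$ bound (\ref{eqn400}). For $\widetilde{P}^{3}$, note that by construction (see (\ref{eqn880})) only the High$\times$High piece $|k_1'-k_2'|\le 10$ is present---the Low$\times$High bulk of $\partial_t g_{k_1}$ has already been routed into $\widetilde{P}^{5}$---so a single integration by parts in $\sigma$ suffices and no further case split is needed. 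Your alternative of invoking (\ref{eqn400}) for very small $k_2'$ is viable but not the route the paper takes.
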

\begin{proof}

We first estimate $\tilde{P}^{  1}_{k,k_1,k_2}$. Recall  (\ref{eq5648}) and (\ref{e340}).  By doing integration  by parts in ``$\eta$'' many times, we can rule out the case when $\max\{j_1,j_2\}\leq m +k_{1,-}-\beta m $. From the $L^2-L^\infty$ type bilinear estimate (\ref{bilinearesetimate}) in Lemma \ref{multilinearestimate}, (\ref{eqn730}) in Lemma \ref{derivativeL2estimate2},,   the following estimate holds when  $\max\{j_1,j_2\}\geq m +k_{1,-}-\beta m  $,
\[
\sum_{ \max\{j_1,j_2\}\geq m +k_{1,-}-\beta m} | \widetilde{P}^{ j_1,j_2, 1}_{k,k_1,  k_2 }| \lesssim  2^{2m + 2\tilde{\delta} m +k_2+3k_1 + 6k_{+}} \big( \sum_{j_1\geq \max\{j_2,m +k_{1,-}-\beta m \}} \| e^{-it \Lambda}g_{k_2,j_2} \|_{L^\infty}  \]
\[ \times \| g_{k_1,j_1}\|_{L^2}   +  \sum_{j_2\geq \max\{j_1,m +k_{1,-}-\beta m \}} \| g_{k_2,j_2}\|_{L^2} \| e^{-it \Lambda}g_{k_1,j_1} \|_{L^\infty}\big)\lesssim 2^{-m-k_2 + 50\beta m }\epsilon_0^2\lesssim 2^{-2\beta m }\epsilon_0^2.
\]

Now we proceed to estimate $\tilde{P}^{  2}_{k,k_1,k_2}$. Recall (\ref{eqn872}) and (\ref{eqn869}). Based on the size of the difference between $k_1'$ and $k_1$, we split into two cases as follows,

$\oplus$ If $k_1'\geq k_1-5$. \quad For this case, we have  $k_1'\geq k_2+5$ and $|k_1'-k_2'|\leq 5$. By doing integration by parts in ``$\sigma$'', we can rule out the case when $\max\{j_1',j_2'\} \leq m +k_{2,-}-k_{1,+}'- \beta m $.  From the  $L^2-L^\infty-L^\infty$ type  trilinear estimate (\ref{trilinearesetimate}) in Lemma \ref{multilinearestimate}, the following estimate holds when  $\max\{j_1',j_2'\} \geq   m +k_{2,-}-k_{1,+}'- \beta m  $,
\[
\sum_{k_1'\geq k_1-5}\big| \sum_{\max\{j_1',j_2'\} \geq m +k_{2,-}-k_{1,+}'- \beta m }\widehat{P}^{  2,k_1', j_1', k_2',j_2' }_{k,k_1,j_1, k_2} \big| \lesssim \sum_{|k_1'-k_2'|\leq 5} 2^{3m+k_2+3k_1+2k_1'}\| e^{-it \Lambda} g_{k_1}(t)\|_{L^\infty}\]
\[ \big( \sum_{j_1'\geq \max\{j_2', m +k_{2,-}-k_{1,+}'- \beta m\}}\| g_{k_1',j_1'}(t)\|_{L^2} \| e^{-it \Lambda} g_{k_2',j_2'}\|_{L^\infty} + \sum_{j_2'\geq \max\{j_1', m +k_{2,-}-k_{1,+}'- \beta m\}}\| g_{k_2',j_2'}(t)\|_{L^2}\]
\[\times \| e^{-it \Lambda} g_{k_1',j_1'}\|_{L^\infty}    \big)\| \Gamma^1\Gamma^2 g_k\|_{L^2}\lesssim 2^{-m-k_2 +30\beta m }\epsilon_0^2 \lesssim 2^{-2\beta m }\epsilon_0^2.
\]

$\oplus$ If $k_1'\leq  k_1-5$. \quad For this case, we do integration by parts in ``$\eta$ '' many times to rule out the case when $\max\{j_1',j_1\}\leq m +k_{1,-}-\beta m$.  From the  $L^2-L^\infty-L^\infty$ type  trilinear estimate (\ref{trilinearesetimate}) in Lemma \ref{multilinearestimate}, the following estimate holds when  $\max\{j_1',j_1\}\geq m +k_{1,-}-\beta m$,
\[
\sum_{k_2'\leq k_1'\leq k_1-5}\big|\sum_{ \max\{j_1',j_1\}\geq m +k_{1,-}-\beta m} \widehat{P}^{  2,k_1', j_1', k_2',j_2' }_{k,k_1,j_1, k_2}\big| \lesssim \sum_{k_2'\leq k_1'\leq k_1-5} 2^{3m+k_2+3k_1 + 2k_1'}\| e^{-it \Lambda} g_{k_2'}(t)\|_{L^\infty}\]
\[ \big( \sum_{j_1'\geq \max\{j_1,m +k_{1,-}-\beta m\}}\| g_{k_1',j_1'}(t)\|_{L^2} \| e^{-it \Lambda} g_{k_1,j_1 }\|_{L^\infty} + \sum_{j_1 \geq \max\{j_1' ,m +k_{1,-}-\beta m\}}\| g_{k_1,j_1}(t)\|_{L^2}\]
\[\times \| e^{-it \Lambda} g_{k_1',j_1'}\|_{L^\infty}    \big)\| \Gamma^1\Gamma^2 g_k\|_{L^2}\lesssim 2^{-m/2+30\beta m}\epsilon_0^2.
\]
 
Now, we proceed to   estimate $\tilde{P}^{ 3}_{k,k_1,k_2}$. Recall (\ref{eqn880}) and (\ref{eqn881}).  Note that $|k_1'-k_2'|\leq 10$ and  ``$\nabla_\sigma \Phi^{\mu',\nu'}(\xi-\eta,\sigma)$'' always has a lower bound, which is $2^{k_1-k_{1,+}'}.$ By doing integration by parts in ``$\sigma$'' many times, we can rule out the case when $\max\{j_1',j_2'\}\leq m +k_{1,-} -k_{1,+}'- \beta m $. From the  $L^2-L^\infty-L^\infty$ type  trilinear estimate (\ref{trilinearesetimate}) in Lemma \ref{multilinearestimate}, the following estimate holds when  $\max\{j_1',j_2'\}\geq  m +k_{1,-}-k_{1,+}' - \beta m$,
\[
\sum_{|k_1'-k_2'|\leq 10}\sum_{ \max\{j_1',j_2'\}\geq  m +k_{1,-}-k_{1,+}' - \beta m} |  \widehat{P}^{3,k_1',j_1',k_2',j_2'  	 }_{k,k_1,k_2}| \lesssim \sum_{|k_1'-k_2'|\leq 10} 2^{3m +k_2+3k_1+2k_1'}\| e^{-it \Lambda} g_{k_2 }(t)\|_{L^\infty}\]
\[ \big( \sum_{j_1'\geq \max\{j_2',  m +k_{1,-} -k_{1,+}'- \beta m\}}\| g_{k_1',j_1'}(t)\|_{L^2} \| e^{-it \Lambda} g_{k_2',j_2' }\|_{L^\infty} + \sum_{j_2' \geq \max\{j_1' ,  m +k_{1,-}-k_{1,+}' - \beta m\}}\| g_{k_2',j_2'}(t)\|_{L^2}\]
\[\times \| e^{-it \Lambda} g_{k_1',j_1'}\|_{L^\infty}    \big)   \| \Gamma^1\Gamma^2 g_k\|_{L^2}\lesssim 2^{-m/2+30\beta m}\epsilon_0^2.
\]

Now, we proceed to estimate $\tilde{P}^{4}_{k,k_1,k_2}$. Recall (\ref{eqq878}).   By doing integration  by parts in ``$\eta$'' many times, we can rule out the case when $\max\{j_1,j_2\}\leq m +k_{1,-}-\beta m $. From the $L^2-L^\infty$ type bilinear estimate (\ref{bilinearesetimate}) in Lemma \ref{multilinearestimate}, estimate (\ref{eqq425}) in Lemma \ref{Z2normcubicandhigher}, and estimate (\ref{L2cubicandhigher}) in Lemma \ref{derivativeL2estimate1}, the following estimate holds when $\max\{j_1,j_2\}\geq m +k_{1,-}-\beta m $,
\[
\sum_{ \max\{j_1,j_2\}\geq m +k_{1,-}-\beta m   }\|\widetilde{P}^{4}_{k,k_1,j_1, k_2,j_2}\|_{L^2}\lesssim \sup_{t\in[2^{m-1}, 2^m]} 2^{3m + k_2+3k_1} \| \Gamma^1 \Gamma^2 g_{k}(t)\|_{L^2}\]
\[\times  \big[\sum_{j_1 \geq \max\{j_2,m +k_{1,-}-\beta m   \} } \| \Lambda_{\geq 3}[\p_t  g^{\mu}]_{k_1,j_1} \|_{L^2}  \| e^{-it\Lambda} g_{k_2,j_2}\|_{L^\infty} + \| g_{k_1,j_1}\|_{L^2} 2^{k_2}\|  \Lambda_{\geq 3}[\p_t g_{k_2}]\|_{L^2}
\]
\[
+ \sum_{j_2 \geq \max\{j_1,m +k_{1,-}-\beta m   \} }   \| \Lambda_{\geq 3}[\p_t  g^{\mu}]_{k_2,j_2} \|_{L^2}  \| e^{-it\Lambda} g_{k_1,j_1}\|_{L^\infty} + 2^{k_2}\| g_{k_2,j_2}(t) \|_{L^2} \| \Lambda_{\geq 3}[\p_t  g^{\mu}_{k_1 }]\|_{L^2}\big]
\]
\[
\lesssim 2^{-m-k_2+40 \beta m }\epsilon_0^2 + 2^{-m/2+40\beta m }\epsilon_0^2\lesssim 2^{-2\beta m }\epsilon_0^2.
\]

Lastly, we estimate $\tilde{P}^{5}_{k,k_1,k_2}$. Recall (\ref{eqn888}) and (\ref{eqn870}).  For the case we are considering, we have $k_2'\leq k_1'-10$ and $|k_1'-k_1|\leq 10 $. By doing integration by parts in ``$\eta$'' many times, we can rule out the case when $\max\{j_1,j_2\} \leq m +k_{1, -}-\beta m $.  From the  $L^2-L^\infty-L^\infty$ type  trilinear estimate (\ref{trilinearesetimate}) in Lemma \ref{multilinearestimate} and estimate (\ref{eqn860}), the following estimate holds when  $\max\{j_1 ,j_2 \}\geq  m +k_{1, -}-\beta m $,
\[
\sum_{k_2'\leq k_1-10}\sum_{\max\{j_1 ,j_2 \}\geq  m +k_{1, -}-\beta m } | \widehat{P}_{k,k_1,j_1,k_2,j_2}^{5,k_1',k_2' }|\lesssim \sum_{k_2'\leq k_1-10}  2^{3m + k_2+\max\{k_2,k_2'\} + 4k_1} \| e^{-it \Lambda} g_{k_2'}(t)\|_{L^\infty}
\]
\[
\times \| \Gamma^1\Gamma^2 g_k\|_{L^2}  \big( \sum_{j_1\geq \max\{j_2,m +k_{1, -}-\beta m\}} \| g_{k_1,j_1}\|_{L^2} \| e^{-it\Lambda} g_{k_2,j_2}\|_{L^\infty} +  \sum_{j_2\geq \max\{j_1, m +k_{1, -}-\beta m\}} \| g_{k_2,j_2}\|_{L^2} \]
\[\times \| e^{-it\Lambda} g_{k_1,j_1}\|_{L^\infty}\big) \lesssim  2^{-m/2+30\beta m } \epsilon_0^2 + 2^{-m-k_2 +30\beta  m}\epsilon_0^2 \lesssim 2^{-2\beta m }\epsilon_0^2.
\]

\end{proof}
  \begin{lemma}
Under the bootstrap assumption \textup{(\ref{smallness})}, the following estimate holds,
\be\label{eqn1129}
 \sum_{k_2\leq k_1-10, |k_1-k |\leq 10}  \big|  P_{k,k_1,,k_2}^{2,2}   \big| \lesssim  2^{2\tilde{\delta} m  }\epsilon_0^2. 
\ee
\end{lemma}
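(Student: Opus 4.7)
The plan is to mimic closely the proof of the analogous estimate \eqref{eqn1129001} in the High $\times$ High regime, with the gain coming entirely from the extra $|\eta|^2$ smallness that the decomposition \eqref{eqn1301} yields for the symbol $\widehat{q}^2_{+,\nu}$. More precisely, the first step is to establish the symbol bound
\[
\big\| \widehat{q}^2_{+,\nu}(\xi,\eta)\psi_k(\xi)\psi_{k_1}(\xi-\eta)\psi_{k_2}(\eta) \big\|_{\mathcal{S}^\infty}\lesssim 2^{3k_2+2k_1+ C k_{1,+}},\qquad k_2\leq k_1-10,\ |k-k_1|\leq 10,
\]
which is the High $\times$ Low analogue of \eqref{eqn1134}. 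The proof is a direct application of Lemma \ref{Snorm}: the factor $\tilde{q}_{+,\nu}$ contributes $2^{2k_1+Ck_{1,+}}$ by \eqref{symbolquadraticrough}, the factor $(\hat{L}_\xi+\hat{L}_\eta)\Phi^{+,\nu}$ contributes $2^{k_2+Ck_{1,+}}$ (its leading behavior in $|\eta|\ll|\xi|$ is $O(\xi\cdot\eta)$ by \eqref{eqn945}), and the factor $(\hat{L}_\xi+\hat{L}_\eta)\Phi^{+,\nu}-\tilde{c}(\xi-\eta)\Phi^{+,\nu}$ contributes $2^{2k_2+Ck_{1,+}}$ by \eqref{eqn1301}. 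Multiplying these three yields the claimed bound.

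Once the symbol bound is in hand, I would follow the same case analysis as in Lemma \ref{errorestimate} / \eqref{eqn1129001}. After spatial localizations $\widehat{g_{k_1,j_1}}$, $\widehat{g_{k_2,j_2}^\nu}$, I would do integration by parts in $\eta$ twice: since $|\nabla_\eta\Phi^{+,\nu}|\gtrsim 2^{k_1-k_{1,+}/2}$ in this regime by \eqref{eqn928}, two integrations by parts produce the gain $2^{-2k_1+k_{1,+}}$ and let one put the input with the larger spatial concentration in $L^2$ while the other input goes in $L^\infty$. The boundary terms split into (i) $\max\{j_1,j_2\}\leq m+k_{1,-}-\beta m$, which is handled by continuing to integrate by parts in $\eta$ many times and gaining rapid decay; and (ii) $\max\{j_1,j_2\}\geq m+k_{1,-}-\beta m$, where the $L^2$--$L^\infty$ bilinear estimate (\ref{bilinearesetimate}) combined with the symbol bound, the $t^2$ prefactor, $\|LLg_k\|_{L^2}\lesssim 2^{\tilde\delta m}\epsilon_1$, and the sharp decay $\|e^{-it\Lambda}g_{k_i,j_i}\|_{L^\infty}\lesssim 2^{-m-(1+\alpha)k_i}\epsilon_1$ (resp.\ $\|g_{k_i,j_i}\|_{L^2}\lesssim 2^{-j_i-(1+\alpha)k_i}\epsilon_1$) closes the estimate. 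A typical model bound after the two integrations by parts reads
\[
\big|P_{k,k_1,k_2}^{2,2}\big|\lesssim 2^{2m}\cdot 2^{3k_2+2k_1+Ck_{1,+}}\cdot 2^{-2k_1+k_{1,+}}\cdot 2^{\tilde\delta m}\cdot \|g_{k_1}\|_{L^2}\cdot \|e^{-it\Lambda}g_{k_2}\|_{L^\infty}\cdot 2^{m}
\]
schematically, which is consistent with the target $2^{2\tilde\delta m}\epsilon_0^2$ after summing in $k$, $k_1$, $k_2$.

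The main obstacle I expect is purely bookkeeping: one must verify that every time a derivative in $\eta$ lands on the symbol $\widehat{q}^2_{+,\nu}$ (or on $(\nabla_\eta\Phi)/|\nabla_\eta\Phi|^2$) rather than on a profile, the resulting symbol still obeys an $\mathcal{S}^\infty$ bound with the same $2^{3k_2}$ smallness (possibly reduced to $2^{2k_2}$ per lost $\eta$-derivative). Using \eqref{hitsphasedenominator}, each such derivative costs at most $2^{-k_2}$, so after two integrations by parts the net smallness is still $2^{k_2}$ relative to what one would get for a generic order-zero symbol, which combined with $|k-k_1|\leq 10$ and the trivial summability in $k_2\leq k_1-10$ gives the logarithmic-in-$m$ loss already absorbed by $2^{\tilde\delta m}$. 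Apart from this, no new idea beyond those of Lemmas \ref{errorestimate} and \ref{errorestimate2} is required, because the summability over $k$ is automatic (we sum only over $|k-k_1|\leq 10$) and the delicate substitution of $(\hat{L}_\xi+\hat{L}_\eta)\Phi$ by $\tilde{c}\cdot\Phi$ has already been handled in $P_{k,k_1,k_2}^{2,1}$.
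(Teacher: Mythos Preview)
Your approach is essentially the paper's: establish the $\mathcal{S}^\infty$ bound for $\widehat{q}^2_{+,\nu}$ using the $|\eta|^2$ gain from \eqref{eqn1301}, then integrate by parts twice in $\eta$ and apply the $L^2$--$L^\infty$ bilinear estimate. Two small corrections are worth noting. First, the middle factor $(\hat{L}_\xi+\hat{L}_\eta)\Phi^{+,\nu}$ is of size $\sim\xi\cdot\eta\lesssim 2^{k_1+k_2}$, not $2^{k_2}$; combined with $\tilde{q}_{+,\nu}\sim 2^{2k_1}$ and the remainder $\sim 2^{2k_2}$ this gives the paper's sharper bound $\|\widehat{q}^2_{+,\nu}\|_{\mathcal{S}^\infty_{k,k_1,k_2}}\lesssim 2^{3k_1+3k_2}$ (your $2^{2k_1+3k_2+Ck_{1,+}}$ is still a valid upper bound in the relevant range, so nothing breaks). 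Second, your schematic model bound omits the factor $t^{-2}$ coming from the two integrations by parts; each IBP gains $(t|\nabla_\eta\Phi|)^{-1}\sim 2^{-m-k_1+k_{1,+}/2}$, not just $2^{-k_1}$. Once this is restored the arithmetic closes.

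One organizational difference: the paper does \emph{not} perform the case split on $\max\{j_1,j_2\}$ versus $m+k_{1,-}-\beta m$. It simply carries out exactly two integrations by parts, collects all resulting terms (derivatives landing on $\widehat{g_{k_1}}$, on $\widehat{g_{k_2}}$, on the symbol, and the cross terms handled via $\sum_{j_1\gtrless j_2}$), and estimates each directly. Your proposed split --- rapid decay via many IBP when both $j_i$ are small, versus two IBP plus $L^2$ smallness when one $j_i$ is large --- would also work, but it is more bookkeeping than necessary here; the $2^{3k_2}$ smallness already suffices to close the direct estimate after two IBP without any further case analysis.
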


\begin{proof}
Recall  (\ref{eqn1301}) and (\ref{eqn1132}).  From  Lemma \ref{Snorm}, the following estimate holds, 
\be\label{eqn112781}
\| \widehat{q}_{+, \nu}^2(\xi-\eta, \eta) \psi_{k}(\xi) \psi_{k_1}(\xi-\eta)\psi_{k_2}(\eta)\|_{\mathcal{S}^\infty}\lesssim 2^{3k_1+3k_2},\quad k_2\leq k_1-10.
\ee
After doing integration by parts in ``$\eta$'' twice, the following estimate holds, 
\[
 \sum_{k_2\leq k_1-10, |k_1-k |\leq 10}  \big|  P_{k,k_1,,k_2}^{2,2}   \big|\lesssim  \sum_{k_2\leq k_1-10 } \sum_{i=1,2} 2^{m+k_1+3k_2+k_{1,+}}\| \Gamma^1 \Gamma^2 g_{k}\|_{L^2}\Big[ \| e^{-it \Lambda} g_{k_2}\|_{L^\infty} \big(\| \nabla_\xi^2 \widehat{g}_{k_1}(t, \xi)\|_{L^2}   
\]
\[
 +2^{-k_2}\| \nabla_\xi  \widehat{g}_{k_1}(t, \xi)\|_{L^2}\big)  +   \| e^{-it \Lambda} g_{k_1}\|_{L^\infty} \big(\| \nabla_\xi^2 \widehat{g}_{k_2}(t, \xi)\|_{L^2}+ 2^{-k_2}\| \nabla_\xi  \widehat{g}_{k_2}(t, \xi)\|_{L^2} +2^{-2k_{2}} \| g_{k_2}(t)\|_{L^2}\big)
 \]
 \[ + \sum_{j_1\geq j_2} 2^{ -m +2j_2} \| g_{k_2,j_2}\|_{L^2} 2^{j_1}\| g_{k_1,j_1}\|_{L^2}    +\sum_{j_2\geq j_1} 2^{-m+   2j_1}  \| g_{k_1,j_1}\|_{L^2}
  2^{j_2} \| g_{k_2,j_2}\|_{L^2}  \Big] \lesssim 2^{2\tilde{\delta}m}\epsilon_0^2.
\]
 
\end{proof}
\subsubsection{The estimate of $P_{k,k_1,k_2}^3$}
 Recall (\ref{eqn1170}). We first estimate $Q_{k,k_1,k_2}^1$. A crucial  ingredient  of estimating $Q_{k,k_1,k_2}^1$   is utilizing symmetries, which is to switch the role of $\xi$ and $\xi-\eta$ inside $Q_{k,k_1,k_2}^1$. As a result, we have 
\[
\textup{Re}\big[   Q_{k,k_1,k_2}^1 \big] = \textup{Re}\big[  \widetilde{Q}_{k,k_1,k_2}^{ 1}\big], \quad  \widetilde{Q}_{k,k_1,k_2}^{ 1}:=  \int_{t_1}^{t_2} \int_{\R^2} \int_{\R^2 }  \overline{\widehat{\Gamma^1 \Gamma^2 g }(t, \xi)}  e^{i t\Phi^{+, \nu}(\xi, \eta)} \]
\[\times   {p}^{+, \nu}_{k,k_1}(\xi-\eta, \eta) \widehat{  \Gamma^1   \Gamma^2 g }(t, \xi-\eta)\widehat{g_{k_2}^\nu}(t, \eta) d \eta d\xi d t.
\]
where \be\label{e100}
 {p}^{+, \nu}_{k,k_1}(\xi-\eta, \eta)= \frac{\tilde{q}_{+,\nu}(\xi-\eta, \eta)\psi_k(\xi)\psi_{k_1}(\xi-\eta)}{2} + \frac{\overline{\tilde{q}_{+, -\nu}(\xi , -\eta)}\psi_k(\xi-\eta)\psi_{k_1 }(\xi )}{2}.
\ee
From (\ref{symmetricsymbol}) and (\ref{eqn1}), we have
\[
 {p}^{+, \nu}_{k,k_1}(\xi-\eta, \eta)=  {p}^{+, \nu,1}_{k,k_1}(\xi-\eta, \eta)+  {p}^{+, \nu,2}_{k,k_1}(\xi-\eta, \eta)= \mathcal{O}(1) \xi \cdot \eta + \mathcal{O}(|\eta|^2) , 
\]
where $ {p}^{+, \nu,2}_{k,k_1}(\xi-\eta, \eta)= {p}^{+, \nu }_{k,k_1}(\xi-\eta, \eta)- {p}^{+, \nu,1 }_{k,k_1}(\xi-\eta, \eta) $ and the detail formula of ${p}^{+, \nu,1}_{k,k_1}(\xi-\eta, \eta)$ is given as follows,
\[
 {p}^{+, \nu,1}_{k,k_1}(\xi-\eta, \eta)=  c_{+}|\xi|^2 \big(1 -   \tanh^2(|\xi|)  \big) \big( \tilde{\lambda}'(|\xi|^2)\psi_k(\xi)\psi_{k_1}(\xi ) + \hat{\psi}_{k_1} (|\xi|^2)\hat{\psi}_{k}'(|\xi|^2) - \hat{\psi}_{k } (|\xi|^2)\hat{\psi}_{k_1}'(|\xi|^2)\big)\xi \cdot \eta, \quad
\]
where $\hat{\psi}_k(x):=\psi_k(\sqrt{x}) $, which is still a smooth function. Recall (\ref{degeneratephase}), we can rewrite ${p}^{+, \nu,1}_{k,k_1}(\xi-\eta, \eta)$ as follows, 
\[
{p}^{+, \nu,1}_{k,k_1}(\xi-\eta, \eta)=   c_{+} a_{k,k_1}( \xi )  \Phi^{+, \nu}(\xi-\eta, \eta) + \mathcal{O}(|\eta|^2), \]
\[  a_{k,k_1}( \xi ):=\frac{  \big( |\xi|^2 - |\xi|^2 \tanh^2(|\xi|)  \big) \big( \tilde{\lambda}'(|\xi|^2)\psi_k(\xi)\psi_{k_1}(\xi ) +  \hat{\psi}_{k_1} (|\xi|^2)\hat{\psi}_{k}'(|\xi|^2 )- \hat{\psi}_{k } (|\xi|^2)\hat{\psi}_{k_1}'(|\xi|^2)\big)}{2\lambda'(|\xi|^2)}.
\]
To sum up, we can decompose $p_{+,\nu}(\xi-\eta, \eta)$ into two parts as follows, 
\be\label{eqn611}
{p}^{+, \nu}_{k,k_1}(\xi-\eta, \eta)= \sum_{i=1,2} \tilde{p}^{+, \nu,i}_{k,k_1}(\xi-\eta, \eta),\quad \tilde{p}^{+, \nu, 1}_{k,k_1}(\xi-\eta, \eta)=  \frac{-i}{2}  a_{k,k_1}( \xi ) \Phi^{+, \nu}(\xi , \eta),
\ee
where $  \tilde{p}^{+, \nu,2}_{k,k_1}(\xi-\eta, \eta)$ satisfies the following estimate, 
\be\label{eqn470}
\|  \tilde{p}^{+, \nu,2}_{k,k_1}(\xi-\eta, \eta)\|_{\mathcal{S}^\infty_{k, k_1, k_2}} \lesssim 2^{2k_2}, \quad k_2\leq k_1-5.
\ee
Correspondingly, we decompose $\widetilde{Q}_{k,k_1,k_2}^{ 1}$ into two parts as follows, 
\[
\widetilde{Q}_{k,k_1,k_2}^{ 1} = \sum_{i=1,2}\widetilde{Q}_{k,k_1,k_2}^{ 1;i},\quad \widetilde{Q}_{k,k_1,k_2}^{ 1;i}:=
  \int_{t_1}^{t_2} \int_{\R^2}  \int_{\R^2 } \overline{\widehat{\Gamma^1 \Gamma^2 g }(t, \xi)}  e^{i t\Phi^{+, \nu}(\xi, \eta)}  \]
  \[\times  \tilde{p}^{+, \nu,i}_{k,k_1}(\xi-\eta, \eta)   \widehat{( \Gamma^1 \Gamma^2 g)_{ }}(t, \xi-\eta) \widehat{g^{\nu}_{k_2}}(t,\eta)  d \eta d \xi d t,\quad i \in\{1,2\}.
\] 
From   the $L^2-L^\infty$ type bilinear estimate (\ref{bilinearesetimate}) in Lemma \ref{multilinearestimate}, the following estimate holds for $ \widetilde{Q}_{k,k_1,k_2}^{ 1;2}$, 
\[
\sum_{|k-k_1|\leq 10,k_2\leq k_1-10} |\widetilde{Q}_{k,k_1,k_2}^{ 1;2}| \lesssim \sum_{|k-k_1|\leq 10,k_2\leq k_1-10}  2^{2k_2}\| \Gamma^1 \Gamma^2 g_k\|_{L^2}  \| \Gamma^1 \Gamma^2 g_{k_1}\|_{L^2}\]
\be\label{eqn790}
 \times \| e^{-it \Lambda}g_{k_2}\|_{L^\infty}\lesssim 2^{2\tilde{\delta} m}\epsilon_0^2.
\ee
The estimate of $\widetilde{Q}_{k,k_1,k_2}^{ 1;1}$ is slightly more delicate. We put the estimate of this term in the proof of  the following Lemma.
\begin{lemma}
Under the bootstrap assumption \textup{(\ref{smallness})}, the following estimate holds, 
\be
\sum_{|k-k_1|\leq 10,  k_2\leq k_1-10} |\widetilde{Q}_{k,k_1,k_2}^{ 1;1}| \lesssim    2^{2\tilde{\delta} m } \epsilon_0^2.
\ee
\end{lemma}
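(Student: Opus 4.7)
The core idea is to exploit the factorization $\tilde{p}^{+,\nu,1}_{k,k_1}(\xi-\eta,\eta) = \frac{-i}{2} a_{k,k_1}(\xi)\,\Phi^{+,\nu}(\xi,\eta)$ and integrate by parts in time, using $\Phi^{+,\nu}(\xi,\eta)\, e^{it\Phi^{+,\nu}} = \frac{1}{i}\partial_t e^{it\Phi^{+,\nu}}$. Note that the loss of one ``$t$'' that usually accompanies IBP in time is absent here because the symbol already provides the phase, so after IBP we obtain boundary terms and three interior terms (one for each factor $\overline{\widehat{\Gamma^1\Gamma^2 g_k}}$, $\widehat{\Gamma^1\Gamma^2 g_{k_1}}$, $\widehat{g^\nu_{k_2}}$ that $\partial_t$ can hit), none of which carries a factor of $t$. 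Throughout, the multiplier $a_{k,k_1}(\xi)$ is smooth with $\mathcal{S}^\infty$-norm bounded independently of $k,k_1$ on the support $|k-k_1|\leq 10$, so all estimates reduce to standard bilinear/trilinear $\mathcal{S}^\infty$ bounds.

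First, the boundary contributions at $t=t_1,t_2$ are controlled by an $L^2\times L^2\times L^\infty$ estimate (Lemma \ref{multilinearestimate}), giving
$$|\text{Bdry}|\lesssim \|\Gamma^1\Gamma^2 g_k\|_{L^2}\,\|\Gamma^1\Gamma^2 g_{k_1}\|_{L^2}\,\|e^{-it\Lambda}g_{k_2}\|_{L^\infty}\lesssim 2^{2\tilde{\delta}m-m}\epsilon_0^2\cdot 2^{-(1+\alpha)k_2-8k_{2,+}},$$
which, after summing over $|k-k_1|\leq 10$ and $k_2\leq k_1-10$ using the usual bootstrap, is $\lesssim 2^{2\tilde\delta m}\epsilon_0^2$. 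Next, when $\partial_t$ hits the low-frequency input $\widehat{g^\nu_{k_2}}$, one substitutes (\ref{realduhamel}): the quadratic piece of $\partial_t\widehat{g}_{k_2}$ gives, via the $L^2-L^\infty$ bilinear estimate, an $L^2$ bound of order $\sim 2^{-m+\tilde{\delta}m}\epsilon_0^2$ on a spatial-frequency-localized piece, so the resulting $L^2\times L^2\times L^2$-type trilinear integration in time (total duration $2^m$) yields $\lesssim 2^{2\tilde{\delta}m}\epsilon_0^3$; the cubic/quartic/remainder pieces of $\partial_t \widehat{g}_{k_2}$ are absorbed using estimates (\ref{Z1estimatecubicandquartic}) and (\ref{eqnj878}).

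The main obstacle is the case when $\partial_t$ hits $\overline{\widehat{\Gamma^1\Gamma^2 g_k}}$ or $\widehat{\Gamma^1\Gamma^2 g_{k_1}}$, because the equation for $\partial_t \widehat{\Gamma^1\Gamma^2 g}$ contains precisely the ``bulk'' quadratic pieces $\widetilde{B}^{+,\nu'}_{k,k_1',k_2'}$ of (\ref{eqn892}), which lose $L^2$ control. My plan is to subtract these pieces, writing $\partial_t \widehat{\Gamma^1\Gamma^2 g_k} = \sum_{\nu',(k_1',k_2')\in\chi_k^2}\widetilde{B}^{+,\nu'}_{k,k_1',k_2'} + R_k(t,\xi)$ where $\|R_k\|_{L^2}$ is acceptable by (\ref{eqn730})/(\ref{eqn751}). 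The $R_k$ contribution is handled by the same $L^2\times L^2\times L^\infty$ trilinear estimate as above, with the smallness of $R_k$ in $L^2$ ($\lesssim 2^{-m+3\tilde\delta m}\epsilon_0^2$) beating the loss. The genuine $\widetilde{B}$ contribution produces a quartic integrand of the form
$$\int_{t_1}^{t_2}\int\overline{e^{it\Phi^{+,\nu'}(\xi,\kappa)}\tilde{q}_{+,\nu'}(\xi-\kappa,\kappa)\widehat{\Gamma^1\Gamma^2 g_{k_1'}}\,\widehat{g}_{k_2'}}\,e^{it\Phi^{+,\nu}(\xi,\eta)}a_{k,k_1}(\xi)\widehat{\Gamma^1\Gamma^2 g_{k_1}}\,\widehat{g^\nu_{k_2}}\,d\eta\,d\kappa\,d\xi\,dt,$$
where the factor $\tilde{q}_{+,\nu'}$ carries the smallness $2^{2\min\{k_2,k_2'\}}$ from (\ref{symbolquadraticrough}) (since both $k_2,k_2'\leq k_1-10$).

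To close this quartic term I will follow exactly the four-subcase scheme used for $\widehat{\Gamma}^2_{k,k_1,k_2}$ and for $\widehat{P}^{k_1',k_2'}_{k,k_1,k_2}$ in the preceding $k_2\leq k_1-10$ analysis: split on $|k_1'-k_2'|\leq 10$ vs. $k_2'\leq k_1'-10$, and, in the latter case, further split on whether $k_1+k_2'$ is above or below $-19m/20$, using Lemma \ref{Linftyxi} for the very low frequency regime and integration by parts in $\sigma$ (after spatial localization to scales $j_1',j_2'\geq m+k_{1,-}-\beta m$) followed by the $L^2\times L^\infty\times L^\infty$ trilinear estimate otherwise. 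In each subcase the smallness $2^{2\min\{k_2,k_2'\}}$ from $\tilde{q}_{+,\nu'}$ (together with the sharp $(1+t)^{-1}$ decay of the two low-frequency inputs placed in $L^\infty$) suffices to produce a bound $\lesssim 2^{-\beta m}\epsilon_0^2$. Summing all the decomposition pieces gives the desired bound $2^{2\tilde{\delta}m}\epsilon_0^2$; the hardest remaining check is the book-keeping near the time-resonance threshold $|\eta|\sim 2^{k_2}\sim 1/t$, where I rely on the conservation-law improvement (\ref{eqn400}) exactly as in the earlier sections of the paper.
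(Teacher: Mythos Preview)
Your overall plan---integrate by parts in time using $\tilde p^{+,\nu,1}_{k,k_1}=\tfrac{-i}{2}a_{k,k_1}(\xi)\Phi^{+,\nu}$, treat the endpoint terms and the $\partial_t g_{k_2}$ term directly, and subtract the bulk pieces $\widetilde B^{+,\nu'}$ from $\partial_t\widehat{\Gamma^1\Gamma^2 g}$ using Lemma~\ref{derivativeL2estimate2}---matches the paper.  The gap is in your treatment of the remaining bulk quartic term.

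First, the claim that ``$\tilde q_{+,\nu'}$ carries the smallness $2^{2\min\{k_2,k_2'\}}$ from (\ref{symbolquadraticrough})'' is simply false: that estimate gives $\lesssim 2^{2k_1'}\sim 2^{2k_1}$, not $2^{2k_2'}$.  The leading part of $\tilde q_{+,\nu'}(\xi-\kappa,\kappa)$ for $|\kappa|\ll|\xi-\kappa|$ is $c(\xi)$ (see (\ref{eqn939})--(\ref{eqn932})), which is of size $2^{2k_1}$ and carries \emph{no} low-frequency smallness.  With this correct symbol size a direct $L^2\!-\!L^2\!-\!L^\infty\!-\!L^\infty$ bound yields, in the scenario $k_1\sim 0$, $k_2\sim k_2'\sim -m/2$, a contribution of order $2^{\alpha m+2\tilde\delta m}\epsilon_0^2$, which does not close.

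Second, the four-subcase scheme you invoke from the analysis of $\widehat\Gamma^{2}_{k,k_1,k_2}$ and $\widehat P'^{k_1',k_2'}_{k,k_1,k_2}$ does not transfer: in those terms only \emph{one} input carries $\Gamma^1\Gamma^2$, so integration by parts in the new variable can always land on a profile with good $Z_1$-control.  Here both high-frequency inputs are $\Gamma^1\Gamma^2 g$, controlled only in $L^2$; integration by parts in $\kappa$ will unavoidably hit $\widehat{\Gamma^1\Gamma^2 g_{k_1'}}(t,\xi-\kappa)$, producing a third weight you cannot bound.

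What the paper does instead is to pair the bulk term coming from $\partial_t$ acting on $\overline{\widehat{\Gamma^1\Gamma^2 g_k}}$ with the one coming from $\partial_t$ acting on $\widehat{\Gamma^1\Gamma^2 g_{k_1}}$, and then change variables $(\xi,\eta,\kappa)\to(\xi-\kappa,\eta,-\kappa)$ in the second.  The two leading symbols become $a_{k,k_1}(\xi)\,\overline{c(\xi)}=-a_{k,k_1}(\xi)c(\xi)$ and $a_{k,k_1}(\xi-\kappa)c(\xi-\eta-\kappa)$, whose sum is $O\big(2^{\max\{k_2,k_2'\}+k_1}\big)$ by a difference estimate; see (\ref{eqn783}).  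With this improved symbol the straightforward $L^2\!-\!L^2\!-\!L^\infty\!-\!L^\infty$ estimate already gives $\lesssim 2^{-m/2+30\beta m}\epsilon_0^2$, and none of the subcase machinery or (\ref{eqn400}) is needed.  You must incorporate this symmetry cancellation; treating the two bulk terms separately cannot close.
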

\begin{proof}
Recall (\ref{eqn611}).
For $\widetilde{Q}_{k,k_1,k_2}^{ 1;1} $, we do integration by parts in time. As a result, we have
\[
\widetilde{Q}_{k,k_1,k_2}^{ 1;1}= \sum_{i=1,2} \widehat{Q}_{k,k_1,k_2}^{ 1;i}, \quad \widehat{Q}_{k,k_1,k_2}^{ 1;1}:=\sum_{i=1,2}(-1)^{i-1} \int_{\R^2}  \int_{\R^2 } \overline{\widehat{\Gamma^1 \Gamma^2 g }(t_i, \xi)}  e^{i t_i \Phi^{+, \nu}(\xi, \eta)}   \widehat{  \Gamma^1 \Gamma^2 g }(t_i, \xi-\eta) \]
  \[\times \frac{  a_{k,k_1}( \xi )}{2}  \widehat{g^{\nu}_{k_2}}(t_i,\eta)  d \eta d \xi  +   \int_{t_1}^{t_2} \int_{\R^2}  \int_{\R^2 } \overline{\widehat{\Gamma^1 \Gamma^2 g }(t, \xi)}  e^{i t\Phi^{+, \nu}(\xi, \eta)}   \frac{  a_{k,k_1}( \xi )}{2}   \widehat{ \Gamma^1 \Gamma^2 g }(t, \xi-\eta) \p_t  \widehat{g^{\nu}_{k_2}}(t,\eta)   d \eta d \xi d t,
\]
 \[
 \widehat{Q}_{k,k_1,k_2}^{ 1;2}:=\int_{t_1}^{t_2} \int_{\R^2}  \int_{\R^2 }  e^{i t\Phi^{+, \nu}(\xi, \eta)}   \frac{  a_{k,k_1}( \xi )}{2}  \p_t \big(  \overline{\widehat{\Gamma^1 \Gamma^2 g }(t, \xi)}  \widehat{  \Gamma^1 \Gamma^2 g }(t, \xi-\eta) \big) \widehat{g^{\nu}_{k_2}}(t,\eta)  d \eta d \xi d t.
 \]
From the $L^2-L^\infty$ type bilinear estimate (\ref{bilinearesetimate}) in Lemma \ref{multilinearestimate}, the following estimate holds, 
\[
 \sum_{|k-k_1|\leq 10,k_2\leq k_1-10}  | \widehat{Q}_{k,k_1,k_2}^{ 1;1}| \lesssim \sum_{|k-k_1|\leq 10,k_2\leq k_1-10} \| \Gamma^1 \Gamma^2 g_k\|_{L^2}  \| \Gamma^1 \Gamma^2 g_{k_1}\|_{L^2}\big( \| e^{-it \Lambda}g_{k_2}\|_{L^\infty}\]
\be\label{eqn1180}
+ 2^{m}  \| e^{-it \Lambda}\p_t g_{k_2}(t)\|_{L^\infty} \big)\lesssim 2^{-m/2+\beta m }\epsilon_0^2.
\ee

Recall the estimate (\ref{eqn730}) in Lemma \ref{derivativeL2estimate2}. It motivates us to do the decomposition as follows,
\[
 \widehat{Q}_{k,k_1,k_2}^{ 1;2}= \widehat{Q}_{k,k_1,k_2}^{ 1;2,1} +  \widehat{Q}_{k,k_1,k_2}^{ 1;2,2}, \]
where
\[  \widehat{Q}_{k,k_1,k_2}^{ 1;2,1}:=\int_{t_1}^{t_2} \int_{\R^2}  \int_{\R^2 }  e^{i t\Phi^{+, \nu}(\xi, \eta)}   \frac{ a_{k,k_1}( \xi )}{2} \widehat{g^{\nu}_{k_2}}(t,\eta)  \big[   \overline{ \big(\p_t \widehat{\Gamma^1 \Gamma^2 g_k}(t, \xi)-   \sum_{(k_1',k_2')\in \chi_{k}^2} \sum_{\nu'\in\{+,-\}} \widetilde{B}_{k,k_1',k_2'}^{+, \nu'}(t, \xi) \big)  }   \]
\[\times  \widehat{  \Gamma^1 \Gamma^2 g }(t, \xi-\eta)    +   \overline{\widehat{\Gamma^1 \Gamma^2 g}(t, \xi)} \big(\p_t \widehat{\Gamma^1 \Gamma^2 g_{k_1}}(t, \xi-\eta)-  \sum_{(k_1',k_2')\in \chi_{k_1}^2} \sum_{\nu'\in\{+,-\}} \widetilde{B}_{k_1,k_1',k_2'}^{+, \nu'}(t, \xi-\eta) \big)  d \eta d \xi d t,
  \]
\[ \widehat{Q}_{k,k_1,k_2}^{ 1;2,2}:= \sum_{k_2'\leq k_1'-10, |k_1-k_1'|\leq 10} \sum_{\nu'\in\{+,-\}} \int_{t_1}^{t_2} \int_{\R^2}  \int_{\R^2 }  e^{i t\Phi^{+, \nu}(\xi, \eta)}   \frac{ a_{k,k_1}( \xi )}{2}  \Big[ \widehat{  \Gamma^1 \Gamma^2 g } (t, \xi-\eta)  \widehat{g^{\nu}_{k_2}}(t,\eta)  
\]
\[
\times \overline{e^{it\Phi^{+, \nu}(\xi, \kappa)} \widehat{\Gamma^1 \Gamma^2 g_{k_1'}}(t, \xi-\kappa) \widehat{g^{\nu'}_{k_2'}(t, \kappa)} \tilde{q}_{+, \nu'}(\xi-\kappa, \kappa)}    + \overline{\widehat{\Gamma^1 \Gamma^2 g  }(t, \xi)} \widehat{g^{\nu}_{k_2}}(t,\eta)   
\]
\[
\times e^{i t\Phi^{+, \nu'}(\xi-\eta, \kappa)} \tilde{q}_{+, \nu'}(\xi-\eta-\kappa, \kappa) \widehat{  \Gamma^1 \Gamma^2 g_{k_1'}}(t, \xi-\eta-\kappa) \widehat{g^{\nu'}_{k_2'}(t, \kappa)}  \Big] d \eta d \kappa d \xi d t. 
\]

From estimate (\ref{eqn730}) in Lemma \ref{derivativeL2estimate2} and the $L^2-L^\infty$ type bilinear estimate (\ref{bilinearesetimate}) in Lemma \ref{multilinearestimate}, we have
\[
  \sum_{|k-k_1|\leq 10,k_2\leq k_1-10} |   \widehat{Q}_{k,k_1,k_2}^{ 1;2,1}|\lesssim   \sum_{|k-k_1|\leq 10,k_2\leq k_1-10} \epsilon_1 2^{50\beta m  }\| \Gamma^1 \Gamma^2 g_k\|_{L^2}\| e^{-it \Lambda}g_{k_2}\|_{L^\infty}\lesssim 2^{-m/2 +60\beta m }\epsilon_0^2.
\]

Lastly, we proceed to estimate $  \widehat{Q}_{k,k_1,k_2}^{ 1;2,2}$. To utilize symmetry,  we do change of variables  for the second part of integration as follows $(\xi, \eta, \kappa)\longrightarrow (\xi-\kappa, \eta, -\kappa)$. As a result, we have
\[ \widehat{Q}_{k,k_1,k_2}^{ 1;2,2}:= \sum_{k_2'\leq k_1'-10, |k_1-k_1'|\leq 10} \sum_{\nu'\in\{+,-\}} \int_{t_1}^{t_2} \int_{\R^2}  \int_{\R^2 }  e^{i t\Phi^{+, \nu}(\xi, \eta)-it \Phi^{+, \nu'}(\xi,\kappa)} \Big[ \widehat{  \Gamma^1 \Gamma^2 g }(t, \xi-\eta)   
\]
\[
\times   \widehat{g^{\nu}_{k_2}}(t,\eta)  \frac{ a_{k,k_1}( \xi )}{2}  \overline{  \widehat{\Gamma^1 \Gamma^2 g_{k_1'}}(t, \xi-\kappa) \widehat{g^{\nu'}_{k_2'}(t, \kappa)} \tilde{q}_{+, \nu'}(\xi-\kappa, \kappa)}    +  \frac{ a_{k,k_1}(\xi-\kappa)}{2}  \overline{\widehat{\Gamma^1 \Gamma^2 g }(t, \xi-\kappa)} \widehat{g^{\nu}_{k_2}}(t,\eta)   
\]
\[
\times  \tilde{q}_{+, \nu'}(\xi-\eta , -\kappa) \widehat{  \Gamma^1 \Gamma^2 g_{k_1'}}(t, \xi-\eta ) \widehat{g^{\nu'}_{k_2'}(t, -\kappa)}  \Big] d \eta d \kappa d \xi d t   
\]
\[
= \sum_{k_2'\leq k_1'-10, |k_1-k_1'|\leq 10} \sum_{\nu'\in\{+,-\}} \h \int_{t_1}^{t_2} \int_{\R^2}  \int_{\R^2 }  e^{i t\Phi^{+, \nu}(\xi, \eta)-it \Phi^{+, \nu'}(\xi,\kappa)}   \widehat{  \Gamma^1 \Gamma^2 g }(t, \xi-\eta)   \widehat{g^{\nu}_{k_2}}(t,\eta) 
\]
\be\label{eqn1186}
\times     \overline{  \widehat{\Gamma^1 \Gamma^2 g }(t, \xi-\kappa) } \widehat{g^{\nu'}_{k_2'}(t,- \kappa)} \tilde{r}_{\nu, \nu'}^{k,k_1'}(\xi,\eta, \kappa)  d \eta d \kappa d \xi d t,
\ee
 where
 \[
\tilde{r}_{\nu, \nu'}^{k,k_1'}(\xi,\eta, \kappa):= {a_{k,k_1}(\xi)}  \overline{\tilde{q}_{+, -\nu'}(\xi-\kappa, \kappa) }\psi_{k_1'}(\xi-\kappa)    +  a_{k,k_1}(\xi-\kappa)  \tilde{q}_{+,  \nu'}(\xi-\eta , -\kappa)\psi_{k_1'}(\xi-\eta),
 \]
 \[
\Phi^{+, \nu}(\xi, \eta)- \Phi^{+,\nu'}(\xi, \kappa) = -\Lambda(\xi-\eta)-\nu\Lambda(\eta)+\Lambda(\xi-\kappa) -\nu'\Lambda(\kappa).
\]
 Recall (\ref{eqn939}) and (\ref{eqn932}). It is easy to verify that the following estimate holds from the Lemma \ref{Snorm},
\be\label{eqn783}
\| \tilde{r}_{\nu, \nu'}(\xi,\eta, \kappa) \psi_{k_2'}(\kappa)\psi_{k_1}(\xi-\eta)\psi_{k_2}(\eta) \|_{\mathcal{S}^\infty}\lesssim 2^{\max\{k_2,k_2'\}+ k_1}.
\ee
From (\ref{eqn783}), and multilinear estimate, the following estimate holds, 
\[
 \sum_{|k-k_1|\leq 10,k_2\leq k_1-10} | \widehat{Q}_{k,k_1,k_2}^{ 1;2,2}  | \lesssim \sum_{|k-k_1|\leq 10,k_2\leq k_1-10}  \sum_{k_2'\leq k_1 -10}  2^{m+ \max\{k_2,k_2'\}+ k_1 }  \| \Gamma^1\Gamma^2 g_k\|_{L^2} 
\]
\[\times  \| \Gamma^1\Gamma^2 g_{k_1}\|_{L^2}\| e^{-it\Lambda} g_{k_2}\|_{L^\infty} \| e^{-it \Lambda} g_{k_2'}\|_{L^\infty}\lesssim 2^{-m/2+30\beta m }\epsilon_0^2.
\]
\end{proof}
 \begin{lemma}
Under the bootstrap assumption \textup{(\ref{smallness})}, the following estimate holds, 
\be
\Big|\sum_{|k-k_1|\leq 10,  k_2\leq k_1-10}   {Q}_{k,k_1,k_2}^{ 2}\Big| + \Big|\sum_{|k-k_1|\leq 10,  k_2\leq k_1-10}   {Q}_{k,k_1,k_2}^{ 3}\Big| \lesssim    2^{2\tilde{\delta} m } \epsilon_0^2.
\ee
\end{lemma}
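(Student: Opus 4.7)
The plan is to estimate $Q_{k,k_1,k_2}^2$ and $Q_{k,k_1,k_2}^3$ by decomposing each term according to the structure of the symbol $\tilde{q}_{+,\nu}(\xi-\eta,\eta)$ and then combining spatial localization, integration by parts, and (when necessary) integration by parts in time. Throughout, I will use the normal-form cut-offs in (\ref{eqn1}) (which keep us away from the space resonance set $(\xi,\xi/2)$) together with the lower bound (\ref{eqn928}) on $|\nabla_\eta\Phi^{+,\nu}(\xi,\eta)|$, the derivative-in-$L^2$ estimates of Lemma \ref{derivativeL2estimate2}, and the elementary fact that $\Gamma_\xi^i = (\Gamma_\xi^i+\Gamma_\eta^i+d_{\Gamma^i}) - (\Gamma_\eta^i + d_{\Gamma^i})$, so every vector field can be moved either onto the symbol or onto the other profile at a controllable cost.

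For $Q^2_{k,k_1,k_2}$ (the case where both vector fields land on the low-frequency input $g_{k_2}$), I would first split the symbol as $\tilde{q}_{+,\nu}=c(\xi)+(\tilde{q}_{+,\nu}-c(\xi))$ using (\ref{symboldecomposition}). For the remainder piece, the gain $2^{k_1+k_2}$ from (\ref{eqn932}) combined with an $L^2$-$L^\infty$ bilinear estimate (putting $\Gamma^1\Gamma^2g_k$ and $\Gamma^1\Gamma^2g_{k_2}$ in $L^2$ and using the sharp $L^\infty$ decay of $e^{-it\Lambda}g_{k_1}$) is already more than enough. For the bulk piece $c(\xi)$, since the symbol depends only on $\xi$, I do integration by parts in time, in direct analogy with the treatment of $\widetilde{Q}^{1;1}_{k,k_1,k_2}$ just above: boundary terms are bounded by the high-order weighted norm, interior terms where $\p_t$ hits $g_{k_2}$ are controlled via Lemma \ref{derivativeL2estimate1}, and interior terms where $\p_t$ hits $\Gamma^1\Gamma^2g_k$ or $\Gamma^1\Gamma^2g_{k_1}$ are split into their leading High $\times$ Low quadratic part $\widetilde{B}^{+,\nu'}_{k,k_1',k_2'}$ plus a controllable remainder via Lemma \ref{derivativeL2estimate2}.

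For $Q^3_{k,k_1,k_2}$ the estimate is simpler because each term carries at least one vector field on the symbol $\tilde{q}_{+,\nu}$, so that at most one vector field remains on each profile. After one integration by parts in $\eta$ (using the lower bound (\ref{eqn928}) on $|\nabla_\eta\Phi^{+,\nu}|$, valid since we are outside the removed space-resonance neighborhood) I restrict to $\max\{j_1,j_2\}\geq m+k_{1,-}-\beta m$ as in (\ref{integrationbypartsinkappa}), and then an $L^2$-$L^\infty$ bilinear estimate closes: the input in $L^2$ supplies at most a factor $2^{\tilde\delta m}$ from the $Z_2$-norm, while the other input carries the sharp $(1+t)^{-1}$ decay thanks to the $1+\alpha$ derivatives we gain from the symbol-size bound (\ref{symbolquadraticrough}).

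The main obstacle I anticipate is the integration-in-time step for the bulk of $Q^2$: when $\p_t$ hits $\overline{\widehat{\Gamma^1\Gamma^2 g_k}}$ and is replaced by its leading quadratic piece $\widetilde{B}^{+,\nu'}_{k,k_1',k_2'}$ with $|k_1'-k|\leq 10$, a naive estimate fails in exactly the regime $|\eta|,|\sigma|\sim 1/t$, $|\xi|\sim 1$, $k_2'\leq k_1'-10$. The remedy is the same hidden symmetry exploited in (\ref{eqn829})-(\ref{eqn841}) for $\widetilde{\Gamma}^{1,2;7}$: after the change of variables $(\xi,\eta,\kappa)\to(\xi-\kappa,\eta,-\kappa)$ one recombines the two quartic integrals into a single one whose composite symbol gains a factor $2^{\max\{k_2,k_2'\}+k_1}$ by (\ref{eqn783}), which upgrades the $L^\infty$ decay of $e^{-it\Lambda}g_{k_2'}$ from $(1+t)^{-1/2+\delta}$ to essentially $(1+t)^{-1+\delta}$ in the symbol and therefore closes the argument with the required $2^{2\tilde\delta m}\epsilon_0^2$ bound.
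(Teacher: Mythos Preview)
Your treatment of $Q^3_{k,k_1,k_2}$ matches the paper exactly: integrate by parts in $\eta$ (using the lower bound on $|\nabla_\eta\Phi^{+,\nu}|$) to restrict to $\max\{j_1,j_2\}\ge m+k_{1,-}-\beta m$, then close with an $L^2$--$L^\infty$ bilinear estimate.

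For $Q^2_{k,k_1,k_2}$, however, you are working much harder than necessary, and the elaborate part of your plan has a gap. The analogy with $\widetilde Q^{1;1}_{k,k_1,k_2}$ is not correct: there the symbol $\tilde p^{+,\nu,1}_{k,k_1}$ already carried an explicit factor $\Phi^{+,\nu}(\xi,\eta)$ (coming from the decomposition $\xi\cdot\eta=\tfrac{1}{2\lambda'(|\xi|^2)}\Phi^{+,\nu}+O(|\eta|^2)$ after symmetrization), so integration by parts in time was free. Your ``bulk'' piece $c(\xi)$ has no such factor; to integrate by parts in time you would have to divide by $\Phi^{+,\nu}$, which is singular on the (large) time-resonance set. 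All the subsequent machinery you describe---splitting $\p_t\widehat{\Gamma^1\Gamma^2 g_k}$, invoking the hidden symmetry (\ref{eqn783})---never gets off the ground.

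The paper avoids this entirely: since $|k-k_1|\le 10$, one puts $g_{k_1}$ (the \emph{high}-frequency input without vector fields) in $L^\infty$ and $\Gamma^1\Gamma^2 g_{k_2}$ in $L^2$. The full symbol contributes $2^{2k_1}$, and $\|e^{-it\Lambda}g_{k_1}\|_{L^\infty}\lesssim 2^{-m-(1+\alpha)k_1-6k_{1,+}}\epsilon_1$ gives the sharp decay, yielding (\ref{eqn1194}) directly:
\[
\Big|\sum Q^2_{k,k_1,k_2}\Big|\lesssim \sum_{|k-k_1|\le 10}2^{m+2k_1}\|\Gamma^1\Gamma^2 g_k\|_{L^2}\,\|P_{\le k_1+2}\Gamma^1\Gamma^2 g\|_{L^2}\,\|e^{-it\Lambda}g_{k_1}\|_{L^\infty}\lesssim 2^{2\tilde\delta m}\epsilon_0^2.
\]
Your own ``remainder'' estimate is precisely this computation with the smaller symbol bound $2^{k_1+k_2}$; simply apply it with $2^{2k_1}$ instead and drop the $c(\xi)$/remainder split altogether.
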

\begin{proof}
We first estimate $Q_{k,k_1,k_2}^2$. Recall (\ref{eqn1230}). From the $L^2-L^\infty$ type bilinear estimate (\ref{bilinearesetimate}) in Lemma \ref{Snorm}, (\ref{eqn939}) and (\ref{eqn932}), the following estimate holds, 
\[
\big| \sum_{k_2\leq k_1+2, |k-k_1|\leq 10} Q_{k,k_1,k_2}^2\big|\lesssim \sum_{|k-k_1|\leq 10 } 2^{m+2k_1}\| \Gamma^1 \Gamma^2 g_{k}\|_{L^2 } \]
\be\label{eqn1194}
\times \| P_{\leq k_1+2}[ \Gamma^1 \Gamma^2 g  ]\|_{L^2 } \| e^{-it \Lambda} g_{k_1}\|_{L^2} \lesssim 2^{2\tilde{\delta}m}\epsilon_0^2.
\ee

Now, we proceed to consider  $Q_{k,k_1,k_2}^3$. Recall (\ref{eqn1160}). By doing integration by parts in $\eta$ many times, we can rule out the case when $\max\{j_1,j_2\} \leq m+k_{1,-}-\beta m$.  From the $L^2-L^\infty$ type bilinear estimate,  the following estimate holds when $\max\{j_1,j_2\} \geq m+k_{1,-}-\beta m$,
\[
\sum_{\max\{j_1,j_2\} \geq m+k_{1,-}-\beta m} \big|Q_{k,k_1,k_2}^{j_1,j_2, 3}\big| \lesssim\sum_{i=1,2} 2^{m+2k_1} \| \Gamma^1\Gamma^2 g_k\|_{L^2}\big( \sum_{j_1\geq \max\{j_2,m+k_{1,-}-\beta m\}} 2^{k_1+j_1} \| g_{k_1,j_1}\|_{L^2} 
\]
\[
\times (\| e^{-it \Lambda} \Gamma^i g_{k_2,j_2}(t)\|_{L^\infty} + \| e^{-it \Lambda}   g_{k_2,j_2}(t)\|_{L^\infty})  +  \sum_{j_2\geq \max\{j_1,m+k_{1,-}-\beta m\}} 2^{k_2+j_2} \| g_{k_2,j_2}\|_{L^2} \]
\be
 \times (\| e^{-it \Lambda} \Gamma^i g_{k_1,j_1}(t)\|_{L^\infty} + \| e^{-it \Lambda}   g_{k_1,j_1}(t)\|_{L^\infty}) \big)\lesssim 2^{-m+20\beta m  -k_2 } \epsilon_0^2\lesssim 2^{-\beta m }\epsilon_0^2.
\ee
\end{proof}

 \subsubsection{The estimate of $P_{k,k_1,k_2}^4$}
  Recall (\ref{eqn1013}). The estimate of ``$P_{k,k_1,k_2}^4$'' can be summarized as the following Lemma,
\begin{lemma}
Under the bootstrap assumption \textup{(\ref{smallness})}, the following estimate holds,
\be
  |   {P}_{k,k_1,k_2}^{4}  |   \lesssim 2^{-\beta  m } \epsilon_0^2.
\ee
\end{lemma}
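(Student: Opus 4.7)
The plan is to follow the two-step scheme used for $P_{k,k_1,k_2}^4$ in the High $\times$ High regime (see (\ref{eqn999})), adapted to the present High $\times$ Low setting where $|k-k_1|\leq 10$, $k_2\leq k_1-10$, and, by virtue of the preliminary reductions leading up to (\ref{e789}), $k_1\leq 5\beta m$ and $k_1+k_2\geq -19m/20$. Since $k_2\leq k_1-10$, identity (\ref{eqn900}) forces $\mu=+$, which is the one new structural simplification relative to the High $\times$ High case.

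First I would do integration by parts in $\eta$ many times inside the spatially localized piece $P_{k,k_1,k_2}^{4,j_1,j_2}$. The lower bound $|\nabla_\eta\Phi^{+,\nu}(\xi,\eta)|\gtrsim 2^{k_1-k_{1,+}/2}$ recorded in (\ref{eqn928}), together with (\ref{hitsphasedenominator}) to control iterated $\eta$-derivatives of the phase, shows that the contribution of $\max\{j_1,j_2\}\leq m+k_{1,-}-\beta m$ is negligible. It remains to handle $\max\{j_1,j_2\}\geq m+k_{1,-}-\beta m$, which I split by symmetry into the two sub-cases $j_1\geq j_2$ and $j_2\geq j_1$.

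In either sub-case I apply the bilinear estimate (\ref{bilinearesetimate}) of Lemma \ref{multilinearestimate}, combined with the symbol bound (\ref{symbolquadraticrough}), by placing the input with the larger spatial concentration into $L^2$—where the $2^{-j}$ smallness comes from the bootstrap-improved weighted norm estimates supplied by Lemma \ref{derivativeL2estimate2}—and the other input into $L^\infty$ through the linear dispersion estimate of Lemma \ref{lineardecay}. Combined with $\|\Gamma^1\Gamma^2 g_k\|_{L^2}\lesssim 2^{\tilde\delta m}\epsilon_0$, telescoping the resulting sums in $j_1,j_2$ yields a total bound of order $2^{-m-k_2+C\beta m}\epsilon_0^2$, of the same shape as the one appearing at the end of (\ref{eqn999}). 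Finally, the restrictions $k_1\leq 5\beta m$ and $k_1+k_2\geq -19m/20$ give $-k_2\leq 19m/20+5\beta m$, so $2^{-m-k_2+C\beta m}\lesssim 2^{-m/20+C'\beta m}\lesssim 2^{-\beta m}$ once $\beta$ is chosen small relative to $1/20$, which yields the desired estimate.

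The principal obstacle is keeping the summation over $j_1,j_2$ under control near the boundary scale $\max\{j_1,j_2\}\approx m+k_{1,-}$; this is precisely why the symmetric splitting into $j_1\geq j_2$ versus $j_2\geq j_1$ is needed, since it lets us always place the factor with the larger concentration (hence the stronger $2^{-j}$ gain from the weighted norm) in $L^2$. The auxiliary frequency restrictions $k_1\leq 5\beta m$ and $k_1+k_2\geq -19m/20$, established at the start of this subsection precisely to remove the degenerate high- and low-frequency regimes, are what ultimately convert the $2^{-m-k_2}$ factor into an honest polynomial gain $2^{-\beta m}$.
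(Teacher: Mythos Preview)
Your approach is correct and essentially identical to the paper's: integrate by parts in $\eta$ to eliminate $\max\{j_1,j_2\}\leq m+k_{1,-}-\beta m$, then split by which of $j_1,j_2$ is larger and apply the $L^2$--$L^\infty$ bilinear estimate, obtaining $2^{-m-k_2+C\beta m}\epsilon_0^2$, which the restriction $k_1+k_2\geq -19m/20$ converts into $2^{-\beta m}\epsilon_0^2$. One small correction: the $2^{-j}$ gain on $\|g_{k,j}\|_{L^2}$ does not come from Lemma~\ref{derivativeL2estimate2} (that lemma concerns $\p_t\widehat{\Gamma^1\Gamma^2 g_k}$), but directly from the bootstrap assumption (\ref{smallness}) on the $Z_1$ norm, together with the elementary facts $\|\Gamma^l g_{k,j}\|_{L^2}\lesssim 2^{j+k}\|g_{k,j}\|_{L^2}$ and $\|e^{-it\Lambda}\Gamma^l g_{k,j}\|_{L^\infty}\lesssim 2^{-m+k+2j}\|g_{k,j}\|_{L^2}$ recorded just after (\ref{eqn999}).
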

\begin{proof}
By doing integration by parts in ``$\eta$ '' many times, we can rule out the case when $\max\{j_1,j_2\} \leq m +k_{1,-}- \beta m $. From the $L^2-L^\infty$ type bilinear estimate,  the following estimate holds when $\max\{j_1,j_2\} \geq  m +k_{1,-}- \beta m $.
 \[
 \sum_{  \max\{j_1,j_2\} \geq m +k_{1,-}- \beta m  } |P_{k,k_1,k_2}^{4,j_1,j_2} | \lesssim 2^{m+2k_1} \| \Gamma^1 \Gamma^2 g_k\|_{L^2}\Big[ \sum_{j_1\geq \max\{j_2, m +k_{1,-}- \beta m \}} 2^{-m+ k_1+ j_1+k_2+2j_2} \]
 \[\times \| g_{k_1,j_1}\|_{L^2} \| g_{k_2,j_2}\|_{L^2}  + \sum_{j_2\geq \max\{j_1, m +k_{1,-}- \beta m \}} 2^{-m+ k_1+ 2j_1+k_2+ j_2}  \| g_{k_1,j_1}\|_{L^2} \| g_{k_2,j_2}\|_{L^2}\Big]
 \]
 \be
 \lesssim 2^{-m-k_2+20\beta m}\epsilon_0^2\lesssim 2^{-\beta  m } \epsilon_0^2.
 \ee
\end{proof}
\subsection{The $Z_2$ norm estimate of cubic terms}
  Recall (\ref{eqn440}) and \emph{the fact that $k_3\leq k_2\leq k_1$}. For any $\Gamma_\xi^1, \Gamma^2_\xi \in \{ \hat{L}_\xi, \hat{\Omega}_\xi \}$, we have
  \[
  \Gamma^1_\xi \Gamma^2_\xi \Lambda_{3}[\p_t \widehat{g}(t, \xi)]\psi_k(\xi) =\sum_{\tau, \kappa, \iota\in\{+,-\}} \sum_{k_3\leq k_2\leq k_1} \sum_{i=1,2,3,4}T^{\tau, \kappa,\iota,i}_{k,k_1,k_2,k_3}(t, \xi), 
  \]
  \[
T^{\tau, \kappa,\iota,i}_{k,k_1,k_2,k_3}(t, \xi) 
  = \sum_{j_1\geq -k_{1,-}, j_2\geq -k_{2,-}, j_3\geq -k_{3,-} } T_{k,k_1, j_1,k_2, j_2,k_3,j_3 }^{\tau, \kappa,\iota, i}(t, \xi), \quad i\in\{ 3,4 \},
  \]
  where
  \[
T^{\tau, \kappa,\iota,1}_{k,k_1,k_2,k_3}(t, \xi)=\int_{\R^2}\int_{\R^2}  e^{i t\Phi^{\tau, \kappa,\iota}(\xi, \eta,\sigma)}  \tilde{d}_{\tau, \kappa, \iota}(\xi-\eta, \eta-\sigma, \sigma) \Gamma^1_\xi \Gamma^2_\xi \widehat{g_{k_1 }^{\tau}}(t, \xi-\eta)\]
  \be\label{eqq3}
\times \widehat{g^{\kappa}_{k_2 }}(t, \eta-\sigma) \widehat{g^{\iota}_{k_3 }}(t,  \sigma) \psi_{k}(\xi) d \eta d\sigma,
  \ee
\[
T^{\tau, \kappa,\iota,2}_{k,k_1,k_2,k_3}(t, \xi)= \int_{\R^2}\int_{\R^2} e^{i t\Phi^{\tau, \kappa,\iota}(\xi, \eta,\sigma)}\big[\Gamma^1_\xi \Gamma^2_\xi \big( \tilde{d}_{\tau, \kappa, \iota}(\xi-\eta, \eta-\sigma, \sigma)\big) \widehat{g_{k_1  }^{\tau}}(t, \xi-\eta)   
 \]
 \be\label{eqn1604}
 +\sum_{\{l,n\}=\{1,2\}}\Gamma_\xi^l \tilde{d}_{\tau, \kappa, \iota}(\xi-\eta, \eta-\sigma, \sigma) \Gamma^n_\xi  \widehat{g_{k_1  }^{\tau}}(t, \xi-\eta) \big]\widehat{g^{\kappa}_{k_2  }}(t, \eta-\sigma) \widehat{g^{\iota}_{k_3  }}(t,  \sigma) \psi_k(\xi)d \eta d\sigma,
  \ee
\[
T^{\tau, \kappa,\iota,3}_{k,k_1,j_1, k_2,j_2, k_3,j_3}(t, \xi)= \int_{\R^2}\int_{\R^2} e^{i t\Phi^{\tau, \kappa,\iota}(\xi, \eta,\sigma)}   e^{i t\Phi^{\tau, \kappa,\iota}(\xi, \eta,\sigma)}  it \big(\Gamma^l_\xi \Phi^{\tau, \kappa,\iota}(\xi, \eta,\sigma)\big)   \]
\be\label{eqn1605}
\times  \Gamma^n_\xi  \big(\tilde{d}_{\tau, \kappa, \iota}(\xi-\eta, \eta-\sigma, \sigma)\widehat{g_{k_1,j_1}^{\tau}}(t, \xi-\eta)  \big) \widehat{g^{\kappa}_{k_2,j_2}}(t, \eta-\sigma)  \widehat{g^{\iota}_{k_3,j_3}}(t,  \sigma)   \psi_k(\xi)d \eta d\sigma,
\ee
\[
T^{\tau, \kappa,\iota,4}_{k,k_1,j_1, k_2,j_2, k_3,j_3}(t, \xi)= -\int_{\R^2}\int_{\R^2}e^{i t\Phi^{\tau, \kappa,\iota}(\xi, \eta,\sigma)}  t^2 \big(  \Gamma^1_\xi  \Phi^{\tau, \kappa,\iota}(\xi, \eta,\sigma)\Gamma^2_\xi  \Phi^{\tau, \kappa,\iota}(\xi, \eta,\sigma) \big)  \]
\be\label{eqn1609}
  \times  \tilde{d}_{\tau, \kappa, \iota}(\xi-\eta, \eta-\sigma, \sigma) \widehat{g_{k_1,j_1}^{\tau}}(t, \xi-\eta)  \widehat{g^{\kappa}_{k_2,j_2}}(t, \eta-\sigma) \widehat{g^{\iota}_{k_3,j_3}}(t,  \sigma)\psi_k(\xi) d \eta d\sigma.
\ee
Therefore, we have
\be\label{eqn1690}
\textup{Re}\big[\int_{t_1}^{t_2}\int_{\R^2} \overline{ \Gamma^1_\xi \Gamma^2_\xi \widehat{g}(t, \xi) } \Gamma^1_\xi \Gamma^2_\xi \Lambda_{3}[\p_t \widehat{g}(t, \xi)]\psi_k(\xi) d \xi d t\big] =\sum_{\tau, \kappa, \iota\in\{+,-\}} \sum_{k_3\leq k_2\leq k_1}  \sum_{i=1,2,3,4}   \textup{Re}\big[ T_{k,k_1,k_2,k_3}^{\tau, \kappa,\iota, i}],
\ee
\be\label{eqq4}
 T_{k,k_1,k_2,k_3}^{\tau, \kappa,\iota, i}= \int_{t_1}^{t_2} \int_{\R^2} \int_{\R^2} \overline{ \Gamma^1_\xi \Gamma^2_\xi \widehat{g}(t, \xi) } T^{\tau, \kappa,\iota,i}_{k,k_1,k_2,k_3}(t, \xi) d \xi d t . 
\ee
 The main goal of this subsection is to prove the following proposition,
\begin{proposition}\label{cubicproposition1}
 Under the bootstrap assumption \textup{(\ref{smallness})}, the following estimates hold,
 \be\label{eqq1}
 \sup_{t_1, t_2\in[2^{m-1}, 2^m]} \big|\sum_{k } \textup{Re}\big[\int_{t_1}^{t_2}\int_{\R^2} \overline{ \Gamma^1_\xi \Gamma^2_\xi \widehat{g}(t, \xi) } \Gamma^1_\xi \Gamma^2_\xi \Lambda_{3}[\p_t \widehat{g}(t, \xi)]\psi_k(\xi) d \xi d t\big] \big|\lesssim 2^{2\tilde{\delta}m}\epsilon_0^2,
 \ee
 \be\label{eqq2}
 \sup_{t \in[2^{m-1}, 2^m]} \big\| \sum_{k_3\leq k_2\leq k_1}  T^{\tau, \kappa,\iota,i}_{k,k_1,k_2,k_3}(t, \xi)\big\|_{L^2} \lesssim 2^{-m + \tilde{\delta}m } \big(1+ 2^{2\tilde{\delta}m +k+5k_{+}} \big)\epsilon_0.
 \ee
 
\end{proposition}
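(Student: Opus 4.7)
The plan is to follow the four-term decomposition already introduced in (\ref{eqq3})--(\ref{eqn1609}): writing $\Gamma^1_\xi\Gamma^2_\xi$ applied to $T^{\tau,\kappa,\iota}_{k,k_1,k_2,k_3}$ as $T^{\tau,\kappa,\iota,1}+\cdots+T^{\tau,\kappa,\iota,4}$ according to whether the two vector fields land on the highest-frequency input, on the symbol together with one of the other inputs, on the phase once (producing a factor $t$), or on the phase twice (producing $t^2$). For the $L^2$-norm bound (\ref{eqq2}), I would use the symbol estimate (\ref{symbolcubic}) in Lemma \ref{symbolcubicandquartic} together with the linear dispersion (\ref{highdecay})--(\ref{lowdecay}) of Lemma \ref{lineardecay}. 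The $t$ or $t^2$ losses in $T^{\tau,\kappa,\iota,3}$ and $T^{\tau,\kappa,\iota,4}$ are absorbed by the identity $\Gamma_\xi\Phi^{\tau,\kappa,\iota}(\xi,\eta,\sigma)=O(2^{k_2})\Phi+O(2^{2k_2})$, the analogue of (\ref{eqn1301})--(\ref{eqn951}) in the trilinear setting, which is proved by the same Taylor expansion in the regime $|\eta-\sigma|,|\sigma|\ll|\xi-\eta|$ and vanishes identically when $\Gamma_\xi=\hat{\Omega}_\xi$ by rotation invariance as in (\ref{eqn989})--(\ref{eqn990}); an $L^2-L^\infty-L^\infty$ trilinear estimate (\ref{trilinearesetimate}) with the largest frequency in $L^2$ and the other two in $L^\infty$ then yields (\ref{eqq2}) immediately, with the factor $2^{k+5k_+}$ coming from summing the $Z_2$ bootstrap over $k_1\sim k$.

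For the energy-type bound (\ref{eqq1}), I would treat $T_{k,k_1,k_2,k_3}^{\tau,\kappa,\iota,i}$, $i=1,2$, directly by an $L^2-L^\infty-L^\infty$ pairing: the input $\overline{\Gamma^1\Gamma^2 \widehat{g_k}}$ is placed in $L^2$, the highest-frequency input (or its $\Gamma$-derivative) in $L^2$, and the two lower-frequency inputs in $L^\infty$ using the sharp decay $(1+t)^{-1}$ coming from the bootstrap $W^{6,1+\alpha}$ control. Because there are three derivatives available in $\tilde{d}_{\tau,\kappa,\iota}$ by (\ref{symbolcubic}), the quadratic regime of summability issues from the High $\times$ High $\times$ Low pairing present in the quadratic analysis does not reappear here; the case $|k-k_1|\le 10$ with $k_2,k_3\le k_1-10$ is handled by the same spatial-localization plus integration-by-parts-in-$\sigma$ scheme used in subsubsection \ref{lowhighcubic}, now with two low-frequency dispersive factors of $(1+t)^{-1}$ available.

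For $T_{k,k_1,k_2,k_3}^{\tau,\kappa,\iota,3}$ I would split $\Gamma_\xi\Phi=\tilde{c}(\xi-\eta)\Phi+O(2^{2k_2})$; the $O(2^{2k_2})$ error is estimated directly by $L^2-L^\infty-L^\infty$ after one integration by parts in $\eta$ or $\sigma$ (using the lower bound $|\nabla_\eta\Phi|,|\nabla_\sigma\Phi|\gtrsim 2^{k_1-k_{1,+}/2}$ available on the support of $\tilde{d}$ since the space-resonant configurations listed in $\mathcal{S}_1,\ldots,\mathcal{S}_4$ have all been cancelled by the normal-form choice (\ref{eqn200})). For the principal piece $t\tilde{c}\Phi$ I would integrate by parts in time: the boundary terms are $O(2^m)\|\Gamma^1\Gamma^2 g\|_{L^2}\cdot\|g\|_{Z_1}^2$ by Proposition \ref{eqq298}, and when $\partial_t$ hits one of the inputs $\widehat{g^{\tau}_{k_1}}$ etc.\ the resulting quartic term is integrable by (\ref{Z1estimatecubicandquartic}); when $\partial_t$ hits $\overline{\Gamma^1\Gamma^2\widehat{g_k}}$ and reproduces a quadratic contribution $\widetilde{B}^{+,\nu}_{k,k_1',k_2'}$, I split off that dangerous piece and treat it separately by the symmetrization trick $(\xi,\eta,\kappa)\mapsto(\xi-\kappa,\eta,-\kappa)$ used in (\ref{eqn1186}).

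The main obstacle is $T_{k,k_1,k_2,k_3}^{\tau,\kappa,\iota,4}$ with its $t^2$ factor. The strategy is to expand $\Gamma^1_\xi\Phi\cdot\Gamma^2_\xi\Phi=\tilde{c}^2\Phi^2+\tilde{c}\Phi\cdot O(2^{2k_2})+O(2^{4k_2})$; the last two summands absorb the $t^2$ by the two or four powers of $2^{k_2}$ smallness, and for the leading $t^2\tilde{c}^2\Phi^2$ piece a single integration by parts in time reduces it to a $t$-loss of the same structure as in $T^{\tau,\kappa,\iota,3}$, which is then handled as above. The quartic boundary terms and the quartic terms produced when $\partial_t$ lands on $\widehat{g^\tau_{k_1}}$ require one further use of the symmetry identified in (\ref{e88987})--(\ref{e88988}) in order to gain the extra smallness of $2^{\max\{k_2,k_2'\}}$ that brings the dispersive decay of the auxiliary low-frequency factor from $(1+t)^{-1/2+\delta}$ up to $(1+t)^{-1+\delta}$; without this hidden cancellation the case $|\eta-\sigma|,|\sigma|\approx 2^{-m/2}$, $|\xi|\sim 1$ with $\xi\cdot\eta,\xi\cdot\sigma\approx 2^{-m}$ is not integrable, exactly as flagged in the introduction discussion around (\ref{e120}).
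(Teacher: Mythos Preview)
Your proposal assembles most of the right ingredients but places the crucial one in the wrong spot, and this produces a genuine gap.

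The failure is in your treatment of $T^{\tau,\kappa,\iota,1}$ for the energy estimate (\ref{eqq1}) when $k_2\le k_1-10$.  A direct $L^2\!-\!L^\infty\!-\!L^\infty$ pairing does \emph{not} close there: the symbol $\tilde d_{\tau,\kappa,\iota}$ only contributes $2^{2k_1}$ (not three derivatives; (\ref{symbolcubic}) gives two at low frequency), so after placing $\overline{\Gamma^1\Gamma^2\widehat g_k}$ and $\Gamma^1_\xi\Gamma^2_\xi\widehat{g_{k_1}}$ in $L^2$ you are left with $2^{m+2k_1+2\tilde\delta m}\|e^{-it\Lambda}g_{k_2}\|_{L^\infty}\|e^{-it\Lambda}g_{k_3}\|_{L^\infty}$.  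At the worst scale $k_2,k_3\sim -m/(2+\alpha)$ each $L^\infty$ factor is only $\sim 2^{-m/2+\tilde\delta m}$, yielding a bound $\sim 2^{4\tilde\delta m}\epsilon_1^4$, which exceeds the target $2^{2\tilde\delta m}\epsilon_0^2$.  The paper fixes this exactly by the hidden symmetry (\ref{e88987})--(\ref{e88988}): one first peels off the leading symbol $e(\xi)$, then symmetrizes $\xi\leftrightarrow\xi-\eta$ in the resulting bilinear pairing of $\Gamma^1\Gamma^2 g$ against itself; the symmetrized symbol $\tilde d_{k,k_1}$ (see (\ref{eqn1688})) carries an extra factor $2^{\max\{k_2,k_3\}}$, which converts one generic $(1+t)^{-1/2+}$ decay into the sharp $(1+t)^{-1}$ and closes the estimate.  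You invoke (\ref{e88987})--(\ref{e88988}) only for the quartic output of an IBP-in-time on $T^4$; the paper needs it already, and directly, for $T^1$.

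Your scheme for $T^3$ and $T^4$---splitting $\Gamma_\xi\Phi=\tilde c\,\Phi+O(2^{2k_2})$ and integrating by parts in time on the $\tilde c\,\Phi$ piece---is not what the paper does and is unnecessarily heavy.  Because the cubic normal form (\ref{eqn200}) removed the space-resonant configurations $(\xi/2,\cdot)$, $(\xi/3,\xi/3,\xi/3)$, and the $\widetilde S$-cases, on the support of $\tilde d_{\tau,\kappa,\iota}$ one always has $|\nabla_\eta\Phi|\gtrsim 2^{k_{1,-}-k_{1,+}/2}$ (or the analogous $\nabla_\sigma$ bound), so a single integration by parts in $\eta$ (or $\sigma$) suffices; no time integration by parts is used for cubic terms.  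Note also that your assertion that all of $\mathcal S_1,\dots,\mathcal S_4$ are cancelled is not correct: for $(\tau,\kappa,\iota)=(-,+,+)$ (and its permutations) the space-resonant set survives, and the paper handles it by localizing near $(\xi,\eta,\sigma)=(\xi,2\xi,\xi)$ with threshold $\bar l_-=-m/2+10\delta m+k_{1,+}/2$ and using a volume bound at threshold together with IBP in $\eta$ away from it (Lemma~\ref{cubicZ2estimatepart2} and the lemma following it).
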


Firstly, we  rule out the very high frequency case and the very low frequency case. 

 Very similar to what we did in the estimate of quadratic terms (see (\ref{e789})), we do integration by parts in $\eta$ to move the derivatives $\nabla_\eta$ of $\nabla_\xi \widehat{g}_{k_1}(t, \xi-\eta) = -\nabla_\eta \widehat{g}_{k_1}(t, \xi-\eta) $ around such that there is no derivatives in front of $\widehat{g}_{k_1}(t, \xi-\eta) $. As a result, the following estimate holds from the $L^2-L^\infty-L^\infty$ type trilinear estimate (\ref{trilinearesetimate}) in Lemma \ref{multilinearestimate} and the $L^\infty\rightarrow L^2$ type Sobolev embedding,
 \[
\sum_{i=1,2,3,4}\|T_{k,k_1,k_2,k_3}^{\tau, \kappa,\iota, i}(t, \xi) \|_{L^2} \lesssim   2^{2m+2k_1 + 6k_{1,+}}   \| g_{k_1}(t)\|_{L^2} 2^{k_2 +k_3} \| g_{k_3}(t)\|_{L^2}  \big(2^{-2k_{2 }} \| g_{k_2}(t)\|_{L^2}
 \]
\be\label{eqn1601}
 + 2^{-k_{2 }} \| \nabla_\xi \widehat{g}_{k_2}(t)\|_{L^2} +  \| \nabla_\xi^2 \widehat{g}_{k_2}(t)\|_{L^2} \big) \lesssim 2^{2m+ \beta m  - (N_0-20) k_{1,+}  } \epsilon_0.
\ee
From above estimate, we can rule out the case when $k_1\geq 4 \beta m $.  It remains to consider the case when $k_1\leq 4\beta m $.

Now, we proceed to rule out the very low frequency case. If either $k\leq -2m$ or $k_3\leq -3m-30\beta m $, then the following estimate holds from the $L^2-L^\infty-L^\infty$ type trilinear estimate,
\[
\sum_{i=1,2,3,4} \|T_{k,k_1,k_2,k_3}^{\tau, \kappa,\iota, i}(t, \xi) \|_{L^2} \lesssim  (1+2^{2m+2k})    2^{ k+ k_3+  4k_{1,+}}\big( 2^{2k_1}\|\nabla_\xi^2 \widehat{g}_{k_1}(t, \xi)\|_{L^2} 
\]
\[
+ 2^{ k_1  }\|\nabla_\xi  \widehat{g}_{k_1}(t, \xi)\|_{L^2} + \| g_{k_1}(t)\|_{L^2} \big)   \|   g_{k_2}(t)\|_{L^2}  \| g_{k_3}(t)\|_{L^2} \lesssim 2^{-m-\beta m}\epsilon_0.
\]
 Therefore, for the rest of this subsection, we restrict ourself to the case when $k, k_1$, $k_2$, and $k_3$ are  in the  range as follows,
\be\label{restrictedrangeforcubicterms}
-3m-30\beta m \leq k_3\leq k_2\leq k_1\leq 4\beta m,\quad -2m \leq k \leq 4\beta m.
\ee
 
Recall (\ref{eqn1604}).  From the $L^2-L^\infty-L^\infty$ type trilinear estimate (\ref{trilinearesetimate}) in Lemma \ref{multilinearestimate}, the following estimate holds, 
 \[
\| T_{k,k_1,  k_2, k_3 }^{\tau, \kappa,\iota, 2}(t, \xi) \|_{L^2}\lesssim 2^{ 2k_1+4k_{1,+}} \big( \| e^{-it \Lambda} g_{k_1}\|_{L^\infty} + \sum_{i=1,2} \| e^{-it \Lambda} \Gamma^i g_{k_1}\|_{L^\infty}  \big)\| e^{-it\Lambda} g_{k_2}\|_{L^\infty} 
 \]
 \[
\times  \| g_{k_3}\|_{L^2}\lesssim 2^{-3m/2 + 50 \beta m }\epsilon_0^2, \Longrightarrow | T_{k,k_1,  k_2, k_3 }^{\tau, \kappa,\iota, 2}|\lesssim 2^{- m/2 + 50 \beta m }\epsilon_0^2.
 \]

Since there are only at most ``$m^4$ ''cases  in the range (\ref{restrictedrangeforcubicterms}),  to prove (\ref{eqq1}) and (\ref{eqq2}), it would be sufficient to prove the following estimate for fixed $k, k_1,k_2,k_3$ in the range (\ref{restrictedrangeforcubicterms}),
 \be\label{desiredcubicZ2estimate}
   \sum_{i=1, 3,4}|\textup{Re}\big[ T_{k,k_1,k_2,k_3}^{\tau, \kappa,\iota, i}\big] |   \lesssim 2^{ 3 \tilde{\delta} m/2 } \epsilon_0^2,\quad    \sum_{i=1, 3,4}\|  T_{k,k_1,k_2,k_3}^{\tau, \kappa,\iota, i}(t, \xi)\|_{L^2} \lesssim  2^{-m + \tilde{\delta}m /2} \big(1+ 2^{2\tilde{\delta}m + k+5k_{+}} \big)\epsilon_0.
 \ee
\begin{lemma}
For fixed $k_1,k_2,k_3$ in the range \textup{ (\ref{restrictedrangeforcubicterms}) },  our desired estimate (\ref{desiredcubicZ2estimate}) holds if moreover $k_2\leq k_1-10$.
\end{lemma}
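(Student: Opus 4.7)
Since $k_2,k_3\le k_1-10$, the $\tau=-$ case has been removed by the normal form transformation (see (\ref{eqn200})), so it suffices to treat $\tau=+$. The plan is to decompose the symbol via (\ref{e88987})--(\ref{e88988}) as
\[
\tilde d_{+,\kappa,\iota}(\xi-\eta,\eta-\sigma,\sigma)=e(\xi)+r_{+,\kappa,\iota}(\xi-\eta,\eta-\sigma,\sigma),
\]
with $\|r_{+,\kappa,\iota}\psi_{k_1}\psi_{k_2}\psi_{k_3}\|_{\mathcal S^\infty}\lesssim 2^{\max\{k_2,k_3\}+k_1+4k_{1,+}}$, and treat the three contributions $T^{+,\kappa,\iota,i}$, $i=1,3,4$, separately. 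The structural gains that drive the whole argument are: (i) the remainder $r$ carries the extra smallness $2^{\max\{k_2,k_3\}}$; (ii) the leading piece $e(\xi)$ is independent of $\eta,\sigma$, so the associated trilinear form factorises on the physical side; (iii) $e(\xi)$ and the normal-form-quadratic symbol are independent of the signs $\kappa,\iota$, which after summation converts the two low-frequency inputs into $\mathrm{Re}(v)$-type objects that behave essentially like $h$ at low frequency.

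For $T^{+,\kappa,\iota,1}$ (the main term, in which $\Gamma^1\Gamma^2$ hits $g_{k_1}$), the $r$-part is controlled by an $L^2-L^\infty-L^\infty$ trilinear estimate: put $\Gamma^1\Gamma^2 g_{k_1}$ in $L^2$ (gaining $2^{\tilde\delta m}$) and use the sharp decay $\|e^{-it\Lambda}g_{k_2}\|_{L^\infty}\lesssim 2^{-m+\tilde\delta m-k_2}\epsilon_1$ (analogously for $k_3$); the smallness $2^{\max\{k_2,k_3\}}$ from $r$ exactly compensates the $1/|\eta|$ and $1/|\sigma|$ losses. For the $e(\xi)$-part, the factorisation
\[
\mathcal F^{-1}\!\bigl[T^{+,\kappa,\iota,1}_{k,k_1,k_2,k_3}\bigr]=P_k\bigl[e(D)\bigl(\Gamma^1\Gamma^2 v_{k_1}^+\cdot v_{k_2}^{\kappa}\cdot v_{k_3}^{\iota}\bigr)\bigr]
\]
combined with summation over $\kappa,\iota$ reduces two of the three factors to $\mathrm{Re}(v)$-type inputs. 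We then pair with $\overline{\Gamma^1\Gamma^2 \widehat g_k(t,\xi)}$ and apply the $L^2-L^2-L^\infty-L^\infty$ bound, in which the two $L^\infty$ norms are upgraded via Lemma \ref{Linftyxi} (exactly the strategy of (\ref{eqn302})--(\ref{eqn303})); this exploits the conservation of mean and yields the bound $2^{2\tilde\delta m+ k+\cdots}\epsilon_0^2$, which is sharp in $k$ and gives the second inequality in (\ref{desiredcubicZ2estimate}) after integration in time.

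For $T^{+,\kappa,\iota,3}$ only $\Gamma^l=\hat L$ matters since $\hat\Omega_\xi\Phi^{+,\kappa,\iota}$ is small of order $|\eta|+|\sigma|$ and can be absorbed. Mimicking (\ref{eqn1301}), write
\[
\hat L_\xi\Phi^{+,\kappa,\iota}(\xi,\eta,\sigma)=\tilde c(\xi-\eta)\Phi^{+,\kappa,\iota}(\xi,\eta,\sigma)+\mathcal O(|\eta|^2+|\sigma|^2).
\]
For the first piece, integrate by parts in time: the factor $\Phi^{+,\kappa,\iota}$ cancels the oscillation, the time-boundary terms are controlled as in (\ref{eqn1025}), and when $\partial_t$ lands on an input one invokes the $Z_1$ bound of Proposition \ref{eqq298} together with the weighted control (\ref{eqn730}), (\ref{L2cubicandhigher}) of Lemmas \ref{derivativeL2estimate1}--\ref{derivativeL2estimate2}; because there is no summability issue over $k$ here (the lemma fixes $k_1,k_2,k_3$), the argument is strictly simpler than the quadratic counterpart worked out in \ref{highhighinteraction1}. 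For the $\mathcal O(|\eta|^2+|\sigma|^2)$ piece, the two extra derivatives on the low-frequency inputs are absorbed by the sharp decay $(1+t)^{-1}$ of $1+\alpha$ derivatives of $v$, after integration by parts in $\eta$ or $\sigma$ (whose phase gradient is bounded below by $2^{k_1-k_{1,+}/2}$ by (\ref{derivativephaseeta})) to localise the spatial concentrations.

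The term $T^{+,\kappa,\iota,4}$ is nonzero only when $\Gamma^1=\Gamma^2=\hat L$, and is handled by the same split of $\hat L_\xi\Phi$ applied twice: the $\Phi^2$-part is removed by one integration by parts in time (exactly as in (\ref{eq3})--(\ref{eqn1110})), while the cross and remainder pieces carry the extra smallness $|\eta|^2+|\sigma|^2$ which make the resulting $L^2-L^\infty-L^\infty$ estimate comfortable. The main obstacle I anticipate is the bookkeeping in the $e(\xi)$ contribution of $T^{+,\kappa,\iota,1}$: one must track exactly how summation over $(\kappa,\iota)$ turns $u^{\kappa}u^{\iota}$ into $\mathrm{Re}(v)$-products so that Lemma \ref{Linftyxi} can be applied to both low-frequency factors simultaneously, rather than to only one of them. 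Once this cancellation is established, all other cases are variations of estimates already carried out in the quadratic analysis, and the desired bound (\ref{desiredcubicZ2estimate}) follows.
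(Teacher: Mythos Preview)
Your treatment of the $e(\xi)$--part of $T^{+,\kappa,\iota,1}$ via summation over $\kappa,\iota$ and Lemma~\ref{Linftyxi} has a genuine gap. The bound in Lemma~\ref{Linftyxi} is only useful when $2^{k_2}\lesssim t^{-1}$; for intermediate frequencies such as $k_2\approx k_3\approx -m/2$ and $k_1\approx 0$, the $L^\infty_\xi$ bound gives $2^{2k_2}\|\widehat{\mathrm{Re}(v)}\psi_{k_2}\|_{L^\infty_\xi}\lesssim 2^{2\delta m}\epsilon_0$ with no decay, and the physical $L^\infty$ decay of $(\mathrm{Re}(v))_{k_2}$ is no better than that of $g_{k_2}$. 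In this regime your $L^2$--$L^2$--$L^\infty$--$L^\infty$ bound produces a factor $2^{m+2\tilde\delta m}$ after time integration and does not close. The paper instead handles the $e(\xi)$--part by the \emph{same symmetrization} used for $Q^1_{k,k_1,k_2}$ in the quadratic analysis: switch the roles of $\xi$ and $\xi-\eta$ and average, so that the effective symbol becomes
\[
\tilde d_{k,k_1}(\xi,\eta,\sigma)=e(\xi)\psi_{k_1}(\xi-\eta)\psi_k(\xi)+\overline{e(\xi-\eta)}\psi_{k_1}(\xi)\psi_k(\xi-\eta),
\]
which satisfies $\|\tilde d_{k,k_1}\psi_{k_2}\psi_{k_3}\|_{\mathcal S^\infty}\lesssim 2^{\max\{k_2,k_3\}+k_1+4k_{1,+}}$. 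This extra factor $2^{\max\{k_2,k_3\}}$ is exactly what compensates the slow decay of the low-frequency inputs uniformly over the whole range of $k_2,k_3$, and a single $L^2$--$L^\infty$--$L^\infty$ trilinear estimate then gives $\lesssim 2^{-m/2+50\beta m}\epsilon_0^2$.

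Your plan for $T^{+,\kappa,\iota,3}$ and $T^{+,\kappa,\iota,4}$ is also far more elaborate than necessary. The key structural fact you are not using is that for $\tau=+$ and $k_2\le k_1-10$ one has $|\Gamma_\xi\Phi^{+,\kappa,\iota}(\xi,\eta,\sigma)|\lesssim 2^{k_2}$ (see (\ref{derivativephase})), so the dangerous factors $t\,\Gamma_\xi\Phi$ and $t^2(\Gamma_\xi\Phi)^2$ already carry the smallness $2^{m+k_2}$ and $2^{2m+2k_2}$. Combined with the lower bound $|\nabla_\eta\Phi^{+,\kappa,\iota}|\gtrsim 2^{k_1-k_{1,+}/2}$ from (\ref{derivativephaseeta}), a direct integration by parts in $\eta$ (no time integration, no phase decomposition (\ref{eqn1301})) rules out $\max\{j_1,j_2\}\le m+k_{1,-}-\beta m$, and the remaining piece is closed by a single $L^2$--$L^\infty$--$L^\infty$ estimate yielding $\lesssim 2^{-3m/2+50\beta m}\epsilon_0$. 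Your time-IBP route may be salvageable but introduces many $\partial_t g$ terms whose control would replicate large parts of the quadratic analysis with no gain.
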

\begin{proof}
Recall the normal form transformation that we did in subsection \ref{goodvariable}, see  (\ref{eqn200}) and (\ref{eqn1643}). For the case we are considering, which is $k_2\leq k_1-10$, 
\emph{we have ``$\tau=+$'' and $|\nabla_\xi \Phi^{+, \kappa,\iota}(\xi, \eta,\sigma)|\lesssim 2^{k_2}. $} 

We first consider $T_{k_1,k_2,k_3}^{+, \kappa,\iota,1 }$ and $T_{k_1,k_2,k_3}^{+, \kappa,\iota,1 }(t, \xi)$. Recall (\ref{eqq3}) and (\ref{eqq4}). The following estimate
holds from the $L^2-L^\infty-L^\infty$ type trilinear estimate (\ref{trilinearesetimate}) in Lemma
\ref{multilinearestimate},
\[
\| T_{k_1,k_2,k_3}^{+, \kappa,\iota,1 }(t, \xi)\|_{L^2} \lesssim 2^{2k_1+ 4k_{1,+}} \big(2^{2k_1}\| \nabla_\xi^2 \widehat{g_{k_1}}(t, \xi)\|_{L^2}+ 2^{ k_1}\| \nabla_\xi  \widehat{g_{k_1}}(t, \xi)\|_{L^2} + \| g_{k_1}(t)\|_{L^2} \big)\]
\[\times \| e^{-it \Lambda} g_{k_2}\|_{L^\infty} \| e^{-it \Lambda} g_{k_3}\|_{L^\infty} \lesssim 2^{-m + 7\tilde{\delta}m/3 +  k+5k_{+}}\epsilon_0. 
\]

 Since the general $L^\infty$ decay rate is slightly slower than $t^{-1/2 }$, a rough $L^2-L^\infty-L^\infty$ is not sufficient to close the estimate of $T_{k_1,k_2,k_3}^{\tau, \kappa,\iota,1 }$. An essential ingredient is to utilize symmetry such that one of the inputs, which is putted in $L^\infty$, has a derivative in front.  We decompose $T_{k_1,k_2,k_3}^{+, \kappa,\iota,1 }$ into three parts as follows,
\[
T_{k,k_1,k_2,k_3}^{+, \kappa,\iota,1}=\sum_{i=1,2,3}  T_{k,k_1,k_2,k_3}^{+, \kappa,\iota,1;i},\quad  T_{k,k_1,k_2,k_3}^{+, \kappa,\iota,1;1}=    \int_{t_1}^{t_2} \int_{\R^2} \int_{\R^2} \overline{  \widehat{\Gamma^1  \Gamma^2  g}(t, \xi) } e^{i t\Phi^{+, \kappa,\iota}(\xi, \eta,\sigma)}  e(\xi) \]
\[\times  \widehat{ \Gamma^1   \Gamma^2   g_{  }^{  }}(t, \xi-\eta) \psi_{k}(\xi) \psi_{k_1}(\xi-\eta)  \widehat{g^{\kappa}_{k_2 }}(t, \eta-\sigma) \widehat{g^{\iota}_{k_3 }}(t,  \sigma) d \eta d\sigma d \xi d t, 
\]
\[
T_{k,k_1, k_2, k_3   }^{+, \kappa,\iota,1;2}=  \int_{t_1}^{t_2} \int_{\R^2} \int_{\R^2} \overline{  \widehat{\Gamma^1  \Gamma^2  g}(t, \xi) } e^{i t\Phi^{+, \kappa,\iota}(\xi, \eta,\sigma)}\big( \tilde{d}_{+, \kappa, \iota}(\xi, \eta, \sigma)-   e(\xi) \big) \]
\[\times   \psi_{k}(\xi) \widehat{ \Gamma^1   \Gamma^2   g_{k_1 }^{ }}(t, \xi-\eta) \widehat{g^{\kappa}_{k_2 }}(t, \eta-\sigma) \widehat{g^{\iota}_{k_3 }}(t,  \sigma) d \eta d\sigma d \xi d t,
\]
\[
T_{k,k_1, k_2, k_3  }^{+, \kappa,\iota,1;3}=   \int_{t_1}^{t_2} \int_{\R^2} \int_{\R^2} \overline{  \widehat{\Gamma^1  \Gamma^2  g}(t, \xi) } e^{i t\Phi^{+, \kappa,\iota}(\xi, \eta,\sigma)} \tilde{d}_{+, \kappa, \iota}(\xi-\eta, \eta-\sigma, \sigma) \widehat{g^{\kappa}_{k_2 }}(t, \eta-\sigma) \]
\[\times \psi_{k}(\xi)   \widehat{g^{\iota}_{k_3 }}(t,  \sigma)   \big(\Gamma^1_\xi \Gamma^2_\xi \widehat{g_{k_1 }^{ }}(t, \xi-\eta) -\widehat{ \Gamma^1   \Gamma^2   g_{k_1 }^{ }}(t, \xi-\eta)\big)d \eta d\sigma d \xi d t,
\]
where $e(\xi)$ is defined in (\ref{e88988}).  After switching the role of $\xi$ and $\xi-\eta$ inside $T_{k_1,k_2,k_3}^{+, \kappa,\iota,1;1}$, we have
\[
\textup{Re}[T_{k_1, k_2 ,k_3 }^{+, \kappa,\iota,1;1} ] = \textup{Re}[\tilde{T}_{k_1, k_2 ,k_3 }^{+, \kappa,\iota} ], \quad  \tilde{T}_{k_1,k_2, k_3 }^{+, \kappa,\iota}:= \frac{1}{2}\int_{t_1}^{t_2} \int_{\R^2} \int_{\R^2} \overline{  \widehat{\Gamma^1  \Gamma^2  g}(t, \xi) } e^{i t\Phi^{+, \kappa,\iota}(\xi, \eta,\sigma)} \]
\[\times \tilde{d}_{k, k_1}(\xi, \eta, \sigma) \widehat{ \Gamma^1   \Gamma^2   g_{ }^{ }}(t, \xi-\eta)  \widehat{g^{\kappa}_{k_2 }}(t, \eta-\sigma) \widehat{g^{\iota}_{k_3 }}(t,  \sigma) d \eta d\sigma d \xi d t,
\]
where
\[
 \tilde{d}_{ k, k_1}(\xi, \eta, \sigma):=  {e(\xi)}  \psi_{k_1}(\xi-\eta)\psi_k(\xi) +  \overline{ e(\xi-\eta)}     \psi_{k_1}(\xi)\psi_{k}(\xi-\eta).
\]
From Lemma \ref{Snorm} and (\ref{e88988}), we have
\be\label{eqn1688}
\|   \tilde{d}_{k, k_1}(\xi, \eta, \sigma) \psi_{k_2}(\eta-\sigma) \psi_{k_3}(\sigma)\|_{\mathcal{S}^\infty } \lesssim 2^{\max\{k_2,k_3\} + k_1 + 4k_{1,+}}.
\ee
From  (\ref{eqn1688}),  (\ref{e88987}) and the $L^2-L^\infty -L^\infty$ type trilinear estimate (\ref{bilinearesetimate}) in Lemma \ref{multilinearestimate}, we have
\[
 \sum_{i=1,2,3} |\textup{Re}\big[T_{k,k_1 ,k_2 ,k_3 }^{+, \kappa,\iota,1; i}\big]|  \lesssim 2^{m+k_1 +4k_{1,+} + \max\{k_2,k_3\}} \| \Gamma^1 \Gamma^2 g_{k_1}\|_{L^2}\big(2^{2k_1}\|\nabla_\xi^2\widehat{g}_{k_1}(t,\xi)\|_{L^2}   \]
\[  + 2^{ k_1}\|\nabla_\xi  \widehat{g}_{k_1}(t,\xi)\|_{L^2}  + \| g_{k_1}(t)\|_{L^2}\big) \| e^{-it \Lambda } g_{k_2}\|_{L^\infty} \| e^{-it\Lambda} g_{k_3}\|_{L^\infty} \lesssim 2^{-m/2+50\beta m}\epsilon_0^2.
\]

Now, we proceed to estimate $ T_{k_1,k_2,k_3}^{+, \kappa,\iota, i}$ and $ T_{k_1,k_2,k_3}^{+, \kappa,\iota, i}(t, \xi)$, $i\in\{ 3,4\}$. Recall (\ref{eqn1605}), (\ref{eqn1609}), and (\ref{eqq4}).  By doing integration by parts in ``$\eta$'' many times, we can rule out the case when $\max\{j_1,j_2\}\leq m +k_{1,-} -\beta m $.  From the $L^2-L^\infty-L^\infty$ type trilinear estimate (\ref{trilinearesetimate}) in Lemma \ref{multilinearestimate}, the following estimate holds when $\max\{j_1,j_2\}\geq m +k_{1,-} -\beta m $, 
\[
 \sum_{i=3,4}\big\|\sum_{\max\{j_1,j_2\}\geq m +k_{1,-} -\beta m  }T_{k,k_1, j_1,k_2,j_2, k_3,j_3 }^{+, \kappa,\iota,i}(t, \xi) \big\|_{L^2} \lesssim 2^{ m+3k_1+k_2 + 4k_{1,+}} 	 \| e^{-it \Lambda} g_{k_3}\|_{L^\infty}\]
\[\times  \Big[\sum_{j_1\geq \max\{j_2,  m +k_{1,-} -\beta m\}}  (2^{k_1+j_1}+ (1+2^{m+k_1+k_2}))   \| g_{k_1,j_1}\|_{L^2} \| e^{-it \Lambda} g_{k_2,j_2}\|_{L^\infty}+ \sum_{j_2\geq \max\{j_1,  m +k_{1,-} -\beta m\}}  \]
\[ \big( 2^{k_1}\|e^{-it \Lambda} \mathcal{F}^{-1}[\nabla_\xi \widehat{g_{k_1,j_1}}(t, \xi)]\|_{L^\infty}+ (1+ 2^{m +k_1+k_2})\|e^{-it \Lambda} g_{k_1,j_1}\|_{L^\infty}\big)    \|  g_{k_2,j_2}\|_{L^2}  \Big]
\lesssim 2^{-3m/2+50\beta m }\epsilon_0 .
\]
Note that above estimate is very sufficient for the estimate of $ T_{k_1,k_2,k_3}^{+, \kappa,\iota, i}$, $i\in\{ 3,4\}$.

\end{proof}
\begin{lemma}\label{cubicZ2estimatepart2}
For fixed $k_1,k_2,k_3$ in the range \textup{ (\ref{restrictedrangeforcubicterms}) },  our desired estimate \textup{(\ref{desiredcubicZ2estimate}) }holds if either $|k_1-k_2|\leq 10$ and $k_3\leq k_2-10$ or $|k_1-k_2|\leq 10$, $|k_3-k_2|\leq 10, k\leq k_1-10$.
\end{lemma}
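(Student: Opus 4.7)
I plan to follow the architecture of the preceding lemma, treating the two regimes in the hypothesis separately. In both, $T^{\tau,\kappa,\iota,2}_{k,k_1,k_2,k_3}$ has already been handled above, so only $i=1,3,4$ remain. \textbf{Case (i):} $|k_1-k_2|\leq 10$ and $k_3\leq k_2-10$. Here $|\sigma|\ll|\eta-\sigma|\sim 2^{k_1}$, which gives the lower bound $|\nabla_\sigma\Phi^{\tau,\kappa,\iota}|\gtrsim 2^{k_1-k_{1,+}/2}$. For $T^{\tau,\kappa,\iota,1}$ I apply the $L^2\cdot L^2\cdot L^\infty\cdot L^\infty$ version of the trilinear estimate from Lemma~\ref{multilinearestimate}, placing $\overline{\widehat{\Gamma^1\Gamma^2 g_k}}$ and $\widehat{\Gamma^1\Gamma^2 g_{k_1}}$ in $L^2$ and using Lemma~\ref{lineardecay} for the $L^\infty$ decay of $g_{k_2}$ and $g_{k_3}$; the symbol bound (\ref{symbolcubic}), $\lesssim 2^{2k_1+2k_{1,+}}$, closes the estimate (with the $L^1$ control from the $Z_1$-norm supplying good decay when $k_3$ is very negative). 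For $T^{\tau,\kappa,\iota,3}$ and $T^{\tau,\kappa,\iota,4}$, I perform spatial localizations in $j_1,j_2,j_3$ and integrate by parts in $\sigma$ repeatedly, ruling out the region $\max\{j_2,j_3\}\leq m+k_{1,-}-\beta m$; the complementary contribution is then estimated trilinearly with the large-$j$ input in $L^2$, the IBP gain absorbing the $t\cdot\Gamma_\xi\Phi$ or $t^2|\Gamma_\xi\Phi|^2$ factor.

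\textbf{Case (ii):} $|k_1-k_2|\leq 10$, $|k_3-k_2|\leq 10$, $k\leq k_1-10$. All three inputs sit at frequency $\sim 2^{k_1}$ while the output lives at the smaller scale $2^k$. Since $|\xi|\ll 2^{k_1}$, neither the space resonance configurations $(\xi/3,\xi/3,\xi/3)$ for $(\tau,\kappa,\iota)\in\mathcal{S}_4$ nor $(-\tau\xi,-\kappa\xi,-\iota\xi)$ for $(\tau,\kappa,\iota)\in\widetilde S$ can be approached in the support of the integrand, so $|\nabla_\eta\Phi|+|\nabla_\sigma\Phi|\gtrsim 2^{k_1-k_{1,+}/2}$. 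Two structural facts drive this case: $\hat\Omega_\xi\Phi=0$ (killing $T_3,T_4$ whenever $\Gamma=\Omega$), and
\[
|\hat L_\xi\Phi|\leq |\xi|\bigl(\Lambda'(|\xi|)+\Lambda'(|\xi-\eta|)\bigr)\lesssim 2^{k+k_1-k_{1,+}/2},
\]
which inherits the crucial smallness $2^k$ from $|\xi|\ll 2^{k_1}$. For $T^{\tau,\kappa,\iota,1}$ a direct $L^2\cdot L^2\cdot L^\infty\cdot L^\infty$ bound works, exploiting the sharp $L^\infty$-decay of $g_{k_2}, g_{k_3}$ (whose frequencies are bounded by $4\beta m$). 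For $T^{\tau,\kappa,\iota,3}, T^{\tau,\kappa,\iota,4}$ I reduce to $\Gamma=L$, localize in $j_i$, and integrate by parts in $\eta$ or $\sigma$ using the lower bound above; the smallness $2^k$ (respectively $2^{2k}$) from $|\hat L_\xi\Phi|$ (respectively $|\hat L_\xi\Phi|^2$), combined with the IBP gains, is expected to absorb the $t$ or $t^2$ losses.

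The main obstacle is the estimate of $T^{\tau,\kappa,\iota,4}$ in Case (ii): a naive $L^2\cdot L^2\cdot L^\infty\cdot L^\infty$ bound produces the factor $t^2|\hat L_\xi\Phi|^2\cdot 2^{2k_1+2k_{1,+}}\lesssim 2^{2m+2k+4k_1+k_{1,+}}$, which, after incorporating the decays of the inputs, yields a loss of order $2^{O(\beta m)}$; since $\beta\gg\tilde\delta$, this is not absorbed by the $2^{2\tilde\delta m}$ budget. The resolution relies on combining (a) the smallness $2^{2k}$ extracted from $|\hat L_\xi\Phi|^2$ thanks to $|\xi|\leq 2^{k_1-10}$, (b) iterated integrations by parts in $\eta$ or $\sigma$ using $|\nabla_\eta\Phi|\gtrsim 2^{k_1-k_{1,+}/2}$, which each contribute a factor $t^{-1}2^{-k_1+k_{1,+}/2}$ at the cost of a $\xi$-derivative on one input (absorbed by the $\nabla_\xi\widehat g$ norm controlled by $\Gamma g$), and (c) the sharp $L^\infty$ decay from Lemma~\ref{lineardecay} applied to the $L^1$ control of the $Z_1$-norm for all three input profiles. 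Balancing these three sources of smallness against the $t^2$ loss closes the estimate and yields the required bound.
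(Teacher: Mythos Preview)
Your overall plan diverges from the paper's proof in a way that creates genuine gaps.

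\textbf{Case (i): wrong direction of integration by parts.} You integrate by parts in $\sigma$, using the (correct) lower bound $|\nabla_\sigma\Phi|\gtrsim 2^{k_1-k_{1,+}/2}$, which localizes $\max\{j_2,j_3\}\geq m+k_{1,-}-\beta m$. The problem is the sub-case where $j_3$ is the large index. Here the input carrying the large spatial concentration is $g_{k_3}$, the \emph{low}-frequency input; placing it in $L^2$ gives $\|g_{k_3,j_3}\|_{L^2}\lesssim 2^{-2j_3-2k_3+\tilde\delta m}\epsilon_1$, and when $-k_{3}\geq m+k_{1,-}-\beta m$ (which is allowed in the range \textup{(\ref{restrictedrangeforcubicterms})}) the constraint $j_3\geq m+k_{1,-}-\beta m$ is vacuous, so no gain is extracted. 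A direct check shows that for, say, $k\sim k_1\sim 0$ the resulting bound on $\|T^4(t,\xi)\|_{L^2}$ does not decay in $m$. The paper instead integrates by parts in $\eta$ with the lower bound $|\nabla_\eta\Phi|\gtrsim 2^{k-4\beta m}$, which localizes $\max\{j_1,j_2\}$; both of these are \emph{high}-frequency inputs, so whichever one is large can be put in $L^2$ using the $Z_2$-norm bound $2^{-2j-2k_1}$, yielding the clean $2^{-3m/2+50\beta m}$ in \textup{(\ref{e1435})}. The point is that the IBP direction must be chosen so that the resulting large-$j$ localization lands on inputs at frequency $\sim 2^{k_1}$, not on $g_{k_3}$.

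\textbf{Case (ii): the claim $\hat\Omega_\xi\Phi=0$ is false.} For the cubic phase one has
\[
\hat\Omega_\xi\Phi^{\tau,\kappa,\iota}(\xi,\eta,\sigma)=-\xi^\perp\cdot\nabla_\xi\Phi^{\tau,\kappa,\iota}= \tau\,\Lambda'(|\xi-\eta|)\,\frac{\xi^\perp\cdot\eta}{|\xi-\eta|},
\]
which does not vanish. You seem to have confused this with the quadratic identity $(\hat\Omega_\xi+\hat\Omega_\eta)\Phi^{\mu,\nu}=0$ from \textup{(\ref{eqn989})}. So $T^{\tau,\kappa,\iota,3}$ and $T^{\tau,\kappa,\iota,4}$ are not killed when $\Gamma^l=\Omega$; they satisfy the same bound $|\Gamma_\xi^l\Phi|\lesssim 2^{k+k_1-k_{1,+}/2}$ as in the $L$-case and must be estimated. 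More importantly, your entire ``main obstacle'' discussion for $T^4$ in Case (ii) is unnecessary: the paper treats both sub-cases of the lemma \emph{simultaneously} by the single observation that $|\nabla_\eta\Phi^{\tau,\kappa,\iota}|\gtrsim 2^{k-4\beta m}$ (this uses the cubic normal form cutoffs in \textup{(\ref{eqn200})}), then does IBP in $\eta$ to localize $\max\{j_1,j_2\}\geq m+k_{-}-5\beta m$, and closes with a single $L^2$--$L^\infty$--$L^\infty$ estimate placing $g_{k_3}$ in $L^\infty$ throughout. No balancing of iterated IBP against the $t^2$ loss is needed.
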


\begin{proof} 
The estimate of $T_{k,k_1,k_2,k_3}^{\tau, \kappa,\iota, 1}(t,\xi)$ is straightforward. As $|k_1-k_2|\leq 10$, the size of symbol compensates the decay rate of $e^{-it \Lambda} g_{k_2}(t)$.  From the $L^2-L^\infty-L^\infty$ type trilinear estimate (\ref{trilinearesetimate}) in Lemma \ref{multilinearestimate}, the following estimate holds
\[
\| T_{k,k_1,k_2,k_3}^{\tau, \kappa,\iota, 1}(t, \xi)\|_{L^2}\lesssim   2^{  2k_1 + 4k_{1,+}} \big( 2^{2k_1}  \| \nabla_\xi^2 \widehat{g}_{k_1}(t, \xi)\|_{L^2}  + 2^{ k_1} \| \nabla_\xi \widehat{g}_{k_1}(t, \xi)\|_{L^2}  + \| g_{k_1}(t)\|_{L^2}\big)
\]
\be\label{eqn1698}
\times \| e^{-it \Lambda} g_{k_2}\|_{L^\infty}\| e^{-it \Lambda} g_{k_3}\|_{L^\infty} \lesssim 2^{-3m/2 +50\beta m }\epsilon_0.
\ee

Now, we proceed to estimate $T_{k,k_1,k_2,k_3}^{\tau, \kappa,\iota, 3}(t, \xi)$ and $T_{k,k_1,k_2,k_3}^{\tau, \kappa,\iota, 4}(t, \xi)$. Recall (\ref{eqn1605}) and (\ref{eqn1609}).   Note that, if either $|k_1-k_2|\leq 10$ and $k_3\leq k_2-10$ or $|k_1-k_2|\leq 10$, $|k_3-k_2|\leq 10, k\leq k_1-10$, we know that $\nabla_\eta \Phi^{\tau, \kappa, \iota}(\xi, \eta, \kappa)$ has a lower bound, which is $2^{k-4\beta m }$. To take advantage of this fact, we  do integration by parts in ``$\eta$'' many times to rule out the case when $\max\{j_1,j_2\} \leq m + k_{-}-5\beta m $. From the $L^2-L^\infty-L^\infty$ type trilinear estimate (\ref{trilinearesetimate}) in Lemma \ref{multilinearestimate}, the following estimate holds when $\max\{j_1,j_2\}\geq m + k_{-}- 5\beta m $, 
 \[
\sum_{i=3,4} \big\| \sum_{ \max\{j_1,j_2\}\geq  m + k_{-}- 5\beta m} T_{k,k_1, j_1,k_2,j_2, k_3,j_3 }^{+, \kappa,\iota,i}  (t, \xi) \big\|_{L^2} 
\lesssim   2^{ m+ 2k+  k_1 + 4k_{1,+}}\| e^{-it \Lambda} g_{k_3}\|_{L^\infty}    \]
\[\times \Big( \sum_{j_2\geq \max\{j_1 ,  m +k_{-}-5 \beta m\}}  \big( (1+2^{ m+ 2k_1  } ) \| e^{-it \Lambda} g_{k_1,j_1}\|_{L^\infty}+ 2^{    k_1  }  \| e^{-it \Lambda}\mathcal{F}^{-1}[\nabla_\xi \widehat{g_{k_1,j_1}}(t, \xi)]	\|_{L^\infty} \big)       \| g_{k_2,j_2}\|_{L^2} \]
 \be\label{e1435}
  +     \sum_{j_1\geq \max\{j_2,    m + k_{-}- 5\beta m\}}  \big( 2^{ m+ 2 k_1}+ 2^{    j_1 +   k_1} \big)  \| e^{-it \Lambda} g_{k_2,j_2}\|_{L^\infty} \| g_{k_1,j_1}\|_{L^2}  \Big)  \lesssim 2^{-3m/2 + 50\beta m }\epsilon_0 .
\ee
From (\ref{eqn1698}) and (\ref{e1435}), it is easy to see our desired estimate for $T_{k,k_1,k_2,k_i}^{\tau, \kappa,\iota, 3}$ , $i\in \{1,3,4\}, $  in (\ref{desiredcubicZ2estimate}) 
holds. 
\end{proof}
 
\begin{lemma}
For fixed $k_1,k_2,k_3$ in the range \textup{ (\ref{restrictedrangeforcubicterms}) },  our desired estimate \textup{(\ref{desiredcubicZ2estimate}) }holds if  $|k_1-k_2|\leq 10$,  $|k_3-k_2|\leq 10$, and $|k-k_1|\leq 10$.
\end{lemma}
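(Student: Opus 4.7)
The plan is to adapt the resonance–analysis scheme of subsubsection \ref{allcomparable} (which handled the analogous $Z_1$–norm estimate for this ``all comparable'' regime) to the two competing bounds in \eqref{desiredcubicZ2estimate}. As in that subsubsection, I would split according to the sign combination $(\tau,\kappa,\iota)\in\mathcal S_j$, $j=1,2,3,4$, defined in \eqref{eqn719}--\eqref{eqn720}. For each $j$ I would perform the associated change of variables that recenters the space resonance set $\mathcal R_{\tau,\kappa,\iota}$ at the origin (e.g.\ $(\xi-\eta,\eta-\sigma,\sigma)\mapsto(-\xi-\eta-\sigma,\xi+\eta,\xi+\sigma)$ for $\mathcal S_1$, or the centering at $(\xi/3,\xi/3,\xi/3)$ for $\mathcal S_4$). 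Then I would localize around the resonance using the threshold cutoffs $\varphi_{l_1;\bar{l}_\tau}(\eta)\varphi_{l_2;\bar{l}_\tau}(\sigma)$ with $\bar l_-=-2m/5-10\beta m$ and $\bar l_+=k_--10$, exactly as in \eqref{cubicdecomposition24}--\eqref{cubicdecompose} and \eqref{e1020}--\eqref{e1021}. A crucial point is that the normal form transformation of subsection \ref{goodvariable} has already removed a neighborhood of $\mathcal R_{\tau,\kappa,\iota}$ from the symbol $\tilde d_{\tau,\kappa,\iota}$, so only the case $\max(l_1,l_2)>\bar l_\tau$ needs genuine analysis.

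For the first piece $T^{\tau,\kappa,\iota,1}_{k,k_1,k_2,k_3}$ I would run the same $L^2$-$L^\infty$-$L^\infty$ trilinear estimate used in Lemma \ref{cubicZ2estimatepart2}, placing $\Gamma^1\Gamma^2 g_{k_1}$ (which is of size $2^{\tilde\delta m}\epsilon_0$) in $L^2$. Outside the threshold, integration by parts in $\eta$ (for $\tau=+$) or in $\xi$ (for $\tau=-$, using \eqref{eqn739}) localizes spatially to $\max(j_1,j_2,j_3)\geq m+\min(l_1,l_2)-\beta m$; the resulting bound is the analogue of \eqref{generalestimate3}, and combined with the symbol size $2^{2k_1+2k_{1,+}}$ from Lemma \ref{symbolcubicandquartic} it comfortably beats $2^{-m+\tilde\delta m/2}$. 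At the threshold $l_1=l_2=\bar l_\tau$ a pure volume estimate in the style of the $\mathcal S_1$ argument (``If $\tau=-$'' case) gives the same conclusion.

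For $T^{\tau,\kappa,\iota,3}$ and $T^{\tau,\kappa,\iota,4}$ the vector fields $\Gamma^l_\xi$ hit the phase, producing the weights $t\,\Gamma^l_\xi\Phi^{\tau,\kappa,\iota}$ and $t^2\,\Gamma^1_\xi\Phi\cdot\Gamma^2_\xi\Phi$. When $\Gamma=\Omega$, the analogue of \eqref{eqn989}, \eqref{eqn990} makes these factors small or vanishing in the appropriate direction; when $\Gamma=L$, I would invoke the key decomposition \eqref{eqn1301}--\eqref{eqn951} to write $\Gamma_\xi\Phi=\tilde c(\cdot)\Phi+O(|\eta|^2+|\sigma|^2)$. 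The $O(|\eta|^2+|\sigma|^2)$ piece picks up two extra derivatives that kill the $t^2$ loss and is handled by the same resonance localization plus integration by parts in $\eta,\sigma$ as used in \eqref{e1435}. The $\tilde c\,\Phi$ piece is converted via integration by parts in time into boundary terms and a term with $\p_t$ on the inputs; the former is controlled as in \eqref{eqn1025}, and for the latter I would substitute the equation of $\p_t\widehat g$ and invoke Lemma \ref{derivativeL2estimate2}, Lemma \ref{derivativeL2estimate1}, and the cubic/quartic control of Lemma \ref{Z2normcubicandhigher}, exactly paralleling the quadratic argument of subsubsection \ref{highhighinteraction1}.

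The main obstacle I expect is the borderline sub-case of $T^{\tau,\kappa,\iota,4}$ for $(\tau,\kappa,\iota)\in\mathcal S_1\cup\mathcal S_2\cup\mathcal S_3$ at the threshold $l_1=l_2=\bar l_-$, where $\Gamma_\xi^i\Phi$ can be of size $O(1)$ simultaneously and a naive volume estimate $2^{2m+4\bar l_-}\|g\|^3_{L^1}\lesssim 2^{2m/5}\epsilon_0$ fails by a polynomial factor. The remedy is precisely the decomposition \eqref{eqn1301}--\eqref{eqn951}: for the $O(|\eta|^2+|\sigma|^2)$ piece the two extra derivatives save $2^{2\bar l_-}=2^{-4m/5}$, and for the $\tilde c\,\Phi$ piece the integration-by-parts-in-time trade turns one $t$ into $\p_t$ hitting a factor that is controlled by Lemma \ref{derivativeL2estimate2}; after the dust settles each sub-term is bounded by $2^{-\beta m}\epsilon_0^2$, which sums over the $O(m^4)$ indices into the desired $2^{2\tilde\delta m}\epsilon_0^2$ bound for \eqref{eqq1} and the $2^{-m+\tilde\delta m/2}$ bound for \eqref{eqq2}. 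The bookkeeping of these iterated normal-form plus time-integration arguments will be the bulk of the work.
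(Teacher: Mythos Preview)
Your overall framework (recenter at the space resonance set $\mathcal R_{\tau,\kappa,\iota}$, localize with cutoffs $\varphi_{l_1;\bar l_\tau}\varphi_{l_2;\bar l_\tau}$, and rely on the normal form of subsection~\ref{goodvariable} to discard the threshold piece when $\tau=+$ or $(\tau,\kappa,\iota)\in\mathcal S_4$) matches the paper. The treatment of $T^{\tau,\kappa,\iota,1}$ is also the same. The divergence is in how you handle $T^{\tau,\kappa,\iota,3}$ and $T^{\tau,\kappa,\iota,4}$.

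The paper does \emph{not} invoke the phase decomposition \eqref{eqn1301}--\eqref{eqn951} or any integration by parts in time here. Instead it uses a much simpler observation: after the change of variables recentering at $\mathcal R_{\tau,\kappa,\iota}$, the factor $\Gamma_\xi^l\Phi^{\tau,\kappa,\iota}(\xi,2\xi+\eta+\sigma,\xi+\sigma)$ is itself small, of order $2^{k_1+\max(l_1,l_2)}$ (this is the first bound in \eqref{eqn739}, since $\nabla_\xi\Phi$ only sees $\xi$ and $\xi-\eta$). With that gain and the threshold $\bar l_-=-m/2+10\delta m+k_{1,+}/2$, a direct volume estimate at the threshold gives the factor $2^{2m+6\bar l_-+4k_1}\lesssim 2^{-m+100\delta m}$, and above threshold one does integration by parts in $\eta$ exactly as in \eqref{eqq10}--\eqref{eqq11}. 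No time-integration or $\p_t$-substitution is needed.

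Your proposed mechanism has a genuine gap. The identities \eqref{eqn1301}--\eqref{eqn951} concern the \emph{symmetric} combination $(\hat L_\xi+\hat L_\eta)\Phi^{\mu,\nu}$ for the \emph{quadratic} phase in the High$\times$Low or Low$\times$High regime; they do not apply to $\Gamma_\xi\Phi^{\tau,\kappa,\iota}$ alone in the all-comparable cubic regime. Concretely, in the recentered variables for $(\tau,\kappa,\iota)=(-,+,+)$ one has $\widetilde\Phi(\xi,0,0)=0$ together with $\nabla_{\eta,\sigma}\widetilde\Phi(\xi,0,0)=0$, so $\widetilde\Phi$ is \emph{quadratic} in $(\eta,\sigma)$, whereas $\Gamma_\xi\Phi$ (after substitution) vanishes only to \emph{first} order. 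Hence a decomposition $\Gamma_\xi\Phi=\tilde c\,\widetilde\Phi+O(|\eta|^2+|\sigma|^2)$ cannot absorb the linear part, and your ``two extra derivatives save $2^{2\bar l_-}$'' step does not go through. Relatedly, your threshold $\bar l_-=-2m/5-10\beta m$, borrowed from the $Z_1$ argument, is not the right balance here; the paper's choice $\bar l_-=-m/2+10\delta m+k_{1,+}/2$ is dictated precisely by the direct smallness of $\Gamma_\xi\Phi$.
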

\begin{proof}
 Since we still have $|k_1-k_2|\leq 10$, the estimate of  $T_{k,k_1,k_2,k_3}^{\tau, \kappa,\iota, 1}(t, \xi)$ is same as what we did in (\ref{eqn1698}), which is derived from the $L^2-L^\infty-L^\infty$ type trilinear estimate. We omit details here.

The estimate of   $T_{k,k_1,k_2,k_3}^{\tau, \kappa,\iota, 3}(t, \xi)$ and $T_{k,k_1,k_2,k_3}^{\tau, \kappa,\iota, 4}(t, \xi)$  is more delicate. We will handle it in the similar way as we did in the $Z_1$-norm estimate of cubic terms in subsubsection \ref{allcomparable}. We can still either do integration by parts in  ``$\eta$'' or in ``$\sigma$'' as long as either $\xi-\eta$ is not close to $\eta-\sigma$ or $\eta-\sigma$ is not close to $\sigma-\eta$. 

Note that, we already canceled out the case when $ (\tau, \kappa, \iota) \in \mathcal{S}_4 $ (see (\ref{eqn720}))  and  $(\xi-\eta, \eta-\sigma, \sigma)$ is very close to $(\xi/3,\xi/3,\xi/3)$ in the normal form transformation. Therefore, for the case   when $(\tau, \kappa, \iota)\in \mathcal{S}_4$,  we only have to consider the case when     $(\xi-\eta, \eta-\sigma, \sigma)$ is not  close to $(\xi/3,\xi/3,\xi/3)$, in which case either $\nabla_\eta \Phi^{\tau, \kappa, \iota}(\xi, \eta, \kappa)$ or $\nabla_\sigma \Phi^{\tau, \kappa, \iota}(\xi, \eta, \kappa)$
has a good lower bound, which allows us to do integration by parts either in $\eta$ or in $\sigma$. The estimate of this case is
 similar to and also easier than the estimate of (\ref{e1435}) in the proof of Lemma \ref{cubicZ2estimatepart2}. We omit details here.

Now, we   focus on the case when $(\tau, \kappa, \iota) \in \mathcal{S}_i$, $i\in \{1,2,3\}.$ By the symmetries between   inputs, it would be sufficient to consider the case when $(\tau, \kappa, \iota)\in \mathcal{S}_1$, i.e., $(\tau, \kappa, \iota)\in \{ (+,-,-), (-,+,+)  \}$. After changing the variables as follows $(\xi, \eta, \sigma)\longrightarrow ( \xi, 2\xi+\eta+\sigma, \xi+\sigma) $, we have the following
decomposition for
$i \in \{3,4\},$
\[
T_{k,k_1,  k_2, k_3}^{\tau, \kappa,\iota, i}(t, \xi):= \sum_{l_1, l_2\geq \bar{l}_{\tau}} H^{l_1,l_2,\tau,i-2}_{}(t, \xi), \quad H^{l_1,l_2,\tau,i-2 }(t,\xi) = \sum_{j_1\geq -k_{1,-}, j_2\geq -k_{2,-}}   H_{j_1,j_2}^{l_1,l_2,\tau,i-2}(t,\xi) , \]
\[  H_{j_1,j_2}^{l_1,l_2,\tau,1 }(t, \xi):=   \sum_{\{l,n\}=\{1,2\}}   \int_{\R^2} \int_{\R^2}  e^{i t\widetilde{\Phi}^{\tau, \kappa,\iota}(\xi, \eta,\sigma)}  it \big(\Gamma^l_\xi \Phi^{\tau, \kappa,\iota}(\xi, 2\xi+\eta+\sigma,\xi+\sigma)\big)  \]
\[  
\times \Gamma^n_\xi \big(\tilde{d}_{\tau, \kappa, \iota}(-\xi-\eta-\sigma, \xi+\eta , \xi+\sigma)   \widehat{g_{k_1,j_1}^{\tau}}(t, -\xi-\eta-\sigma) \big) \widehat{g^{\kappa}_{k_2,j_2}}(t, \xi+\eta ) \]
\[\times \widehat{g^{\iota}_{k_3,j_3}}(t,  \xi+\sigma){\varphi}_{l_1;\bar{l}_{\tau}}(\eta) {\varphi}_{l_2;\bar{l}_{\tau}}(\sigma) \psi_k(\xi) d \eta d\sigma  ,
\]
\[  H_{j_1,j_2}^{l_1,l_2,\tau,2 }:=  -   \int_{\R^2} \int_{\R^2}    e^{i t \widetilde{\Phi}^{\tau, \kappa,\iota}(\xi, \eta, \sigma)} t^2 \big(  \Gamma^1_\xi  \Phi^{\tau, \kappa,\iota}(\xi, 2\xi+\eta+\sigma,\xi+\sigma) \]
\[
\times \Gamma^2_\xi  \Phi^{\tau, \kappa,\iota}(\xi, 2\xi+\eta+\sigma,\xi+\sigma) \big) \tilde{d}_{\tau, \kappa, \iota}(-\xi-\eta-\sigma, \xi+\eta , \xi+\sigma)     \widehat{g_{k_1, j_1}^{\tau}}(t, -\xi-\eta-\sigma)  \]
\[ \times   \widehat{g^{\kappa}_{k_2,j_2 }}(t, \xi+ \eta )  \widehat{g^{\iota}_{k_3 }}(t,  \xi+ \sigma) \psi_k(\xi)   {\varphi}_{l_1;\bar{l}_{\tau}}(\eta) {\varphi}_{l_2;\bar{l}_{\tau}}(\sigma)  d \eta d\sigma,
\]
where $\widetilde{\Phi}^{\tau, \kappa, \iota}(\xi, \eta, \sigma)$ is defined in (\ref{eqn724}), the cutoff function $\varphi_{l;\bar{l}}(\cdot)$ is defined in (\ref{thresholdcutoff}) and the thresholds are chosen as follows, $\bar{l}_{+}:=k_1-10$ and $\bar{l}_{-}:= -m/2+ 10\delta  m+ k_{1,+}/2$. 

  $\oplus$  If $\tau=+$, i.e., $(\tau, \kappa, \iota)=(+,-,-)$. \quad  Recall the normal form transformation that we did in (\ref{goodvariable}), see (\ref{normalformatransfor}) and (\ref{eqn200}). For the case we are considering, $(\tau, \kappa, \iota)\in \widetilde{S}$, we have already canceled out the case  when    $\max\{l_1, l_2\} = \bar{l}_{+}$. Hence it would be sufficient   to  consider the case when $\max\{l_1, l_2\} > \bar{l}_{+}$.   Due to the symmetry between inputs, we assume that $l_2=\max\{l_1,l_2\}$. As $l_2 > \bar{l}_{+}$, we can take the advantage of the fact that ``$\nabla_\eta \widetilde{\Phi}^{\tau, \kappa, \iota}(\xi, \eta, \sigma)$'' is big by doing integration by parts in $\eta$. From (\ref{eqn723}), we can rule out the case when $\max\{j_1,j_2\}\leq m + k_{-}- \beta m $ by doing integration by parts in ``$\eta$'' many times. 

  From the $L^2-L^\infty-L^\infty$ type trilinear estimate (\ref{trilinearesetimate}) in Lemma \ref{multilinearestimate}, the following estimates holds when $\max\{j_1,j_2\} \geq m +k_{-}- \beta m$,
\[
\sum_{\max\{j_1,j_2\} \geq  m +k_{-}- \beta m } \sum_{i=1,2}\|H_{j_1,j_2}^{l_1,l_2,\tau,i}(t, \xi)\|_{L^2}  \lesssim    2^{ m+ 4k_{1}+  4k_{1,+}}  \Big(  \sum_{j_1\geq \max\{j_2, m +k_{-}-\beta m\}} \big(2^{m + 2k_1}+ 2^{ k_1+  j_1  }\big) 
\]
\[
\times\| g_{k_1,j_1}(t)\|_{L^2} \| e^{-it\Lambda} g_{k_2,j_2}(t)\|_{L^\infty} + \sum_{j_2\geq \max\{j_1,m+k_{-}-\beta m \}}  \big( (2^{m+2k_1} + 1) \| e^{-it\Lambda} g_{k_1,j_1}(t)\|_{L^\infty}
\]
\be\label{eqq10}
 + 2^{k_1}\| e^{-it\Lambda}\mathcal{F}^{-1}[\nabla_\xi \widehat{ g_{k_1,j_1}}(t, \xi)]\|_{L^\infty} \big)  \| g_{k_2,j_2}(t)\|_{L^2} \Big)\| e^{-it \Lambda}  g_{k_3}(t)\|_{L^\infty}  
   \lesssim 2^{-2m+50\beta m } \epsilon_0.
\ee

  $\oplus$  If $\tau=-$, i.e., $(\tau,\kappa, \iota)=(-,+,+)$. \quad Note that, estimate (\ref{eqn739}) holds for the case we are considering.
 We first consider the case when $\max\{l_1,l_2\} > \bar{l}_{-}$.  Same as before, due to the symmetry between inputs, we assume that $l_2=\max\{l_1,l_2\}$. Recall (\ref{eqn723}), by doing integration by parts in ``$\eta$'' many times, we can rule out the case when $\max\{j_1,j_2\}\leq m+l_2 -4\beta m $.  From the $L^2-L^\infty-L^\infty$ type trilinear estimate, the following estimate holds when $\max\{j_1,j_2\} \geq m+l_2 -4\beta m $.
\[
\sum_{\max\{j_1,j_2\} \geq m+l_2 -4\beta m} \sum_{i=1,2} \| H_{j_1,j_2}^{l_1,l_2,-, i}(t, \xi)\|_{L^2} \lesssim    2^{  m+3k_1+l_2+  4k_{1,+}}   \Big( \sum_{ j_1\geq \max\{j_2, m+l_2 -4\beta m\}}   \big(2^{m+ k_1+l_2} \]
\[   + 2^{j_1 + k_1}\big) \| g_{k_1,j_1}(t)\|_{L^2} \| e^{-it \Lambda}g_{k_2,j_2}(t)\|_{L^\infty} + \sum_{ j_2\geq \max\{j_1, m+l_2 -4\beta m\}} \big((1+2^{ m+k_1+l_2}) \| e^{-it \Lambda}g_{k_1,j_1}(t)\|_{L^\infty}    \]
\be\label{eqq11} 
 + 2^{  k_1}   \| e^{-it\Lambda}\mathcal{F}^{-1}[\nabla_\xi \widehat{ g_{k_1,j_1}}(t, \xi)]\|_{L^\infty} \big)     \| g_{k_2,j_2}(t)\|_{L^2}   \Big)\| e^{-it \Lambda}  g_{k_3}(t)\|_{L^\infty}
   \lesssim 2^{-2m +50\beta m }\epsilon_0^2.
\ee

Lastly, we consider the case when $\max\{l_1,l_2\}= \bar{l}_{-}=-m/2+ 10\delta  m+ k_{1,+}/2 $. Recall (\ref{eqn739}).  For this case, we use the volume of support in $\eta$ and $\sigma$. As a result, the following estimate holds, 
\[
\sum_{i=1,2}\| H_{ }^{\bar{l}_{-},\bar{l}_{-}, i}(t, \xi) \|_{L^2} \lesssim   2^{   4k_{1,+}} \big(2^{2m + 6\bar{l}+ 4k_1} +2^{ m + 5\bar{l}+ 4k_1}\big)\big(2^{-k_1}\|   g_{k_1}(t)\|_{L^2}  +\|  \nabla_\xi \widehat{ g_{k_1}}(t, \xi)(t)\|_{L^2}\big) \]
\be\label{eqq14}
  \times \|   g_{k_2}(t)\|_{L^1}\| g_{k_3}(t)\|_{L^1}\lesssim 2^{-m + 100\delta  m}\epsilon_0^2.
\ee
 From (\ref{eqq10}), (\ref{eqq11}), and (\ref{eqq14}), it is sufficient to derive our desired estimate (\ref{desiredcubicZ2estimate}). 
\end{proof}
 \subsection{The $Z_2$ norm estimate of the quartic terms}

 Recall (\ref{eqn441}). For any $\Gamma^1_\xi, \Gamma^2_\xi \in\{ \hat{L}_\xi, \hat{\Omega}_\xi\},$ we have 
 \[
\Gamma^1_\xi \Gamma^2_\xi   \Lambda_{4}[\p_t \widehat{g}(t, \xi) ] \psi_{k}(\xi) = \sum_{ \mu_1, \mu_2,\nu_1, \nu_2\in\{+,-\} }\sum_{k_4\leq k_3\leq k_2\leq k_1}\sum_{i=1,2,3,4}    K_{k,k_1,k_2,k_3,k_4}^{ \mu_1, \mu_2,\nu_1, \nu_2,i}(t, \xi),
 \]
 \[
 K_{k,k_1,k_2,k_3,k_4}^{ \mu_1, \mu_2,\nu_1, \nu_2,i}(t, \xi)= \sum_{j_1\geq -k_{1,-}, j_2\geq -k_{2,-} } K_{k,k_1,j_1,k_2,j_2,k_3,k_4}^{ \mu_1, \mu_2,\nu_1, \nu_2,i}(t, \xi), \quad i\in\{ 3,4 \},
\]	
where
 \[ K_{k,k_1,k_2,k_3,k_4}^{\mu_1, \mu_2,\nu_1, \nu_2, 1}(t, \xi):=    \int_{\R^2}  \int_{\R^2} \int_{\R^2}    e^{i t \Phi^{\mu_1, \mu_2,\nu_1, \nu_2}(\xi, \eta, \sigma,\kappa)}   \tilde{e}_{\mu_1, \mu_2,\nu_1, \nu_2}(\xi-\eta, \eta-\sigma, \sigma-\kappa, \kappa)  \]
\be\label{eqn1676}
\times\Gamma^1_\xi \Gamma^2_\xi \widehat{g_{k_1 }^{\mu_1}}(t, \xi-\eta) \widehat{g^{\mu_2}_{k_2 }}(t, \eta-\sigma) \widehat{g^{\nu_1}_{k_3 }}(t,  \sigma-\kappa) \widehat{g^{\nu_2}_{k_4 }}(t,   \kappa) \psi_{k}(\xi)  d \kappa d\sigma d \eta  ,
\ee
 
\[  K_{k,k_1,k_2,k_3,k_4}^{ \mu_1, \mu_2,\nu_1, \nu_2,2}(t, \xi):=   \int_{\R^2} \int_{\R^2}  \int_{\R^2}     e^{i t \Phi^{\mu_1, \mu_2,\nu_1, \nu_2}(\xi, \eta, \sigma,\kappa)} \psi_k(\xi) \big[\Gamma^1_\xi \Gamma^2_\xi \big( \tilde{e}_{\mu_1, \mu_2,\nu_1, \nu_2}(\xi-\eta, \eta-\sigma, \sigma-\kappa, \kappa)\big)   \]
\[
 \times   \widehat{g_{k_1  }^{\mu_1}}(t, \xi-\eta) +\sum_{\{l,n\}=\{1,2\}}\Gamma_\xi^l  \tilde{e}_{\mu_1, \mu_2,\nu_1, \nu_2}(\xi-\eta, \eta-\sigma, \sigma-\kappa, \kappa)    \Gamma^n_\xi  \widehat{g_{k_1  }^{\mu_1}}(t, \xi-\eta)  \big]
\]
\be\label{eqn1684} 
\times  \widehat{g^{\mu_2}_{k_2  }}(t, \eta-\sigma)  \widehat{g^{\nu_1}_{k_3 }}(t,  \sigma-\kappa) \widehat{g^{\nu_2}_{k_4 }}(t,   \kappa) d \kappa d\sigma d \eta  ,
\ee

\[
K_{k,k_1,j_1,k_2,j_2,k_3,k_4}^{ \mu_1, \mu_2,\nu_1, \nu_2,3}(t, \xi):=   \sum_{\{l,n\}=\{1,2\}}   \int_{\R^2} \int_{\R^2}\int_{\R^2}  \psi_k(\xi)   e^{i t \Phi^{\mu_1, \mu_2,\nu_1, \nu_2}(\xi, \eta, \sigma,\kappa)}   it \big(\Gamma^l_\xi  \Phi^{\mu_1, \mu_2,\nu_1, \nu_2}(\xi, \eta, \sigma,\kappa)\big)  \]
\be 
 \times  \Gamma^n_\xi \big(   \tilde{e}_{\mu_1, \mu_2,\nu_1, \nu_2}(\xi-\eta, \eta-\sigma, \sigma-\kappa, \kappa)  \widehat{g_{k_1,j_1}^{\mu_1}}(t, \xi-\eta) \big)\widehat{g^{\mu_2}_{k_2,j_2}}(t, \eta-\sigma)  \widehat{g^{\nu_1}_{k_3 }}(t,  \sigma-\kappa) \widehat{g^{\nu_2}_{k_4 }}(t,  \kappa) d \kappa d\sigma d \eta ,
\ee
\[
K_{k,k_1,j_1,k_2,j_2,k_3,k_4}^{ \mu_1, \mu_2,\nu_1, \nu_2,4}(t, \xi):= -  \int_{\R^2} \int_{\R^2}\int_{\R^2} \psi_k(\xi)    e^{i t \Phi^{\mu_1, \mu_2,\nu_1, \nu_2}(\xi, \eta, \sigma,\kappa)}  t^2   \Gamma^1_\xi  \Phi^{\mu_1, \mu_2,\nu_1, \nu_2}(\xi, \eta, \sigma,\kappa)  \]
\[  \times \Gamma^2_\xi  \Phi^{\mu_1, \mu_2,\nu_1, \nu_2}(\xi, \eta, \sigma,\kappa)    \tilde{e}_{\mu_1, \mu_2,\nu_1, \nu_2}(\xi-\eta, \eta-\sigma, \sigma-\kappa, \kappa)  \widehat{g_{k_1,j_1}^{\mu_1}}(t, \xi-\eta)  \widehat{g^{\mu_2}_{k_2,j_2}}(t, \eta-\sigma)   \]
\be 
 \times  \widehat{g^{\nu_1}_{k_3 }}(t,  \sigma-\kappa) \widehat{g^{\nu_2}_{k_4 }}(t,   \kappa)  d \kappa d \eta d\sigma .
\ee

The main goal of this subsection is to prove the following proposition
\begin{proposition}\label{quarticZ2norm3}
 Under the bootstrap assumption (\textup{\ref{smallness}}), the following estimates hold,
 \be\label{eqq20}
 \sup_{t_1, t_2\in[2^{m-1}, 2^m]}  \big| \sum_{k } \int_{t_1}^{t_2}\int_{\R^2} \overline{ \Gamma^1_\xi \Gamma^2_\xi \widehat{g}(t, \xi) } \Gamma^1_\xi \Gamma^2_\xi   \Lambda_{4}[\p_t \widehat{g}(t, \xi) ] \psi_{k}(\xi)d \xi d t \big|\lesssim 2^{2\tilde{\delta}m }\epsilon_0^2.
 \ee
  \be\label{eqq21}
  \sup_{t\in[2^{m-1}, 2^m ]} \|  \Gamma^1_\xi \Gamma^2_\xi   \Lambda_{4}[\p_t \widehat{g}(t, \xi) ]  \|_{L^2} \lesssim 2^{-m +\tilde{\delta}m }\epsilon_0^2.
 \ee
\end{proposition}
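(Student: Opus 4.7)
The plan is to follow the template established for the cubic proposition \ref{cubicproposition1}, exploiting the fact that a quartic term automatically gains one extra power of $L^\infty$ decay over a cubic term. First I would expand $\Gamma^1_\xi \Gamma^2_\xi$ applied to each $K^{\mu_1, \mu_2,\nu_1, \nu_2}_{k,k_1,k_2,k_3,k_4}(t, \xi)$ by the product rule, producing the four pieces $K^{\cdot,i}_{\cdot}$ for $i \in \{1,2,3,4\}$ classified by whether the vector fields hit an input, the symbol, one phase plus one input, or two phases. Then, as in \eqref{restrictedrangequartic}, a crude $L^2\text{-}L^\infty\text{-}L^\infty\text{-}L^\infty$ estimate combined with the bootstrap bounds for $\|g_k\|_{L^2}$ and $\|e^{-it\Lambda}g_k\|_{L^\infty}$ (losing a logarithmic amount of room) restricts analysis to fixed frequencies with $k_4 \geq -m/2 - 30\beta m$, $k_1 \leq 2\beta m$, and $k \geq -m/(2+\alpha)$, leaving at most $m^5$ cases.

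Within this range, $K^{\cdot,2}$ is handled directly: the symbol inherits the bound \eqref{symbolquartic} from Lemma~\ref{symbolcubicandquartic}, and an $L^2\text{-}L^\infty\text{-}L^\infty\text{-}L^\infty$ trilinear–bilinear chain (placing the input with largest frequency or largest spatial concentration in $L^2$) yields sub-$2^{-3m/2+50\beta m}$ decay, much more than enough for both \eqref{eqq20} and \eqref{eqq21}. For $K^{\cdot,1}$, where $\Gamma^1\Gamma^2$ falls on a single input, I would subdivide into the High $\times$ High and High $\times$ Low regimes for $(k_1,k_2)$; in the High $\times$ High regime the summability in $k$ is non-problematic since the symbol furnishes smallness $2^{2k_1}$, and in the High $\times$ Low regime we follow the symmetrization idea sketched near \eqref{e120} of the introduction, splitting off the bulk symbol via \eqref{e88987} and recovering smallness of $2^{\max\{k_2,k_3\}}$ to make the three inputs that must go into $L^\infty$ decay at the sharp rate. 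For $K^{\cdot,3}$ and the diagonal piece of $K^{\cdot,4}$, the $\Gamma_\xi \Phi$ factors produce $t$ or $t^2$ growth; I would absorb this using the good decomposition \eqref{eqn1301}--\eqref{eqn951}, writing each $(\hat{L}_\xi + \hat{L}_\eta)\Phi = \tilde{c}(\xi-\eta)\Phi + \mathcal{O}(\text{smaller}^2)$, and treating the two summands separately. The error piece of size $\mathcal{O}(\text{smaller}^2)$ combines with two $L^\infty$ decaying inputs to yield the desired rate. The $\Phi$ piece is integrated by parts in time, converting the quartic into a quintic term whose $Z_1$ and $L^2$ norms are already controlled by estimate \eqref{eqnj878} in Lemma~\ref{remaindertermweightednorm}. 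For the remaining spatial localizations, I would integrate by parts in $\eta$, $\sigma$, or $\kappa$, exploiting that the normal form transformation \eqref{quarticsymbolnormalform} has already removed the space resonance neighborhoods where $\nabla_{\eta,\sigma,\kappa}\Phi^{\mu_1,\mu_2,\nu_1,\nu_2}$ fails to have a good lower bound.

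The main obstacle I anticipate is the configuration inside $K^{\cdot,4}$ where both $\Gamma^1_\xi = \Gamma^2_\xi = \hat{L}_\xi$ act on the phase in a High $\times$ Low type interaction with, say, $k_4 \approx -m/2$ and two of the input frequencies extremely close to resonance. Here the $t^2$ growth must be absorbed by integration by parts in time on the $\Phi$-part of both decompositions, producing (via time differentiation of the four factors of $g$) boundary terms, cubic quintic-like correctors, and a remainder involving $\p_t \widehat{g}$ that must be handled through the full system \eqref{realduhamel}. I will need to carefully track the resulting quintic terms through Lemmas~\ref{remaindertermweightednorm} and~\ref{Z2normcubicandhigher} (specifically \eqref{eqq425}) to avoid circularity with the very $Z_2$ estimate being proved; as long as these auxiliary lemmas are formulated in terms of strictly lower-order inputs, the bootstrap closes and both \eqref{eqq20} and \eqref{eqq21} follow.
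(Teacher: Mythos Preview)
Your overall decomposition into $K^{\cdot,i}$, $i\in\{1,2,3,4\}$, and the initial frequency restriction are correct, but your proposed treatment of $K^{\cdot,1}$, $K^{\cdot,3}$, and $K^{\cdot,4}$ is considerably more elaborate than what the paper does, and in one place it has a genuine gap.

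The paper's route is much simpler. For quartic terms the fourth input supplies an \emph{additional} factor of roughly $2^{-m/2}$ in $L^\infty$, so neither the symmetrization/bulk-symbol extraction you propose for $K^{\cdot,1}$ nor the phase decomposition \eqref{eqn1301}--\eqref{eqn951} plus time integration by parts you propose for $K^{\cdot,3}$ and $K^{\cdot,4}$ is needed. After restricting to the range \eqref{Z2normestimatequarticrange}, the paper handles both $K^{\cdot,1}$ and $K^{\cdot,2}$ by a single $L^2\text{--}L^\infty\text{--}L^\infty\text{--}L^\infty$ estimate, obtaining $\lesssim 2^{-3m/2+50\beta m}\epsilon_0^2$. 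For $K^{\cdot,3}$ and $K^{\cdot,4}$ the paper splits only according to whether $k_3\ge k_1-10$ (all comparable: direct multilinear estimate) or $k_3\le k_1-10$ (integrate by parts in $\eta$, using that the normal form \eqref{quarticsymbolnormalform} removed the space-resonance neighborhood so $|\nabla_\eta\Phi^{\mu_1,\mu_2,\nu_1,\nu_2}|\gtrsim 2^{k-k_{1,+}}$, then multilinear estimate). In every branch the bound is $\lesssim 2^{-m+\tilde\delta m/2}\epsilon_0^2$ pointwise in $t$, which gives \eqref{eqq21} directly and \eqref{eqq20} by Cauchy--Schwarz. No time integration by parts, no good decomposition of $\Gamma_\xi\Phi$, no symmetrization.

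The concrete gap in your plan is the fixed-time estimate \eqref{eqq21}. There is no time integral in $\|K^{\cdot,4}(t,\xi)\|_{L^2}$, so your proposed integration by parts in time on the $\tilde c(\xi-\eta)\Phi$ piece of the decomposition is simply unavailable there; you would be left with a term carrying a raw factor of $t^2|\Phi|$ and no mechanism to reduce it. This is precisely why the paper avoids that route for the quartic terms and instead exploits the extra $L^\infty$ input. Relatedly, your appeal to Lemma~\ref{Z2normcubicandhigher} (estimate \eqref{eqq425}) would be circular: that lemma's proof explicitly invokes \eqref{eqq21} of the present proposition.
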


As usual,  we first rule out the very high   frequency case and the very low frequency case.   Same as before, we move the derivative $\nabla_\xi = -\nabla_\eta$ in front of $\widehat{g}_{k_1}(t, \xi-\eta)$ around by doing integration by parts in $\eta$ such that there is no derivative in front of $\widehat{g}_{k_1}(t, \xi-\eta)$. As a result, the following estimate holds, 
\[
\sum_{i=1,2,3,4} \|K_{k,k_1,k_2,k_3,k_4}^{\mu_1, \mu_2,\nu_1, \nu_2, i}(t,\xi) \|_{L^2} \lesssim (1+2^{2m + 2k }) 2^{ 6k_{1,+}  }  \| g_{k_1}(t)\|_{L^2} \big( \| \nabla_\xi^2  \widehat{g_{k_2}}(t, \xi)\|_{L^2} +2^{-k_2}\| \nabla_\xi  \widehat{g_{k_2}}(t, \xi)\|_{L^2} 
\]
\be\label{eqn1740}
+ 2^{-2k_2}\| g_{k_2}(t)\|_{L^2} \big)  2^{k_3+k_4} \|g_{k_3}(t)\|_{L^2}  \|g_{k_4}(t)\|_{L^2}  \lesssim 2^{ 2m+\beta m -(N_0-10) k_{1,+}  }\epsilon_0^2.
\ee
Hence, we can rule out the case when $k_1\geq 4\beta m $. It remains to consider the case when $k_1\leq 4\beta m $.

If either  $k_4\leq -3m -30\beta m $ or $k \leq -2m$, then the following estimate holds,  
 \[
\sum_{i=1,2,3,4}  \|K_{k,k_1,k_2,k_3,k_4}^{\mu_1, \mu_2,\nu_1, \nu_2, 1}(t, \xi)\|_{L^2} \lesssim    (1+2^{2m+2k}) 2^{ k+k_4 + 4k_{1,+}}  \big( 2^{2k_1} \| \nabla_\xi^2\widehat{g}_{k_1}(t, \xi)\|_{L^2}  +  2^{ k_1} \| \nabla_\xi \widehat{g}_{k_1}(t, \xi)\|_{L^2}
 \]
\[
 +   \|   {g}_{k_1}(t )\|_{L^2} \big) \| e^{-it \Lambda} g_{k_2}(t )\|_{L^\infty} \|  g_{k_3}(t )\|_{L^2} \|   g_{k_4}(t )\|_{L^2} \lesssim 2^{-m-\beta m}\epsilon_0^2.
\] 

Now it would be sufficient to consider   fixed
 $k,k_1,k_2,k_3$, and $k_4$   in the following range, 
\be\label{Z2normestimatequarticrange}
-3m-30\beta m  \leq k_4\leq k_3\leq k_2\leq k_1\leq 4\beta m, \quad -2m \leq k \leq 3\beta m .
\ee
From the    $L^2-L^\infty-L^\infty-L^\infty$ type estimate, the following estimate holds,
\[
\sum_{i=1,2}  \|K_{k,k_1,k_2,k_3,k_4}^{\mu_1, \mu_2,\nu_1, \nu_2, i}(t, \xi)\|_{L^2} \lesssim 2^{ 2k_1+4k_{1,+}} \big( 2^{2k_1}\| \nabla_\xi^2 \widehat{g}_{k_1}(t, \xi)\|_{L^2} + 2^{k_1}\| \nabla_\xi \widehat{g}_{k_1}(t, \xi)\|_{L^2} + \|   {g}_{k_1}(t, \xi)\|_{L^2}  \big)
\]
\[
\times   \| e^{-i t\Lambda} g_{k_2}(t)\|_{L^\infty}  \| e^{-i t\Lambda} g_{k_3}(t)\|_{L^\infty}\| e^{-i t\Lambda} g_{k_4}(t)\|_{L^\infty}\lesssim 2^{-3m /2 + 50\beta m }\epsilon_0^2.
\]
\begin{lemma}
Under the bootstrap assumption \textup{(\ref{smallness})}, the following estimate holds for fixed  $k,k_1,k_2,k_3$, and $k_4$  in the range \textup{(\ref{Z2normestimatequarticrange})},
\be\label{Z2normestimatequartic}
\sum_{i= 3,4} \|K_{k,k_1,k_2,k_3,k_4}^{\mu_1, \mu_2,\nu_1, \nu_2, i}(t, \xi)\|_{L^2} \lesssim 2^{-m+\tilde{\delta} m /2}\epsilon_0^2.
\ee
\end{lemma}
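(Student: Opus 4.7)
The plan is to mimic the treatment of the cubic $T^{\tau,\kappa,\iota,3}_{k,k_1,k_2,k_3}$ and $T^{\tau,\kappa,\iota,4}_{k,k_1,k_2,k_3}$ carried out in Proposition \ref{cubicproposition1}, taking full advantage of the fact that each extra $L^\infty$ input contributes a factor $(1+t)^{-1/2+\delta}$ of decay. First I would do spatial localizations of the two inputs with the largest frequencies, writing $K^{\mu_1,\mu_2,\nu_1,\nu_2,i}_{k,k_1,k_2,k_3,k_4}(t,\xi)=\sum_{j_1\geq -k_{1,-},\,j_2\geq -k_{2,-}} K^{\mu_1,\mu_2,\nu_1,\nu_2,i}_{k,k_1,j_1,k_2,j_2,k_3,k_4}(t,\xi)$, and then reduce to fixed $(k,k_1,\ldots,k_4)$ in the range (\ref{Z2normestimatequarticrange}), at the cost of at most an $m^5$ logarithmic loss, which is absorbed by the slack $2^{\tilde{\delta}m/2}$.

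Next I would exploit the lower bounds on $\nabla_\eta\Phi^{\mu_1,\mu_2,\nu_1,\nu_2}(\xi,\eta,\sigma,\kappa)$ obtained exactly as in (\ref{eqn928})--(\ref{hitsphasedenominator}): away from the space resonance set (which has been pruned by the normal form transformation with symbol $e_{\mu_1,\mu_2,\nu_1,\nu_2}(\cdot)$ in (\ref{quarticsymbolnormalform})), one has $|\nabla_\eta\Phi^{\mu_1,\mu_2,\nu_1,\nu_2}|\gtrsim 2^{k_1-k_{1,+}/2}$. Integrating by parts in $\eta$ many times then rules out the range $\max\{j_1,j_2\}\leq m+k_{1,-}-\beta m$ (using that $k_{1,+}\leq 4\beta m$), so it suffices to treat the window $\max\{j_1,j_2\}\geq m+k_{1,-}-\beta m$. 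On this window I apply the natural $L^2$-$L^\infty$-$L^\infty$-$L^\infty$ quadrilinear analogue of (\ref{trilinearesetimate}), putting the factor with the maximal spatial concentration in $L^2$ and each of the remaining three inputs in $L^\infty$. Recalling the $t$-weighted symbol bounds $\|t\,\Gamma_\xi^l\Phi\cdot\tilde{e}\|_{\mathcal{S}^\infty}\lesssim 2^{m+2k_1+3k_{1,+}}\max(|\xi|,|\eta|,|\sigma|,|\kappa|)$ and analogously for $K^{,4}$ with $t^2$, the three $L^\infty$ norms each supply $(1+t)^{-1+}$ decay on $e^{-it\Lambda}g_{k_i}$, which together with the $L^2$ norm of the localized input (bounded via (\ref{smallness}) by $2^{-j_l-(1+\alpha)k_l}\epsilon_1$) defeats the $t$ or $t^2$ prefactor. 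The resulting estimate is of the form $2^{-3m/2+O(\beta m)}\epsilon_0^2$ per fixed $(k_1,\ldots,k_4)$, which is comfortably better than $2^{-m+\tilde\delta m/2}\epsilon_0^2$.

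The main obstacle is the High$\times$Low contributions where $\nabla_\eta\Phi$ degenerates on the space resonance sets not eliminated by (\ref{quarticsymbolnormalform}): the near-diagonal sets $\eta\approx\xi/2$ for $\mu_1=+$ and the analogous resonances in $\sigma,\kappa$. Here the integration by parts argument above breaks down. I would handle these in the same spirit as the cubic case in Lemma \ref{cubicZ2estimatepart2} and subsubsection \ref{allcomparable}: perform a change of variables recentering $(\eta,\sigma,\kappa)$ at the relevant resonance, decompose with dyadic cutoffs $\varphi_{l;\bar l}$ as in (\ref{thresholdcutoff}) with thresholds $\bar l_+=k_1-10$ and $\bar l_-=-m/2+10\delta m +k_{1,+}/2$, and treat two subcases: on $\max\{l_i\}>\bar l$ the new $|\nabla_\eta\Phi|\gtrsim 2^{l}$ is large enough that integration by parts in $\eta$ again restricts $\max\{j_1,j_2\}\geq m+l-4\beta m$, after which the quadrilinear $L^2$-$L^\infty$-$L^\infty$-$L^\infty$ estimate yields the bound; on $\max\{l_i\}=\bar l_-$ one simply uses the $L^2$-$L^1$-$L^1$-$L^1$ version together with the volume $|\{|\eta|,|\sigma|,|\kappa|\leq 2^{\bar l_-}\}|\lesssim 2^{6\bar l_-}$, which gives a bound of order $2^{-m+O(\delta)m}\epsilon_0^2$. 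In both subcases the supplementary smallness from having four inputs (compared to three in Proposition \ref{cubicproposition1}) leaves room to spare, so summing over the $O(m^5)$ frequency configurations and over $(l_1,l_2,l_3)$ produces the desired bound (\ref{Z2normestimatequartic}).
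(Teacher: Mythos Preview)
Your overall strategy would likely close, but it is substantially more elaborate than what the paper does, and a couple of your intermediate claims are inaccurate.

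The paper's argument rests on a single dichotomy that you overlook: split according to whether $k_3\geq k_1-10$ or $k_3\leq k_1-10$. In the first case all four input frequencies are comparable, and a \emph{direct} $L^2\!-\!L^\infty\!-\!L^\infty\!-\!L^\infty$ estimate (no integration by parts at all) already gives $2^{-m+\tilde\delta m/2}\epsilon_0^2$; this is exactly the regime where the quartic normal form \eqref{quarticsymbolnormalform} has \emph{not} pruned the $\eta$-space-resonance, but the fourth input supplies enough extra decay to absorb the $t^2$ prefactor. In the second case $|\sigma-\kappa|,|\kappa|\ll|\xi|$, so the normal form \emph{has} removed the neighborhood of $\eta\approx\xi/2$, giving $|\nabla_\eta\Phi^{\mu_1,\mu_2,\nu_1,\nu_2}|\gtrsim 2^{k-k_{1,+}}$ uniformly; a single IBP-in-$\eta$ argument then restricts to $\max\{j_1,j_2\}\geq m+k_{-}-5\beta m$ and the quadrilinear estimate finishes. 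No recentering, no threshold cutoffs $\varphi_{l;\bar l}$, and no case analysis \`a la subsubsection~\ref{allcomparable} are needed.

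Two specific corrections to your write-up: (i) the lower bound you quote, $|\nabla_\eta\Phi|\gtrsim 2^{k_1-k_{1,+}/2}$, is not correct in the regime $|k_1-k_2|\leq 10$; the derivative $\nabla_\eta\Phi$ depends only on $\xi-\eta$ and $\eta-\sigma$, and its size near the resonance is governed by $|\xi-\sigma|$, so the uniform bound after pruning is $\gtrsim 2^{k-k_{1,+}}$ (note $k$, not $k_1$) and the IBP threshold must be $m+k_{-}-O(\beta m)$, not $m+k_{1,-}-\beta m$; (ii) your concern about ``analogous resonances in $\sigma,\kappa$'' is misplaced, since only $\nabla_\eta\Phi$ enters the $\eta$-IBP and the $\sigma,\kappa$-space-resonance sets are irrelevant here. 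Once you make the $k_3$-dichotomy, both of these issues disappear.
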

\begin{proof}
  We first consider the case when $k_1-10 \leq k_3$.  For this case, the following estimate holds from  the    $L^2-L^\infty-L^\infty-L^\infty$ type estimate, the following estimate holds, 
\[
\sum_{i= 3,4} \|K_{k,k_1,k_2,k_3,k_4}^{\mu_1, \mu_2,\nu_1, \nu_2, i}(t, \xi)\|_{L^2} \lesssim 2^{m+4k_1+4k_{1,+}} \big[  \big( \| e^{-it \Lambda} g_{k_1}(t)\|_{L^\infty} + 2^{k_1} \| e^{-it \Lambda} \mathcal{F}^{-1}[\nabla_\xi \widehat{g}_{k_1}(t, \xi)]\|_{L^\infty}  \big) 
\]
\be\label{eqq30}
 + 2^{m+2k_{1}} \| e^{-it \Lambda} g_{k_1}(t)\|_{L^\infty} \big]  \| e^{-i t\Lambda} g_{k_2}(t)\|_{L^\infty} \| e^{-i t\Lambda} g_{k_3}(t)\|_{L^\infty} \| g_{k_4}(t)\|_{L^2}\lesssim 2^{-m+ \tilde{\delta} m /2  }\epsilon_0^2.
 \ee

Now, we proceed to consider the case when $k_3\leq k_1-10$. Recall (\ref{quarticsymbolnormalform}) and (\ref{quarticsymbolnormalform2}). Because of  the construction of normal form transformation we did in subsection \ref{goodvariable}, we know that the case when $\eta$ is very close to $\xi/2$ and $|\sigma|, |\kappa|\ll |\xi|$ is canceled out. As a result, we know that 
``$\nabla_\eta  \Phi^{\mu_1, \mu_2,\nu_1, \nu_2}(\xi, \eta, \sigma,\kappa)$'' has a lower bound, which is $2^{k-k_{1,+} }$.   To take advantage of this fact, we do integration by parts in ``$\eta$'' many times to rule out the case when $\max\{j_1,j_2\}\leq m+ k_{-}-5\beta m$. 
From    the    $L^2-L^\infty-L^\infty-L^\infty$ type estimate, the following estimate holds when  when $\max\{j_1,j_2\}\geq m+ k_{-}-5\beta m$,
\[
\sum_{i=3,4}\sum_{\max\{j_1,j_2\}\geq m+ k_{-}-5\beta m} \|K_{k,k_1,j_1,k_2,j_2,k_3,k_4}^{ \mu_1, \mu_2,\nu_1, \nu_2,i}(t, \xi)\|_{L^2} \lesssim \sum_{ j_1 \geq \max\{j_2,m+ k_{-}-5\beta m \} } 2^{ m + k+k_2+2k_1  + 4k_{1,+}}   \]
\[  \times\big( 2^{m+k+k_1}  +2^{k_1+ j_1} \big) \| g_{k_1,j_1}\|_{L^2} \| e^{-it \Lambda} g_{k_2,j_2}\|_{L^\infty} \| e^{-it \Lambda} g_{k_3}\|_{L^\infty} \| e^{-it \Lambda} g_{k_4}\|_{L^\infty} \]
\[+ \sum_{ j_2 \geq \max\{j_1,m+ k_{-}-5\beta m  \} } \ 2^{ m + k+k_2+2k_1  + 4k_{1,+}}  \big( 2^{m+k+k_1}  \|e^{-it \Lambda}  g_{k_1,j_1}\|_{L^\infty}     \]
\be\label{eqq31}
+ 2^{k_1} \|e^{-it \Lambda}  \mathcal{F}^{-1}[\nabla_\xi \widehat{g_{k_1,j_1}}(t, \xi)]\|_{L^\infty}  \big) 2^{k_2  }\|  g_{k_2,j_2}\|_{L^2} \| e^{-it \Lambda} g_{k_3}\|_{L^\infty}  \|   g_{k_4}\|_{L^2} \lesssim 2^{-3m/2+50\beta m}\epsilon_0^2.
\ee
From (\ref{eqq30}) and (\ref{eqq31}), it is easy to see our desired estimate (\ref{Z2normestimatequartic}) holds. 
\end{proof}

\begin{lemma}\label{Z2normcubicandhigher}
Under the bootstrap assumption \textup{(\ref{smallness})}, the following estimates hold for any  $t \in [2^{m-1}, 2^{m}]$ and any $\Gamma^1_\xi, \Gamma^2_\xi \in \{\hat{L}_\xi, \hat{\Omega}_\xi\},$
\be\label{eqq425}
 \| \Gamma^1 \Gamma^2 \Lambda_{\geq 3}[\p_t \widehat{g_k}(t, \xi)] \|_{L^2}\lesssim 2^{-m + \tilde{\delta}m } \big(1+ 2^{2\tilde{\delta}m + k+5k_{+}} \big)\epsilon_0,
\ee
\end{lemma}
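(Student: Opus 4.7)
The plan is to decompose $\Lambda_{\geq 3}[\p_t \widehat{g_k}(t,\xi)]$ via the Duhamel formula (\ref{realduhamel}) into three pieces: the cubic contribution $\sum_{\tau,\kappa,\iota}\sum_{k_3\leq k_2\leq k_1} T^{\tau,\kappa,\iota}_{k,k_1,k_2,k_3}(t,\xi)$, the quartic contribution $\sum_{\mu_1,\mu_2,\nu_1,\nu_2}\sum_{k_4\leq\cdots\leq k_1} K^{\mu_1,\mu_2,\nu_1,\nu_2}_{k,k_1,k_2,k_3,k_4}(t,\xi)$, and the quintic-and-higher remainder $e^{it\Lambda(\xi)}\widehat{\mathcal{R}_1}(t,\xi)\psi_k(\xi)$. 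Applying $\Gamma^1_\xi\Gamma^2_\xi$ to each piece and expanding the derivatives by Leibniz produces, for the cubic (resp. quartic) part, exactly the four families $T^{\tau,\kappa,\iota,i}_{k,k_1,k_2,k_3}$ (resp. $K^{\mu_1,\mu_2,\nu_1,\nu_2,i}_{k,k_1,k_2,k_3,k_4}$), $i\in\{1,2,3,4\}$, introduced in the preceding two subsections.

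For the cubic families with $i\in\{1,3,4\}$ the $L^2$ bound (\ref{eqq2}) of Proposition \ref{cubicproposition1} is used directly; the family $i=2$ was already bounded by the rough $L^2$-$L^\infty$-$L^\infty$ trilinear estimate earlier in that subsection, yielding a still better bound than the right-hand side of (\ref{eqq425}). For the quartic families all four values of $i$ are handled by (\ref{eqq21}) in Proposition \ref{quarticZ2norm3} (again, $i=2$ was controlled separately by a pure $L^2$-$L^\infty$-$L^\infty$-$L^\infty$ multilinear estimate, which only improves the bound). Summing over the finitely many signs and the dyadic frequencies (the summations in $k_1,k_2,k_3,k_4$ converge thanks to the factors $2^{-(N_0-30)k_{1,+}}$ implicit in the high-frequency bound and the logarithmic cutoffs used throughout the previous subsections) gives the $\Lambda_3$ and $\Lambda_4$ contributions to (\ref{eqq425}).

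It remains to control $\Gamma^1_\xi\Gamma^2_\xi\big(e^{it\Lambda(\xi)}\widehat{\mathcal{R}_1}(t,\xi)\psi_k(\xi)\big)$ in $L^2$, where $\mathcal{R}_1$ collects the quintic and higher order terms of the equation (\ref{realvariableweighted}). This part I would estimate by the fixed-point type argument of Section \ref{reminderestimatefixed}: each vector field $\hat{L}_\xi$ hitting the phase costs at most $t\sim 2^m$, while each of the five or more factors of $e^{-it\Lambda}g$ that is placed in $L^\infty$ contributes a gain of $2^{-m+\tilde{\delta}m}$ from the bootstrap assumption (\ref{smallness}), so that putting one factor bearing the vector fields in $L^2$ and the remaining four (or more) in $L^\infty$ produces a bound of size $2^{-2m+C\tilde{\delta}m}\epsilon_0^5$, which is absorbed into the right-hand side of (\ref{eqq425}) with ample room. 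The main obstacle is purely bookkeeping: one must verify that the extra factor $2^{2\tilde{\delta}m+k+5k_{+}}$ appearing in the cubic bound (\ref{eqq2}), which reflects the case when both vector fields land on the phase and release two powers of $t$ together with the low-frequency factor $\xi$ from (\ref{eqn1301}), is not amplified by the quartic or quintic pieces; this follows because in those pieces the extra $\eta,\sigma,\kappa$ integrations already supply the necessary smallness to close the estimate without invoking the $\xi\cdot\eta$ cancellation that was crucial at the cubic level.
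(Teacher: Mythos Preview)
Your proposal is correct and follows essentially the same approach as the paper: decompose $\Lambda_{\geq 3}[\p_t\widehat{g_k}]$ into the cubic, quartic, and quintic-and-higher pieces, and invoke (\ref{eqq2}) in Proposition~\ref{cubicproposition1} for the cubic part, (\ref{eqq21}) in Proposition~\ref{quarticZ2norm3} for the quartic part, and the remainder estimate from Section~\ref{reminderestimatefixed}. One small simplification: for the remainder $e^{it\Lambda(\xi)}\widehat{\mathcal{R}_1}(t,\xi)\psi_k(\xi)$, the paper bypasses the heuristic you sketch and simply cites estimate (\ref{eqnj878}) in Lemma~\ref{remaindertermweightednorm}, which bounds the $Z_2$-norm of $e^{it\Lambda}\mathcal{R}_1$ by $2^{-3m/2+200\beta m}\epsilon_0$; since the $Z_2$-norm directly controls $\|\Gamma^1\Gamma^2(\cdot)\|_{L^2}$, this immediately gives the needed bound without redoing the multilinear bookkeeping.
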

\begin{proof}
The desired estimate (\ref{eqq425}) follows easily from estimate (\ref{eqq2}) in Proposition (\ref{cubicproposition1}), estimate (\ref{eqq21}) in Proposition (\ref{quarticZ2norm3}), and estimate (\ref{eqnj878}) in Lemma \ref{remaindertermweightednorm}.
\end{proof}
\section{Fixed time weighted norm  estimates and the estimate of   remainder terms}\label{reminderestimatefixed}
\begin{lemma}\label{derivativeL2estimate1}
Under the bootstrap assumption \textup{(\ref{smallness})}, the following estimates hold,
\be\label{eqn751}
\sup_{t\in[2^{m-1}, 2^m]} \| \p_t \widehat{g}_k(t, \xi)-  \sum_{\mu, \nu\in\{+,-\}} \sum_{(k_1,k_1)\in \chi_k^1}B_{k,k_1,k_2}^{\mu, \nu}(t, \xi)  \|_{L^2} \lesssim 2^{-21m/20}\epsilon_0,
\ee
\be\label{eqn52}
\sup_{t\in[2^{m-1}, 2^m]} \| \p_t \widehat{g}_k(t, \xi)\|_{L^2}   \lesssim \min\{ 2^{-2m-k + 2\tilde{\delta} m }, 2^{-m+\delta m }\}\epsilon_0 +2^{-21 m/20}\epsilon_0,
\ee
\be\label{L2cubicandhigher}
\sup_{t\in[2^{m-1}, 2^m]} \| \Lambda_{\geq 3}[\p_t \widehat{g}_k(t, \xi)]\|_{L^2}   \lesssim 2^{-3m/2+\beta m }\epsilon_0,
\ee
where $B_{k,k_1,k_2}^{\mu, \nu}(t, \xi)$ is defined in \textup{(\ref{eqn650})}.
\end{lemma}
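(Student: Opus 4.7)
The starting point is the Duhamel formula (\ref{realduhamel}), which expresses
\[
\p_t\widehat{g}_k(t,\xi) = \sum_{\mu,\nu,k_1,k_2} B^{\mu,\nu}_{k,k_1,k_2}(t,\xi) + \sum_{\tau,\kappa,\iota,k_1\geq k_2\geq k_3} T^{\tau,\kappa,\iota}_{k,k_1,k_2,k_3}(t,\xi) + \sum K^{\mu_1,\mu_2,\nu_1,\nu_2}_{k,k_1,k_2,k_3,k_4}(t,\xi) + e^{it\Lambda(\xi)}\widehat{\mathcal{R}}(t,\xi)\psi_k(\xi).
\]
The plan is to establish (\ref{eqn751})--(\ref{L2cubicandhigher}) by bounding each piece in $L^2$ separately, subtracting only the High--High quadratic block indexed by $(k_1,k_2)\in\chi_k^1$ for (\ref{eqn751}).

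\textbf{Proof of (\ref{eqn751}).} After the subtraction, the residual quadratic contribution comes only from the High--Low block $(k_1,k_2)\in\chi_k^2$, in which case $\mu=+$ by (\ref{eqn900}). Mirroring the High--Low $Z_1$-estimate of Section \ref{loworderweight}, I would split $\tilde q_{+,\nu}(\xi-\eta,\eta)=c(\xi)+\bigl(\tilde q_{+,\nu}-c(\xi)\bigr)$ as in (\ref{symboldecomposition}). The error piece obeys (\ref{eqn932}), so the bilinear $L^2$--$L^\infty$ estimate (\ref{bilinearesetimate}) applied with $g_{k_1}$ in $L^2$ and $e^{-it\Lambda}g_{k_2}$ in $L^\infty$ gains the factor $2^{k_1+k_2}\cdot 2^{-m-(1+\alpha)k_2-6k_{2,+}}$ and sums to $\lesssim 2^{-m-\beta m}\epsilon_0$. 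For the $c(\xi)$ piece the $\eta$-integral factors as
\[
c(\xi)e^{it\Lambda(\xi)}\,\mathcal F\bigl[(e^{-it\Lambda}g_{k_1})\cdot(e^{-it\nu\Lambda}g^\nu_{k_2})\bigr](\xi),
\]
and, exactly as in the estimates (\ref{eqn302})--(\ref{eqn303}), combining $|c(\xi)|\lesssim 2^{2k}$, the $L^\infty_\xi$-control of $\widehat{\textup{Re}(v)}$ at frequency $k_2$, and the improved low-frequency bound (\ref{eqn400}) of Lemma \ref{Linftyxi} on $\widehat h$, yields a gain of $2^{-m/20}$ over the naive $2^{-m}$ rate after the dyadic summation in $k_2$. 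For the cubic, quartic and quintic-or-higher terms I would invoke (weightless versions of) the $L^2$ bounds in Proposition \ref{cubicproposition1}, Proposition \ref{quarticZ2norm3} and the remainder estimate (\ref{eqnj878}) in Lemma \ref{remaindertermweightednorm}; using the $L^2$--$L^\infty$--$L^\infty$ trilinear estimate with two sharp decays, the cubic alone already gives $\lesssim 2^{-2m+\beta m}\epsilon_0$, which is well inside $2^{-21m/20}\epsilon_0$.

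\textbf{Proof of (\ref{eqn52}) and (\ref{L2cubicandhigher}).} For (\ref{eqn52}) I add back the High--High block. The universal bound $2^{-m+\delta m}$ follows from the $L^2$--$L^\infty$ bilinear estimate using the crude symbol bound $\|\tilde q_{\mu,\nu}\|_{\mathcal S^\infty_{k,k_1,k_2}}\lesssim 2^{2k_1}$ together with the sharp decay $\|e^{-it\Lambda}g_{k_2}\|_{L^\infty}\lesssim 2^{-m-(1+\alpha)k_2-6k_{2,+}}\epsilon_1$, summed over $(k_1,k_2)\in\chi_k^1$. The refined bound $2^{-2m-k+2\tilde\delta m}$ is obtained by first applying the Bernstein inequality $\|P_k f\|_{L^2}\lesssim 2^k\|P_k f\|_{L^\infty}$ and then estimating $\|B^{\mu,\nu}_{k,k_1,k_2}\|_{L^\infty}$ via the two $L^\infty$ norms of $e^{-it\Lambda}g_{k_i}$, each contributing a sharp $2^{-m}$ factor; the loss $2^k$ from Bernstein is outweighed by the gain $2^{-2m}$ from the two sharp decays, and the regularity weights in the $W^{6,1+\alpha}$-norm ensure summability in $k_1\sim k_2$ with only a $2^{2\tilde\delta m}$ loss. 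Estimate (\ref{L2cubicandhigher}) is purely cubic-and-higher: the $L^2$--$L^\infty$--$L^\infty$ trilinear estimate with symbol bound (\ref{symbolcubic}) and two sharp $L^\infty$ decays gives $\lesssim 2^{-2m+\beta m}\epsilon_0$ for the cubic, the analogous four-linear estimate gives $\lesssim 2^{-5m/2+\beta m}$ for the quartic, and the remainder satisfies the same bound by (\ref{eqnj878}); together these yield $2^{-3m/2+\beta m}\epsilon_0$.

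\textbf{Main obstacle.} The real technical difficulty is getting faster-than-$2^{-m}$ decay in (\ref{eqn751}). A generic $L^2$--$L^\infty$ bilinear bound saturates at $2^{-m}$, so the claimed $2^{-21m/20}$ requires exploiting the structure of $\tilde q_{+,\nu}$ at $|\eta|\ll|\xi|$ in two different ways: (i) the principal part $c(\xi)$ is harmless because the conservation of momentum (\ref{conservedmom}), through the improved low-frequency bound (\ref{eqn400}) on $\widehat h$, prevents $\widehat v$ from concentrating too strongly near $\eta=0$; (ii) the remainder $\tilde q_{+,\nu}-c(\xi)$ gains a symbolic smallness of $2^{k_1+k_2}$ from (\ref{eqn932}). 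The delicate balance between these two structural facts and the dyadic summation over the constrained range of $(k_1,k_2)$ is what yields the polynomial gain $2^{-m/20}$ beyond the natural $2^{-m}$ rate.
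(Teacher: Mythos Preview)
Your proposal has two genuine gaps, both of which concern the mechanism for gaining decay beyond the naive $2^{-m}$ rate.

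\textbf{Gap in (\ref{eqn52}), refined bound.} The inequality $\|P_kf\|_{L^2}\lesssim 2^k\|P_kf\|_{L^\infty}$ is simply false in physical space in two dimensions. Even reading it charitably as the Fourier-side volume bound $\|B\|_{L^2_\xi}\lesssim 2^k\|B\|_{L^\infty_\xi}$, you cannot then control $\|B^{\mu,\nu}_{k,k_1,k_2}(t,\cdot)\|_{L^\infty_\xi}$ by a product of the two $\|e^{-it\Lambda}g_{k_i}\|_{L^\infty_x}$: the expression $B(t,\xi)$ is a \emph{convolution} in $\eta$, not a pointwise product, and its $L^\infty_\xi$ norm is governed by $\|P_k\tilde Q(v_{k_1},v_{k_2})\|_{L^1_x}$, for which two $L^\infty_x$ bounds are useless. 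The paper instead does one integration by parts in $\eta$, exploiting the lower bound $|\nabla_\eta\Phi^{\mu,\nu}|\gtrsim 2^{k-k_{1,+}/2}$ (valid because the space-resonance neighborhood was removed by the normal form), which converts one $t$ into $2^{-m-k}$ at the cost of a weighted factor $\|\nabla_\xi\widehat g_{k_i}\|_{L^2}\lesssim 2^{-k_i+\tilde\delta m}\epsilon_1$. This is how $2^{-2m-k+2\tilde\delta m}$ actually arises.

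\textbf{Gap in (\ref{eqn751}), High--Low quadratic block.} Your symbol split $c(\xi)+(\tilde q_{+,\nu}-c(\xi))$ does not cover all of $k_2\le k_1-10$. The error piece with direct $L^2$--$L^\infty$ gives roughly $2^{k_1+k_2}\cdot 2^{\delta m}\cdot\min\{2^{-m-(1+\alpha)k_2},\,2^{k_2}\}\epsilon_0$; optimizing at $k_2\approx -m/(2+\alpha)$ yields only $\sim 2^{k_1-2m/(2+\alpha)}\approx 2^{-20m/21}\epsilon_0$, which is \emph{worse} than $2^{-m}$ and far from $2^{-21m/20}$. Conversely, the $c(\xi)$ piece via Lemma~\ref{Linftyxi} is only useful when $k_2$ is very negative; for $k_2$ of order $0$ the bound $\|\widehat h\psi_{k_2}\|_{L^\infty_\xi}\lesssim 2^{2m}$ is vacuous. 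The paper's argument is structurally different: it splits on the size of $k_{1,-}+k_2$. When $k_{1,-}+k_2\le -18m/19$ it uses the $L^\infty_\xi$ bounds (in the spirit of your $c(\xi)$ argument, but applied to the full symbol); when $k_{1,-}+k_2\ge -18m/19$ it localizes spatially, integrates by parts in $\eta$ repeatedly to force $\max\{j_1,j_2\}\ge m+k_{1,-}-\beta m$, and then a direct $L^2$--$L^\infty$ on the remaining range gives $2^{-3m-2k_2-k_{1,-}+3\beta m}\lesssim 2^{-21m/20}$. The integration by parts in $\eta$ is the missing ingredient in your scheme.

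Your treatment of the cubic-and-higher terms for (\ref{L2cubicandhigher}) is essentially what the paper does (note, however, that two ``sharp'' $L^\infty$ decays on the cubic only give $2^{-3m/2+\beta m}$, not $2^{-2m}$, because when $k_2$ is small the factor $\|e^{-it\Lambda}g_{k_2}\|_{L^\infty}$ carries a loss $2^{-(1+\alpha)k_2}$; this is still ample for the target).
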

\begin{proof}
For the cubic and higher order terms, after putting the input with the smallest frequency in $L^2$ and all other inputs in $L^\infty$, the decay rate of $L^2$ norm is at least $2^{-3m/2+\beta m } $, which gives us our desired estimate (\ref{L2cubicandhigher}). Hence to prove (\ref{eqn751}) and (\ref{eqn52}), 
  we only have to consider the quadratic terms ``$B^{\mu, \nu}_{k,k_1,k_2}(t, \xi)$''. Recall (\ref{eqn650}), after doing spatial  localizations for two inputs, we have
\[
   B_{k,k_1,k_2}^{\mu, \nu}(t, \xi)= \sum_{j_1 \geq -k_{1,-}, j_2\geq -k_{2,-}}   B^{\mu, \nu,j_1,j_2	}_{k,k_1, k_2 }(t, \xi), 
\] 
\[
B^{\mu, \nu,j_1,j_2	}_{k,k_1, k_2 }(t, \xi) = \int_{\R^2} e^{i t\Phi^{\mu, \nu}(\xi ,\eta)} \tilde{q}_{\mu, \nu}(\xi, \eta) \widehat{g_{k_1,j_1}^{\mu}}(t,  \xi-\eta)\widehat{g_{k_2,j_2}^{\nu}}(t, \eta) \psi_{k}(\xi) d \eta.
\]

 We first consider the case when $|k_1-k_2|\leq 10 $. From the $L^2-L^\infty$ type bilinear estimate (\ref{bilinearesetimate}) in Lemma \ref{multilinearestimate}, we have
 \be\label{eqn700}
\sum_{|k_1-k_2|\leq 10}\|  B^{\mu, \nu}_{k,k_1,k_2}(t, \xi)\|_{L^2}\lesssim  \sum_{|k_1-k_2|\leq 10} 2^{2k_1} \| g_{k_1}\|_{L^2} \| e^{-it \Lambda} g_{k_2}(t)\|_{L^\infty} \lesssim 2^{-m +\delta m}\epsilon_0. 
 \ee
Meanwhile, after doing integration by parts in ``$\eta$'' once, the following estimate also holds,
\[
\sum_{|k_1-k_2|\leq 10} \|  B^{\mu, \nu}_{k,k_1,k_2}(t, \xi) \|_{L^2}\lesssim \sum_{|k_1-k_2|\leq 10} 2^{2k_1} 2^{-m-k+k_{1,+}}\big( \| e^{-it \Lambda}  g_{k_1}\|_{L^\infty}  + \| e^{-it \Lambda}  g_{k_2}\|_{L^\infty} \big) \]
\be\label{eqn701}
\times \big( \| \nabla_\xi \widehat{g}_{k_1}(t, \xi)\|_{L^2} + \| \nabla_\xi \widehat{g}_{k_2}(t, \xi)\|_{L^2} + 2^{-k_1}\| g_{k_1}(t)\|_{L^2} \big) \lesssim 2^{-2m-k + 2\tilde{\delta} m } \epsilon_0.
\ee

Now, we consider the case when $ k_2\leq k_1-10$ and $k_{1,-} +k_2\leq -18 m /19 $. From estimate  (\ref{eqn400}) in Lemma \ref{Linftyxi}, we have
\[
\sum_{k_{1,-} +k_2\leq -18m/19  } \| B^{\mu, \nu}_{k,k_1,k_2}(t, \xi) \| \lesssim \sum_{k_{1,-} +k_2\leq -18m/19    }  \| g_{k_1}(t)\|_{L^2} \min\{ 2^{2k_1+k_2}  \| g_{k_2}(t)\|_{L^2},
\]
\[
 2^{ k_1+3k_2} \| \widehat{g}_{k_2}(t, \xi)\|_{L^\infty_\xi} + 2^{ 2k_1+2k_2} \| \widehat{\textup{Re}[v]}_{}(t, \xi)\psi_{k_2}(\xi) \|_{L^\infty_\xi}  \}\lesssim \sum_{k_{1,-} +k_2\leq -18m/19  }2^{3\tilde{\delta} m }\min\{2^{2 k_{1,-} + k_2   },  \]
 \[   2^{ 2k_2  }\big(2^{ k_{1,-}+ k_2+ m}+   2^{2k_{1,-}+2k_2+2m}\big)  \} \lesssim 2^{-21m/20}\epsilon_0.
\]

Lastly, we consider the case when $k_2\leq k_1-10$ and $k_{1,-}+k_2\geq  -18 m /19$. After doing integration by parts in ``$\eta$'' many times, we can rule out the case when $\max\{j_1,j_2\} \leq m +k_{1,-}-\beta m .$  From the $L^2-L^\infty$ type bilinear estimate (\ref{bilinearesetimate}) in Lemma \ref{multilinearestimate}, the following estimate holds when $\max\{j_1,j_2\} \geq m +k_{1,-}- \beta m$,
\[
\sum_{\max\{j_1,j_2\} \geq m +k_{1,-}- \beta m}\|  B^{\mu, \nu,j_1,j_2	}_{k,k_1, k_2 }(t, \xi)\|_{L^2}\lesssim \sum_{j_1 \geq \max\{j_2,m +k_{1,-}- \beta m \}} 2^{2k_1} \| e^{-it \Lambda} g_{k_2,j_2}\|_{L^\infty} \| g_{k_1,j_1}\|_{L^2} 
\]
\be\label{eqn705}
+ \sum_{j_2\geq \max\{j_1, m +k_{1,-}- \beta m\}} 2^{2k_1} \| e^{-it \Lambda} g_{k_1,j_1}\|_{L^\infty} \| g_{k_2,j_2}\|_{L^2}  \lesssim 2^{-3m-2k_2-k_{1,-} + 3\beta m }\epsilon_0 \lesssim 2^{-21 m/20}\epsilon_0.
\ee
Combining estimates (\ref{eqn700}), (\ref{eqn701}), and (\ref{eqn705}), it is easy to see our desired estimate (\ref{eqn52}) holds. 
\end{proof}

\begin{lemma}\label{derivativeL2estimate2}
Under the bootstrap assumption \textup{(\ref{smallness})}, the following estimate holds for any $t\in [2^{m-1}, 2^m]$,
\be\label{eqn730}
\|   \p_t \widehat{\Gamma_1 \Gamma_2 g_k}(t, \xi) -\sum_{\nu\in\{+,-\}}\sum_{(k_1,k_2)\in \chi_k^2}\widetilde{B}_{k,k_1 ,k_2   }^{+, \nu }(t, \xi)  \|_{L^2 } \lesssim  2^{-m + \tilde{\delta}m +\delta m} \big(1+ 2^{2\tilde{\delta}m +k+5k_{+}} \big)\epsilon_0,  
\ee
where $\Gamma_1, \Gamma_2\in \{L, \Omega\}$ and $\widetilde{B}_{k,k_1 ,k_2   }^{+, \nu }(t, \xi) $ is defined as follows,
\be\label{eqn892}
\widetilde{B}_{k,k_1 ,k_2   }^{+, \nu }(t, \xi):= \int_{\R^2} e^{it \Phi^{+, \nu}(\xi, \eta)} \tilde{q}_{+,\nu}(\xi-\eta, \eta) \widehat{\Gamma_1 \Gamma_2 g_{k_1}}(t, \xi-\eta)\widehat{g^{\nu}_{k_2}}(t, \eta)\psi_k(\xi) d \eta.
\ee
\end{lemma}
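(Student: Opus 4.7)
The plan is to apply $\Gamma^1_\xi \Gamma^2_\xi$ to both sides of the Duhamel identity (\ref{realduhamel}), split the resulting forcing into quadratic, cubic, quartic, and quintic-plus contributions, and show that every piece except the bulk quadratic term $\widetilde{B}^{+,\nu}_{k,k_1,k_2}$ obeys the claimed $L^2$ bound. The whole argument is parallel in spirit to the $Z_2$-norm estimate of Section 6, except that here the target norm is $\|\cdot\|_{L^2}$ applied directly to the forcing rather than its pairing against $\overline{\Gamma^1\Gamma^2\widehat{g_k}}$, so the $\xi\leftrightarrow\xi-\eta$ symmetrization used there will not be available and each piece has to be estimated in its raw form.

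For the cubic, quartic, and quintic-or-higher contributions the required bounds are already in hand: estimate (\ref{eqq2}) of Proposition \ref{cubicproposition1} supplies the cubic part, (\ref{eqq21}) of Proposition \ref{quarticZ2norm3} supplies the quartic part, and (\ref{eqnj878}) of Lemma \ref{remaindertermweightednorm} supplies the quintic remainder; each already carries the $(1+2^{2\tilde{\delta}m+k+5k_+})$ tail demanded by (\ref{eqn730}). For the quadratic part I plan to apply the Leibniz decomposition $\Gamma^l_\xi = (\Gamma^l_\xi + \Gamma^l_\eta) - \Gamma^l_\eta$ for $l=1,2$ and integrate by parts in $\eta$ on the $\Gamma^l_\eta$ pieces, producing the same four families $P^1,P^2,P^3,P^4$ as in (\ref{eqn1006}). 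In the High $\times$ High regime $|k_1-k_2|\leq 10$, the output frequency satisfies $k\leq k_1+10$, and the combination of the $2^{2k_1}$ smallness of the symbol (\ref{symbolquadraticrough}) with the $L^2\times L^\infty$ bilinear estimate of Lemma \ref{multilinearestimate} — mirroring the arithmetic of (\ref{eqn998})--(\ref{eqn1069}) — closes the bound directly.

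The High $\times$ Low regime $k_2\leq k_1-10$ is where the subtraction matters. By (\ref{eqn900}) only $\mu=+$ survives, and the unique $P^3$-type term in which both vector fields transfer onto the high-frequency input $\widehat{g_{k_1}}$ is precisely $\widetilde{B}^{+,\nu}_{k,k_1,k_2}$; subtracting it removes the one piece for which an $L^2\times L^\infty$ estimate would fail because $\|\Gamma^1\Gamma^2 g_{k_1}\|_{L^2}$ grows like $2^{\tilde{\delta}m}$. For every remaining piece (derivatives on the symbol, cross terms, both derivatives on $g_{k_2}$, and the phase-derivative terms carrying factors of $t$ and $t^2$) the plan is to do spatial localization, rule out $\max\{j_1,j_2\}\leq m+k_{1,-}-\beta m$ by iterated integration by parts in $\eta$, and apply an $L^2\times L^\infty$ bilinear estimate; the sharp decay of $1+\alpha$ derivatives of the profile will then finish each sub-case. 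For the phase-derivative terms, which are the most dangerous because of the extra $t$ or $t^2$ factor, I would invoke the hidden decomposition (\ref{eqn1301}) that splits $(\hat L_\xi+\hat L_\eta)\Phi^{+,\nu}$ into $\tilde c(\xi-\eta)\Phi^{+,\nu}+\mathcal O(|\eta|^2)$: on the first piece one integrates by parts in time (using Lemma \ref{derivativeL2estimate1} to control $\partial_t\widehat g$ in $L^2$), and the second piece gains two extra derivatives on the low-frequency input, which is sufficient to absorb the $t^2$ growth.

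The main obstacle will be maintaining the precise $2^{k+5k_+}$ tail in the final bound rather than a generic constant. The very-low-frequency regime $k+k_{1,-}\ll -m$ will have to be handled via the improved pointwise estimate (\ref{eqn400}) for $\widehat h$ from Lemma \ref{Linftyxi}, exactly as in the $Z_1$-analysis around (\ref{eqn302})--(\ref{eqn303}), and the very-high-frequency regime will be reduced to $k_1\leq 4\beta m$ by the crude $L^2\times L^\infty\times\cdots$ bound modeled on (\ref{eqn1740}). Once both extremes are disposed of, only logarithmically many dyadic cases remain and the bookkeeping is routine; what is delicate is ensuring that the phase-integration-by-parts on the $t^2$-weighted term does not lose any factor beyond $2^{\tilde{\delta}m+\delta m}$, since that budget is already tight in (\ref{eqn730}).
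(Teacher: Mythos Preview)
Your high-level organization is right: the cubic, quartic, and quintic-plus pieces are indeed disposed of by (\ref{eqq2}), (\ref{eqq21}), and (\ref{eqnj878}), and the identification of $\widetilde{B}^{+,\nu}_{k,k_1,k_2}$ as the unique quadratic piece that must be subtracted is correct. The handling of the very-low and very-high frequency extremes via Lemma~\ref{Linftyxi} and the crude $L^2$ bound also matches the paper.

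The gap is in your treatment of the phase-derivative terms (those carrying factors of $t$ or $t^2$). You propose to split $(\hat L_\xi+\hat L_\eta)\Phi^{+,\nu}$ via (\ref{eqn1301}) and then \emph{integrate by parts in time} on the $\tilde c(\xi-\eta)\Phi^{+,\nu}$ piece. But this lemma is a \emph{fixed-time} $L^2$ estimate on $\partial_t\widehat{\Gamma^1\Gamma^2 g_k}(t,\cdot)$; there is no time integral in which to absorb $\Phi e^{it\Phi}=\frac{1}{i}\partial_t e^{it\Phi}$, so the mechanism you describe (and the appeal to Lemma~\ref{derivativeL2estimate1} for $\partial_t\widehat g$) is not available here. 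The paper instead handles these terms by direct integration by parts in $\eta$, once for the $t$-weighted piece and twice for the $t^2$-weighted piece, using the lower bound $|\nabla_\eta\Phi^{+,\nu}|\gtrsim 2^{k_1-k_{1,+}/2}$ from (\ref{eqn928}). In the High$\times$Low regime the size estimate $(\hat L_\xi+\hat L_\eta)\Phi^{+,\nu}=\mathcal{O}(2^{k_1+k_2})$ from (\ref{eqn945}) is all that is used from (\ref{eqn1301}); the $\Phi$-extraction is unnecessary. After these $\eta$-integrations the resulting bilinear expressions are bounded by $L^2\times L^\infty$ with spatial localization exactly as you outline, yielding the $2^{-m+2\tilde\delta m+\delta m/2+k}$ contribution that produces the $2^{k+5k_+}$ tail. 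In the High$\times$High regime the paper does not even pass to the $P^1,\ldots,P^4$ form of (\ref{eqn1006}); it applies $\Gamma^1_\xi\Gamma^2_\xi$ directly (three pieces $K^{1;1},K^{1;2},K^{1;3}$) and again integrates by parts in $\eta$. So the fix is simply to replace your time-integration step by the $\eta$-integration that you already use elsewhere.
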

\begin{proof}
From (\ref{eqq2}) in Proposition \ref{cubicproposition1}, (\ref{eqq21}) in Proposition \ref{quarticZ2norm3}, and  (\ref{eqnj878}) in Lemma \ref{remaindertermweightednorm}, we know  that all terms except quadratic terms inside $ \p_t \widehat{\Gamma_1 \Gamma_2 g_k}(t, \xi) $   already satisfy the desired estimate \ref{eqn730}.  Hence, we only need to estimate the quadratic terms. 

We first consider the case when  $(k_1,k_2)\in \chi_k^1$ ,  we have
\[
\Gamma^1_\xi \Gamma^2_\xi B^{\mu, \nu}_{k_1,k_2}(t, \xi)= \sum_{i=1,2,3} K_{k_1,k_2}^{\mu, \nu,1;i}, \]
\[K_{k_1,k_2}^{\mu, \nu,1;1}:=   \int_{\R^2} e^{i t\Phi^{\mu, \nu}(\xi, \eta)}\Gamma^1_\xi \Gamma^2_\xi \big(\tilde{q}_{\mu, \nu}(\xi-\eta, \eta) \widehat{g_{k_1}^{\mu}}(t, \xi-\eta) \big)  \widehat{g^{\nu}_{k_2}}(t, \eta) d \eta,
\]
\[
K_{k_1,k_2}^{\mu, \nu,1;2}:=   \sum_{{l,m}=\{1,2\}} \int_{\R^2} e^{i t\Phi^{\mu, \nu}(\xi, \eta)}it \big(\Gamma^l_\xi \Phi^{\mu, \nu}(\xi, \eta)\big) \Gamma^n_\xi \big(\tilde{q}_{\mu, \nu}(\xi-\eta, \eta) \widehat{g_{k_1}^{\mu}}(t, \xi-\eta) \big)  \widehat{g^{\nu}_{k_2}}(t, \eta) d \eta
\]
\[
K_{k_1,k_2}^{\mu, \nu,1;3}:= -\int_{\R^2} e^{i t\Phi^{\mu, \nu}(\xi, \eta)} t^2 \big(  \Gamma^1_\xi \Phi^{\mu, \nu}(\xi, \eta) \Gamma^2_\xi \Phi^{\mu, \nu}(\xi, \eta) \big) \tilde{q}_{\mu, \nu}(\xi-\eta, \eta) \widehat{g_{k_1}^{\mu}}(t, \xi-\eta) \big)  \widehat{g^{\nu}_{k_2}}(t, \eta) d \eta.
\]
From the $L^2-L^\infty$ type bilinear estimate (\ref{bilinearesetimate}) in Lemma \ref{multilinearestimate}, we have
\[
\sum_{|k_1-k_2|\leq 10}  \| K_{k_1,k_2}^{\mu, \nu,1;1}\|_{L^2}\lesssim 2^{2k_1} \big( 2^{2 k}  \| \nabla_\xi^2 \widehat{g}_{k_1}(t, \xi)\|_{L^2} + 2^{ k}  \| \nabla_\xi   \widehat{g}_{k_1}(t, \xi)\|_{L^2}+    \|   \widehat{g}_{k_1}(t, \xi)\|_{L^2} \big) \]
\[\times \| e^{-it \Lambda} g_{k_2}(t)\|_{L^\infty} \lesssim 2^{-m +\tilde{\delta} m }\epsilon_0.
\]

For $K_{k_1,k_2}^{\mu, \nu,1;2}$, we do integration by parts in ``$\eta $ '' once. Meanwhile, for $K_{k_1,k_2}^{\mu, \nu,1;3}$, we do integration by parts in ``$\eta$ '' twice.  As a result, we have
\[
\sum_{|k_1-k_2|\leq 10} \sum_{i=2,3} \|  K_{k_1,k_2}^{\mu, \nu,1;i}\|_{L^2}   \lesssim \sum_{|k_1-k_2|\leq 10} 2^{2k_1} \big( \sum_{i=0,1,2} 2^{ i k_1}  \| \nabla_\xi^i \widehat{g}_{k_1}(t, \xi)\|_{L^2} + 2^{ i k_1}  \| \nabla_\xi^i \widehat{g}_{k_2}(t, \xi)\|_{L^2} \big)  \]
\[\times \big( \| e^{-it \Lambda} g_{k_1}(t)\|_{L^\infty}+ \| e^{-it \Lambda} g_{k_2}(t)\|_{L^\infty} \big)  + \sum_{|k_1-k_2|\leq 10}\sum_{j_1\geq j_2} 2^{4k_1}   \| e^{-it \Lambda} \mathcal{F}^{-1}[ \nabla_\xi \widehat{g}_{k_2,j_2}(t,\xi)]\|_{L^\infty}\]
\[ \times   \| \nabla_\xi \widehat{g}_{k_1, j_1}(t, \xi)\|_{L^2} +  \sum_{|k_1-k_2|\leq 10}\sum_{j_2\geq j_1} 2^{4k_1}   \| e^{-it \Lambda} \mathcal{F}^{-1}[ \nabla_\xi \widehat{g}_{k_1,j_1}(t,\xi)]\|_{L^\infty}  \| \nabla_\xi \widehat{g}_{k_2, j_2}(t, \xi)\|_{L^2}
\]
\[
\lesssim 2^{-m +\tilde{\delta} m }\epsilon_0 +\sum_{j_1} 2^{-m +4k_1 +2j_1} \| g_{k_1,j_1}(t)\|_{L^2} \sum_{j_2\geq j_1} 2^{ j_2 } \| g_{k_2,j_2}(t)\|_{L^2}\]
\[
+ \sum_{j_2} 2^{-m +4k_1 +2j_1} \| g_{k_2,j_2}(t)\|_{L^2} \sum_{j_1\geq j_2} 2^{ j_1 } \| g_{k_1,j_1}(t)\|_{L^2} \lesssim 2^{-m +\tilde{\delta}m}\epsilon_0.
 \]

 Now, we proceed to consider the case when $(k_1,k_2)\in \chi_k^2$. We split it into two cases based on the size of $k_1+k_2$. If $ k_1+k_2\leq -18 m /19$, the following estimate holds from estimates (\ref{eqn400})   in Lemma \ref{Linftyxi}, 
 \[
 \sum_{i=1,2,3}\|\sum_{\nu\in\{+,-\}}  K_{k_1,k_2}^{\mu, \nu,1;i}\|_{L^2}+ \|\sum_{\nu\in\{+,-\}}\widetilde{B}_{k,k_1 ,k_2   }^{+, \nu }(t, \xi)\|_{L^2}  \]
 \[\lesssim \Big( \big(\sum_{i=0,1,2}2^{ i k_1}  \| \nabla_\xi^i \widehat{g}_{k_1}(t, \xi)\|_{L^2}  \big) + 2^{m+k_1+k_2}\big(\sum_{i=0,1 }2^{ i k_1}  \| \nabla_\xi^i \widehat{g}_{k_1}(t, \xi)\|_{L^2}  \big)+ 2^{2m+2k_1+2k_2} \| g_{k_1}(t)\|_{L^2}  \Big) \]
 \[ \times  \min\{  2^{k_1+ 3k_2}\| \widehat{g}_{k_2}(t)\|_{L^\infty_\xi}  +2^{2k_1+ 2k_2}\| \widehat{\textup{Re}[v]}(t, \xi)\psi_{k_2}(\xi)\|_{L^\infty_\xi},  2^{2k_1+k_2} \| g_{k_2}\|_{L^2}\}
 \]
 \[
\lesssim ( 2^{k_1 + \tilde{\delta} m}+  2^{2m + 3k_1+2k_2 + 2\tilde{\delta} m })\min\{2^{k_1+k_2},   2^{ 3k_2+m}+ 2^{k_1+4k_2 +2m}  \}\epsilon_0\lesssim 2^{-m-\beta m }\epsilon_0.
 \]

Now, we will rule out the case when $k_1$ is relatively large. Same as before, we move the derivative $\nabla_\xi = -\nabla_\eta$ in front of $\widehat{g}_{k_1}(t, \xi-\eta)$ around by doing integration by parts in $\eta$ such that there is no derivative in front of $\widehat{g}_{k_1}(t, \xi-\eta)$. As a result, the following estimate holds when $k_1+k_2\geq -18m/19$ and $k_1 \geq 5\beta m $,
\[
\sum_{k_1+k_2 \geq-18m/19, k_1 \geq 5\beta m}\sum_{i=1,2,3}\|  K_{k_1,k_2}^{\mu, \nu,1;i}\|_{L^2}+ \|\widetilde{B}_{k,k_1 ,k_2   }^{+, \nu }(t, \xi)\|_{L^2}   \]
\[ \lesssim 2^{2m+2k_1+k_2 + 4k_{1,+}} \| g_{k_1}(t)\|_{L^2}\big( \| \nabla_\xi^2\widehat{g}_{k_2}(t, \xi)\|_{L^2} + 2^{-k_2} \| \nabla_\xi  \widehat{g}_{k_2}(t, \xi)\|_{L^2} + 2^{-2k_2}\| g_{k_2}(t)\|_{L^2} \big) \]
\[ \lesssim \sum_{k_1+k_2 \geq-18m/19,k_1 \geq 5\beta m} 2^{2m+\beta m +2k_1-k_2 -(N_0-10) k_{1,+}} \epsilon_1^2\lesssim 2^{-m -\beta m }\epsilon_0.
\]
Lastly, we consider the case when $k_1+k_2\geq -18 m /19$ and $k_1\leq 5\beta m $. Note that
\[
  \Gamma^1_\xi \Gamma^2_\xi  B^{\mu, \nu}_{k_1,k_2}(t, \xi)-  \int_{\R^2 } e^{i t\Phi^{+, \nu}(\xi, \eta)}     \tilde{q}_{+, \nu}(\xi-\eta, \eta) \widehat{  \Gamma^1   \Gamma^2 g_{k_1}}(t, \xi-\eta)\widehat{g_{k_2}^\nu}(t, \eta)  d\eta = \sum_{i=1}^4 K_{k_1,k_2}^{+, \nu,2;i},\]
  where
\[  K_{k_1,k_2}^{+, \nu,2;1}=  \int_{\R^2 } e^{i t\Phi^{+, \nu}(\xi, \eta)} \tilde{q}_{+, \nu}(\xi-\eta, \eta) \widehat{  g_{k_1}}(t, \xi-\eta) \widehat{\Gamma^1   \Gamma^2 g_{k_2}^\nu}(t, \eta ) d \eta,\]
\[ K_{k_1,k_2}^{+, \nu,2;2}= \sum_{j_1 \geq k_{1,-}, j_2\geq -k_{2,-}} K_{k_1,j_1,k_2,j_2}^{+, \nu,2;2}, \quad K_{k_1,j_1,k_2,j_2}^{+, \nu,2;2}:=  \sum_{(l,n)\in\{(1,2),(2,1)\}}   \int_{\R^2 } e^{i t\Phi^{+, \nu}(\xi, \eta)} \Big[\tilde{q}_{+, \nu}(\xi-\eta, \eta)\]
\[   \times \widehat{ \Gamma^l g_{k_1,j_1}}(t, \xi-\eta) \widehat{\Gamma^n g_{k_2,j_2}^\nu}(t, \eta)   + (\Gamma_\xi^l +\Gamma_\eta^l + d_{\Gamma^l} )\tilde{q}_{+, \nu}(\xi-\eta, \eta) \big(\widehat{ \Gamma^n g_{k_1,j_1}}(t, \xi-\eta) \widehat{  g_{k_2,j_2}^\nu}(t, \eta)  \]
\[
+ \widehat{ g_{k_1,j_1}}(t, \xi-\eta) \widehat{ \Gamma^n  g_{k_2,j_2}^\nu}(t, \eta) \big) 
 +it   (\Gamma_\xi^l +\Gamma_\eta^l )\Phi^{+, \nu}(\xi, \eta)(\Gamma_\xi^n +\Gamma_\eta^n  + d_{\Gamma^n})\tilde{q}_{+, \nu}(\xi-\eta, \eta) \widehat{   g_{k_1,j_1}}(t, \xi-\eta) 
  \]
  \[ 
  \times \widehat{g_{k_2,j_2}^\nu}(t, \eta)   +(\Gamma_\xi^1 +\Gamma_\eta^1  + d_{\Gamma^1}) (\Gamma_\xi^2 +\Gamma_\eta^2 +  d_{\Gamma^2})\tilde{q}_{+, \nu}(\xi-\eta, \eta) \widehat{ g_{k_1,j_1}}(t, \xi-\eta)   \widehat{   g_{k_2,j_2}^\nu}(t, \eta)  d \eta.  \] 
\[
  K_{k_1,k_2}^{+ \nu,2;3}=  \sum_{(l,n)\in\{(1,2),(2,1)\}}  \int_{\R^2 } e^{i t\Phi^{+, \nu}(\xi, \eta)}  it   (\Gamma_\xi^l +\Gamma_\eta^l )\Phi^{+, \nu}(\xi, \eta) \tilde{q}_{+, \nu}(\xi-\eta, \eta) \]
  \[ \times\big( \widehat{g_{k_2 }^\nu}(t, \eta) \widehat{\Gamma^n  g_{k_1 }}(t, \xi-\eta)+   \widehat{g_{k_1 }}(t, \xi-\eta) 
   \widehat{\Gamma^n g_{k_2 }^\nu}(t, \eta)\big) d \eta
\]
\[
  K_{k_1,k_2}^{+, \nu,2;4}=  -\int_{\R^2 } e^{i t\Phi^{+, \nu}(\xi, \eta)}  t^2 (\Gamma_\xi^1 +\Gamma_\eta^1 ) \Phi^{\mu, \nu}(\xi, \eta) (\Gamma_\xi^2 +\Gamma_\eta^2 ) \Phi^{+, \nu}(\xi, \eta)  \tilde{q}_{+, \nu}(\xi-\eta, \eta)   \widehat{g_{k_1  }}(t, \xi-\eta) \widehat{  g_{k_2  }^\nu}(t, \eta) d \eta.
\]

From the $L^2-L^\infty$ type estimate (\ref{bilinearesetimate}) in Lemma \ref{multilinearestimate}, we have
\[
\big\|  K_{k_1,k_2}^{+, \nu,2;1} \big\|_{L^2}\lesssim2^{2k_1} \| \Gamma^1 \Gamma^2 g_{k_2}(t)\|_{L^2} \| e^{-it \Lambda} g_{k_1}(t)\|_{L^\infty} \lesssim 2^{-m +\tilde{\delta} m }\epsilon_0. 
 \]

  Now, we proceed to estimate  $K_{k_1,k_2}^{+, \nu,2;2}$. By doing integration by parts in $\eta$ many times, we can rule out the case when $\max\{j_1,j_2\} \leq m +k_{1,-}-\beta m $. From the $L^2-L^\infty$ type estimate (\ref{bilinearesetimate}) in Lemma \ref{multilinearestimate}, the following estimate holds when  $\max\{j_1,j_2\} \geq  m +k_{1,-}-\beta m $,
 \[
\sum_{ \max\{j_1,j_2\} \geq  m +k_{1,-}-\beta m } \|K_{k_1,j_1,k_2,j_2}^{+, \nu,2;2}\|_{L^2} \lesssim \sum_{  j_1 \geq  \max\{ m +k_{1,-}-\beta m,j_2\}  } 2^{2k_1}\big(2^{j_1+k_1+k_2+j_2} + 2^{m+k_1+k_2}\big)\]
\[ \| g_{k_1,j_1}(t)\|_{L^2} 2^{-m}\|   g_{k_2,j_2}(t)\|_{L^1}  + \sum_{  j_2 \geq  \max\{ m +k_{1,-}-\beta m,j_1\}  } 2^{2k_1}\big(2^{j_1+k_1+k_2+j_2} + 2^{m+k_1+k_2}\big) \]
\[\times \| g_{k_2,j_2}(t)\|_{L^2} 2^{-m} \| g_{k_1,j_1}(t)\|_{L^1} \lesssim 2^{-2m-k_2 +20\beta m }\epsilon_0 \lesssim 2^{-m-\beta m }\epsilon_0.
 \]
For $ K_{k_1,k_2}^{\mu, \nu,2;3}$, we do integration by parts in ``$\eta$'' once. Meanwhile, for $ K_{k_1,k_2}^{\mu, \nu,2;4}$, we do integration by parts in ``$\eta$'' twice. As a result, we have
\[
 \|K_{k_1,k_2}^{+, \nu,2;3}\|_{L^2} + \| K_{k_1,k_2}^{+, \nu,2;4}\|_{L^2} \lesssim  \big(\sum_{i=0,1,2} 2^{i k_2} \| \nabla_\xi^i \widehat{g}_{k_2}(t, \xi)\|_{L^2}+ 2^{i k_1} \| \nabla_\xi^i \widehat{g}_{k_1}(t, \xi)\|_{L^2}   \big)\]
\[\times \big( 2^{2k_1} \| e^{-it\Lambda} g_{k_1}\|_{L^\infty}  +  2^{k_1+k_2} \| e^{-it\Lambda} g_{k_2}\|_{L^\infty} \big) + \sum_{j_1 \geq j_2} 2^{-m+3k_1+ k_2+j_1+2j_2}  \| g_{k_1,j_1}\|_{L^2}\| g_{k_2,j_2}\|_{L^2}
\]
\[
  +\sum_{j_2 \geq j_1 } 2^{-m+3k_1+ k_2+j_2+2j_1}  \| g_{k_1,j_1}\|_{L^2}\| g_{k_2,j_2}\|_{L^2} \lesssim 2^{-m+2\tilde{\delta} m +\delta m /2 +k} \epsilon_0.
\]
 Now, it is easy to see our desired estimate (\ref{eqn730}) holds. Hence finishing the proof.
\end{proof}

The rest of this section is devoted to prove the weighted norm estimates for the remainder term $\mathcal{R}_1$ in (\ref{realduhamel}), which will be done by using the fixed point type formulation (\ref{fixedpoint}). Before that, we first prove the weighted norm estimates for a very general multilinear form.

For $g_i\in H^{N_0-10}\cap Z_2\cap Z_1$, $i \in \{1,\cdots,5\}$, we define 
  a multilinear form  as follows, 
\[
Q^{\tau, \kappa, \iota}_{k,\mu, \nu}(g_1(t), g_2(t), g_3(t), g_4(t), g_5(t))(\xi):= \int_{\R^2} \int_{\R^2} \int_{\R^2} \int_{\R^2}e^{i t \Phi^{\tau, \kappa, \iota}_{\mu, \nu}(\xi, \eta, \sigma,\eta', \sigma')} q^{\tau, \kappa, \iota}_{\mu, \nu}(\xi, \eta, \sigma,\eta', \sigma') 
\]
\[
\times  \widehat{g^{\tau}_{ 1}}(t, \xi-\eta) \widehat{g^{\kappa}_{ 2}}(t, \eta-\sigma) \widehat{g^{\iota}_{ 3}}(t,  \sigma-\eta') \widehat{g^{\mu}_{ 4}}(t, \eta'-\sigma')\widehat{g^{\nu}_{ 5}}(t, \sigma') \psi_k(\xi) d \sigma' d \eta' d \eta d \sigma,
\]
where the symbol $ q^{\tau, \kappa, \iota}_{\mu, \nu}(\xi, \eta, \sigma,\eta', \sigma') $ satisfies the following estimate,
\[
\| q^{\tau, \kappa, \iota}_{\mu, \nu}(\xi, \eta, \sigma,\eta', \sigma')  \psi_k(\xi) \psi_{k_1}(\xi-\eta) \psi_{k_2}( \eta-\sigma) \psi_{k_3}(\sigma-\eta') \psi_{k_4}(\eta'-\sigma')\]
\[\times \psi_{k_5}(\eta'-\sigma') \|_{\mathcal{S}^\infty}\lesssim 2^{2k_1 + 6\max\{k_1,\cdots,k_5\}_{+}}.
\]
 We define auxiliary  function spaces as follows, 
\be\label{eqn19876}
\| f\|_{\widetilde{Z}_i}:=\sup_{k\in \mathbb{Z}} \sup_{j \geq -k_{-}} \| f\|_{\widetilde{B}_{k,j}^i}, \quad \| f\|_{\widetilde{B}_{k,j}^i}:= 2^{ (1-\delta)k +k_{+} + (20-5i)k_{+}+ i j +\delta j} \| \varphi_j^k(x)  P_k f \|_{L^2}, \quad i\in \{0,1,2\}.
\ee
From above definition, it is easy to verify that the following estimates hold, 
\[
 \sum_{k\in \mathbb{Z}} 2^{k+(20-5i)k_{+}}  \| \nabla_\xi^i \widehat{f}_k(t, \xi)\|_{L^2}    \lesssim \| f\|_{ \widetilde{Z}_i}, \quad \| f\|_{Z_l} \lesssim  \| f\|_{ \widetilde{Z}_l}, \quad i\in  \{0,1,2\}, l\in \{1,2\}.
\]

\begin{lemma}\label{quarticestimatefixed}
Let $g_i(t)\in H^{N_0-10}\cap Z_2\cap Z_1$, $i\in\{1,\cdots, 5\}$. Assume that the following estimate holds for any $t\in [2^{m-1}, 2^m]$,
\[
2^{-\delta  m } \| g_i(t)\|_{H^{N_0-10}} + 2^{-\tilde{\delta}  m } \| g_i(t)\|_{Z_2}  +\| g_i(t)\|_{Z_1} \lesssim \epsilon_1:=\epsilon_0^{5/6}, \quad i \in \{1,2,3\},
\]
then the following estimates hold for  any $t\in[2^{m-1}, 2^m]$ and  any $\mu, \nu, \kappa, \iota, \tau\in \{+,-\},$
\be\label{quinticestimateZ2}
 \sum_{i=0,1,2}     2^{( 3-i) m }    \| \mathcal{F}^{-1}\big[  Q^{\tau, \kappa, \iota}_{k,\mu, \nu}(g_{1 }(t), g_{2 }(t),g_{3 }(t),g_{4 }(t), g_{5 }(t))(\xi)\big] \|_{\widetilde{Z}_i}\lesssim 2^{-m/2+190\beta m }\epsilon_0^2. 
\ee
\end{lemma}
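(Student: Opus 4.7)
The plan is to prove the three estimates for $i = 0, 1, 2$ simultaneously using dyadic decomposition in all five input frequencies and multilinear estimates, with the prefactor $2^{(3-i)m}$ exactly matching the cost of letting $i$ applications of $\nabla_\xi$ fall on the phase $t\Phi^{\tau, \kappa, \iota}_{\mu, \nu}$. First I would dyadically decompose each input as $g_j = \sum_{k_j} P_{k_j} g_j$ and perform spatial localizations $g_{j, k_j, l_j}$ when necessary. Rough $L^2$--$L^\infty$--$L^\infty$--$L^\infty$--$L^\infty$ type estimates, analogous to (\ref{eqn1601}) and (\ref{eqn1740}), would first eliminate the very high frequency regime $k_1 \geq 4\beta m$ (using the large Sobolev weight $N_0$ on $g_1$) and the very low frequency regime $\min_j k_j \leq -3m - \beta m$ or $k \leq -2m$ (using volume of support together with the $L^1$ bounds on the inputs coming from the $Z_1$-control). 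This reduces matters to a bounded range of $(k, k_1, \ldots, k_5)$, costing only an $m^5$ logarithmic loss that is absorbed into the allowed $2^{190\beta m}$ slack.

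For fixed frequencies in the admissible range, I would treat the $i = 0$ case first. The $L^2$--$L^\infty$--$L^\infty$--$L^\infty$--$L^\infty$ multilinear estimate from Lemma \ref{multilinearestimate} (extended inductively to five inputs), combined with the bootstrap bounds (\ref{smallness}) giving $\|e^{-it\Lambda} g_{j, k_j}\|_{L^\infty} \lesssim 2^{-m}$ up to $k_j$-dependent factors, yields a gain of $2^{-4m}$ from the four $L^\infty$ factors, times $\|g_{1, k_1}\|_{L^2} \lesssim \epsilon_1$, times the symbol size $2^{2k_1 + 6k_{1,+}}$. The spatial weight $2^{\delta j}$ in $\widetilde{B}^0_{k,j}$ is controlled by integrating by parts in $\eta$, $\sigma$, $\eta'$, or $\sigma'$ many times whenever $j \geq (1+\delta)\max\{m + k_1, -k_-\} + \beta m$, reducing matters to $j \lesssim m$ where $2^{\delta j}$ is negligible. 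After summing over $k_1, \ldots, k_5$ in the admissible range, one obtains $\|\mathcal{F}^{-1}[Q]\|_{\widetilde{Z}_0} \lesssim 2^{-4m + O(\beta m)} \epsilon_0^2$, which comfortably absorbs the $2^{3m}$ prefactor.

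For $i = 1, 2$, each $\nabla_\xi$ applied to the output of $Q$ is commuted through the integral: when it hits $e^{it\Phi}$ it produces the factor $it\nabla_\xi\Phi = O(t \cdot 2^{k_1 - k_{1,+}/2})$, which is exactly compensated by the reduction of the prefactor from $2^{(3-i)m}$ to $2^{(3-i-1)m}$ (noting $|\nabla_\xi \Phi| \lesssim 1$ at high frequency and $\lesssim 2^{k_1}$ at low frequency so the cost is never worse than $2^m$); when it hits the symbol $q^{\tau,\kappa,\iota}_{\mu,\nu}$ one merely loses a bounded $\mathcal{S}^\infty$ factor; and when it hits an input $\widehat{g_1^\tau}(t, \xi-\eta)$ one writes $\nabla_\xi = -\nabla_\eta$ and either absorbs it into the $l_1$-spatial-localization, producing $2^{l_1}\|g_{1, k_1, l_1}\|_{L^2}$ which is summable against the $\widetilde{B}^i_{k,j}$ weight, or integrates by parts in $\eta$ once more. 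The additional factor $2^{ij + \delta j}$ in $\widetilde{B}^i_{k,j}$ is handled exactly as in the $i = 0$ case once the spatial localization is exploited.

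The main obstacle will be carefully bookkeeping the five pairs $(k_j, l_j)_{j=1}^5$ when more than one $\nabla_\xi$ derivative hits distinct inputs, because the non-degeneracy of $\nabla_\eta \Phi$ depends on which of the twelve sign combinations $(\tau, \kappa, \iota, \mu, \nu)$ one considers and degenerates near the various space resonance sets of the quartic and quintic phases. Fortunately, estimate (\ref{quinticestimateZ2}) is lossy by a factor of $2^{-m/2 + 190\beta m}$ rather than sharp, and the cushion of four $L^\infty$ factors each contributing $2^{-m}$ provides an enormous margin relative to the target $2^{-7m/2 + 190\beta m}\epsilon_0^2$. Consequently the proof should proceed by systematic case analysis along the lines of the quartic-term estimates in Sections \ref{loworderweight} and \ref{highorderweighted}, organized by which input carries the largest spatial-localization index $l_j$, with integration by parts in $\eta, \sigma, \eta', \sigma'$ used to rule out the small-$l_j$ cases and direct $L^2$--$L^\infty$ estimates used for the remaining large-$l_j$ cases.
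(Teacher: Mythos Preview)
Your overall architecture---dyadic decomposition, elimination of very high and very low frequencies, and $L^2$--$L^\infty$--$L^\infty$--$L^\infty$--$L^\infty$ multilinear estimates---matches the paper. But two points deserve correction and simplification.

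First, a genuine error: for large $j$ (the spatial concentration of the \emph{output}), integrating by parts in the \emph{internal} variables $\eta,\sigma,\eta',\sigma'$ does nothing to force $j$ to be small. Those integrations by parts exploit $\nabla_\eta\Phi$, $\nabla_\sigma\Phi$, etc., and control only the spatial concentrations $j_1,\dots,j_5$ of the \emph{inputs}. To reduce to $j\lesssim m$ you must integrate by parts in the \emph{output} frequency $\xi$, using $|x+t\nabla_\xi\Phi|\sim 2^{j}$ on the support of $\varphi_j^k(x)$; this forces $\min\{j_1,j_2\}\gtrsim j$ (after spatially localizing the two inputs containing $\xi$), and then a direct $L^2$ estimate closes. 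This is exactly what the paper does.

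Second, your treatment of $i=1,2$ by commuting $\nabla_\xi$ through the integral and tracking where each derivative lands is considerably more laborious than necessary. The paper never distributes $\nabla_\xi$: the norms $\widetilde{Z}_i$ are physical-space weighted norms with weight $2^{ij+\delta j}$, and once one has reduced to $j\leq (1+\delta)(m+k_{1,+})+\beta m$, the combination $2^{(3-i)m}\cdot 2^{ij+\delta j}$ is $\lesssim 2^{3m+O(\beta m)}$ \emph{uniformly in $i\in\{0,1,2\}$}. Thus a single multilinear $L^2$ estimate handles all three cases simultaneously; there is no need to chase $\nabla_\xi$ onto phase, symbol, or inputs. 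Relatedly, your anticipated ``main obstacle'' of quintic space resonances never materializes: because the margin is so large (four $L^\infty$ factors, each $\lesssim 2^{-m/2}$), the paper needs only a crude trichotomy on $k_2-k_1$ and $k_3-k_2$, doing one integration by parts in $\eta$ or $\sigma$ to force a single $j_l$ large, and nothing finer.
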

\begin{proof}

As usual, we  rule out the very high frequency case and the very low frequency case first. Without loss of generality, we assume that $k_5\leq k_4\leq k_3\leq k_2\leq k_1$. From the $L^2-L^\infty-L^\infty-L^\infty-L^\infty$ type multilinear estimate and the $L^\infty\rightarrow L^2$ type Sobolev estimate, we have
\[
\sum_{i=0,1,2} 2^{( 3-i) m }  \| \mathcal{F}^{-1}[Q^{\tau, \kappa, \iota}_{k,\mu, \nu}(g_{1,k_1}(t), g_{2,k_2}(t),g_{3,k_3}(t),g_{4,k_4}(t), g_{5,k_5}(t))(\xi)]\|_{ \widetilde{B}_{k,j}^i}\lesssim 2^{3m+ (2+\delta) j } \]
\be\label{eqn1889}
  \times 2^{ 30 k_{1,+} + (1-\delta)k+k_5}\| g_{k_1}\|_{L^2} \| e^{-it \Lambda}g_{k_2}\|_{L^\infty}\| e^{-it \Lambda}g_{k_3}\|_{L^\infty} \| e^{-it \Lambda}g_{k_4}\|_{L^\infty}\|  g_{k_5}\|_{L^2}.
\ee

From   estimate (\ref{eqn1889}), we can rule out the case when $k_{1,+} \geq (3m+2j)/(N_0-45)$ or $ k_5\leq -3m -2(1+2\delta)j$, or $k\leq -3m -2(1+2\delta)j$. Hence it would be sufficient to consider fixed $k, k_1,k_2,k_3,k_4$, and $k_5$ in the following range, 
\be\label{restrictedrangequartichigh}
   -3m  -2(1+2\delta)j\leq k_5, k \leq k_1 +2\leq  (3m+2j)/(N_0-45). 
\ee

	From now on, $k, k_i$, $i\in\{1,\cdots, 5\}$, are restricted inside the range  (\ref{restrictedrangequartichigh}). 
We first consider the case when $j\geq (1+\delta)\big( m +k_{1,+}\big)+ \beta m$. For this case, we do spatial localization for inputs ``$g_{k_1}$'' and ``$g_{k_2}$''.  Note that the following estimate holds for the case we are considering, 
\[
\big|\nabla_\xi \big[x\cdot \xi + t  \Phi^{\tau, \kappa, \iota}_{\mu, \nu}(\xi, \eta, \sigma,\eta', \sigma')\big]\big| \varphi_j^k(x) \sim 2^{j}.
\]
Therefore, by doing integration by parts in ``$\xi$'' many times, we can rule out the case when $\min\{j_1, j_2\}\leq j-\delta j - \delta m$, where $j_1$ and $j_2$ are the spatial concentrations of $g_{k_1}$ and $g_{k_2}$ respectively. For the case when $ \min\{j_1, j_2\} \geq j-\delta j -\delta m $, the following estimate holds from the $L^2-L^\infty-L^\infty-L^\infty-L^\infty$ type multilinear estimate, 
\[
\sum_{\min\{j_1, j_2\} \geq j-\delta j -\delta m  } \sum_{i=0,1,2}    2^{( 3-i) m }  \|\mathcal{F}^{-1}\big[ Q^{\tau, \kappa, \iota}_{k,\mu, \nu}(g_{1,k_1,j_1}(t), g_{2,k_2,j_2}(t),g_{3,k_3}(t),g_{4,k_4}(t), g_{5,k_5}(t))(\xi)\big]\|_{\widetilde{B}_{k,j}^i}
\]
\[\lesssim \sum_{i=0,1,2} \sum_{ \min\{j_1, j_2\} \geq j-\delta j -\delta m  } 2^{ (3-i)m+ i j +\delta j + 3\beta m +  (3-\delta)k_1+30 k_{1,+}} \| g_{1,k_1,j_1}\|_{L^2} 2^{k_2}\|   g_{2,k_2,j_2 }\|_{L^2} 
\]
\[
\times \| e^{-it \Lambda} g_{3,k_3}\|_{L^\infty} \| e^{-it \Lambda} g_{4,k_4}\|_{L^\infty} \| e^{-it \Lambda} g_{5,k_5}\|_{L^\infty} \lesssim 2^{-m/2 +50\beta m } \epsilon_0^2.
\]

It remains to consider the case when $j \leq (1+\delta)\big(m  + k_{1,+} \big) + \beta m $. Recall   (\ref{restrictedrangequartichigh}).   Note that $j$ now is bounded, we have $-6m \leq k_5\leq k_{1}\leq 5\beta m $.

 We split   into three cases based on  sizes of the difference between $k_1$ and $k_2$ and the difference between $k_2$ and $k_3$ as follows.

$\oplus$\quad 
 If $k_2\leq k_1-10$. \quad For this case, we have a good lower bound for $\nabla_\eta \Phi^{\tau, \kappa, \iota}_{\mu, \nu}(\xi, \eta, \sigma,\eta', \sigma')$. Hence, we can do integration by parts in $\eta$ many times to rule out the  case when $\max\{j_1, j_2\} \leq m + k_{1,-} - \beta m $. From the $L^2-L^\infty-L^\infty-L^\infty-L^\infty$ type multilinear estimate, the following estimate holds, 
\[
\sum_{ \max\{j_1, j_2\} \geq m + k_{1,-} - \beta m} \sum_{i=0,1,2}    2^{( 3-i) m }   \|\mathcal{F}^{-1}\big[ Q^{\tau, \kappa, \iota}_{k,\mu, \nu}(g_{1,k_1,j_1}(t), g_{2,k_2,j_2}(t),g_{3,k_3}(t),g_{4,k_4}(t), g_{5,k_5}(t))(\xi)\big]\|_{\widetilde{B}_{k,j}^i} \]
\[ \lesssim \sum_{j_1 \geq \max\{j_2, m + k_{1,-} - \beta m\}} 2^{3m+4\beta m + 3k_1 +30k_{1,+}}\| g_{1,k_1,j_1}\|_{L^2} \| e^{-it\Lambda} g_{2,k_2,j_2}\|_{L^\infty} \| e^{-it \Lambda} g_{3,k_3}\|_{L^\infty}
\]
\[
\times \| e^{-it \Lambda} g_{4,k_4}\|_{L^\infty} \| e^{-it \Lambda} g_{5,k_5}\|_{L^\infty} +  \sum_{j_2 \geq \max\{j_1, m + k_{1,-} - \beta m\}}2^{3m+4\beta m + 3k_1 + 30k_{1,+} + k_4+k_5}\| g_{2,k_2,j_2}\|_{L^2} 
\]
\[
\times\| e^{-it\Lambda} g_{1,k_1,j_1}\|_{L^\infty} \| e^{-it \Lambda} g_{3,k_3}\|_{L^\infty} \|  g_{4,k_4}\|_{L^2} \|   g_{5,k_5}\|_{L^2} \lesssim 2^{-m/2+180\beta m }\epsilon_0^2.
\]

$\oplus$\quad If $|k_1-k_2|\leq 10$ and $k_3\leq k_1-20$. \quad Note that, $\nabla_\sigma \Phi^{\tau, \kappa, \iota}_{\mu, \nu}(\xi, \eta, \sigma,\eta', \sigma')$ has a good lower bound for the case we are considering. Hence, by doing integration by parts in $\sigma$, we can rule out the case when $\max\{j_2,j_3\}\leq m +k_{2,-}-\beta m $, where $j_2$ and $j_3$ are the spatial concentrations of inputs $g_{k_2}$ and $g_{k_3}$ respectively.  From the $L^2-L^\infty-L^\infty-L^\infty-L^\infty$ type multilinear estimate, the following estimate holds, 
\[
\sum_{ \max\{j_2, j_3\} \geq m + k_{2,-} - \beta m}  \sum_{i=0,1,2}    2^{( 3-i) m }  \|\mathcal{F}^{-1}\big[ Q^{\tau, \kappa, \iota}_{k,\mu, \nu}(g_{1,k_1 }(t), g_{2,k_2,j_2}(t),g_{3,k_3,j_3}(t),g_{4,k_4}(t), g_{5,k_5}(t))(\xi)\big]\|_{\widetilde{B}_{k,j}^i} \]
\[
  \lesssim \sum_{j_2 \geq \max\{j_3, m + k_{1,-} - \beta m\}} 2^{3m+4\beta m + 3k_1+ 30k_{1,+}}\| g_{3,k_3,j_3}\|_{L^2} \| e^{-it\Lambda} g_{2,k_2,j_2}\|_{L^\infty} \| e^{-it \Lambda} g_{1,k_1}\|_{L^\infty} 
\]
\[
\times \| e^{-it \Lambda} g_{4,k_4}\|_{L^\infty}  \| e^{-it \Lambda} g_{k_5}\|_{L^\infty} +  \sum_{j_3 \geq \max\{j_2, m + k_{1,-} - \beta m\}}2^{3m+4\beta m +3k_1 + 30k_{1,+} + k_4+k_5}\| g_{3,k_3,j_3}\|_{L^2} 
\]
\[
\times\| e^{-it\Lambda} g_{2,k_2,j_2}\|_{L^\infty} \| e^{-it \Lambda} g_{1,k_1}\|_{L^\infty} \|  g_{4,k_4}\|_{L^2} \|   g_{5,k_5}\|_{L^2} \lesssim 2^{-m/2+180\beta m }\epsilon_0^2.
\]

$\oplus$\quad If $|k_1-k_2|\leq 10$ and $|k_2-k_3|\leq 10$. This case is straightforward. By the $L^2-L^\infty-L^\infty-L^\infty-L^\infty$ type multilinear estimate, the following estimate holds, 
\[
 \sum_{i=0,1,2}    2^{( 3-i) m }  \|\mathcal{F}^{-1}\big[ Q^{\tau, \kappa, \iota}_{k,\mu, \nu}(g_{1,k_1 }(t), g_{2,k_2 }(t),g_{3,k_3 }(t),g_{4,k_4}(t), g_{5,k_5}(t))(\xi)\big]\|_{\widetilde{B}_{k,j}^i}  \]
 \[\lesssim 2^{3m+4\beta m +3k_{1}+30k_{1,+}}  \| g_{5,k_5}(t)\|_{L^2}  \| e^{-it\Lambda} g_{1,k_1}(t)\|_{L^\infty} \| e^{-it\Lambda} g_{2,k_2}(t)\|_{L^\infty} 	\| e^{-it\Lambda} g_{3,k_3}(t)\|_{L^\infty} \]
 \[
 \times\| e^{-it\Lambda} g_{4,k_4}(t)\|_{L^\infty} \lesssim 2^{-m/2+180\beta m }\epsilon_0^2.
\]
To sum up, our desired estimate (\ref{quinticestimateZ2}) holds, hence finishing the proof.
\end{proof}

In the following, we will use a fixed point type argument to estimate the Remainder terms. Recall (\ref{normalformatransfor}) and $ u = \tilde{\Lambda} h + i \tilde{\psi}$. To estimate the weighted norms of the reminder term $\mathcal{R}_1$, from estimate (\ref{quinticestimateZ2}) in Lemma \ref{quarticestimatefixed}, we know that it would be sufficient to estimate the weighted norms, i.e., $ {Z}_1$ norm and $ {Z}_2$ norm, of $  e^{it \Lambda  } \Lambda_{\geq 5}[B(h)\psi]=  e^{it \Lambda  }\Lambda_{\geq 5}[\nabla_{x,z}\varphi](t  )\big|_{z=0}$.

\begin{lemma}\label{L2estimateremaider}
	Under the bootstrap assumption \textup{ (\ref{smallness})}, 
the following estimate holds for any $t \in[2^{m-1}, 2^m]$,
\be\label{eqn19878}
  \|   \Lambda_{\geq i}[\nabla_{x,z}\varphi]\|_{L^\infty_z H^{20}}   \lesssim 2^{-im/2+ \beta m }\epsilon_0^2, \quad   i\in \{5,6\}. 
\ee
\end{lemma}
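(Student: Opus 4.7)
The plan is to iterate the fixed point formulation (\ref{fixedpoint}) for $\nabla_{x,z}\varphi$ to extract the quintic-and-higher multilinear structure, and then bound the resulting multilinear terms by a standard $L^2$--$L^\infty$--$\cdots$--$L^\infty$ product estimate using the bootstrap assumption (\ref{bootstrapassumption}). The key observation is that the nonlinear kernels $g_1, g_2, g_3$ defined in (\ref{eqn12})--(\ref{eqn14}) are, modulo analytic functions of $h$, multilinear in $h, \nabla h$ on one side and $\nabla\varphi, \p_z\varphi$ on the other, and their Taylor expansions in powers of $h$ begin at order two.

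First, I would substitute the fixed point expression (\ref{fixedpoint}) back into itself, replacing each $\nabla_{x,z}\varphi$ appearing inside $g_i$ by its linear part plus an expression that is already quadratic in $(h,\psi)$. Iterating this substitution $i-1$ times produces a representation
\[
\Lambda_{\geq i}[\nabla_{x,z}\varphi] = \sum_{\ell} M_\ell(h,\psi,\ldots,h,\psi) + \mathcal{R}_i,
\]
where each $M_\ell$ is an $i$-linear expression in $h, \nabla h, \nabla\psi, \d\tanh(\d)\psi$ obtained by composing the kernels $K_1, K_2, K_3$ from (\ref{equation300})--(\ref{equation302}) with Fourier multipliers of order at most one, and the residual $\mathcal{R}_i$ is of order at least $i$ in the same quantities.

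Second, for each such multilinear term I would estimate in $L^\infty_z H^{20}$ by placing one factor in $H^{21}$, controlled by the bootstrap $\|(\tilde{\Lambda}h,\psi)\|_{H^{N_0}}\lesssim 2^{\delta m}\epsilon_0$, and the remaining $i-1\geq 4$ factors in $W^{6,1+\alpha}$, each bounded by $2^{-m}\epsilon_0$ via (\ref{bootstrapassumption}). The multilinear estimates (\ref{bilinearesetimate})--(\ref{trilinearesetimate}), together with the fact that the composite symbols associated with the kernels are of bounded $\mathcal{S}^\infty$-norm at each dyadic scale, then yield
\[
\| M_\ell\|_{L^\infty_z H^{20}} \lesssim 2^{-(i-1)m+\delta m}\epsilon_0^i,
\]
which is much smaller than $2^{-im/2+\beta m}\epsilon_0^2$ for $i\in\{5,6\}$ and $\epsilon_0$ small. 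The same bound applies to $\mathcal{R}_i$ since it carries even more $L^\infty$-bounded factors.

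The main technical obstacle will be the low-frequency behavior of the Riesz-type operators $\nabla/|\nabla|$ appearing in $K_1, K_2, K_3$, since a naive application to a low-frequency factor of $h$ could spoil the multilinear $L^2$ bound. This difficulty is exactly the one already resolved in the proof of Lemma \ref{Sobolevestimate} in \cite{wang2}: inside $g_1, g_2, g_3$ the low-frequency factor always carries at least one derivative, which absorbs the $|\nabla|^{-1}$. Since the margin between the natural decay $2^{-(i-1)m}$ and the target decay $2^{-im/2}$ is large, no delicate cancellations or refined frequency envelopes are needed beyond what is already in Lemma \ref{Sobolevestimate}; the proof is essentially a careful bookkeeping exercise based on iterating (\ref{fixedpoint}).
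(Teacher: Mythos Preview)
Your overall strategy---iterate the fixed-point formulation (\ref{fixedpoint}) and bound the resulting multilinear expressions by an $L^2$--$L^\infty$--$\cdots$--$L^\infty$ product estimate---is the same as the paper's. The paper packages it as a single self-consistent inequality
\[
\|\Lambda_{\geq i}[\nabla_{x,z}\varphi]\|_{L^\infty_z H^{20}}
\lesssim \|(h,\psi)\|_{W^{30,1}}\|(h,\psi)\|_{W^{30,0}}^{\,i-2}\|(h,\psi)\|_{H^{30}}
+\|(h,\psi)\|_{W^{30}}\|\Lambda_{\geq i}[\nabla_{x,z}\varphi]\|_{L^\infty_z H^{20}}
\]
and absorbs the last term, but that is equivalent to your explicit iteration plus residual.

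There is, however, a genuine gap in your bookkeeping of decay rates. You claim that all $i-1$ factors placed in $L^\infty$ can be controlled by $\|(h,\psi)\|_{W^{6,1+\alpha}}\lesssim 2^{-m}\epsilon_0$, giving $2^{-(i-1)m+\delta m}\epsilon_0^i$. This is not correct: the multilinear estimates (\ref{bilinearesetimate})--(\ref{trilinearesetimate}) use plain $L^p$ norms, and the bootstrap bound on $\|h\|_{W^{6,1+\alpha}}$ only yields $\|P_k h\|_{L^\infty}\lesssim 2^{-m-(1+\alpha)k}\epsilon_1$, which does \emph{not} sum to a $t^{-1}$ bound on $\|h\|_{L^\infty}$ unless the symbol supplies a compensating factor $2^{(1+\alpha)k}$. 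For the undifferentiated $h$ factors that arise from the coefficient $(2h+h^2)/(1+h)^2$ in $g_1$ (see (\ref{eqn12})), no such derivative is available, and each such factor contributes only $\sim 2^{-m/2}$ in $L^\infty$. Concretely, the quintic piece $h^4\,\Lambda_1[\p_z\varphi]$ has four bare $h$'s, and your scheme would overcount its decay.

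The correct distribution---and what the paper does---is to notice that the innermost factor $\Lambda_1[\nabla_{x,z}\varphi]$ always carries at least one derivative (it is $\nabla\psi$ or $\d\tanh\d\psi$), so exactly \emph{one} $L^\infty$ factor can be placed in $W^{\gamma,1}$ with decay $\sim 2^{-m}$, while the remaining $i-2$ go in $W^{\gamma,0}$ with decay $\sim 2^{-m/2}$ each, and one factor goes in $H^\gamma$. This yields $2^{-m}\cdot 2^{-(i-2)m/2}\cdot 2^{\delta m}=2^{-im/2+\delta m}$, i.e.\ essentially the target rate with very little room to spare---not the huge margin you expected. Your ``much smaller'' conclusion is therefore based on an overclaim; once corrected, the argument closes, but only just.
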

\begin{proof}
  Recall the fixed point type formulation for $\nabla_{x,z}\varphi$ in (\ref{fixedpoint}). Very similar to the proof of (\ref{eqn2202}) in Lemma \ref{Sobolevestimate}, we can derive the following estimate for $i\in \{5,6\},$
\[
\| \Lambda_{\geq i}[\nabla_{x,z}\varphi]\|_{L^\infty_z H^{20}}\lesssim  \|(h, \psi)\|_{W^{30,1}}\|(h, \psi)\|_{W^{30,0}}^{i-2} \| (h, \psi)\|_{H^{30}} + \|(h, \psi)\|_{W^{30}} \| \Lambda_{\geq i}[\nabla_{x,z}\varphi]\|_{L^\infty_z H^{20}},
\]
which further implies the following estimate, 
\[
  \| \Lambda_{\geq i}[\nabla_{x,z}\varphi]\|_{L^\infty_z H^{20}}\lesssim  \|(h, \psi)\|_{W^{30,1}}\|(h, \psi)\|_{W^{30,0}}^{i-2} \| (h, \psi)\|_{H^{30}} \lesssim 2^{-im/2+\beta m }\epsilon_0^2,\quad i\in\{5,6\}.
\]
Hence, it is easy to see our desired estimate holds.

\end{proof}
Now we will use above estimate of Sobolev norm to estimate the weighted norms of $ e^{it \Lambda  } \Lambda_{\geq 5}[\nabla_{x,z}\varphi](t  )$.  More precisely, we have
\begin{lemma}\label{remaindertermweightednorm}
Under the bootstrap assumption \textup{(\ref{smallness})}, the following estimate holds for the remainder term $\mathcal{R}_1$,
 \be\label{eqnj878}
\sup_{t\in[2^{m-1}, 2^m]}  \sum_{i= 1,2}      \| e^{it \Lambda} \mathcal{R}_1  \|_{Z_i} \lesssim  	2^{-3m/2+200\beta m }\epsilon_0. 
\ee
\end{lemma}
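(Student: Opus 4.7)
The plan is to decompose $\mathcal{R}_1$ according to its origin and then estimate each piece either with the quintic multilinear bound of Lemma \ref{quarticestimatefixed} or with a fixed-point argument modeled on the Sobolev estimate of Lemma \ref{L2estimateremaider}. Recall from \eqref{complexversion} and \eqref{realvariableweighted} that $\mathcal{R}_1$ consists of: (a) the quintic-and-higher-order terms $\mathcal{R}$ from \eqref{complexversion}, which in turn come from $\Lambda_{\geq 5}[B(h)\psi] = \Lambda_{\geq 5}[\nabla_{x,z}\varphi]\big|_{z=0}$ together with the quintic-and-higher expansion of the other nonlinear terms in the system \eqref{waterwaves}; and (b) the tail contributions produced by the normal-form transformation \eqref{normalformatransfor}, where substituting $u = v + A + B + E$ into the quadratic, cubic and quartic nonlinearities of \eqref{complexversion} generates products of five or more copies of $v$. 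Since the symbols of $A_{\mu,\nu}, B_{\tau,\kappa,\iota}, E_{\mu_1,\mu_2,\nu_1,\nu_2}$ obey the $\mathcal{S}^\infty$-bound \eqref{normaformsize}, every piece in (b) can, after fully expanding and using \eqref{productofsymbol} to multiply symbols, be cast as a finite sum of operators of the form $Q^{\tau,\kappa,\iota}_{k,\mu,\nu}(g,\ldots,g)$ considered in Lemma \ref{quarticestimatefixed}, whose symbol satisfies the ``at least two derivatives plus polynomial growth'' assumption. Applying \eqref{quinticestimateZ2} immediately yields the bound $2^{-m/2+190\beta m}\epsilon_0^2$ on these terms, which is much stronger than what \eqref{eqnj878} asks for.

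For part (a), the subtlety lies in $\Lambda_{\geq 5}[\nabla_{x,z}\varphi]\big|_{z=0}$; the other components of $\mathcal{R}$ are explicit polynomial expressions in $(h,\psi, G(h)\psi)$ and reduce to case (b) once we peel off their leading quintic part using \eqref{fixedpoint}. For the Dirichlet-Neumann piece itself I would iterate the fixed-point formula \eqref{fixedpoint} several times, using the explicit definitions \eqref{eqn12}, \eqref{eqn14} of $g_1, g_2, g_3$ and the kernels $K_1, K_2, K_3$ in \eqref{equation300}-\eqref{equation302}, to split
\[
\Lambda_{\geq 5}[\nabla_{x,z}\varphi]\big|_{z=0} = \mathcal{M}(h,\psi,h,\psi,h) + \mathcal{E},
\]
where $\mathcal{M}$ is a finite sum of explicit quintic multilinear expressions whose symbols, after converting $(h,\tilde\psi)$ to $(u,\bar u)$ via \eqref{eqn1200}, fall under the hypothesis of Lemma \ref{quarticestimatefixed} (the required symbol bound follows exactly as in the derivation of \eqref{symbolcubic} and \eqref{symbolquartic} in Lemma \ref{symbolcubicandquartic}, by counting derivatives in each iteration of \eqref{fixedpoint}), while $\mathcal{E} = \Lambda_{\geq 6}[\nabla_{x,z}\varphi]\big|_{z=0}$ is a genuine sextic-and-higher remainder. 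The quintic piece $\mathcal{M}$ is then controlled by \eqref{quinticestimateZ2} and absorbs into the desired $2^{-3m/2+200\beta m}\epsilon_0$ bound with room to spare, after using the decay rate of $e^{-it\Lambda} g$ in $W^{6,1+\alpha}$ to supply the extra $2^{-m}$ factor beyond what Lemma \ref{quarticestimatefixed} gives.

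The remaining term $\mathcal{E}$ is where the fixed-point ingredient enters. By Lemma \ref{L2estimateremaider}, $\|\mathcal{E}\|_{H^{20}} \lesssim 2^{-3m+\beta m}\epsilon_0^2$, which is a very strong Sobolev bound. To upgrade it to a $Z_1 + Z_2$ bound I would run a fixed-point argument directly on \eqref{fixedpoint}: schematically, writing $\mathcal{E}$ as a bilinear form in $\nabla_{x,z}\varphi$ and $(h,\psi)$ via \eqref{fixedpoint}, \eqref{eqn12}, \eqref{eqn14}, one obtains an estimate
\[
\|e^{it\Lambda}\mathcal{E}\|_{Z_1} + \|e^{it\Lambda}\mathcal{E}\|_{Z_2} \lesssim \|(h,\psi)\|_{\widetilde Z_0+\widetilde Z_1+\widetilde Z_2}\cdot \|\Lambda_{\geq 5}[\nabla_{x,z}\varphi]\|_{L^\infty_z H^{20}} + (\textrm{lower order}),
\]
and the right-hand side is bounded by $2^{\tilde\delta m} \cdot 2^{-3m+\beta m}\epsilon_0^2 \lesssim 2^{-3m/2+200\beta m}\epsilon_0$ by the bootstrap assumption \eqref{smallness} and Lemma \ref{L2estimateremaider}. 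The main obstacle in carrying out this program is the fixed-point step for $\mathcal{E}$: verifying that the $z$-integrations against the Poisson-type kernels $K_1, K_2, K_3$ preserve the $Z_1$ and $Z_2$ localizations, that each factor of $|\xi|$ emerging from their Taylor expansion at low frequency is paid for by a regularity gain from one of the low-frequency inputs, and that the resulting symbols satisfy the $\mathcal{S}^\infty$ bounds uniformly in $z\in [-1,0]$. Once this bookkeeping is settled, summing parts (a) and (b) yields \eqref{eqnj878}.
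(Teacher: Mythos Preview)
Your overall plan---handle the normal-form tails and the explicit quintic piece of $\Lambda_{\geq 5}[\nabla_{x,z}\varphi]\big|_{z=0}$ via Lemma \ref{quarticestimatefixed}, then treat the higher-order remainder through the Sobolev bound of Lemma \ref{L2estimateremaider} together with the fixed-point structure of \eqref{fixedpoint}---is the paper's strategy. Two corrections are in order.

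First, a misreading: the prefactor $2^{(3-i)m}$ in \eqref{quinticestimateZ2} means that for $i=2$ one already obtains $\|Q\|_{\widetilde Z_2}\lesssim 2^{-3m/2+190\beta m}\epsilon_0^2$; since $\|\cdot\|_{Z_2}\lesssim\|\cdot\|_{\widetilde Z_2}$, the quintic piece $\mathcal{M}$ is controlled directly with no need for an extra $2^{-m}$ from $W^{6,1+\alpha}$ decay.

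Second, and more seriously, the step you single out as the ``main obstacle'' is exactly where your sketch has a real gap and where the paper supplies the missing device. Your schematic inequality
\[
\|e^{it\Lambda}\mathcal{E}\|_{Z_1}+\|e^{it\Lambda}\mathcal{E}\|_{Z_2}\lesssim\|(h,\psi)\|_{\widetilde Z_0+\widetilde Z_1+\widetilde Z_2}\,\|\Lambda_{\geq 5}[\nabla_{x,z}\varphi]\|_{L^\infty_z H^{20}}
\]
is not justified: applying $\Gamma^1_\xi\Gamma^2_\xi$ to $e^{it\Lambda(\xi)}\widehat{\mathcal E}(\xi)$ produces factors of $t^2$ when the vector fields hit the phase, and when they hit the factor $\Lambda_{\geq 5}[\nabla_{x,z}\varphi]$ you would need weighted control of that very object, which is circular. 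The paper bypasses vector fields altogether by working in the auxiliary norms $\widetilde Z_i$ of \eqref{eqn19876}, which are defined via the spatial--frequency localization $\varphi_j^k P_k$ rather than via $\Gamma$-derivatives. One first removes the range $j\geq(1+\delta)\max\{m+k_{1,+},-k_-\}+\beta m$ by repeated integration by parts in $\xi$ (using $|x+t\nabla_\xi\Phi|\sim 2^j$ on that set); on the remaining range the $\widetilde Z_i$ weight is at most $2^{2m+3\beta m}$, and a straightforward $L^2$--$L^\infty$ bilinear estimate combined with the bounds \eqref{eqn19878} for $\Lambda_{\geq 5}$ and $\Lambda_{\geq 6}$ closes the argument. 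The kernel and low-frequency symbol issues you anticipate never have to be confronted.
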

\begin{proof}
Recall the fixed point type formulation for $\nabla_{x,z}\varphi$ in (\ref{fixedpoint}). We decompose $\Lambda_{\geq 5}[g_i(z)]$ into two parts: one of them doesn't depend on $\Lambda_{\geq 5}[\nabla_{x,z}\varphi]$ while the other part does depend (linearly depend) on  $\Lambda_{\geq 5}[\nabla_{x,z}\varphi]$. For the first part, estimate  (\ref{quinticestimateZ2}) in Lemma \ref{quarticestimatefixed} is very sufficient. Hence, it remains to estimate the second part. As usual, by doing integration by parts in $\xi$ many times, we can rule out the case  when $j\geq (1+\delta)\big(\max\{m +k_{1,+}, -k_{-}\}\big)	  +\beta m $. It remains to consider the case when  $j\leq (1+\delta)\big( \max\{m +k_{1,+}, -k_{-}\}	\big) +\beta m  $. For this case, the following estimate holds from estimate (\ref{quinticestimateZ2}) in Lemma \ref{quarticestimatefixed}, estimate  (\ref{eqn19878}) in Lemma \ref{L2estimateremaider} and $L^2-L^\infty$ type bilinear estimate,
\[
\sum_{i=1,2}  \|  e^{it \Lambda  } \Lambda_{\geq 5}[\nabla_{x,z}\varphi](t  )  \|_{L^\infty_z  {Z}_i} \lesssim 2^{-3m/2+200\beta m }\epsilon_0 + 2^{2m+3\beta m} \big(   \| e^{-it \Lambda} g \|_{W^{20,0}} \]
\[\times \|\Lambda_{\geq 6}[\nabla_{x,z}\varphi](t, \xi) \|_{L^\infty_z H^{15}} + \| g\|_{H^{20}} \| \d \Lambda_{5}[\nabla_{x,z}\varphi]\|_{L^\infty_z H^{15}}\big)+ 2^{3\beta m } \| g\|_{H^{20}}
\]
\be
 \times  \|\Lambda_{\geq 5}[\nabla_{x,z}\varphi](t, \xi) \|_{L^\infty_z H^{20}} \lesssim 2^{-3m/2 +200\beta m }\epsilon_0.
\ee
 From above estimate and estimate \textup{(\ref{eqn19878}) in Lemma \ref{L2estimateremaider}}, now it is easy to derive our desired estimate (\ref{eqnj878}) for the remainder term $\mathcal{R}_1$.
\end{proof}

\end{document}